\documentclass[11pt,a4paper]{article}

\usepackage[utf8]{inputenc}
\usepackage[T1]{fontenc}
\usepackage{lmodern}
\usepackage[margin=2.3cm]{geometry}
\usepackage{amsmath,amssymb,amsthm,mathtools}
\usepackage{aliascnt}
\usepackage{array}
\usepackage{enumitem}
\usepackage{hyperref}
\usepackage{cleveref}

\setlist[enumerate]{leftmargin=*,itemsep=2pt,topsep=4pt}
\newcolumntype{L}[1]{>{\raggedright\arraybackslash}p{#1}}

\theoremstyle{plain}
\newtheorem{theorem}{Theorem}[section]
\newaliascnt{proposition}{theorem}
\newtheorem{proposition}[proposition]{Proposition}
\aliascntresetthe{proposition}
\newaliascnt{lemma}{theorem}
\newtheorem{lemma}[lemma]{Lemma}
\aliascntresetthe{lemma}
\newaliascnt{corollary}{theorem}
\newtheorem{corollary}[corollary]{Corollary}
\aliascntresetthe{corollary}

\theoremstyle{definition}
\newaliascnt{definition}{theorem}
\newtheorem{definition}[definition]{Definition}
\aliascntresetthe{definition}
\newaliascnt{assumption}{theorem}
\newtheorem{assumption}[assumption]{Assumption}
\aliascntresetthe{assumption}

\theoremstyle{definition}
\newaliascnt{remark}{theorem}
\newtheorem{remark}[remark]{Remark}
\aliascntresetthe{remark}

\newcommand{\T}{\mathbb{T}}
\newcommand{\Z}{\mathbb{Z}}
\newcommand{\R}{\mathbb{R}}
\newcommand{\E}{\mathbb{E}}
\newcommand{\Pbb}{\mathbb{P}}
\newcommand{\cN}{\mathcal{N}}
\newcommand{\cL}{\mathcal{L}}
\newcommand{\cF}{\mathcal{F}}
\newcommand{\cE}{\mathcal{E}}
\newcommand{\Tr}{\mathrm{tr}}
\newcommand{\Ent}{\mathrm{Ent}}
\newcommand{\RelEnt}[2]{\Ent\!\left(#1\,\middle|\,#2\right)}

\crefname{equation}{eq.}{eqs.}
\crefname{theorem}{Theorem}{Theorems}
\crefname{proposition}{Proposition}{Propositions}
\crefname{lemma}{Lemma}{Lemmas}
\crefname{corollary}{Corollary}{Corollaries}
\crefname{remark}{Remark}{Remarks}
\crefname{definition}{Definition}{Definitions}
\crefname{assumption}{Assumption}{Assumptions}

\hypersetup{
    colorlinks=true,
    linkcolor=blue,
    citecolor=blue,
    urlcolor=blue,
}

\begin{document}

\title{\bfseries Relaxation and Steady-State Entropy Production\\[3pt] for Langevin SPDEs: A Dirichlet-Form Approach}

\author{Shuyuan Fan \quad Yuanke Chen \quad Lifei Wang \quad Jinqiao Duan}

\date{September 14, 2026}

\maketitle

\begingroup
\renewcommand{\thefootnote}{}
\begin{NoHyper}
\footnotetext{\scriptsize\raggedright E-mail addresses: Shuyuan Fan, \texttt{fanshy9@mail2.sysu.edu.cn}, \texttt{233621002@stu.gbu.edu.cn}; Yuanke Chen, \texttt{chenyk37@mail2.sysu.edu.cn}, \texttt{233621001@stu.gbu.edu.cn}; Lifei Wang, \texttt{wanglf86@hebtu.edu.cn}; Jinqiao Duan, \texttt{duan@gbu.edu.cn}.\par}
\end{NoHyper}
\endgroup

\begin{abstract}
We develop a Dirichlet-form framework for relaxation and steady-state entropy production in preconditioned Langevin stochastic partial differential equations. In infinite dimensions, the usual Fokker--Planck calculations based on ambient Lebesgue densities and the corresponding probability-current densities are generally unavailable. For the detailed-balance class, a quasi-regular symmetric Dirichlet form on the Gibbs state space yields an exact de~Bruijn entropy-dissipation formula for regular densities and an integrated inequality for arbitrary finite-entropy initial laws. Self-adjointness gives detailed balance and stationary path reversal, while a coordinate-martingale criterion identifies the associated process with the prescribed SPDE. For the one-dimensional $\Phi^4_1$ and convex Allen--Cahn-type Gibbs dynamics, we combine the established strong well-posedness theory with direct verification of the logarithmic derivatives, form closure, quasi-regularity and form--SPDE correspondence, and obtain relative-entropy decay bounds with exponents $2$ and $2(1-\lambda/m)$, respectively, with $m>\lambda$ in the latter case. Away from detailed balance, bounded skew-adjoint linear forcing preserves a Gaussian invariant law without requiring commutation between the forcing and covariance. We identify the antisymmetric action on cylinder observables and its Cameron--Martin current and prove that the squared current energy equals the steady-state entropy-production rate defined by forward--reverse path-space relative entropy per unit time, as well as the monotone limit of the Galerkin rates. Under commutation, we additionally obtain an explicit transient Onsager decomposition after a mass quench. Exclusion processes and Gaussian rotors provide finite-state and Gaussian benchmarks.

 \medskip
\noindent\textbf{Keywords:} Entropy production; Langevin SPDEs; symmetric Dirichlet forms; relative entropy dissipation; path-space time reversal;
$\Phi^4_1$ dynamics; Allen--Cahn-type SPDEs.
\end{abstract}

\setcounter{tocdepth}{2}
\tableofcontents

\section{Introduction}
\label{sec:intro}

Relaxation towards a stationary law and the persistence of probability currents at stationarity are two central themes in stochastic thermodynamics. At the mesoscopic scale, thermodynamic systems exhibiting these effects are often modelled by Markov processes. In finite-dimensional models, relative entropy with respect to the stationary law quantifies relaxation from an initial preparation, while stationary currents and the comparison of forward and time-reversed trajectories reveal irreversibility that remains at stationarity \cite{VdB2010,Seifert2012,LebowitzSpohn1999,Qian2013}. The purpose of this paper is to understand these mechanisms for spatially extended systems whose state is a fluctuating field and whose evolution is governed by a Langevin SPDE. Interacting Gibbs dynamics, including the one-dimensional $\Phi^4_1$ and Allen--Cahn-type equations considered below, already show why this extension requires further analysis: their stationary measures are infinite-dimensional and non-Gaussian, so the classical density and integration-by-parts calculations are no longer directly available \cite{GlimmJaffe1987,HairerStuartVoss2007}. We therefore begin with the finite-dimensional calculation that identifies the structures to be retained at field level.

In the constant-mobility diffusion case, let $\mu_t(dx)=p_t(x)\,dx$ and $\Pi(dx)=\pi(x)\,dx$ denote the time-$t$ and stationary laws. The basic decomposition reads
\begin{equation*}
    \sigma_{\mathrm{tot}} = \sigma_{\mathrm{na}}+\sigma_{\mathrm{hk}},
    \qquad
    \frac{d}{dt}\RelEnt{\mu_t}{\Pi} = -\sigma_{\mathrm{na}}.
\end{equation*}
Here $\sigma_{\mathrm{tot}}$, $\sigma_{\mathrm{na}}$ and $\sigma_{\mathrm{hk}}$ denote the total, nonadiabatic and housekeeping rates, respectively, and $p_t$ is the time-$t$ density \cite{EspositoVdB2010,Seifert2012}. Relative entropy with respect to $\Pi$ measures relaxation, while a nonzero stationary probability current diagnoses the failure of detailed balance. Jiang, Qian and Qian develop the stationary diffusion framework for entropy production and identify the time-reversed semigroup with the adjoint semigroup \cite[Chapter~3]{JiangQianQian2004}. The equivalence between reversibility, symmetry of the stationary generator and vanishing steady entropy production for general diffusion processes was established by Qian, Qian and Tang \cite{QianQianTang2002}; the symmetric--antisymmetric decomposition of an irreversible diffusion separates free-energy dissipation from stationary circulation \cite{Qian2013}. Closely related results hold for general jump diffusions: Fan and Zhang obtain the symmetric--antisymmetric generator decomposition and the nonlocal free-energy balance \cite{FanZhang2026}, while Zhang and Lu identify stationary entropy production with forward--reverse path-space relative entropy and characterise reversibility under their stated hypotheses \cite{ZhangLu2025}. The housekeeping rate is first an information-theoretic measure of the circulation sustained per unit model time. It equals housekeeping heat divided by $k_BT_{\mathrm{phys}}$ only after an overdamped Langevin model, temperature and local detailed balance have been specified \cite{EspositoVdB2010,Seifert2012}. Here $k_B$ is Boltzmann's constant and $T_{\mathrm{phys}}$ is the absolute temperature; the unadorned $T$ below always denotes a path-time horizon.

Passing from a finite system to a field changes the analytic basis of this calculation. On an infinite-dimensional configuration space there is no translation-invariant Lebesgue measure, a stationary law need not possess an ambient density, and integration by parts cannot be inferred from a smooth potential. We realise the symmetric Gibbs dynamics through a closed Dirichlet form and identify its associated process with the prescribed SPDE using coordinate martingales and their quadratic variations. In the driven Gaussian class treated below, the stationary current is represented by a Cameron--Martin vector field whose squared norm is integrated against the invariant measure, while path-space irreversibility requires an absolute-continuity comparison between the forward and reversed infinite-dimensional laws.

These difficulties are already present in stochastic quantisation, where Euclidean field theories are represented by auxiliary-time Langevin dynamics \cite{ParisiWu1981,DamgaardHuffel1987} and rigorous Gibbs fields and dynamics are constructed by methods from constructive field theory and SPDEs \cite{GlimmJaffe1987,DaPratoDebussche2003,Hairer2014,GubinelliHofmanova2021}. On $\mathbb H=L^2(\T;\R)$, let $A=-\partial_x^2+m$ with $m>0$, and write $\nabla_{\mathbb H}V$ for the variational $\mathbb H$-gradient when it exists. The one-dimensional preconditioned Langevin dynamics considered here is
\begin{equation}\label{eq:main-intro}
    d\phi_t = -\bigl(\phi_t + A^{-1}\nabla_{\mathbb H}V(\phi_t) + A^{-1}f(\phi_t)\bigr)\,dt + \sqrt{2}\,dW^{A^{-1}},
\end{equation}
where the mobility $\mathsf M=A^{-1}$ is the covariance operator of $W^{A^{-1}}$; the covariance per unit time of the noise term in \eqref{eq:main-intro} is $2\mathsf M$. When $f=0$, the formal invariant law is the Gibbs perturbation $\kappa\propto e^{-V}\mu_{A^{-1}}$ of the Gaussian free field \cite{GlimmJaffe1987}. In the Gaussian class $V=0$, a linear skew force may preserve the reference law although detailed balance fails. Throughout the paper \eqref{eq:main-intro} is treated as a random-field Markov dynamics. In the stochastic-quantisation interpretation, $t$ is an auxiliary sampling time; it becomes physical overdamped time only after temperature, mobility and local detailed balance have been specified.

Our method is based on the decomposition
\begin{equation*}
    L=L_s+L_a.
\end{equation*}
The symmetric part $L_s$ determines the closed form used for entropy dissipation and for constructing the Markov process, while the antisymmetric part $L_a$ represents stationary circulation. We work with the closed energy form because it preserves the integration-by-parts structure of the finite-dimensional entropy calculation and allows dissipation estimates to be established without assuming classical differentiability of the evolving density. The passage from a closed form to its associated process is part of quasi-regular Dirichlet-form theory \cite{MaRoeckner1992}, while the entropy calculation uses the diffusion calculus of symmetric Markov semigroups \cite{BakryGentilLedoux2014}. The stationary comparison is formulated at the level of forward and reversed path laws, as in the time-reversal theory of infinite-dimensional diffusions and the path-space formulation of entropy production \cite{FollmerWakolbinger1986,LebowitzSpohn1999}. In the present setting, these ingredients are connected by the following chain:
\begin{equation*}
    \begin{aligned}
        \text{Gibbs measure}
        &\longrightarrow \text{logarithmic derivatives}\\
        &\longrightarrow \text{closed Dirichlet form}
        \longrightarrow \text{associated diffusion}\\
        &\longrightarrow \text{identified SPDE}
        \longrightarrow
        \begin{cases}
            \text{entropy dissipation and path reversal},\\
            \text{stationary circulation and path irreversibility}.
        \end{cases}
 \end{aligned}
\end{equation*}

This construction gives three groups of results.
\begin{enumerate}[label=\arabic*.]
\item \textbf{Symmetric Dirichlet-form dynamics and relaxation.} For the Gibbs class considered here, the logarithmic derivatives lead, under the stated hypotheses, to a closed symmetric form, which is used both for entropy dissipation and for identifying the associated process with the prescribed Langevin SPDE. Regular positive densities satisfy the exact entropy relation, while arbitrary finite-entropy initial data satisfy the integrated dissipation inequality. Self-adjointness yields detailed balance and stationary path reversal. For the quartic examples treated here, global strong well-posedness and symmetry of the Gibbs semigroup are available from the preconditioned-SPDE theory of \cite[Theorems~2.10 and~3.6]{HairerStuartVoss2007}. Our verification of the Gibbs logarithmic derivatives, form closure and quasi-regularity gives an associated weak solution; uniqueness in law identifies its semigroup with the SPDE semigroup and yields the stated entropy estimates; see \cref{thm:deBruijn,thm:eq-zero,prop:quartic-model-verification}.

\item \textbf{Stationary circulation and path-space irreversibility.} For every bounded skew-adjoint operator $B$, the skew drift $\phi\mapsto-QB\phi$ preserves the Gaussian invariant law $\mu_Q$ without assuming $BQ=QB$, and stationary reversal changes its sign. With $\mathbb K=Q^{1/2}\mathbb H$ and $J_B^{\mathbb K}(\phi)=-QB\phi$, the stationary path laws satisfy
\begin{equation*}
    T^{-1}\RelEnt{\Pbb_{B,T}}{\Pbb_{B,T}\circ r_T^{-1}}
    = \E_{\mu_Q}\|Q^{-1/2}J_B^{\mathbb K}\|_{\mathbb H}^2
    = \|Q^{1/2}BQ^{1/2}\|_{\mathrm{HS}}^2.
\end{equation*}
Thus the same Hilbert--Schmidt quantity is the Cameron--Martin circulation energy, the full stationary path-space relative-entropy rate, and the monotone limit of the Galerkin rates. These conclusions are proved in \cref{prop:L-decomp,prop:La-representation,thm:Galerkin-path-KL,cor:galerkin-path-rate}. Under the additional condition $BQ=QB$, the transient covariance can be computed explicitly and yields the mass-quench Onsager decomposition in \cref{thm:lin-hk}.

\item \textbf{Model examples.} SSEP and WASEP provide finite-state benchmarks with and without detailed balance; see \cref{prop:SSEP-relaxation,prop:WASEP-thermodynamics}. The one-dimensional $\Phi^4_1$ model and the convex Allen--Cahn-type regime treated here verify the Gibbs logarithmic derivatives, form closure, form--SPDE identification and quantitative entropy relaxation. Gaussian rotors verify the stationary circulation formula in the field setting; all examples are collected in \S\ref{sec:examples}.
\end{enumerate}

The Gaussian-reference analysis of invariant measures for nonlinear or nonsymmetric perturbations of Ornstein--Uhlenbeck dynamics developed along several complementary lines. Bogachev and R\"ockner proved Gaussian absolute continuity and first-order Sobolev regularity for invariant densities and also characterised symmetrising measures \cite{BogachevRoeckner1995}; for a class of perturbed Ornstein--Uhlenbeck operators, Bogachev, Da~Prato and R\"ockner obtained the corresponding Gaussian-reference regularity \cite{BogachevDaPratoRoeckner1996}. For nonsymmetric dissipative SPDEs, Da~Prato, Debussche and Goldys established an integration-by-parts formula and a logarithmic Sobolev inequality for invariant measures without assuming reversibility \cite{DaPratoDebusscheGoldys2002}, while Da~Prato and Debussche derived absolute-continuity formulas comparing the invariant measures of interacting and free systems \cite{DaPratoDebussche2004}. The Kolmogorov-operator and ergodic frameworks are developed systematically in \cite{DaPrato2004Kolmogorov,DaPratoZabczyk1996}. Within this Gaussian-reference setting, Duan, Zhang and Zimmer recently formulated the stationary current, used it in a full path-space entropy-production formula and proved equivalent reversibility criteria for a regular class of stationary stochastic evolution equations \cite{DuanZhangZimmer2026}. The present paper develops a closed Dirichlet-form approach to non-Gaussian infinite-dimensional Gibbs dynamics. For the symmetric Gibbs dynamics, we construct and identify the associated diffusions and establish transient entropy dissipation from finite-entropy data. In the driven Gaussian class, we allow linearly growing skew drifts on the complete path space, compute the path-space rate as an explicit Hilbert--Schmidt trace and its monotone Galerkin limit, and obtain the mass-quench transient decomposition.

\S\ref{sec:setting} derives the field problem from the finite-system calculation and formulates the closed Dirichlet-form framework. \S\ref{sec:equilibrium} establishes entropy dissipation, detailed balance and stationary path reversal for systems with an equilibrium steady state, and identifies the form dynamics with the prescribed SPDE. \S\ref{sec:linear} treats systems without detailed balance through Gaussian circulation, full path-space irreversibility and transient relaxation. \S\ref{sec:examples} collects finite-state and field examples with and without detailed balance. \S\ref{sec:outlook} summarises the conclusions and discusses their natural extensions.
\section{From Finite Systems to Fields}
\label{sec:setting}

We now carry out the finite-system calculation and then repeat it formally for a field. This comparison identifies the analytic structure that must replace classical densities and spatial integration by parts in infinite dimensions \cite{LebowitzSpohn1999,Qian2013,MaRoeckner1992}.

\subsection{Finite-System and Field Calculations}

We first fix the entropy notation used in both calculations. For probability measures $\nu$ and $\rho$ on the same measurable space, their relative entropy is
\begin{equation}\label{eq:relative-entropy-definition}
    \RelEnt{\nu}{\rho}
    =\begin{cases}
        \displaystyle\int \log\!\left(\frac{d\nu}{d\rho}\right)d\nu, & \nu\ll\rho,\\[5pt]
        +\infty, &\text{otherwise}.
    \end{cases}
\end{equation}
Here $\nu\ll\rho$ means absolute continuity and $\nu\sim\rho$ means mutual absolute continuity. For $h\ge0$ with $0<\int h\,d\rho<\infty$, its homogeneous density entropy relative to $\rho$ is
\begin{equation}\label{eq:entropy-definition}
    \Ent_\rho(h) = \int h\log\!\left(\frac{h}{\int h\,d\rho}\right)d\rho.
\end{equation}
If $h$ is a probability density with respect to $\rho$, then $\int h\,d\rho=1$ and $d(h\rho)/d\rho=h$; hence $\Ent_\rho(h)=\RelEnt{h\rho}{\rho}$.

The diffusion calculation gives the most direct form of the finite-system argument; see also the Fokker--Planck entropy calculation in \cite[Section~3.3.1]{JiangQianQian2004}. Let $\mathsf M$ be a symmetric positive-definite mobility matrix on $\mathbb R^N$, and let $U\in C^2(\mathbb R^N)$ satisfy
\begin{equation*}
    Z:=\int_{\mathbb R^N}e^{-U(x)}\,dx\in(0,\infty).
\end{equation*}
Set
\begin{equation}\label{eq:finite-gibbs-density}
    \pi(dx)=Z^{-1}e^{-U(x)}\,dx.
\end{equation}
Throughout this finite-dimensional calculation, we consider conservative diffusions with invariant law $\pi$, under regularity and integrability conditions that justify the displayed operations. For smooth $F,G$ with compact support, or more generally with sufficient decay to justify integration by parts, the reversible generator and its energy are
\begin{equation}\label{eq:finite-diffusion-ibp}
    L_sG = \Tr(\mathsf M\nabla^2G) - \langle\mathsf M\nabla U,\nabla G\rangle,
    \qquad
    -\int F L_sG\,d\pi = \int(\nabla F)^T\mathsf M\nabla G\,d\pi.
\end{equation}
Here $\Tr$ is the matrix trace and $\langle a,b\rangle=a^Tb$ is the Euclidean inner product. Thus, using the symmetry of $\mathsf M$,
\begin{equation*}
    \Tr(\mathsf M\nabla^2G) = \sum_{i,j=1}^N\mathsf M_{ij}\partial_{ij}G,
    \qquad
    \langle\mathsf M\nabla U,\nabla G\rangle = \sum_{i,j=1}^N\mathsf M_{ij}\partial_iU\,\partial_jG,
\end{equation*}
and $(\nabla F)^T\mathsf M\nabla G=\sum_{i,j=1}^N\mathsf M_{ij}\partial_iF\,\partial_jG$.
The first equality is the backward generator, while the second is the integration-by-parts formula that enters the Fokker--Planck calculation. In particular, under the reversible evolution, if $h_t$ is a smooth positive density of the time-$t$ law with respect to $\pi$, then differentiation under the integral and the equation $\partial_t h_t=L_sh_t$ give

\begin{equation}\label{eq:finite-entropy-dissipation}
    \begin{aligned}
        \frac d{dt}\Ent_\pi(h_t)
        &= \frac d{dt}\int_{\mathbb R^N}h_t\log h_t\,d\pi
        = \int_{\mathbb R^N}(1+\log h_t)\partial_t h_t\,d\pi\\
        &= \int_{\mathbb R^N}(1+\log h_t)L_sh_t\,d\pi
        = \int_{\mathbb R^N}\log h_t\,L_sh_t\,d\pi\\
        &= -\int_{\mathbb R^N}(\nabla h_t)^T\mathsf M\nabla\log h_t\,d\pi
        = -\int_{\mathbb R^N}h_t(\nabla\log h_t)^T\mathsf M\nabla\log h_t\,d\pi\\
        &=-4\int_{\mathbb R^N}(\nabla\sqrt{h_t})^T
        \mathsf M\nabla\sqrt{h_t}\,d\pi.
    \end{aligned}
\end{equation}

The constant term disappears because $\int L_sh_t\,d\pi=0$, which follows from invariance and conservativity, and the integration-by-parts step is \eqref{eq:finite-diffusion-ibp} with $F=\log h_t$ and $G=h_t$. The last two lines use
\begin{equation*}
    \nabla h_t=h_t\nabla\log h_t = 2\sqrt{h_t}\,\nabla\sqrt{h_t}.
\end{equation*}
Thus the Gibbs density, the spatial integration by parts and the entropy dissipation are successive uses of the same differential structure.

A stationary circulation is described by a velocity field $v_{\rm ss}$ satisfying
\begin{equation}\label{eq:finite-stationary-divergence}
    \nabla\cdot(\pi v_{\rm ss})=0,
\end{equation}
This condition makes $L_aG=v_{\rm ss}\cdot\nabla G$ antisymmetric on smooth compactly supported functions in $L^2(\pi)$. Writing $L=L_s+L_a$, the antisymmetric part is a derivation and satisfies $L_a\Phi(h)=\Phi'(h)L_ah$ for smooth $\Phi$. The density relative to $\pi$ evolves under the $L^2(\pi)$-adjoint generator $L^\dagger$,
\begin{equation*}
    \partial_t h_t=L^\dagger h_t=(L_s-L_a)h_t.
\end{equation*}
Consequently, the contribution of the antisymmetric part to the entropy derivative is

\begin{equation}\label{eq:finite-skew-entropy-neutrality}
    -\int(1+\log h_t)L_ah_t\,d\pi = \int h_tL_a\log h_t\,d\pi = \int L_ah_t\,d\pi=0.
\end{equation}

At stationarity the reversed generator is $L^\dagger=L_s-L_a$. For stationary diffusions satisfying the hypotheses of \cite[Theorem~4.1]{DaCostaPavliotis2023}, the forward--reverse relative-entropy rate is the quadratic current energy \cite{Qian2013}
\begin{equation}\label{eq:finite-current-energy}
    \int_{\mathbb R^N}v_{\rm ss}^T\mathsf M^{-1}v_{\rm ss}\,d\pi.
\end{equation}
The equilibrium and driven calculations therefore use the same invariant law and adjoint operation, while the symmetric gradient energy describes dissipation and the antisymmetric velocity describes circulation.

The same scheme can be stated on different state spaces. Let $\mathsf E$ be a locally compact Polish space, let $\pi$ be invariant, and let $L$ generate a conservative Markov semigroup $(P_t)_{t\ge0}$ on $L^2(\pi)$. Here the Markov property means that $P_t$ is positivity preserving and $0\le F\le1$ implies $0\le P_tF\le1$; conservativity means $P_t1=1$. For a finite-state chain, $\mathsf E$ is a finite discrete space and $L$ is a rate matrix acting on $\mathbb R^{\mathsf E}$; for a diffusion, $\mathsf E=\mathbb R^N$ or a finite-dimensional manifold and $L$ is a second-order differential operator on a smooth core; for a jump diffusion, $L$ is the corresponding local--nonlocal integro-differential operator. On a common core, set
\begin{equation*}
    L_s=\frac12(L+L^\dagger),\qquad
    L_a=\frac12(L-L^\dagger),\qquad
    \cE_s(F,G)=-\langle F,L_sG\rangle_\pi.
\end{equation*}
Here $\langle F,G\rangle_\pi:=\int_{\mathsf E}FG\,d\pi$. For the finite-state chains, diffusions and jump diffusions used as benchmarks, the symmetric energy has a continuous part and a jump part of the form
\begin{equation*}
    \cE_s(F,G) = \cE_s^{(c)}(F,G) + \frac12\iint_{\mathsf E\times\mathsf E\setminus\{x=y\}}[F(y)-F(x)][G(y)-G(x)]\,J_s(dx,dy).
\end{equation*}
Here $\cE_s^{(c)}$ is the continuous part and $J_s$ is the symmetric jump measure on $\mathsf E\times\mathsf E\setminus\{x=y\}$. On a finite discrete space only the jump increments remain; for a diffusion on $\mathbb R^N$ only the continuous gradient part remains; and for a jump diffusion both occur. For the reversible dynamics generated by $L_s$, the master or Fokker--Planck equation gives
\begin{equation*}
    \frac d{dt}\Ent_\pi(h_t)
    = -\cE_s(h_t,\log h_t)
    \le -4\cE_s(\sqrt{h_t},\sqrt{h_t}),
\end{equation*}
For the continuous part, the diffusion chain rule gives
\begin{equation*}
    \cE_s^{(c)}(h_t,\log h_t) = 4\cE_s^{(c)}(\sqrt{h_t},\sqrt{h_t}).
\end{equation*}
For the jump integral, the pointwise inequality
\begin{equation*}
 (a-b)(\log a-\log b)
 \ge4(\sqrt a-\sqrt b)^2
\end{equation*}
gives the corresponding inequality after integration against $J_s$. Thus the displayed entropy relation is an equality for a pure diffusion, whereas it may be strict when a symmetric jump part is present. Oriented edge currents and divergence-free velocities provide the corresponding finite-system descriptions of irreversibility \cite{LebowitzSpohn1999,FanZhang2026,ZhangLu2025}. The calculation has now isolated the ingredients to be transferred: an invariant reference law, an integration-by-parts or summation-by-parts formula, the resulting symmetric energy, and an adjoint operation that determines the reversed dynamics.

We now repeat the calculation on a field state space. Let $\mathbb H$ be a real separable Hilbert space with inner product $\langle\cdot,\cdot\rangle_{\mathbb H}$ and norm $\|\cdot\|_{\mathbb H}$, let $Q$ be a positive, injective, trace-class operator, set $A=Q^{-1}$, the generally unbounded inverse of $Q$, and let $\mu_Q=\cN(0,Q)$. Choose a topological field state space $\mathbb X\subset\mathbb H$ such that $\mu_Q(\mathbb X)=1$. Its Cameron--Martin space is $\mathbb K=Q^{1/2}\mathbb H$, equipped with
\begin{equation*}
    \langle h,k\rangle_{\mathbb K} = \langle Q^{-1/2}h,Q^{-1/2}k\rangle_{\mathbb H},
    \qquad
    \|h\|_{\mathbb K} = \|Q^{-1/2}h\|_{\mathbb H}.
\end{equation*}
Here $Q^{-1/2}$ denotes the inverse of $Q^{1/2}$ on its range $\mathbb K$. Equivalently, if $(e_j)$ is an orthonormal eigenbasis with $Qe_j=q_je_j$, $q_j>0$, then
\begin{equation*}
    \mathbb K = \left\{h=\sum_jh_je_j:\sum_j\frac{|h_j|^2}{q_j}<\infty\right\},
    \qquad
    \langle h,k\rangle_{\mathbb K} = \sum_j\frac{h_jk_j}{q_j}.
\end{equation*}
Thus $\langle\cdot,\cdot\rangle_{\mathbb H}$ is the ambient pairing, whereas $\langle\cdot,\cdot\rangle_{\mathbb K}$ measures Cameron--Martin directions with the stronger $Q^{-1/2}$ weight. For a scalar cylinder functional $F$, $\mathrm DF$ denotes its Fr\'echet differential and $\nabla_{\mathbb H}F$ its $\mathbb H$-Riesz representative, determined by
\begin{equation*}
    \mathrm DF(\phi)[h] = \langle\nabla_{\mathbb H}F(\phi),h\rangle_{\mathbb H};
\end{equation*}
$\nabla_{\mathbb H}^2F = \mathrm D(\nabla_{\mathbb H}F)$ is its Hessian and $\mathrm DG$ denotes the derivative of a cylinder vector field. Let $W^Q$ be an $\mathbb H$-valued $Q$-Wiener process, so that
\begin{equation*}
    \mathbb E\!\left[ \langle W_t^Q-W_s^Q,h\rangle_{\mathbb H} \langle W_t^Q-W_s^Q,k\rangle_{\mathbb H}
    \right] = (t-s)\langle Qh,k\rangle_{\mathbb H}
\end{equation*}
for $0\le s\le t$ and $h,k\in\mathbb H$. If $V$ is the interaction potential and $f$ the driving field, the formal field equation corresponding to the finite-dimensional calculation is
\begin{equation}\label{eq:main}
    d\phi_t = -\bigl(\phi_t + Q\,\nabla_{\mathbb H}V(\phi_t) + Q\,f(\phi_t)\bigr)\,dt + \sqrt{2}\,dW^Q,
\end{equation}
where $Q$ is both the Gaussian covariance and the mobility. The finite-dimensional construction begins with the Gibbs density \eqref{eq:finite-gibbs-density}. For the unforced detailed-balance model $f=0$, there is no infinite-dimensional Lebesgue measure against which the analogous density can be written. The quadratic part of the formal energy $U(\phi)=\frac12\langle\phi,A\phi\rangle_{\mathbb H}+V(\phi)$ is instead represented by $\mu_Q$, and the interaction suggests
\begin{equation}\label{eq:gibbs-candidate}
    \kappa(d\phi) = Z^{-1}e^{-V(\phi)}\mu_Q(d\phi),
    \qquad
    Z = \int_{\mathbb X}e^{-V}\,d\mu_Q\in(0,\infty).
\end{equation}
The noise covariance determines the gradient energy through the quadratic variation of cylinder observables: for the noise in \eqref{eq:main-intro}, this energy is $\langle Q^{1/2}\nabla_{\mathbb H}F,Q^{1/2}\nabla_{\mathbb H}G\rangle_{\mathbb H}$. It is represented by the Cameron--Martin gradient introduced below. Gaussian integration by parts then provides the counterpart of \eqref{eq:finite-diffusion-ibp}. On cylinder functions the Hessian has finite rank, and the corresponding formal generator is
\begin{equation}\label{eq:formal-generator}
    \cL F = \Tr(Q\nabla_{\mathbb H}^2F) - \langle\phi+Q\nabla_{\mathbb H}V,\nabla_{\mathbb H}F\rangle_{\mathbb H}
\end{equation}
where $\Tr$ is the operator trace. The Gaussian integration-by-parts formula reads
\begin{equation}\label{eq:G-IBP}
    \int_{\mathbb H}\langle\phi,G(\phi)\rangle_{\mathbb H}\,d\mu_Q = \int_{\mathbb H}\Tr(Q\,\mathrm DG(\phi))\,d\mu_Q.
\end{equation}
Formula~\eqref{eq:G-IBP} holds for suitable cylinder vector fields, but the substitution $G=e^{-V}\nabla_{\mathbb H}F$ is no longer automatic: the Gibbs weight must have the required Cameron--Martin derivatives, and the resulting terms must be integrable and obtainable by approximation. When this substitution is valid, with the standard convention $ (u\otimes v)h=\langle v,h\rangle_{\mathbb H}u $, the product rule gives
\begin{equation*}
    \mathrm DG=e^{-V}\nabla_{\mathbb H}^2F - e^{-V}\nabla_{\mathbb H}F\otimes\nabla_{\mathbb H}V.
\end{equation*}
Substituting this derivative into \eqref{eq:G-IBP} gives
\begin{align*}
    \int_{\mathbb H}e^{-V}\cL F\,d\mu_Q
    &=\int_{\mathbb H}e^{-V}\Tr(Q\nabla_{\mathbb H}^2F)\,d\mu_Q - \int_{\mathbb H}e^{-V}
    \langle Q\nabla_{\mathbb H}V,\nabla_{\mathbb H}F\rangle_{\mathbb H} \,d\mu_Q\\
    &\quad-\int_{\mathbb H}e^{-V}\langle\phi,\nabla_{\mathbb H}F\rangle_{\mathbb H}\,d\mu_Q = 0.
\end{align*}
Since $d\kappa=Z^{-1}e^{-V}d\mu_Q$, this is precisely
\begin{equation}\label{eq:infinitesimal-invariance}
    \int_{\mathbb X}\cL F\,d\kappa=0.
\end{equation}
Repeating this admissible integration-by-parts step with $F e^{-V}\nabla_{\mathbb H}G$ gives the field counterpart of the second relation in \eqref{eq:finite-diffusion-ibp}:
\begin{equation*}
    -\int_{\mathbb X}F\,\cL G\,d\kappa = \int_{\mathbb X}\langle Q^{1/2}\nabla_{\mathbb H}F,Q^{1/2}\nabla_{\mathbb H}G\rangle_{\mathbb H}\,d\kappa =: \cE_0(F,G).
\end{equation*}
In finite dimensions the right-hand side is a classical Sobolev energy. On the field state space, the bilinear energy $\cE_0$ is initially defined only on the cylinder core. If it is closable, its closure is a closed form on $L^2(\kappa)$; the precise terminology is fixed in the next subsection. The calculation in \eqref{eq:finite-entropy-dissipation} identifies this energy as the quantity governing entropy decay. Passing to its closure allows the dissipation to be formulated for square-root densities in the form domain, without requiring the evolving density to admit the classical derivatives used in the finite-dimensional calculation. Conservativity, strong locality and the diffusion chain rule of the closed form then make the passage from \eqref{eq:finite-diffusion-ibp} to the entropy identity \eqref{eq:finite-entropy-dissipation} valid in the field setting. Under these properties, a smooth positive relative density satisfying $\partial_t h_t=\cL h_t$ obeys
\begin{equation*}
    \begin{aligned}
        \frac d{dt}\Ent_\kappa(h_t)
        &= \int_{\mathbb X}(1+\log h_t)\cL h_t\,d\kappa
        = \int_{\mathbb X}\log h_t\,\cL h_t\,d\kappa\\
        &= -\cE(h_t,\log h_t)=-4\cE(\sqrt{h_t},\sqrt{h_t}).
    \end{aligned}
\end{equation*}

For the driven calculation, write the full formal generator as $\cL+\cL_f$, where
\begin{equation*}
    \cL_fF=-\langle Qf,\nabla_{\mathbb H}F\rangle_{\mathbb H}.
\end{equation*}
The finite-dimensional divergence condition \eqref{eq:finite-stationary-divergence} makes the stationary velocity antisymmetric and leads to \eqref{eq:finite-skew-entropy-neutrality}. Since there is no ambient volume divergence on the field state space, its counterpart must be verified directly against $\kappa$ as the infinitesimal invariance condition
\begin{equation*}
    \int_{\mathbb X}\cL_fF\,d\kappa=0
\end{equation*}
on an algebra stable under products. If this condition holds and $\cL_f$ acts there as a derivation, then
\begin{equation*}
    \int_{\mathbb X}F\cL_fG\,d\kappa = -\int_{\mathbb X}G\cL_fF\,d\kappa,
    \qquad
    \int_{\mathbb X}h\cL_f\log h\,d\kappa = \int_{\mathbb X}\cL_fh\,d\kappa=0.
\end{equation*}
This is the field counterpart of \eqref{eq:finite-skew-entropy-neutrality}: the drive does not enter the entropy derivative, and the stationary adjoint changes $\cL+\cL_f$ into $\cL-\cL_f$. To obtain the analogue of the finite-dimensional path-space rate \eqref{eq:finite-current-energy}, the form process must also coincide with the intended SPDE, the reversed process must be identified, and the Cameron--Martin current must be square integrable. The $\mathbb H$-coordinate of this current is $-Q^{1/2}f$, and its quadratic energy is
\begin{equation*}
    \int_{\mathbb X}\|Q^{1/2}f(\phi)\|_{\mathbb H}^2\,d\kappa(\phi).
\end{equation*}

The field calculation therefore retains the finite-system argument after its analytic objects have been replaced: the Gibbs density and classical integration by parts give way to the Gaussian perturbation \eqref{eq:gibbs-candidate} and its closed form, while infinitesimal invariance and the Cameron--Martin current take over the driven calculation. The remaining step is to identify the associated process with the prescribed SPDE through its coordinate martingales. The resulting division of roles is summarised in \cref{tab:finite-infinite-map}. The next subsection develops the symmetric closed-form framework; the antisymmetric construction is taken up in \S\ref{sec:linear}.

\begin{table}[htbp]
    \centering
    \small
    \renewcommand{\arraystretch}{1.15}
    \begin{tabular}{@{}L{0.24\textwidth}L{0.31\textwidth}L{0.37\textwidth}@{}}
        \hline
        \textbf{Finite-system language} & \textbf{Function-space object} &
        \textbf{Role in this paper}\\
        \hline
        Reversible generator and Fisher information &
        Symmetric Dirichlet form and carr\'e du champ &
        Relative-entropy dissipation without an ambient Lebesgue density.\\
        Steady velocity or edge current &
        Antisymmetric action on cylinder observables and, in the Gaussian class, a Cameron--Martin field &
        Stationary circulation and forward--reverse path-space relative entropy.\\
        Drift and noise coefficients of a stochastic differential equation (SDE) &
        Coordinate martingale problem, quadratic variation and weak uniqueness &
        Identification of the associated form process with the prescribed SPDE.\\
        \hline
    \end{tabular}
    \caption{The finite-to-infinite reformulation used throughout the paper.}
    \label{tab:finite-infinite-map}
\end{table}

\subsection{Closed Dirichlet-Form Framework}

For a symmetric generator, the integration-by-parts relation is equivalently expressed through the bilinear energy $\cE(F,G)=-\langle F,LG\rangle_{L^2(\kappa)}$. Closedness produces the semigroup, the local calculus supplies the differentiation rules in the entropy calculation, and quasi-regularity produces a process on the state space. We first fix the terminology for passing from a cylinder energy to a closed Dirichlet form.

\begin{definition}[Forms, closure and local calculus]
Let $\kappa$ be a probability measure on a topological state space $\mathbb X$, let $\mathcal D_0$ be dense in $L^2(\mathbb X,\kappa)$, and let $\cE_0:\mathcal D_0\times\mathcal D_0\to\mathbb R$ be a nonnegative symmetric bilinear form. We call $(\cE_0,\mathcal D_0)$ a pre-form. For a form $(\cE,\operatorname{Dom}(\cE))$, set
\begin{equation*}
    \|u\|_{\cE_1}^2 = \|u\|_{L^2(\kappa)}^2+\cE(u,u).
\end{equation*}
For the pre-form, we similarly write
\begin{equation*}
    \|u\|_{\cE_{0,1}}^2 := \|u\|_{L^2(\kappa)}^2+\cE_0(u,u),\qquad u\in\mathcal D_0.
\end{equation*}
The form is closed if $\operatorname{Dom}(\cE)$ is complete for this norm. The pre-form $(\cE_0,\mathcal D_0)$ is closable if every sequence $(u_n)\subset\mathcal D_0$ satisfying
\begin{equation*}
    u_n\longrightarrow0\quad\text{in }L^2(\kappa),
    \qquad
    \cE_0(u_n-u_m,u_n-u_m)\longrightarrow0
\end{equation*}
as $n,m\to\infty$ also satisfies $\cE_0(u_n,u_n)\to0$. In that case its closure $(\cE,\operatorname{Dom}(\cE))$ is defined by
\begin{equation*}
    \operatorname{Dom}(\cE) = \left\{u\in L^2(\kappa):
    \begin{array}{l}
        \text{there is a $\|\cdot\|_{\cE_{0,1}}$-Cauchy sequence $(u_n)\subset\mathcal D_0$}\\
        \text{such that $u_n\to u$ in $L^2(\kappa)$}
    \end{array}\right\},
\end{equation*}
with $\cE(u,v)=\lim_n\cE_0(u_n,v_n)$ for approximating sequences. This limit is independent of the approximations. A subspace $\mathcal C\subset\operatorname{Dom}(\cE)$ is a form core if it is dense in $\operatorname{Dom}(\cE)$ for $\|\cdot\|_{\cE_1}$; in particular, $\mathcal D_0$ is a core of the closure of a closable pre-form.

A symmetric Dirichlet form $(\cE,\operatorname{Dom}(\cE))$ on $L^2(\mathbb X,\kappa)$ is a densely defined, closed, nonnegative symmetric bilinear form with the Markov property
\begin{equation*}
    u\in \operatorname{Dom}(\cE)
    \quad\Longrightarrow\quad
    (0\vee u)\wedge1\in \operatorname{Dom}(\cE),\qquad
    \cE((0\vee u)\wedge1,(0\vee u)\wedge1)\le \cE(u,u).
\end{equation*}
It is conservative when $1\in \operatorname{Dom}(\cE)$ and $\cE(1,1)=0$, and strongly local when $\cE(u,v)=0$ whenever $u$ is constant on a neighbourhood of the support of $v$. With our nonpositive-generator convention,
\begin{equation}\label{eq:generator-form-convention}
    \cE(F,G)=-\int_{\mathbb X}F\,LG\,d\kappa.
\end{equation}
On an algebra on which products and the generator are defined, the carr\'e du champ is
\begin{equation}\label{eq:cdc-definition}
    \Gamma(F,G)=\frac12\bigl(L(FG)-F\,LG-G\,LF\bigr), \qquad \Gamma(F)=\Gamma(F,F).
\end{equation}
For the gradient forms constructed below,
\begin{equation*}
    \Gamma(F,G) = \langle Q^{1/2}\nabla_{\mathbb H}F,Q^{1/2}\nabla_{\mathbb H}G\rangle_{\mathbb H},
    \qquad
    \cE(F,G) = \int_{\mathbb X}\Gamma(F,G)\,d\kappa.
\end{equation*}
These conventions agree with the standard theories of symmetric Dirichlet forms and diffusion semigroups \cite{MaRoeckner1992,BakryGentilLedoux2014}.
\end{definition}

\begin{definition}[Quasi-regular forms]
A symmetric Dirichlet form $(\cE,\operatorname{Dom}(\cE))$ is quasi-regular if the following three conditions hold. First, there is an increasing sequence of compact sets $(K_n)$ such that
\begin{equation*}
    \bigcup_{n\ge1}\{u\in\operatorname{Dom}(\cE):u=0\ \kappa\text{-a.e. on }\mathbb X\setminus K_n\}
\end{equation*}
is dense in $\operatorname{Dom}(\cE)$ for the form norm $\|u\|_{\cE_1}^2=\|u\|_{L^2(\kappa)}^2+\cE(u,u)$; such a sequence is an $\cE$-nest. A function $u$ has an $\cE$-quasi-continuous version if it has a version $\widetilde u$ for which there is an $\cE$-nest $(K_n)$ such that $\widetilde u|_{K_n}$ is continuous for every $n$. Second, an $\cE_1$-dense subset of $\operatorname{Dom}(\cE)$ has quasi-continuous versions. Third, there are countably many quasi-continuous functions in $\operatorname{Dom}(\cE)$ that separate the points of $\mathbb X$ outside an $\cE$-exceptional set. A set $N$ is $\cE$-exceptional if, for some $\cE$-nest $(K_n)$, $N\subset\mathbb X\setminus\bigcup_nK_n$; it is properly exceptional for an associated process if it is $\kappa$-null and, when the process starts outside it, the process never enters it.

A Lusin space is a Hausdorff space homeomorphic to a Borel subset of a compact metric space. A Hunt process is a right-continuous strong Markov process that is quasi-left-continuous. Such a process is properly associated with the form if, for every bounded Borel $F$ and $t>0$, the function $x\longmapsto\mathbb E_x[F(X_t)]$ is an $\cE$-quasi-continuous version of the form semigroup $P_tF$; see \cite[Chapter~IV]{MaRoeckner1992}.
\end{definition}

Fix a probability measure $\kappa$ on a topological state space $\mathbb X$ and a bilinear form $(\cE,\operatorname{Dom}(\cE))$ on $L^2(\mathbb X,\kappa)$. We impose the following two standing assumptions.

\begin{assumption}[Symmetric form hypotheses]\label{ass:equilibrium}
\begin{enumerate}[label=(\roman*)]
\item $(\cE,\operatorname{Dom}(\cE))$ is a densely defined, closed, symmetric Dirichlet form on $L^2(\kappa)$.
\item The form is conservative: $1\in \operatorname{Dom}(\cE)$ and $\cE(1,1)=0$.
\end{enumerate}
\end{assumption}

\begin{assumption}[Diffusion-form structure]\label{ass:diffusion-calculus}
The form $(\cE,\operatorname{Dom}(\cE))$ is strongly local and admits a carr\'e du champ $\Gamma$ such that
\begin{equation*}
    \cE(F,G)=\int_{\mathbb X}\Gamma(F,G)\,d\kappa
\end{equation*}
For bounded $F,G\in\operatorname{Dom}(\cE)$ and every $C^1$ function $\Phi$ whose derivative is bounded on the range of $F$, the form-domain functional calculus is valid and
\begin{equation*}
    \Gamma(\Phi(F),G) = \Phi'(F)\Gamma(F,G) \qquad \kappa\text{-a.e.}.
\end{equation*}
We refer to this carr\'e du champ formula as the diffusion chain rule.
\end{assumption}

Under the first standing assumption, the closed symmetric form determines a self-adjoint Markov semigroup and the reference measure is invariant. The second assumption supplies the chain rule used to identify the entropy dissipation with the square-root energy. The entropy-dissipation results depend on these form properties rather than on a Gaussian representation of the reference measure. When the form is also quasi-regular and strongly local, its semigroup is represented by an associated diffusion with continuous paths.

\begin{proposition}[Closed-form dynamics and invariance]
 \label{prop:form-invariance}
Under \cref{ass:equilibrium}, the form has a non-positive self-adjoint generator $(L,\operatorname{Dom}(L))$ and a self-adjoint Markov semigroup $(P_t)_{t\ge0}$. The semigroup is conservative and $\kappa$ is invariant:
\begin{equation}\label{eq:form-semigroup-invariance}
    \int_{\mathbb X} P_tF\,d\kappa=\int_{\mathbb X}F\,d\kappa,\qquad F\in L^1(\kappa),\ t\ge0.
\end{equation}
\end{proposition}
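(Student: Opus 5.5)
The plan is the standard correspondence between closed forms, self-adjoint operators and semigroups, followed by a duality argument for invariance.

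\textbf{Generator and semigroup.} Since $(\cE,\operatorname{Dom}(\cE))$ is densely defined, closed, nonnegative and symmetric, Kato's representation theorem gives a unique nonnegative self-adjoint operator $-L$ with $\operatorname{Dom}(L)\subset\operatorname{Dom}(\cE)$ and
\begin{equation*}
    \cE(F,G)=-\langle F,LG\rangle_{L^2(\kappa)},\qquad F\in\operatorname{Dom}(\cE),\ G\in\operatorname{Dom}(L).
\end{equation*}
This matches the convention \eqref{eq:generator-form-convention}. The spectral theorem then defines $P_t=e^{tL}$, a strongly continuous self-adjoint contraction semigroup on $L^2(\kappa)$.

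\textbf{Markov property.} The Markov property of the form is equivalent to $P_t$ being sub-Markovian on $L^2$, that is, $0\le F\le1$ implies $0\le P_tF\le1$. This is the Beurling--Deny criterion, which I would cite from \cite[Chapter~I]{MaRoeckner1992}. If a self-contained argument is preferred, one can pass through the resolvent: show that $G_\alpha=(\alpha-L)^{-1}$ satisfies $0\le\alpha G_\alpha F\le1$ by minimizing $\cE(u,u)+\alpha\|u-F/\alpha\|^2$ and comparing with the normal contraction $(0\vee u)\wedge1$. Then transfer the bound to $P_t$ via $P_t=\lim_n(\tfrac nt G_{n/t})^n$.

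\textbf{Conservativity.} By \cref{ass:equilibrium}(ii) we have $\cE(1,1)=0$. Cauchy--Schwarz for the nonnegative form gives $|\cE(1,G)|^2\le\cE(1,1)\cE(G,G)=0$, so $\cE(1,G)=0$ for all $G\in\operatorname{Dom}(\cE)$. Hence $1\in\operatorname{Dom}(L)$ with $L1=0$, and so $P_t1=1$.

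\textbf{Invariance.} For $F\in L^2(\kappa)\subset L^1(\kappa)$, which uses that $\kappa$ is a probability measure, self-adjointness gives
\begin{equation*}
    \int P_tF\,d\kappa=\langle P_tF,1\rangle=\langle F,P_t1\rangle=\int F\,d\kappa.
\end{equation*}

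The main obstacle is the extension to $L^1$. For $F\ge0$ in $L^2$, positivity together with the identity above gives $\|P_tF\|_{L^1}=\|F\|_{L^1}$. Splitting $F=F^+-F^-$ then yields $\|P_tF\|_{L^1}\le\|F\|_{L^1}$ on $L^2\cap L^1$. So $P_t$ extends by density to an $L^1$-contraction, and \eqref{eq:form-semigroup-invariance} passes to the limit by continuity of $F\mapsto\int F\,d\kappa$ in $L^1$. Positivity is essential at this step.
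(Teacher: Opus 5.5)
Your proposal is correct and follows the paper's route: the representation theorem for closed symmetric forms (with Beurling--Deny for the Markov property), $P_t1=1$ from conservativity, the duality identity $\int P_tF\,d\kappa=\langle F,P_t1\rangle_{L^2(\kappa)}$ on $L^2(\kappa)$, and extension to $L^1(\kappa)$ by $L^1$-contractivity. The only difference is that you derive that contractivity from positivity and the $L^2$ identity, where the paper simply invokes it. One small slip in your optional resolvent aside: the minimiser of $\cE(u,u)+\alpha\|u-F/\alpha\|^2$ is $G_\alpha F$, so the truncation at level $1$ must instead be applied to $w=\alpha G_\alpha F$, which minimises $\cE(w,w)+\alpha\|w-F\|_{L^2(\kappa)}^2$; equivalently, truncate $G_\alpha F$ at level $1/\alpha$.
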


\begin{proof}
The representation theorem for closed symmetric forms gives the non-positive self-adjoint generator and the self-adjoint Markov semigroup \cite[Chapter~I]{MaRoeckner1992}. Conservativity of the form is equivalent to $P_t1=1$ $\kappa$-a.e. Hence, for $F\in L^2(\kappa)$,
\begin{equation*}
    \int_{\mathbb X}P_tF\,d\kappa
    = \langle P_tF,1\rangle_{L^2(\kappa)}
    = \langle F,P_t1\rangle_{L^2(\kappa)}
    = \int_{\mathbb X}F\,d\kappa.
\end{equation*}
For general $F\in L^1(\kappa)$, apply this identity to bounded truncations and use $L^1$-contractivity of the Markov extension.
\end{proof}

\begin{lemma}[Diffusion-calculus identity]
\label{lem:diffusion-calculus-identity}
Under \cref{ass:equilibrium,ass:diffusion-calculus}, if $h\in\operatorname{Dom}(\cE)$ and $0<c_-\le h\le c_+<\infty$, then $\log h,\sqrt h\in\operatorname{Dom}(\cE)$ and
\begin{equation}\label{eq:diffusion-calculus-identity}
    \cE(h,\log h)
    = \int_{\mathbb X}\frac{\Gamma(h)}{h}\,d\kappa
    = 4\cE(\sqrt h,\sqrt h).
\end{equation}
\end{lemma}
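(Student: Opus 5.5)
The plan is to apply the diffusion chain rule of \cref{ass:diffusion-calculus} twice, with $F=h$ and two $C^1$ functions whose derivatives are bounded on the range $[c_-,c_+]$ of $h$. Because $h$ is bounded and bounded away from zero, we can modify $\log$ and $\sqrt{\cdot}$ outside $[c_-/2,2c_+]$. This gives $C^1$ functions $\Phi_1,\Phi_2$ on $\mathbb R$ with bounded derivatives, $\Phi_1=\log$ and $\Phi_2=\sqrt{\cdot}$ on $[c_-,c_+]$. Then $\Phi_1(h)=\log h$ and $\Phi_2(h)=\sqrt h$ hold $\kappa$-a.e. The first step is membership in the domain. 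The form-domain functional calculus in \cref{ass:diffusion-calculus} gives $\Phi_i(h)\in\operatorname{Dom}(\cE)$ for bounded $h\in\operatorname{Dom}(\cE)$. If one prefers to use only the Dirichlet-form structure, the same conclusion follows from the normal-contraction property: $\Phi_i-\Phi_i(0)$ is Lipschitz and vanishes at $0$, and constants lie in the domain by conservativity in \cref{ass:equilibrium}(ii).

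For the first equality, take $\Phi=\Phi_1$ and $G=h$ in the chain rule. Since $\log h$ is bounded and in the domain, this gives $\Gamma(\log h,h)=h^{-1}\Gamma(h,h)$ $\kappa$-a.e. Integrating against $\kappa$, and using the representation $\cE(F,G)=\int\Gamma(F,G)\,d\kappa$ together with symmetry of $\Gamma$, gives $\cE(h,\log h)=\int \Gamma(h)/h\,d\kappa$. The integrand is bounded by $c_-^{-1}\Gamma(h)\in L^1(\kappa)$, so everything is finite.

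For the second equality, apply the chain rule with $\Phi=\Phi_2$ twice: once with $G=h$, and once, by symmetry, in the first slot of $\Gamma(\sqrt h,\sqrt h)$. The first application gives $\Gamma(\sqrt h,h)=\tfrac{1}{2\sqrt h}\Gamma(h)$. For the second, write $\Gamma(\sqrt h,\sqrt h)=\tfrac{1}{2\sqrt h}\Gamma(h,\sqrt h)$ using $F=h$ and $G=\sqrt h$, which is bounded and in the domain. Combining the two gives $\Gamma(\sqrt h)=\Gamma(h)/(4h)$ $\kappa$-a.e. Integrating yields $4\cE(\sqrt h,\sqrt h)=\int\Gamma(h)/h\,d\kappa$.

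The main obstacle is bookkeeping rather than depth. The chain rule is assumed only for $C^1$ functions with bounded derivative on the range of $F$, and both arguments must be bounded elements of the domain. I will therefore check at each application that the arguments ($h$, $\log h$, $\sqrt h$) are bounded and already shown to lie in $\operatorname{Dom}(\cE)$. I will also check that the a.e.\ identities are insensitive to the modification of $\log$ and $\sqrt{\cdot}$ outside $[c_-,c_+]$. This holds because $h$ takes values in $[c_-,c_+]$ $\kappa$-a.e., so $\Phi_i'(h)$ agrees with the true derivative $\kappa$-a.e.
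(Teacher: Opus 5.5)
Your proposal is correct and follows the same route as the paper: apply the diffusion chain rule of \cref{ass:diffusion-calculus} with $\Phi=\log$ and $\Phi=\sqrt{\cdot}$, whose derivatives are bounded on $[c_-,c_+]$, to obtain $\Gamma(h,\log h)=\Gamma(h)/h$ and $\Gamma(\sqrt h)=\Gamma(h)/(4h)$, and then integrate. Your extra bookkeeping only makes explicit what the paper's two-line proof leaves implicit: the $C^1$ extension off $[c_-,c_+]$, the double application of the chain rule for $\Gamma(\sqrt h)$, and the alternative domain argument via contractions plus conservativity (on that route, first divide $\Phi_i-\Phi_i(0)$ by its Lipschitz constant, which may exceed $1$).
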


\begin{proof}
On the interval $[c_-,c_+]$, the functions $r\mapsto\log r$ and $r\mapsto\sqrt r$ have bounded derivatives. The form-domain functional calculus and the diffusion chain rule therefore give
\begin{equation*}
    \Gamma(h,\log h)=\frac{\Gamma(h)}{h},
    \qquad
    \Gamma(\sqrt h)=\frac{\Gamma(h)}{4h}.
\end{equation*}
Integrating the first identity and four times the second proves \eqref{eq:diffusion-calculus-identity}.
\end{proof}

\begin{proposition}[Associated diffusion]
\label{prop:associated-diffusion}
Assume \cref{ass:equilibrium,ass:diffusion-calculus}, let $\mathbb X$ be a Lusin state space, and suppose that the form is quasi-regular. Then it admits a conservative properly associated $\kappa$-symmetric diffusion with continuous sample paths outside a properly exceptional set.
\end{proposition}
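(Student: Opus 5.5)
The plan is to reduce the statement to the general existence theorem for quasi-regular Dirichlet forms in \cite[Chapter~IV]{MaRoeckner1992}, and then upgrade the resulting process using strong locality and conservativity.

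\emph{Step 1: existence of a properly associated process.} By \cref{ass:equilibrium}, $(\cE,\operatorname{Dom}(\cE))$ is a symmetric Dirichlet form on $L^2(\mathbb X,\kappa)$. Since $\mathbb X$ is a Lusin space and the form is quasi-regular, the Ma--R\"ockner existence theorem applies. It yields a $\kappa$-tight special standard process, in fact a Hunt process $M=(\Omega,(X_t),(\Pbb_x)_{x\in\mathbb X_\Delta})$ with cemetery $\Delta$, that is properly associated with the form. Concretely, for bounded Borel $F$ and $t>0$, $x\mapsto\E_x[F(X_t)]$ is an $\cE$-quasi-continuous version of $P_tF$. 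Symmetry of the form then gives $\kappa$-symmetry of the transition kernels, because $\int G\,P_tF\,d\kappa=\int F\,P_tG\,d\kappa$ and $\kappa$ charges no $\cE$-exceptional set.

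\emph{Step 2: conservativity.} By \cref{prop:form-invariance}, $P_t1=1$ $\kappa$-a.e. Since $x\mapsto\Pbb_x(X_t\neq\Delta)$ is a quasi-continuous version of $P_t1$, it equals $1$ quasi-everywhere. After removing an $\cE$-exceptional set, which we enlarge to a properly exceptional set $N$ using the standard fact that every exceptional set is contained in a properly exceptional Borel set, we get $\Pbb_x(\zeta=\infty)=1$ for $x\notin N$. Restricting $M$ to $\mathbb X\setminus N$ then gives a conservative process.

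\emph{Step 3: continuity of paths (the main obstacle).} Strong locality in \cref{ass:diffusion-calculus} has to be converted into the absence of jumps and of killing inside the state space. I would use the Beurling--Deny / LeJan structure for quasi-regular forms. Transferring through the quasi-homeomorphism onto a regular Dirichlet form on a locally compact space, as in \cite[Chapter~VI]{MaRoeckner1992}, the jump measure $J$ and killing measure $k$ of $\cE$ both vanish, since they are detected by $\cE(u,v)$ for $u,v$ with disjoint supports. The Fukushima correspondence then shows that the associated process satisfies $\Pbb_x(X_{t-}\neq X_t \text{ for some } t)=0$ quasi-everywhere. The step needing care is that the quasi-homeomorphism preserves strong locality and properly exceptional sets. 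I expect to rely on the corresponding statement in \cite[Theorem~V.1.11]{MaRoeckner1992} (local property $\Leftrightarrow$ diffusion) rather than redo this transfer. Enlarging $N$ once more to a properly exceptional set, the process started outside $N$ has continuous paths and infinite lifetime, which completes the proof.
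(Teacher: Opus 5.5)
Your proposal is correct and follows the paper's proof essentially step for step: the Ma--R\"ockner existence theorem on a Lusin space gives the properly associated process, symmetry of the form gives $\kappa$-symmetry, $P_t1=1$ together with quasi-continuity gives conservativity outside a properly exceptional set, and the Chapter~V equivalence between locality and the diffusion property gives continuous paths. The paper invokes that last equivalence directly, without your detour through quasi-homeomorphisms and the Beurling--Deny decomposition. The one point to tighten is Step~2, where the exceptional set from ``$P_t1=1$ quasi-everywhere'' depends on $t$: as the paper does with rational $q$, use countably many times, $\{\zeta<\infty\}=\bigcup_{q\in\mathbb Q_+}\{\zeta\le q\}$ (equivalently $\Pbb_x(\zeta=\infty)=\lim_n\Pbb_x(\zeta>n)$), before enlarging $N$ to a single properly exceptional set.
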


\begin{proof}
By the representation theorem for quasi-regular symmetric Dirichlet forms on Lusin spaces \cite[Chapter~IV, Section~3]{MaRoeckner1992}, there are an associated special standard process
\begin{equation*}
    \mathbb M=(\Omega,\mathcal F,(\mathcal F_t)_{t\ge0}, (X_t)_{t\ge0}, (\mathbb P_x)_{x\in\mathbb X_\Delta})
\end{equation*}
and a properly exceptional set $N$ such that, for every bounded Borel $F$ and $t>0$,
\begin{equation*}
    x\longmapsto\mathbb E_x[F(X_t)]
\end{equation*}
is an $\cE$-quasi-continuous version of $P_tF$ on $\mathbb X\setminus N$. Thus $\mathbb M$ is properly associated with $(\cE,\operatorname{Dom}(\cE))$. Since the form is symmetric,
\begin{equation*}
    \int_{\mathbb X}F(x)\mathbb E_x[G(X_t)]\,d\kappa(x) = \int_{\mathbb X}G(x)\mathbb E_x[F(X_t)]\,d\kappa(x)
\end{equation*}
for bounded $F,G$, which is the $\kappa$-symmetry of the transition function.

Conservativity gives $P_t1=1$ $\kappa$-a.e. If $\zeta$ denotes the lifetime, proper association gives, for each rational $q>0$,
\begin{equation*}
    \mathbb P_x(q<\zeta)=P_q1(x)=1
\end{equation*}
outside an exceptional set $N_q$. Replace $N$ by a properly exceptional set containing $N\cup\bigcup_{q\in\mathbb Q_+}N_q$. If $\zeta<\infty$, some rational $q$ satisfies $q>\zeta$, contradicting the displayed equality. Thus
\begin{equation*}
    \mathbb P_x(\zeta=\infty)=1, \qquad x\in\mathbb X\setminus N.
\end{equation*}
Finally, for a quasi-regular symmetric form, strong locality is equivalent to the diffusion property of the associated process \cite[Chapter~V, Section~1]{MaRoeckner1992}. The strong locality in \cref{ass:diffusion-calculus} therefore provides a version whose paths are continuous on $[0,\infty)$ for every initial point outside one properly exceptional set.
\end{proof}

The preceding framework supplies the objects used in the field calculation: an invariant Markov evolution, the chain rule for entropy dissipation and, under quasi-regularity, an associated diffusion. We first derive entropy dissipation and stationary detailed balance from this structure. We then determine when a Gibbs potential produces the required form and when its associated diffusion is the prescribed SPDE.

\section{Systems with an Equilibrium Steady State}
\label{sec:equilibrium}

In this section an equilibrium steady state means an invariant reference law with respect to which the transition semigroup satisfies detailed balance. The initial law need not be stationary: the symmetric form first describes relaxation of $h_0\kappa$ towards $\kappa$, and its stationary realisation is then shown to be invariant under time reversal. The superscript ${\rm ss}$ denotes a stationary or steady-state value.

\subsection{Entropy Dissipation from Symmetric Dirichlet Forms}

With the Markov evolution and invariant reference law established in \cref{prop:form-invariance}, we prove entropy relaxation along $h_t=P_th_0$. The entropy identity follows from the symmetric form semigroup, while its Fisher-information representation uses the diffusion chain rule \cite{BakryGentilLedoux2014}. We use the homogeneous convention \eqref{eq:entropy-definition}. For a probability density $h$, it gives
\begin{equation*}
    \Ent_\kappa(h)=\int h\log h\,d\kappa.
\end{equation*}
In the log-Sobolev inequality (LSI), the substitution $h=g^2$ gives
\begin{equation}\label{eq:homogeneous-entropy}
    \Ent_\kappa(g^2) := \int g^2\log\!\left(\frac{g^2}{\int g^2\,d\kappa}\right)d\kappa, \qquad 
    0<\int g^2\,d\kappa<\infty.
\end{equation}
For a positive density $h$ to which the diffusion chain rule applies, the quantity
\begin{equation*}
    4\cE(\sqrt h,\sqrt h) = \int h\,\Gamma(\log h)\,d\kappa
\end{equation*}
is the intrinsic Fisher information, or Dirichlet-form dissipation, relative to $\kappa$.
\begin{theorem}[Dirichlet-form entropy dissipation]\label{thm:deBruijn}
Assume \cref{ass:equilibrium,ass:diffusion-calculus}, let $h_0$ be a probability density with respect to $\kappa$, set $h_t=P_th_0$, and write $\mu_t=h_t\kappa$.
\begin{enumerate}[label=(\roman*)]
\item If $h_0 \in \operatorname{Dom}(L)$ and $0 < c_- \le h_0 \le c_+ < \infty$, then, for every $t\ge0$,
\begin{equation}\label{eq:deBruijn-general}
    \Ent_\kappa(h_t) + \int_0^t \cE(h_s,\log h_s)\,ds = \Ent_\kappa(h_0).
\end{equation}
For every $s\ge0$,
\begin{equation}\label{eq:sqrt-energy-identity}
    \cE(h_s,\log h_s)=4\cE(\sqrt{h_s},\sqrt{h_s}),
\end{equation}
and consequently
\begin{equation}\label{eq:deBruijn-eq}
    \Ent_\kappa(h_t) + 4\int_0^t \cE(\sqrt{h_s},\, \sqrt{h_s})\,ds = \Ent_\kappa(h_0).
\end{equation}
\item If $h_0 \ge 0$, $\int h_0\,d\kappa = 1$, and $\Ent_\kappa(h_0) < \infty$, then
\begin{equation}\label{eq:deBruijn-ineq}
    \Ent_\kappa(h_t) + 4\int_0^t \cE(\sqrt{h_s},\, \sqrt{h_s})\,ds \le \Ent_\kappa(h_0).
\end{equation}
\end{enumerate}
In part~(ii), the energy is understood to be $+\infty$ outside $\operatorname{Dom}(\cE)$. In particular, $\sqrt{h_s}\in\operatorname{Dom}(\cE)$ for almost every $s$ on each finite time interval. At steady state $h\equiv1$, $\cE(1,1)=0$, so the equilibrium relative entropy dissipation vanishes.
\end{theorem}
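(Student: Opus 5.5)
For part~(i), we first record what the Markov semigroup preserves. By the Markov property and conservativity in \cref{prop:form-invariance}, the bounds $c_-\le h_0\le c_+$ give $c_-\le h_t\le c_+$ for all $t\ge0$. Invariance \eqref{eq:form-semigroup-invariance} gives $\int h_t\,d\kappa=1$. Since $h_0\in\operatorname{Dom}(L)$, the map $t\mapsto h_t$ is $C^1$ in $L^2(\kappa)$ with $\partial_th_t=Lh_t=P_tLh_0$, and $h_t\in\operatorname{Dom}(L)\subset\operatorname{Dom}(\cE)$.

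On $[c_-,c_+]$ the function $\Psi(r)=r\log r$ is $C^2$ with bounded derivatives. We will show that $t\mapsto\int\Psi(h_t)\,d\kappa$ is differentiable by the mean value theorem, using
\begin{equation*}
    \bigl|\Psi(h_{t+\delta})-\Psi(h_t)-\Psi'(h_t)(h_{t+\delta}-h_t)\bigr|\le C|h_{t+\delta}-h_t|^2 .
\end{equation*}
Combined with $L^2$-differentiability, this yields
\begin{equation*}
    \frac{d}{dt}\Ent_\kappa(h_t)=\int(1+\log h_t)\,Lh_t\,d\kappa .
\end{equation*}
The constant term vanishes because $\int Lh_t\,d\kappa=\langle Lh_t,1\rangle=\langle h_t,L1\rangle=0$, by self-adjointness and $L1=0$ (conservativity, with $1\in\operatorname{Dom}(L)$ since $\cE(1,\cdot)=0$). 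Since $\log h_t\in\operatorname{Dom}(\cE)$ by \cref{lem:diffusion-calculus-identity}, \eqref{eq:generator-form-convention} gives $\int\log h_t\,Lh_t\,d\kappa=-\cE(\log h_t,h_t)$. The derivative is continuous in $t$ because $h_t\mapsto\log h_t$ is Lipschitz on the range and $Lh_t$ is continuous in $L^2$. Integrating from $0$ to $t$ gives \eqref{eq:deBruijn-general}. \Cref{lem:diffusion-calculus-identity} then gives \eqref{eq:sqrt-energy-identity}, and hence \eqref{eq:deBruijn-eq}.

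For part~(ii) we approximate. Set
\begin{equation*}
    h_0^{(n)}=\frac{(h_0\vee n^{-1})\wedge n}{\int (h_0\vee n^{-1})\wedge n\,d\kappa},
    \qquad
    h_0^{(n,\epsilon)}=\frac1\epsilon\int_0^\epsilon P_rh_0^{(n)}\,dr .
\end{equation*}
The time average lies in $\operatorname{Dom}(L)$ and keeps the two-sided bounds. Part~(i) applies to $h_0^{(n,\epsilon)}$. We then pass to the limit $\epsilon\to0$ and $n\to\infty$, in the following steps.
\begin{itemize}
    \item On the right-hand side, we want $\Ent_\kappa(h_0^{(n,\epsilon)})\to\Ent_\kappa(h_0)$. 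Convexity and Jensen give $\Ent_\kappa(h_0^{(n,\epsilon)})\le\Ent_\kappa(h_0^{(n)})$. Finite entropy lets dominated and monotone convergence handle the truncation, using that $r\log r$ is bounded below and the normalising constants tend to $1$.
    \item On the left-hand side, $h_s^{(n,\epsilon)}\to h_s$ in $L^1(\kappa)$ by $L^1$-contractivity. The entropy is lower semicontinuous under $L^1$ convergence.
    \item For the energy term, $\sqrt{h_s^{(n,\epsilon)}}\to\sqrt{h_s}$ in $L^2(\kappa)$, because $|\sqrt a-\sqrt b|^2\le|a-b|$. The closed form $\cE$ is lower semicontinuous in $L^2$ (with value $+\infty$ off the domain). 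Fatou's lemma in $s$ then gives
    \begin{equation*}
        \int_0^t\cE(\sqrt{h_s},\sqrt{h_s})\,ds\le\liminf\int_0^t\cE\bigl(\sqrt{h^{(n,\epsilon)}_s},\sqrt{h^{(n,\epsilon)}_s}\bigr)\,ds .
    \end{equation*}
\end{itemize}
This proves \eqref{eq:deBruijn-ineq}. Finiteness of the integral forces $\sqrt{h_s}\in\operatorname{Dom}(\cE)$ for almost every $s$. The final remark follows from $\cE(1,1)=0$.

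The main obstacle is the differentiation step in~(i), namely justifying that the pointwise entropy derivative matches the $L^2$-derivative. The two-sided bounds are exactly what make the Taylor remainder controllable. In~(ii) the delicate point is convergence of the initial entropies, which we handle by the Jensen bound together with truncation monotonicity rather than a direct limit.
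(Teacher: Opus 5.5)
Your proposal is correct and follows essentially the same route as the paper: part~(i) is the same $L^2$-differentiation of $\int h_t\log h_t\,d\kappa$, justified by the two-sided bounds. Part~(ii) is the same truncation, regularisation and lower-semicontinuity argument as in \cref{lem:finite-entropy-extension}, with the paper using the resolvent $n(n-L)^{-1}$ after the truncation $\min(h_0,n)+n^{-1}$ and proving full convergence of the initial entropies, whereas your time average $\epsilon^{-1}\int_0^\epsilon P_r\,dr$ with two-sided truncation does the same job and you rightly note that only the upper bound $\limsup\Ent_\kappa(h_0^{(n,\epsilon)})\le\Ent_\kappa(h_0)$ is needed.
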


\begin{proof}
We first prove part~(i). Positivity preservation and the Markov property give $c_-\le h_t\le c_+$ for all $t\ge0$. Since $h_0\in \operatorname{Dom}(L)$, one has $h_t\in \operatorname{Dom}(L)$ and $\partial_t h_t=Lh_t$ in $L^2(\kappa)$. On the compact interval $[c_-,c_+]$, the functions $r\mapsto\log r$ and $r\mapsto1+\log r$ are Lipschitz. The normal-contraction calculus for Dirichlet forms therefore yields $\log h_t\in \operatorname{Dom}(\cE)$.

The map $t\mapsto\int h_t\log h_t\,d\kappa$ is absolutely continuous. Indeed, $t\mapsto h_t$ is differentiable in $L^2(\kappa)$, and the derivative of $r\mapsto r\log r$ is Lipschitz on $[c_-,c_+]$. The chain rule for this $L^2(\kappa)$-valued curve therefore gives, for almost every $t$,
\begin{align*}
    \frac d{dt}\Ent_\kappa(h_t)
    &=\int_{\mathbb X}(1+\log h_t)Lh_t\,d\kappa\\
    &=\int_{\mathbb X}\log h_t\,Lh_t\,d\kappa
    =-\cE(h_t,\log h_t).
\end{align*}
In the second equality we used $\int Lh_t\,d\kappa=-\cE(h_t,1)=0$. Integrating in time proves~\eqref{eq:deBruijn-general}. Applying \cref{lem:diffusion-calculus-identity} to $h_t$ proves \eqref{eq:sqrt-energy-identity} and \eqref{eq:deBruijn-eq}.

We now prove part~(ii). A finite-entropy density need not belong to $\operatorname{Dom}(L)$ and need not be bounded or bounded away from zero, so part~(i) cannot be applied directly. By part~(i), every probability density $g\in\operatorname{Dom}(L)$ satisfying $0<c_-\le g\le c_+<\infty$ obeys \eqref{eq:regular-dissipation-hypothesis}. The remaining hypothesis of \cref{lem:finite-entropy-extension} is \cref{ass:equilibrium}, which is already assumed in the theorem. The lemma therefore applies to $h_0$ and gives \eqref{eq:deBruijn-ineq}, proving part~(ii).
\end{proof}

\begin{remark}[Exactness under entropy--energy approximation]
\label{rmk:deBruijn-equality}
Let $h_0^{(n)}$ be regular probability densities as in \cref{thm:deBruijn}(i), and set $h_s^{(n)}=P_sh_0^{(n)}$. Equality in part~(ii) follows if, for each $t>0$,
\begin{align*}
    &\Ent_\kappa(h_0^{(n)})\longrightarrow\Ent_\kappa(h_0),\qquad
    \Ent_\kappa(h_t^{(n)})\longrightarrow\Ent_\kappa(h_t),\text{ and}\\
    &\int_0^t\left|
    \cE(\sqrt{h_s^{(n)}},\sqrt{h_s^{(n)}}) - \cE(\sqrt{h_s},\sqrt{h_s}) \right|\,ds \longrightarrow 0.
\end{align*}
Indeed, these convergences allow passage to every term in the integrated equality of part~(i). Almost-everywhere energy convergence gives the last condition when the energy densities are uniformly integrable on $[0,t]$, by Vitali's theorem. For the explicit Gaussian evolution in \cref{thm:lin-hk}, the differentiated entropy series and the closed-gradient calculation in \cref{lem:gaussian-density-energy} establish the equality directly.
\end{remark}

At the mathematical level, \cref{thm:deBruijn}~(i) is an exact relative-entropy dissipation law for the regular positive densities stated there, while part~(ii) gives the integrated dissipation inequality for arbitrary finite-entropy initial data. In stochastic quantisation this describes relaxation of the auxiliary sampling dynamics. If the Langevin dynamics is equipped with an overdamped local-detailed-balance calibration at temperature $T_{\mathrm{phys}}$, then $k_BT_{\mathrm{phys}}\Ent_\kappa(h_t)$ is the nonequilibrium free-energy excess and, for the regular densities in part~(i), $-k_B\,d\Ent_\kappa(h_t)/dt$ is the nonadiabatic entropy-production rate. At stationarity $h_t\equiv1$, both the relative entropy and its Dirichlet-form dissipation vanish.

\subsection{Detailed Balance and Path Reversal}
\label{sec:detailed-balance-path-reversal}

Entropy decay describes approach to the invariant law. At stationarity, self-adjointness gives the two-time detailed-balance relation, and the properly associated diffusion extends this relation to the complete path law. For a topological state space $\mathbb S$ and $T>0$, define the time-reversal map
\begin{equation*}
    r_T:C([0,T];\mathbb S)\longrightarrow C([0,T];\mathbb S),
    \qquad
    (r_T\omega)(t)=\omega(T-t).
\end{equation*}
Thus $\Pbb\circ r_T^{-1}$ is the reversed law of a path measure $\Pbb$ on $C([0,T];\mathbb X)$.

\begin{theorem}[Equilibrium detailed balance and path reversal]
\label{thm:eq-zero}
Assume \cref{ass:equilibrium,ass:diffusion-calculus}, let $\mathbb X$ be a Lusin state space, and suppose that the form is quasi-regular. Let $X$ be the conservative properly associated diffusion furnished by \cref{prop:associated-diffusion}, started from $\kappa$. Then $X$ is $\kappa$-stationary, and the following properties are equivalent:
\begin{enumerate}[label=(\roman*)]
\item \textbf{Semigroup detailed balance.} $\int F\,P_t G\,d\kappa = \int G\,P_t F\,d\kappa$ for all $F,G\in L^2(\kappa)$ and $t\ge 0$.

\item \textbf{Pathwise reversibility.} For every $T>0$, with $\Pbb_{\kappa,T}$ denoting the path law of $X$ on $[0,T]$,
\begin{equation}\label{eq:path-rev}
    \Pbb_{\kappa,T} = \Pbb_{\kappa,T} \circ r_T^{-1},\qquad
    \frac{1}{T}\RelEnt{\Pbb_{\kappa,T}}{\Pbb_{\kappa,T}\circ r_T^{-1}} = 0,
\end{equation}
where $r_T$ is the time-reversal map defined above.
\end{enumerate}
\end{theorem}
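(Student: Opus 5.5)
We first establish stationarity of $X$ started from $\kappa$, and then prove the two implications. For stationarity, \cref{prop:associated-diffusion} gives a conservative properly associated diffusion. Proper association implies that, for bounded Borel $F$ and $t>0$, we have $\mathbb E_\kappa[F(X_t)]=\int P_tF\,d\kappa$, because $x\mapsto\mathbb E_x[F(X_t)]$ is a quasi-continuous version of $P_tF$ and $\kappa$ does not charge exceptional sets. By \eqref{eq:form-semigroup-invariance} this equals $\int F\,d\kappa$. The Markov property then shows that the law of $X_{s+\cdot}$ under $\Pbb_\kappa$ is again $\Pbb_\kappa$, so $X$ is stationary.

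The implication (i)$\Rightarrow$(ii) is a finite-dimensional-distribution argument. Note first that (i) holds automatically, since $P_t$ is self-adjoint by \cref{prop:form-invariance}; the equivalence is therefore really the statement that (ii) holds. Fix $0\le t_1<\dots<t_n\le T$ and bounded Borel $F_1,\dots,F_n$. Using the Markov property and proper association, we write
\begin{equation*}
    \mathbb E_\kappa\Bigl[\prod_{i=1}^n F_i(X_{t_i})\Bigr]
    = \int F_1\,P_{t_2-t_1}\bigl(F_2\,P_{t_3-t_2}(\cdots F_n)\bigr)\,d\kappa .
\end{equation*}
Repeatedly moving the self-adjoint operators $P_{t_{i+1}-t_i}$ across the $L^2(\kappa)$ pairing, by induction on $n$ as in the usual proof of reversibility of symmetric Markov processes, turns this into the same expression with the times $T-t_n<\dots<T-t_1$ and the functions in reversed order. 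That expression is $\mathbb E_\kappa\bigl[\prod_i F_i(X_{T-t_i})\bigr]$. Hence $\Pbb_{\kappa,T}$ and $\Pbb_{\kappa,T}\circ r_T^{-1}$ agree on cylinder sets. Both measures are carried by $C([0,T];\mathbb X)$: the reversed path of a continuous path is continuous, and paths starting in the properly exceptional set have $\kappa$-measure zero. Cylinder sets generate the Borel $\sigma$-field there because $\mathbb X$ is Lusin, so evaluation maps generate it. The measures are therefore equal, and the relative entropy of a measure with respect to itself is $\int\log 1\,d\Pbb=0$.

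For (ii)$\Rightarrow$(i), we take $n=2$ in the path identity to get $\mathbb E_\kappa[F(X_0)G(X_t)]=\mathbb E_\kappa[F(X_t)G(X_0)]$ for $t\le T$. Conditioning on $X_0$ and using association shows that these two quantities are $\int F\,P_tG\,d\kappa$ and $\int G\,P_tF\,d\kappa$ respectively, which gives (i) for bounded $F,G$. Density of bounded functions in $L^2(\kappa)$, together with $L^2$-contractivity of $P_t$, extends the identity to all of $L^2(\kappa)$.

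We expect the main obstacle to be the measure-theoretic bookkeeping. One must check that the identity $\mathbb E_x[\,\cdot\,]=P_t(\cdot)(x)$, which holds only outside an exceptional set, can be iterated inside the multi-time expectations. Each inner function $P_{t_{i+1}-t_i}(\cdots)$ is defined only $\kappa$-a.e., and one needs that its quasi-continuous version is what the Markov property produces. We will handle this by integrating against $\kappa$ at every step, so that only $\kappa$-a.e. equalities are used, because the stationary process at each fixed time has law $\kappa$. The other delicate point is that the path law is a genuine measure on $C([0,T];\mathbb X)$, which uses the continuity outside the properly exceptional set supplied by \cref{prop:associated-diffusion}.
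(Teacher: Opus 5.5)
Your proposal is correct and follows essentially the same route as the paper: stationarity from the invariance in \cref{prop:form-invariance}, then iterated self-adjointness on the stationary finite-dimensional distributions for (i)$\Rightarrow$(ii), using that coordinate maps generate the path $\sigma$-field, and specialisation to two observation times for (ii)$\Rightarrow$(i). Two points in your version, the $\kappa$-a.e.\ bookkeeping in the Markov iteration and the density extension from bounded $F,G$ to all of $L^2(\kappa)$, make precise steps that the paper's proof leaves implicit.
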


\begin{proof}
\par\noindent\textbf{Step 1. Detailed balance for the associated diffusion.}\par
By \cref{prop:form-invariance}, $(P_t)$ is self-adjoint and $\kappa$ is invariant. Hence the associated diffusion started from $\kappa$ is stationary and
\begin{equation*}
    \int F\,P_t G\,d\kappa
    =\langle F,P_tG\rangle_{L^2(\kappa)}
    =\langle P_tF,G\rangle_{L^2(\kappa)}
    =\int G\,P_t F\,d\kappa.
\end{equation*}

\par\noindent\textbf{Step 2. From detailed balance to path reversal.}\par
By the invariance conclusion of \cref{prop:form-invariance}, the associated process started from $\kappa$ is stationary. Take $0=t_0<t_1<\cdots<t_n=T$, put $\Delta_i=t_i-t_{i-1}$, and let $\varphi_0,\ldots,\varphi_n$ be bounded measurable functions. The Markov property gives
\begin{equation}\label{eq:forward-fdd}
    \E_\kappa\prod_{i=0}^n\varphi_i(X_{t_i})
    =\int\varphi_0P_{\Delta_1}
    \bigl(\varphi_1P_{\Delta_2}(\cdots P_{\Delta_n}\varphi_n)\bigr) \,d\kappa.
\end{equation}
To display the iteration, put
\begin{equation*}
    H_1 = \varphi_1P_{\Delta_2}\bigl(\varphi_2\cdots P_{\Delta_n}\varphi_n\bigr).
\end{equation*}
The first use of self-adjointness gives
\begin{align*}
    \int\varphi_0P_{\Delta_1}H_1\,d\kappa
    &=\int H_1P_{\Delta_1}\varphi_0\,d\kappa\\
    &=\int \varphi_1P_{\Delta_2} \bigl( \varphi_2\cdots P_{\Delta_n}\varphi_n \bigr) P_{\Delta_1}\varphi_0\,d\kappa.
\end{align*}
Applying self-adjointness next to $P_{\Delta_2}$, and then successively to $P_{\Delta_3},\ldots,P_{\Delta_n}$, moves one semigroup at a time to the right. After $n$ iterations the expression becomes
\begin{equation}\label{eq:reversed-fdd}
    \int\varphi_nP_{\Delta_n} \bigl( \varphi_{n-1}P_{\Delta_{n-1}}(\cdots P_{\Delta_1}\varphi_0) \bigr) \,d\kappa.
\end{equation}
This is exactly $\E_\kappa\prod_{i=0}^n\varphi_i(X_{T-t_i})$. Thus all finite-dimensional distributions are invariant under $r_T$. An arbitrary finite set of observation times need not contain $0$ or $T$; inserting the endpoint test functions $1$ reduces it to the calculation above. Hence the conclusion covers all cylinder events. The path-space $\sigma$-algebra is generated by the coordinate maps, so the path laws agree and the relative entropy in~\eqref{eq:path-rev} is zero.

\par\noindent\textbf{Step 3. From path reversal to detailed balance.}\par
For any $\kappa$-stationary Markov process satisfying (ii), equality of the path laws implies equality of all finite-dimensional distributions. Given $t\ge0$, apply (ii) on any interval $[0,T]$ with $T\ge t$ and specialise to the two observation times $0$ and $t$. Stationarity then gives
\begin{equation*}
    \E_\kappa[F(X_0)G(X_t)]
 =\E_\kappa[G(X_0)F(X_t)],
\end{equation*}
which is precisely the detailed-balance relation in (i).
\end{proof}

\begin{remark}
The theorem expresses equilibrium at two compatible dynamical levels: symmetry of the transition semigroup and reversal invariance of the stationary diffusion. The invariant measure alone does not distinguish reversible from irreversible dynamics; the distinction is carried by these temporal symmetries. For stochastic evolution equations treated through a reversible Gaussian reference, Duan, Zhang and Zimmer obtain the corresponding equivalence between vanishing entropy production, zero stationary current, generator self-adjointness, detailed balance and stationary path reversal, together with a gradient characterisation of the drift \cite{DuanZhangZimmer2026}.
\end{remark}

\subsection{From Gibbs Measures to the SPDE}

We now address the construction problem posed at the end of \S\ref{sec:setting}. We first specify the periodic field space, Gaussian reference measure and probabilistic solution concept. We then construct the closed Gibbs form and identify its associated process with the target equation.

\subsubsection{Field and Solution Setup}
\label{sec:conventions}

The analytic facts used in this setup are standard; see \cite{Bogachev1998,MaRoeckner1992}. Let $\mathbb H = L^2(\T;\R)$ with $\T = \R/(2\pi\Z)$, set $A=-\partial_x^2+m$ with $m>0$, and let $Q=A^{-1}$. For the nonlinear identification we use the state space and Cameron--Martin space
\begin{equation}\label{eq:field-spaces}
    \mathbb X=C(\T;\R),\qquad \mathbb K=Q^{1/2}\mathbb H=H^1(\T),
\end{equation}
Here equality with $H^1(\T)$ is equality of sets. Since $Q=A^{-1}$ with $A=-\partial_x^2+m$, the Cameron--Martin inner product and norm are
\begin{align*}
    \langle h,k\rangle_{\mathbb K}
    &= \langle Q^{-1/2}h,Q^{-1/2}k\rangle_{\mathbb H},\\
    \|h\|_{\mathbb K}^2
    &= \int_{\T}\bigl(|\partial_xh|^2+m|h|^2\bigr)\,dx.
\end{align*}
Thus $\|\cdot\|_{\mathbb K}$ is equivalent to the usual periodic $H^1$ norm.

\begin{lemma}[Cameron--Martin embedding]
\label{lem:cameron-martin-embedding}
The inclusion
\begin{equation*}
    i:\mathbb K\longrightarrow\mathbb X
\end{equation*}
is continuous and has dense range.
\end{lemma}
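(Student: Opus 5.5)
The proof has two parts: continuity of $i$ via the one-dimensional Sobolev embedding $H^1(\T)\hookrightarrow C(\T)$, and density of the range because trigonometric polynomials lie in $\mathbb K$ and are uniformly dense in $C(\T;\R)$.

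\textbf{Continuity.} By \eqref{eq:field-spaces}, $\|\cdot\|_{\mathbb K}$ is equivalent to the periodic $H^1$ norm, so it suffices to bound $\|h\|_{\infty}\le C\|h\|_{H^1}$ for $h\in H^1(\T)$. We work in the Fourier basis $e_k(x)=(2\pi)^{-1/2}e^{ikx}$ and write $h=\sum_k\hat h_ke_k$. Then
\begin{equation*}
    \|h\|_{\mathbb K}^2=\sum_{k\in\Z}(k^2+m)|\hat h_k|^2 .
\end{equation*}
By Cauchy--Schwarz,
\begin{equation*}
    \sum_k|\hat h_k|\le\Bigl(\sum_k(k^2+m)^{-1}\Bigr)^{1/2}\|h\|_{\mathbb K},
\end{equation*}
and the first factor is finite since $m>0$ and $\sum_k k^{-2}<\infty$. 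Hence the Fourier series of $h$ converges absolutely and uniformly. Its sum is a continuous function equal to $h$ almost everywhere, and it satisfies $\|h\|_\infty\le(2\pi)^{-1/2}C_m\|h\|_{\mathbb K}$. Identifying $h$ with this continuous representative makes $i$ well defined. It is linear and bounded, hence continuous from $\mathbb K$ into $\mathbb X=C(\T;\R)$ with the sup norm.

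\textbf{Dense range.} Real trigonometric polynomials $\sum_{|k|\le n}(a_k\cos kx+b_k\sin kx)$ are finite combinations of eigenvectors of $A$, so they belong to $\mathbb K$. They are also dense in $C(\T;\R)$ for the uniform norm, by Fej\'er's theorem or by Stone--Weierstrass: they form a point-separating unital subalgebra on the compact space $\T$. Therefore $i(\mathbb K)$ contains a dense subset of $\mathbb X$.

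Neither step has a real obstacle. The only point needing care is that elements of $\mathbb K$ are a priori $L^2$ classes, so one must check that the continuous representative is unique and that the resulting map is linear. Uniqueness holds because two continuous functions that agree almost everywhere on $\T$ agree everywhere.
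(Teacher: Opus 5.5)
Your proof is correct and follows essentially the same route as the paper: continuity comes from the one-dimensional bound $\|h\|_\infty\le C_m\|h\|_{\mathbb K}$, which you prove directly by Cauchy--Schwarz on the Fourier coefficients where the paper cites the periodic Sobolev inequality, and dense range comes from uniform density of trigonometric polynomials via Fej\'er means. Your remark about fixing the continuous representative of each $L^2$ class is a small point that the paper leaves implicit.
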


\begin{proof}
The one-dimensional periodic Sobolev inequality and the equivalence of the preceding norms give
\begin{equation*}
    \|h\|_{\mathbb X}=\|h\|_\infty
    \le C\|h\|_{H^1(\T)}
    \le C_m\|h\|_{\mathbb K},
    \qquad h\in\mathbb K,
\end{equation*}
which proves continuity. Every trigonometric polynomial belongs to $H^1(\T)=\mathbb K$, and the Fej\'er means of a continuous periodic function converge uniformly to that function. Hence the range of $i$ is dense in $C(\T)=\mathbb X$.
\end{proof}

Moreover, the Gaussian Fourier series with covariance $Q$ has a version in $C^\gamma(\T)$ for every $\gamma<\frac12$, so $\mu_Q(\mathbb X)=1$; hence $\mu_Q$ and each equivalent Gibbs perturbation may be regarded as Radon measures on $\mathbb X$.

We work with the real orthonormal basis
\begin{equation}\label{eq:basis}
    e_0 = (2\pi)^{-1/2},\qquad
    e_k^c = \pi^{-1/2}\cos(kx),\qquad
    e_k^s = \pi^{-1/2}\sin(kx),\quad k\ge 1.
\end{equation}
This basis diagonalises $A$ and $Q$. The eigenvalues are $\lambda_0 = m$ (multiplicity $d_0=1$) and $\lambda_k = k^2+m$ (multiplicity $d_k = 2$ for $k\ge 1$). In one spatial dimension, whenever the real basis is enumerated by a single index $j$, we write $Ae_j=\lambda_je_j$ and $Qe_j=q_je_j$, so $q_j=\lambda_j^{-1}$; thus $q_0=m^{-1}$ and $q_{k,c}=q_{k,s}=(k^2+m)^{-1}$ for $k\ge1$. With $\Tr$ denoting the operator trace,
\begin{equation}\label{eq:trQ}
    \Tr Q = \frac{1}{m} + 2\sum_{k\ge 1}\frac{1}{k^2+m} < \infty,
\end{equation}
so $W^Q$ is an $\mathbb H$-valued $Q$-Wiener process and $\mu_Q = \cN(0,Q)$ is the centred Gaussian measure with covariance $Q$ on $\mathbb H$. Thus $W_0^Q=0$, its increments are independent and Gaussian, and
\begin{equation*}
    \E\langle W_t^Q-W_s^Q,h\rangle_{\mathbb H}\langle W_t^Q-W_s^Q,k\rangle_{\mathbb H}
    = (t-s)\langle Qh,k\rangle_{\mathbb H}
\end{equation*}
for $0\le s\le t$ and $h,k\in\mathbb H$.

Since $A=-\partial_x^2+m$ is strictly positive, $Q=A^{-1}$ is the periodic Green operator of $A$: for $h\in\mathbb H$, the element $u=Qh$ is the unique periodic weak solution of $Au=h$. Equivalently, $Qh$ is the convolution of $h$ with the periodic Green kernel of $A$. The same operator has a canonical covariance-map extension $Q:\mathbb X^*\to\mathbb X$, where $\mathbb X^*$ is the continuous dual of $\mathbb X$. Let $i^*:\mathbb X^*\to\mathbb K$ be the adjoint of the embedding in \cref{lem:cameron-martin-embedding}, with $\mathbb K^*$ identified with $\mathbb K$ by the Riesz map. Then, with $Q\ell$ viewed as an element of $\mathbb K$,
\begin{equation}\label{eq:banach-covariance-map}
    i^*\ell=Q\ell,\qquad
    \langle i^*\ell,i^*r\rangle_{\mathbb K}=r(Q\ell),
    \qquad \ell,r\in\mathbb X^*.
\end{equation}
Thus the Hilbert-space covariance used in the Fourier calculations and the Banach-space covariance used in the coordinate martingale problem are the same operator under the natural embeddings.

For $N\ge0$, let $\mathbb H_N=\operatorname{span}\{e_0,e_k^c,e_k^s:1\le k\le N\}$ and let $P_N$ be the $\mathbb H$-orthogonal projection onto $\mathbb H_N$. From this point onward, $N$ in $\mathbb H_N,P_N,A_N,Q_N$ is a frequency cutoff, not an enumeration cutoff; the $N$ used for the exclusion processes in \S\ref{sec:examples} is the finite ring size. We write $A_N=P_NA|_{\mathbb H_N}$ and $Q_N=P_NQ|_{\mathbb H_N}=A_N^{-1}$, and $I_N$ denotes the identity on $\mathbb H_N$. We denote by $\cF C_b^\infty$ the algebra of functions
\begin{equation*}
    F(\phi)=g(\langle\phi,e_{j_1}\rangle_{\mathbb H},\ldots, \langle\phi,e_{j_n}\rangle_{\mathbb H}),
    \qquad g\in C_b^\infty(\R^n),
\end{equation*}
where the $e_{j_i}$ belong to the fixed real Fourier system in \eqref{eq:basis}. Thus every cylinder function depends on finitely many Fourier coordinates.

For the Langevin equations considered here, we realise the form structure through the gradient energy associated with the noise covariance $Q$. For $F\in\cF C_b^\infty$, its Cameron--Martin gradient is the vector $\nabla_{\mathbb K}F\in\mathbb K$ determined by
\begin{equation}\label{eq:K-gradient-convention}
    \nabla_{\mathbb K}F=Q\nabla_{\mathbb H}F,
    \qquad
    \partial_hF=\langle\nabla_{\mathbb K}F,h\rangle_{\mathbb K},
    \quad h\in\mathbb K.
\end{equation}
This is the gradient used in the Gibbs energy below. Gaussian integration by parts in these directions, followed by the Gibbs-weighted argument below, allows the cylinder gradient to be closed and applied to relative densities in its Sobolev domain.

For Banach spaces $\mathbb U,\mathbb V$, let $\mathcal L(\mathbb U,\mathbb V)$ denote the bounded linear maps from $\mathbb U$ to $\mathbb V$, and write $\mathcal L(\mathbb U)=\mathcal L(\mathbb U,\mathbb U)$. For an operator on a Hilbert space, $T^*$ denotes its Hilbert-space adjoint. For a scalar cylinder functional $F:\mathbb H\to\mathbb R$, $\mathrm DF(\phi)\in\mathbb H^*$ denotes its Fr\'echet differential. Its $\mathbb H$-Riesz representative is denoted $\nabla_{\mathbb H}F(\phi)\in\mathbb H$, so $\mathrm DF(\phi)[h]=\langle\nabla_{\mathbb H}F(\phi),h\rangle_{\mathbb H}$; the Hessian operator is $\nabla_{\mathbb H}^2F(\phi):=\mathrm D(\nabla_{\mathbb H}F)(\phi)\in\mathcal L(\mathbb H)$. More generally, for a Fr\'echet differentiable map $G:\mathbb U\to\mathbb V$, $\mathrm DG(x)\in\mathcal L(\mathbb U,\mathbb V)$ denotes its derivative. If a potential $V$ is only differentiable along Cameron--Martin directions, we use $\partial_hV$. The notation $\nabla_{\mathbb H}V$ is used only when a vector in $\mathbb H$ represents those derivatives, $\partial_hV=\langle\nabla_{\mathbb H}V,h\rangle_{\mathbb H}$. Thus $\mathrm D$ is reserved for Fr\'echet differentiation, $\nabla_{\mathbb H}$ for $\mathbb H$-gradients, and $\nabla_{\mathbb K}$ for Cameron--Martin gradients; domains are written $\operatorname{Dom}$. We use $\Tr$ for the trace of a trace-class operator and $\|\cdot\|_{\mathrm{HS}}$ for the Hilbert--Schmidt norm.

In finite dimensions, if $\kappa(dx)=\rho(x)\,dx$ with a positive $C^1$ density, ordinary integration by parts gives, for every smooth compactly supported $F$,
\begin{equation*}
    \int \partial_hF\,d\kappa
    = -\int F\,\partial_h\rho\,dx
    = \int F[-\partial_h\log\rho]\,d\kappa.
\end{equation*}
Thus the coefficient in the integration-by-parts formula is $-\partial_h\log\rho$. On the field state space we use the same formula as the definition, without referring to an ambient Lebesgue density.

\begin{definition}[Logarithmic derivative]
\label{def:logarithmic-derivative}
Let $\kappa$ be a probability measure on $\mathbb X$ and let $h\in\mathbb K$. A function $\beta_h\in L^1(\kappa)$ is called the logarithmic derivative of $\kappa$ in the direction $h$ if
\begin{equation*}
    \int_{\mathbb X}\partial_hF\,d\kappa = \int_{\mathbb X}F\beta_h\,d\kappa
\end{equation*}
for every smooth cylinder function $F$. We use this positive-sign convention for $\beta_h$. When applying \cite{AlbeverioMaRockner2015}, we write $\beta_h^{\rm AMR}:=-\beta_h$ for its logarithmic derivative.
\end{definition}

For the Gibbs perturbation $\kappa=Z^{-1}e^{-V}\mu_Q$ and the Fourier Cameron--Martin direction $h_j=\sqrt{q_j}e_j$, the formula used below is
\begin{equation*}
    \beta_{h_j}(\phi) = \frac{\langle\phi,e_j\rangle_{\mathbb H}}{\sqrt{q_j}} + \partial_{h_j}V(\phi).
\end{equation*}
The hypotheses ensuring this integration-by-parts identity and the $L^2(\kappa)$ integrability of its right-hand side are verified in \cref{prop:closability-criterion}; that integrability is what yields closability of the Gibbs gradient pre-form.

The symbol $P_t$ denotes a Markov semigroup, whereas $P_N$ is a Fourier projection. The spaces $H^s(\T)$ and $L^p(\T)$ denote, respectively, the usual $L^2$-based periodic Sobolev spaces and periodic Lebesgue spaces. Constants such as $C$, $C_s$ and $C_\lambda$ are finite positive constants depending only on the displayed subscripts and may change from line to line.

The following standard Gaussian facts will be used to verify integrability and support properties in the concrete Gibbs models.
\begin{lemma}[Gaussian integrability, moments and support \cite{Bogachev1998}]
\label{lem:gaussian-measure-facts}
Let $\gamma$ be a centred Gaussian Radon probability measure on a separable Banach space $\mathbb E$, and let $\mathbb H_\gamma$ be its Cameron--Martin space. Then there exists $a>0$ such that
\begin{equation*}
    \int_{\mathbb E}\exp\!\bigl(a\|x\|_{\mathbb E}^2\bigr)\,d\gamma(x)<\infty,
\end{equation*}
and
\begin{equation*}
    \operatorname{supp}\gamma = \overline{\mathbb H_\gamma}^{\,\mathbb E}.
\end{equation*}
If $Z$ is a centred real Gaussian random variable, then, for every $p>0$,
\begin{equation*}
    \E|Z|^p=c_p\bigl(\E|Z|^2\bigr)^{p/2}, \qquad c_p=\E|\xi|^p,\quad \xi\sim\cN(0,1).
\end{equation*}
\end{lemma}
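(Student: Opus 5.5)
The lemma collects three standard Gaussian facts, and I will treat them separately.

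\textbf{Fernique's theorem.} I will use the rotation-invariance argument. Let $X,Y$ be independent copies with law $\gamma$. Then $U=(X+Y)/\sqrt2$ and $W=(X-Y)/\sqrt2$ are again independent with law $\gamma$. Separability and the Radon property make $\|\cdot\|_{\mathbb E}$ measurable, and they also make $(X,Y)\mapsto(U,W)$ measurable on $\mathbb E\times\mathbb E$; equality of the joint law can be checked on finitely many functionals in $\mathbb E^*$, where it reduces to a finite-dimensional Gaussian computation.

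For $0<s<t$ the triangle inequality gives
\begin{equation*}
\gamma(\|x\|\le s)\,\gamma(\|x\|>t)\le\gamma\bigl(\|x\|>(t-s)/\sqrt2\bigr)^2 .
\end{equation*}
Choose $s$ with $\gamma(\|x\|\le s)\ge 3/4$ and iterate along $t_{n+1}=s+\sqrt2\,t_n$. The ratios $\gamma(\|x\|>t_n)/\gamma(\|x\|\le s)$ are then bounded by $3^{-2^n}$, while $t_n\asymp 2^{n/2}$. This is a Gaussian tail, which integrates against $e^{a r^2}$ for small $a>0$.

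\textbf{Support.} For the inclusion $\overline{\mathbb H_\gamma}\subset\operatorname{supp}\gamma$, I will use the Cameron--Martin theorem. Each shift $\gamma(\cdot-h)$ with $h\in\mathbb H_\gamma$ is equivalent to $\gamma$, so balls around $h$ carry positive mass whenever the ball $B(0,r)$ does. That $\gamma(B(0,r))>0$ for every $r$ follows from Anderson's inequality together with a countable covering of the separable space by balls of radius $r$. The support is closed, so this inclusion passes to the closure.

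For the reverse inclusion, suppose $x_0\notin\overline{\mathbb H_\gamma}$. Hahn--Banach gives $\ell\in\mathbb E^*$ that vanishes on $\mathbb H_\gamma$ with $\ell(x_0)\ne0$. For $\ell\in\mathbb E^*$ we have $\int\ell^2\,d\gamma=\|R_\gamma\ell\|_{\mathbb H_\gamma}^2$, and $\ell$ vanishing on $\mathbb H_\gamma$ forces this to be zero. Hence $\ell=0$ $\gamma$-a.s., so the support lies in the closed hyperplane $\ker\ell$, which does not contain $x_0$.

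\textbf{Moments.} Write $Z=\sigma\xi$ with $\sigma^2=\E Z^2$. Then $\E|Z|^p=\sigma^p\,\E|\xi|^p$ by scaling. The degenerate case $\sigma=0$ is trivial.

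\textbf{Main obstacle.} I expect the hardest step to be the measurability and independence bookkeeping for the rotation on a general separable Banach space in the Fernique argument. I would handle it by reducing to cylinder functionals via characteristic functionals.
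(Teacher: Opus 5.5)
Your proof is correct and matches the paper's route. The paper proves nothing here and only cites Bogachev; your arguments are the standard textbook ones behind that citation: Fernique via the $\pi/4$-rotation, Cameron--Martin quasi-invariance with small-ball positivity and Hahn--Banach for the support, and scaling for the moments. The one step worth writing out is in the reverse support inclusion: $\int\ell^2\,d\gamma=\ell(R_\gamma\ell)$ with $R_\gamma\ell\in\mathbb H_\gamma$, which is why $\ell|_{\mathbb H_\gamma}=0$ forces $\ell=0$ $\gamma$-a.s.
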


When the closed form is quasi-regular, it determines an associated process. To identify that process with the target stochastic equation, we fix the notions of probabilistically weak and strong solutions and the coordinate martingale problem used below.

\begin{definition}[Probabilistic solutions and coordinate martingale problem]
Let $\mathbb S$ be a separable Banach or Hilbert state space, let $b:\mathbb S\to\mathbb S$ be a Borel drift for which the integral below is defined, and let $Q$ be the covariance operator of an $\mathbb S$-valued Wiener process. For an equation
\begin{equation}\label{eq:generic-spde-convention}
    dX_t=b(X_t)\,dt+\sqrt2\,dW_t^Q
\end{equation}
a probabilistically weak solution with a prescribed initial law consists of a filtered probability space carrying an initial variable $X_0$ with that law, a continuous adapted $\mathbb S$-valued process $X$ and a $Q$-Wiener process $W^Q$ such that
\begin{equation*}
    X_t=X_0+\int_0^t b(X_s)\,ds+\sqrt2\,W_t^Q
\end{equation*}
almost surely for every $t\ge0$. A strong solution is defined on a prescribed stochastic basis with prescribed initial variable and noise and is adapted to their completed filtration; pathwise uniqueness means that two such solutions driven by the same data are indistinguishable. Here ``weak'' is a probabilistic notion, not a statement about weak spatial derivatives.

When the drift is available only through coordinates, we use the canonical coordinate martingale problem: in the application $\mathbb S=\mathbb X$, and for every $\ell\in\mathbb X^*$, the process
\begin{equation*}
    M_t^\ell=\ell(X_t)-\ell(X_0)-\int_0^t b_\ell(X_s)\,ds
\end{equation*}
is a continuous martingale with $\langle M^\ell,M^r\rangle_t=2t\,r(Q\ell)$ for $\ell,r\in\mathbb X^*$. The abstract coordinate drift $b_\ell$ means $b_\ell(x)=\ell(b(x))$ when $b$ is $\mathbb X$-valued; otherwise it is the specified scalar drift in direction $\ell$. The coordinate criterion in \cref{prop:form-spde-identification} states when these martingales determine an $\mathbb X$-valued probabilistically weak solution of the target SPDE and when uniqueness in law identifies its transition semigroup with the form semigroup.
\end{definition}

SPDE well-posedness from a prescribed initial state and admissibility of an initial law for \cref{thm:deBruijn} are distinct requirements. The latter is verified for the selected initial laws in the examples below.

For $f=0$, integration by parts for the Gibbs weight determines its logarithmic derivatives along the Cameron--Martin directions. Their square integrability implies closability of the formal energy $\cE_0$ from \S\ref{sec:setting}, and the closed gradient form satisfies the locality and chain-rule properties used in \cref{thm:deBruijn}. Quasi-regularity gives an associated diffusion, which is identified with the stochastic equation through its coordinate martingales and uniqueness in law.

\subsubsection{Constructing the Closed Gibbs Form}

Let $h_j=\sqrt{q_j}e_j$ be the Fourier Cameron--Martin basis, where $Qe_j=q_je_j$. The family $(h_j)$ is an orthonormal basis of $\mathbb K$. The gradient and directional-derivative conventions are fixed in \eqref{eq:K-gradient-convention}.

\begin{assumption}[Gibbs integration-by-parts hypotheses]\label{ass:gibbs-ibp}
Let $\mathbb X=C(\T;\mathbb R)$, let $\mu_Q$ be the Gaussian measure fixed in \S\ref{sec:conventions}, and let $\kappa=Z^{-1}e^{-V}\mu_Q$ with $0<Z<\infty$ and $V<\infty$ $\mu_Q$-almost everywhere. For every $j$:
\begin{enumerate}[label=(\roman*)]
\item $e^{-V}$ belongs to $W^{1,1}_{h_j}(\mu_Q)$, the closure of the class $\cF C_b^\infty$ of smooth cylinder functions under the norm
\begin{equation*}
    \|u\|_{W^{1,1}_{h_j}(\mu_Q)} := \|u\|_{L^1(\mu_Q)} + \|\partial_{h_j}u\|_{L^1(\mu_Q)},
\end{equation*}
with weak derivative $\partial_{h_j}(e^{-V}) = -(\partial_{h_j}V)e^{-V}$;
\item $\langle\phi,e_j\rangle_{\mathbb H}\in L^2(\kappa)$ and $\partial_{h_j}V\in L^2(\kappa)$.
\end{enumerate}
\end{assumption}

\begin{proposition}[Gibbs logarithmic derivatives and closability]
 \label{prop:closability-criterion}
Under \cref{ass:gibbs-ibp},
\begin{equation}\label{eq:beta-form}
    \beta_j(\phi) = \frac{\langle\phi,e_j\rangle_{\mathbb H}}{\sqrt{q_j}} + \partial_{h_j}V(\phi)\in L^2(\kappa)
\end{equation}
and, for every smooth bounded Fourier cylinder $F$,
\begin{equation}\label{eq:log-derivative}
    \int_{\mathbb X}\partial_{h_j}F\,d\kappa=\int_{\mathbb X}F\beta_j\,d\kappa.
\end{equation}
Consequently $\nabla_{\mathbb K}$ is closable from $L^2(\kappa)$ to $L^2(\kappa;\mathbb K)$, equivalently the pre-form
\begin{equation}\label{eq:pre-form}
    \cE_0(F,G) = \int_{\mathbb X}\langle\nabla_{\mathbb K}F,\nabla_{\mathbb K}G\rangle_{\mathbb K}\,d\kappa
\end{equation}
is closable.
\end{proposition}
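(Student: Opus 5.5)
The plan has three steps: integration by parts for $\mu_Q$, transfer to $\kappa$ by approximating the weight $e^{-V}$, and the standard closability argument via logarithmic derivatives.

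\emph{Step 1: the Gaussian identity.} For $u,F\in\cF C_b^\infty$, write $\xi_j=\langle\phi,e_j\rangle_{\mathbb H}$. The direction $h_j=\sqrt{q_j}e_j$ moves only the coordinate $\xi_j$, and $\xi_j\sim\cN(0,q_j)$ is independent of the remaining Fourier coordinates. One-dimensional Gaussian integration by parts, or equivalently \eqref{eq:G-IBP} with $G=uF\,e_j$, then gives
\begin{equation*}
    \int\partial_{h_j}(uF)\,d\mu_Q=\int uF\,\frac{\xi_j}{\sqrt{q_j}}\,d\mu_Q .
\end{equation*}
Expanding $\partial_{h_j}(uF)=u\,\partial_{h_j}F+F\,\partial_{h_j}u$ yields
\begin{equation*}
    \int u\,\partial_{h_j}F\,d\mu_Q=\int F\Bigl(u\frac{\xi_j}{\sqrt{q_j}}-\partial_{h_j}u\Bigr)d\mu_Q .
\end{equation*}

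\emph{Step 2: passing to $e^{-V}$.} By \cref{ass:gibbs-ibp}(i), choose cylinder functions $u_n\to e^{-V}$ in $W^{1,1}_{h_j}(\mu_Q)$. Since $F$ and $\partial_{h_j}F$ are bounded, the terms $\int u_n\partial_{h_j}F$ and $\int F\partial_{h_j}u_n$ converge. The term $\int F u_n\xi_j/\sqrt{q_j}\,d\mu_Q$ is the delicate one, because $\xi_j$ is unbounded and convergence of $u_n$ holds only in $L^1$.

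I will handle this term by truncation. Replace $F$ by $F\chi_R(\xi_j)$, where $\chi_R$ is a smooth cutoff equal to $1$ on $[-R,R]$ with $\|\chi_R'\|_\infty\le 1/R$. Then $F\chi_R(\xi_j)\,\xi_j$ is bounded, so Step 1 passes to the limit $n\to\infty$. Next let $R\to\infty$ by dominated convergence with respect to $e^{-V}\mu_Q$. The domination is supplied by \cref{ass:gibbs-ibp}(ii), which gives $\xi_j\in L^2(\kappa)\subset L^1(\kappa)$ and $\partial_{h_j}V\in L^1(\kappa)$. The extra term $F\chi_R'(\xi_j)\,\partial_{h_j}\xi_j$ is bounded by $C/R$ and vanishes in the limit.

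Inserting $\partial_{h_j}e^{-V}=-(\partial_{h_j}V)e^{-V}$ and dividing by $Z$ gives \eqref{eq:log-derivative} with $\beta_j$ as in \eqref{eq:beta-form}. Square integrability $\beta_j\in L^2(\kappa)$ is immediate from \cref{ass:gibbs-ibp}(ii). I expect this limit, the interplay between $L^1$ convergence of the weight and the unbounded factor $\xi_j$, to be the main obstacle. An alternative route is to truncate $F$ in the coordinate $\xi_j$ directly, i.e.\ use $F$ times a cutoff in $\xi_j$ as the test function from the start.

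\emph{Step 3: closability.} Apply \eqref{eq:log-derivative} to products $FG$, which remain in the algebra $\cF C_b^\infty$:
\begin{equation*}
    \int G\,\partial_{h_j}F\,d\kappa=\int F\bigl(G\beta_j-\partial_{h_j}G\bigr)\,d\kappa .
\end{equation*}
Now let $F_n\to0$ in $L^2(\kappa)$ with $\nabla_{\mathbb K}F_n\to\Psi$ in $L^2(\kappa;\mathbb K)$. Since $\langle\nabla_{\mathbb K}F_n,h_j\rangle_{\mathbb K}=\partial_{h_j}F_n$, we get
\begin{equation*}
    \int G\langle\Psi,h_j\rangle_{\mathbb K}\,d\kappa=\lim_n\int F_n\bigl(G\beta_j-\partial_{h_j}G\bigr)\,d\kappa=0,
\end{equation*}
because $G\beta_j-\partial_{h_j}G\in L^2(\kappa)$. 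Cylinder functions are dense in $L^2(\kappa)$: they generate the Borel $\sigma$-algebra of $\mathbb X$, since the Fourier coordinates separate points. Hence $\langle\Psi,h_j\rangle_{\mathbb K}=0$ $\kappa$-a.e. for every $j$, and as $(h_j)$ is an orthonormal basis of $\mathbb K$, $\Psi=0$. This closability of $\nabla_{\mathbb K}$ is equivalent to closability of $\cE_0$ in \eqref{eq:pre-form} in the sense of the closure definition: an $\cE_{0,1}$-Cauchy sequence with $F_n\to0$ has $\nabla_{\mathbb K}F_n$ converging to some $\Psi$, which must be zero, so $\cE_0(F_n,F_n)\to0$.
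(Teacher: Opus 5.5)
Your proposal is correct and follows essentially the same route as the paper. A cutoff $\chi_R$ in the $j$-th coordinate makes every multiplier in the Gaussian integration-by-parts identity bounded, so the $W^{1,1}_{h_j}(\mu_Q)$ approximation passes to the limit; $R\to\infty$ is then handled by dominated convergence using the $L^2(\kappa)$ bounds of Assumption~(ii), and closability follows by testing against products with cylinder functions and using completeness of $(h_j)$ in $\mathbb K$.

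The only difference is cosmetic: you approximate $e^{-V}$ and put $F\chi_R(\xi_j)$ into the cylinder test function, whereas the paper approximates $Fe^{-V}$ after applying the product rule. You should say explicitly that $\chi_R$ is compactly supported with $0\le\chi_R\le1$, since both the boundedness of $F\chi_R(\xi_j)\xi_j$ and the final domination rely on it.
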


\begin{proof}
Put $u=Fe^{-V}$ and $\gamma_j(\phi)=\langle\phi,e_j\rangle_{\mathbb H}/\sqrt{q_j}$. The bounded cylinder product rule and \cref{ass:gibbs-ibp}~(i) give $u\in W^{1,1}_{h_j}(\mu_Q)$. Moreover, $\gamma_ju\in L^1(\mu_Q)$ by part~(ii). To justify integration by parts at this endpoint, choose $\chi\in C_c^\infty(\mathbb R)$ equal to one near zero, with $0\le\chi\le1$, and put $\chi_R(r)=\chi(r/R)$. For smooth cylinder approximations $u_n\to u$ in $W^{1,1}_{h_j}(\mu_Q)$, Gaussian integration by parts gives
\begin{equation*}
    \int\chi_R(\gamma_j)\partial_{h_j}u_n\,d\mu_Q = \int [\gamma_j\chi_R(\gamma_j) - \chi_R'(\gamma_j)]u_n \, d\mu_Q.
\end{equation*}
Here $\partial_{h_j}\gamma_j=1$. For fixed $R$, all multipliers are bounded, so passage to $n\to\infty$ is justified by $W^{1,1}$ convergence. Since $\|\chi_R'\|_\infty\le\|\chi'\|_\infty/R$, dominated convergence as $R\to\infty$ yields
\begin{equation*}
    \int\partial_{h_j}(Fe^{-V})\,d\mu_Q = \int\gamma_jFe^{-V}\,d\mu_Q.
\end{equation*}
Expanding the weak derivative and dividing by $Z$ proves \eqref{eq:log-derivative}; \cref{ass:gibbs-ibp}(ii) gives $\beta_j\in L^2(\kappa)$. If $F_n\to0$ in $L^2(\kappa)$ and $\nabla_{\mathbb K}F_n\to G$ in $L^2(\kappa;\mathbb K)$, apply \eqref{eq:log-derivative} to $F_n\psi$:
\begin{equation*}
    \int\psi\,\partial_{h_j}F_n\,d\kappa = \int F_n(\psi\beta_j-\partial_{h_j}\psi)\,d\kappa\longrightarrow0.
\end{equation*}
Smooth bounded Fourier cylinders are dense in $L^2(\kappa)$, so $\langle G,h_j\rangle_{\mathbb K}=0$ almost everywhere for every $j$. A common full-measure set and completeness of $(h_j)$ give $G=0$, proving closability.
\end{proof}

When $\nabla_{\mathbb H}V$ exists, the directional term in \eqref{eq:beta-form} is
\begin{equation*}
    \partial_{h_j}V = \sqrt{q_j}\,\langle\nabla_{\mathbb H}V,e_j\rangle_{\mathbb H}.
\end{equation*}
\begin{proposition}[Properties of the closed gradient form]
\label{prop:closed-gradient-properties}
Under \cref{ass:gibbs-ibp}, the closure $(\cE,\operatorname{Dom}(\cE))$ of~\eqref{eq:pre-form} is a conservative, strongly local, symmetric Dirichlet form. In particular, it satisfies \cref{ass:equilibrium,ass:diffusion-calculus}.
\end{proposition}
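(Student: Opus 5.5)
The plan is to transfer the required properties from the cylinder algebra $\cF C_b^\infty$ to the closure, using \cref{prop:closability-criterion} for closedness. On cylinders the energy density is
\begin{equation*}
\Gamma(F,G)=\langle\nabla_{\mathbb K}F,\nabla_{\mathbb K}G\rangle_{\mathbb K}=\sum_j\partial_{h_j}F\,\partial_{h_j}G .
\end{equation*}
By \cref{prop:closability-criterion} the closure $(\cE,\operatorname{Dom}(\cE))$ is a densely defined, closed, nonnegative symmetric form. Density holds because $\cF C_b^\infty$ is dense in $L^2(\kappa)$. Conservativity is immediate: the constant $1$ is a cylinder function with $\nabla_{\mathbb K}1=0$, so $\cE(1,1)=0$.

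\textbf{Markov property.} I would check it first on cylinders with smooth normal contractions. If $F=g(\langle\phi,e_{j_1}\rangle,\dots)$ and $\eta_\varepsilon$ is smooth, $1$-Lipschitz, with $\eta_\varepsilon(r)\to(0\vee r)\wedge1$, then $\eta_\varepsilon\circ F$ is again a cylinder function. Pointwise $|\nabla_{\mathbb K}(\eta_\varepsilon\circ F)|\le|\nabla_{\mathbb K}F|$, so $\cE(\eta_\varepsilon\circ F)\le\cE(F)$.

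The passage to general $u\in\operatorname{Dom}(\cE)$ uses the standard lemma from \cite[Chapter~I]{MaRoeckner1992}. Take cylinder approximants $F_n\to u$ in $\cE_1$. The images $\eta_\varepsilon\circ F_n$ are $\cE_1$-bounded and converge to $\eta_\varepsilon\circ u$ in $L^2$. Closedness together with Banach--Saks or weak compactness then gives $\eta_\varepsilon\circ u\in\operatorname{Dom}(\cE)$ with the same energy bound, and letting $\varepsilon\to0$ treats the unit contraction. The same argument shows that the closed gradient $\nabla_{\mathbb K}$ extends to $\operatorname{Dom}(\cE)$. It satisfies $\cE(u,v)=\int\langle\nabla_{\mathbb K}u,\nabla_{\mathbb K}v\rangle_{\mathbb K}\,d\kappa$, which exhibits the carré du champ $\Gamma(u,v)=\langle\nabla_{\mathbb K}u,\nabla_{\mathbb K}v\rangle_{\mathbb K}\in L^1(\kappa)$.

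\textbf{Chain rule.} For bounded $F\in\operatorname{Dom}(\cE)$ and $\Phi\in C^1$ with $\Phi'$ bounded on the range of $F$, I would first modify $\Phi$ outside a neighbourhood of the range so that $\Phi\in C^1_b$ with bounded derivative. Then choose cylinder $F_n\to F$ in $\cE_1$, truncated to be uniformly bounded by composing with a smooth cutoff (allowed by the contraction step). Passing to a subsequence gives $F_n\to F$ and $\nabla_{\mathbb K}F_n\to\nabla_{\mathbb K}F$ $\kappa$-a.e. On cylinders $\nabla_{\mathbb K}\Phi(F_n)=\Phi'(F_n)\nabla_{\mathbb K}F_n$. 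Dominated convergence, using uniform boundedness of $\Phi'$ and continuity of $\Phi'$, gives convergence in $L^2(\kappa;\mathbb K)$ to $\Phi'(F)\nabla_{\mathbb K}F$. Closedness of $\nabla_{\mathbb K}$ then yields $\Phi(F)\in\operatorname{Dom}(\cE)$ and $\nabla_{\mathbb K}\Phi(F)=\Phi'(F)\nabla_{\mathbb K}F$. Taking the inner product with $\nabla_{\mathbb K}G$ gives the diffusion chain rule of \cref{ass:diffusion-calculus}.

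\textbf{Strong locality.} This follows from the chain rule. Suppose $u=c$ on a neighbourhood $U$ of $\operatorname{supp}v$. Apply the chain rule with $\Phi(r)=r-c$ composed with suitable smooth cutoffs; this shows $\nabla_{\mathbb K}u=0$ a.e. on $\{u=c\}$, first for bounded $u$ and then for general $u$ by truncation. Since $\nabla_{\mathbb K}v=0$ a.e. off $\{v\neq0\}$ by the same reasoning, the integrand $\langle\nabla_{\mathbb K}u,\nabla_{\mathbb K}v\rangle_{\mathbb K}$ vanishes a.e., so $\cE(u,v)=0$.

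\textbf{Main obstacle.} The delicate step is the identification of the closure with the gradient representation $\cE(u,v)=\int\langle\nabla_{\mathbb K}u,\nabla_{\mathbb K}v\rangle\,d\kappa$ on the full domain. Equivalently, this is the almost-everywhere locality statement $\nabla_{\mathbb K}u=0$ on level sets for non-cylinder $u$. I would handle it via the subsequence and a.e.-convergence argument above, relying on closability of $\nabla_{\mathbb K}$ as an operator, which \cref{prop:closability-criterion} provides.
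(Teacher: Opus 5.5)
Your proposal is correct in substance and follows the paper's architecture: the closure is the closed cylinder gradient $D_{\mathbb K}$, conservativity comes from $1\in\cF C_b^\infty$, the $C^1$ chain rule comes from closedness along cylinder approximants, and strong locality comes from $D_{\mathbb K}u=0$ on level sets. The one genuinely different step is the Markov property.

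You check the Markov inequality for smooth $\varepsilon$-contractions on the core. You then invoke the standard closure theorem, namely lower semicontinuity of a closed form under $L^2$ convergence with bounded energies, used once for $F_n\to u$ and once for $\varepsilon\to0$. The paper instead proves, in \cref{lem:closed-gradient-markov}, the stronger pointwise bound $\|D_{\mathbb K}(\theta\circ u)\|_{\mathbb K}\le\|D_{\mathbb K}u\|_{\mathbb K}$ a.e.\ for every normal contraction. It does so through weak closedness of the graph of $D_{\mathbb K}$ and of a convex set in $L^2(\kappa;\mathbb K)$, and along the way it obtains the smooth chain rule on the whole closed domain, which it reuses for locality. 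Your route is shorter and purely form-theoretic. The paper's route gives pointwise information that this proposition does not need.

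Three points need tightening.

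\emph{Chain rule.} When $\Phi$ is only $C^1$, $\Phi(F_n)$ is not in $\cF C_b^\infty$, so you cannot read off $\nabla_{\mathbb K}\Phi(F_n)=\Phi'(F_n)\nabla_{\mathbb K}F_n$ ``on cylinders''. Run your argument first with mollified $\Phi_k=\rho_{1/k}*\widehat\Phi\in C_b^\infty$, then let $k\to\infty$ using closedness a second time, as the paper does.

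\emph{Locality.} A single application of the chain rule does not give locality; you need a family of contractions. Take $\theta_\varepsilon(r)=\varepsilon[\psi((r-c)/\varepsilon)-\psi(-c/\varepsilon)]$. Then $\theta_\varepsilon(u)\to0$ in $L^2(\kappa)$, while $D_{\mathbb K}\theta_\varepsilon(u)=\psi'((u-c)/\varepsilon)D_{\mathbb K}u\to\mathbf 1_{\{u=c\}}D_{\mathbb K}u$. Closedness therefore forces $\mathbf 1_{\{u=c\}}D_{\mathbb K}u=0$. Since $\theta_\varepsilon$ is globally Lipschitz, this works directly for unbounded $u$, and your truncation detour is unnecessary.

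\emph{Main obstacle.} The step you single out is not equivalent to level-set locality. The representation $\cE(u,v)=\int\langle D_{\mathbb K}u,D_{\mathbb K}v\rangle_{\mathbb K}\,d\kappa$ on all of $\operatorname{Dom}(\cE)$ follows immediately from closability of $\nabla_{\mathbb K}$ as an operator, because $\cE_{0,1}$-Cauchy sequences are exactly graph-norm Cauchy sequences; no weak-compactness argument is needed. The identity $\mathbf 1_{\{u=c\}}D_{\mathbb K}u=0$ is a separate fact and requires the argument above.
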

\begin{proof}
\par\noindent\textbf{Step 1. Closed-gradient representation.}\par
By \cref{prop:closability-criterion}, the operator
\begin{equation*}
    \nabla_{\mathbb K}:\cF C_b^\infty \subset L^2(\kappa)\longrightarrow L^2(\kappa;\mathbb K)
\end{equation*}
is closable. Denote its closure by $D_{\mathbb K}$. Explicitly,
\begin{equation*}
    \operatorname{Dom}(D_{\mathbb K})
    =\left\{u\in L^2(\kappa):
    \begin{array}{l}
        \text{there exist $F_n\in\cF C_b^\infty$ and $G\in L^2(\kappa;\mathbb K)$ such that}\\
        F_n\to u\text{ in }L^2(\kappa)\text{ and }
        \nabla_{\mathbb K}F_n\to G\text{ in }L^2(\kappa;\mathbb K)
    \end{array}\right\},
    \qquad D_{\mathbb K}u:=G.
\end{equation*}
Closability makes the limit $G$ independent of the approximating sequence. The definition of the closure gives
\begin{equation*}
    \operatorname{Dom}(\cE)=\operatorname{Dom}(D_{\mathbb K}),
    \qquad
    \cE(u,v)=\int_{\mathbb X}
    \langle D_{\mathbb K}u,D_{\mathbb K}v\rangle_{\mathbb K}\,d\kappa.
\end{equation*}
Thus symmetry, non-negativity and closedness follow directly from the closed operator $D_{\mathbb K}$.

The operator $D_{\mathbb K}$ is, by construction, the closure of the cylinder gradient, and the cylinder core contains the constant function $1$. Thus the two hypotheses of \cref{lem:closed-gradient-markov} hold. By \cref{lem:closed-gradient-markov}, the closed form has the Markov property and is conservative.

\pagebreak[3]
\par\noindent\textbf{Step 2. Locality and diffusion calculus.}\par
We first verify locality directly. Choose a bounded smooth $\psi$ with $\psi'$ compactly supported, $0\le\psi'\le1$ and $\psi'(0)=1$. For $c\in\mathbb R$ and $\varepsilon>0$, set
\begin{equation*}
    \theta_\varepsilon(r) = \varepsilon\bigl[\psi((r-c)/\varepsilon)-\psi(-c/\varepsilon)\bigr].
\end{equation*}
This is a smooth normal contraction. The smooth chain rule proved in \cref{lem:closed-gradient-markov} therefore gives
\begin{equation*}
    D_{\mathbb K}\theta_\varepsilon(u) = \psi'((u-c)/\varepsilon)D_{\mathbb K}u
    \longrightarrow
    \mathbf1_{\{u=c\}}D_{\mathbb K}u
    \quad\text{in }L^2(\kappa;\mathbb K).
\end{equation*}
The convergence follows by dominated convergence, whereas
$\|\theta_\varepsilon(u)\|_{L^2(\kappa)}\le2\varepsilon\|\psi\|_\infty\to0$.
Closedness implies $\mathbf1_{\{u=c\}}D_{\mathbb K}u=0$. If $u$ is constant on an open neighbourhood $U$ of the support of $v$, then $D_{\mathbb K}u=0$ on $U$, while $D_{\mathbb K}v=0$ on the open set $\mathbb X\setminus\operatorname{supp}v$, where $v$ is identically zero. The closed-gradient representation therefore gives $\cE(u,v)=0$, and the form is strongly local.

For bounded $u,v\in\operatorname{Dom}(\cE)$, define
\begin{equation*}
    \Gamma(u,v) = \langle D_{\mathbb K}u,D_{\mathbb K}v\rangle_{\mathbb K}.
\end{equation*}
Pointwise Cauchy--Schwarz in $\mathbb K$, followed by Cauchy--Schwarz in $L^2(\kappa)$, gives
\begin{align*}
    \int_{\mathbb X}|\Gamma(u,v)|\,d\kappa
    &\le \left( \int_{\mathbb X}\|D_{\mathbb K}u\|_{\mathbb K}^2\,d\kappa \right)^{1/2}\left(\int_{\mathbb X}\|D_{\mathbb K}v\|_{\mathbb K}^2\,d\kappa\right)^{1/2}\\
    &= \cE(u,u)^{1/2}\cE(v,v)^{1/2}<\infty.
\end{align*}
Thus $\Gamma(u,v)\in L^1(\kappa)$, and
\begin{equation*}
    \cE(u,v)=\int_{\mathbb X}\Gamma(u,v)\,d\kappa.
\end{equation*}
If $\Phi\in C^1(\mathbb R)$ and $\Phi'$ is bounded on the range of $u$, choose a compact interval $I$ containing both $0$ and the essential range of $u$. Multiplying $\Phi'$ by a smooth cutoff equal to one on $I$, integrating from $0$ and adding $\Phi(0)$ produces a function $\widehat\Phi\in C_b^1(\mathbb R)$ such that $\widehat\Phi=\Phi$ on $I$ and $\widehat\Phi'$ has compact support. Let $\rho$ be a standard mollifier and set
\begin{equation*}
    \Phi_n=\rho_{1/n}*\widehat\Phi.
\end{equation*}
Then $\Phi_n\in C_b^\infty(\mathbb R)$ and
\begin{equation*}
 \|\Phi_n-\widehat\Phi\|_\infty
 +\|\Phi_n'-\widehat\Phi'\|_\infty\longrightarrow0,
 \qquad
 M:=\sup_n\|\Phi_n'\|_\infty<\infty.
\end{equation*}

Choose $F_j\in\cF C_b^\infty$ such that
\begin{equation*}
 F_j\longrightarrow u\quad\text{in }L^2(\kappa),
 \qquad
 \nabla_{\mathbb K}F_j\longrightarrow D_{\mathbb K}u
 \quad\text{in }L^2(\kappa;\mathbb K).
\end{equation*}
For fixed $n$, the cylinder chain rule gives
\begin{equation*}
 \nabla_{\mathbb K}\Phi_n(F_j)
 =\Phi_n'(F_j)\nabla_{\mathbb K}F_j.
\end{equation*}
The uniform Lipschitz bound gives
\begin{equation*}
 \|\Phi_n(F_j)-\Phi_n(u)\|_{L^2(\kappa)}
 \le M\|F_j-u\|_{L^2(\kappa)}\longrightarrow0.
\end{equation*}
For the gradients,
\begin{equation*}
    \|\Phi_n'(F_j)\nabla_{\mathbb K}F_j - \Phi_n'(u)D_{\mathbb K}u\|_{L^2(\kappa;\mathbb K)}
    \le M\|\nabla_{\mathbb K}F_j-D_{\mathbb K}u\|_{L^2(\kappa;\mathbb K)} + \|[\Phi_n'(F_j) - \Phi_n'(u)]D_{\mathbb K}u\|_{L^2(\kappa;\mathbb K)}.
\end{equation*}
The first term tends to zero. For the second, put $a_j=|\Phi_n'(F_j)-\Phi_n'(u)|^2$. Then $a_j\to0$ in measure and $0\le a_j\le4M^2$. Since $\kappa$ is finite, $\int a_j\,d\kappa\to0$, and for every $R>0$,
\begin{equation*}
    \int_{\mathbb X}a_j\|D_{\mathbb K}u\|_{\mathbb K}^2\,d\kappa
    \le R\int_{\mathbb X}a_j\,d\kappa + 4M^2\int_{\{\|D_{\mathbb K}u\|_{\mathbb K}^2>R\}} \|D_{\mathbb K}u\|_{\mathbb K}^2\,d\kappa.  
\end{equation*}
First letting $j\to\infty$ and then $R\to\infty$ shows that the second term also tends to zero. Closedness of $D_{\mathbb K}$ therefore yields
\begin{equation*}
    \Phi_n(u)\in\operatorname{Dom}(D_{\mathbb K}),
    \qquad
    D_{\mathbb K}\Phi_n(u)=\Phi_n'(u)D_{\mathbb K}u.
\end{equation*}
Finally, the preceding uniform convergence and the fact that $u\in I$ almost everywhere imply
\begin{align*}
    \Phi_n(u)&\longrightarrow\Phi(u)
    &&\text{in }L^2(\kappa),\\
    \Phi_n'(u)D_{\mathbb K}u&\longrightarrow
    \Phi'(u)D_{\mathbb K}u
    &&\text{in }L^2(\kappa;\mathbb K).
\end{align*}
A second application of closedness gives
\begin{equation*}
    \Phi(u)\in\operatorname{Dom}(D_{\mathbb K}),
    \qquad
    D_{\mathbb K}(\Phi(u))=\Phi'(u)D_{\mathbb K}u.
\end{equation*}
Consequently,
\begin{equation*}
    \Gamma(\Phi(u),v) = \Phi'(u)\Gamma(u,v)
    \qquad\kappa\text{-a.e.},
\end{equation*}
which is precisely the diffusion calculus in \cref{ass:diffusion-calculus}.
\end{proof}

The logarithmic derivatives have two related roles. Their $L^2(\kappa)$ bounds give the integration-by-parts estimates needed to close the cylinder gradient. Their values on the coordinate directions then determine the drift in the Fukushima decomposition of the associated process. They therefore connect the Gibbs measure first to the closed local form and, through the coordinate martingales, to the target SPDE.

\subsubsection{Identifying the Form Dynamics}

The closed form supplies the analytic dynamics, but the equation of interest is specified through its drift and noise. The remaining step is to identify these two descriptions of the same Markov evolution. We record the required correspondence at the level at which it is used.

\begin{assumption}[SPDE correspondence]\label{ass:form-spde-interface}
Let $(\cE,\operatorname{Dom}(\cE))$ be a conservative quasi-regular symmetric Dirichlet form on $L^2(\mathbb X,\kappa)$ with semigroup $(T_t^{\cE})_{t\ge0}$. Outside a properly exceptional set, the target $\mathbb X$-valued SPDE admits a Borel Markov family of continuous probabilistically weak solutions. These solutions are unique in law for $\kappa$-almost every initial point, and their transition semigroup satisfies
\begin{equation}\label{eq:form-spde-semigroup}
    T_t^{\cE}F=P_t^{\mathrm{SPDE}}F
    \quad\text{in }L^2(\kappa),\qquad t\ge0,
\end{equation}
for every bounded Borel function $F$ on $\mathbb X$.
\end{assumption}

\begin{corollary}[Equilibrium identities for the identified SPDE]
 \label{cor:equilibrium-spde-transfer}
Assume \cref{ass:gibbs-ibp}, let $(\cE,\operatorname{Dom}(\cE))$ be the closure of \eqref{eq:pre-form}, and suppose that this form and the target SPDE satisfy \cref{ass:form-spde-interface}. Then $\kappa$ is invariant for the SPDE, the entropy conclusions of \cref{thm:deBruijn} hold for its time marginals, and its stationary path law satisfies the detailed-balance and path-reversal conclusions of \cref{thm:eq-zero}.
\end{corollary}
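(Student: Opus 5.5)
The plan is to transfer every conclusion through the semigroup identity \eqref{eq:form-spde-semigroup}, so that no new analysis of the SPDE itself is needed.

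First, by \cref{prop:closed-gradient-properties}, the closure $(\cE,\operatorname{Dom}(\cE))$ of \eqref{eq:pre-form} satisfies \cref{ass:equilibrium,ass:diffusion-calculus}. By \cref{ass:form-spde-interface} it is moreover quasi-regular. Hence \cref{prop:form-invariance}, \cref{thm:deBruijn} and, since $\mathbb X=C(\T;\R)$ is Polish and in particular Lusin, \cref{prop:associated-diffusion} and \cref{thm:eq-zero} all apply to the form semigroup $T_t^{\cE}$ and its properly associated diffusion.

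\emph{Invariance.} For bounded Borel $F$, \eqref{eq:form-spde-semigroup} and \eqref{eq:form-semigroup-invariance} give
\[
\int P_t^{\mathrm{SPDE}}F\,d\kappa=\int T_t^{\cE}F\,d\kappa=\int F\,d\kappa .
\]
Since $\kappa$-null sets are irrelevant in these integrals, $\kappa$ is invariant for the SPDE law started from $\kappa$. The initial law $\kappa$ does not charge the properly exceptional set, so the solution family is defined $\kappa$-almost everywhere.

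\emph{Entropy.} Let $h_0\kappa$ be an initial law with $h_0\ge0$ and $\int h_0\,d\kappa=1$. The time-$t$ law of the SPDE started from $h_0\kappa$ is
\[
\mu_t(F)=\int P_t^{\mathrm{SPDE}}F\,h_0\,d\kappa .
\]
For bounded $h_0$, self-adjointness of $T_t^{\cE}$ and \eqref{eq:form-spde-semigroup} turn this into $\int F\,T_t^{\cE}h_0\,d\kappa$, so $\mu_t=(T_t^{\cE}h_0)\kappa$. For general $h_0\in L^1(\kappa)$, I would truncate $h_0\wedge n$, use monotone convergence, and use the $L^1$-contractive Markov extension of $T_t^{\cE}$. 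Thus $h_t$ in \cref{thm:deBruijn} is exactly the density of the SPDE marginal, and both (i) and (ii) hold verbatim for it.

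\emph{Path level.} The main step is to show that the stationary SPDE path law coincides with $\Pbb_{\kappa,T}$ of the associated diffusion, so that \cref{thm:eq-zero} applies. Both processes are continuous Markov processes with initial law $\kappa$, and their transition semigroups agree $\kappa$-a.e. by \eqref{eq:form-spde-semigroup}. The finite-dimensional distributions are given by the iterated formula \eqref{eq:forward-fdd}. I would evaluate the nested integrand inductively from the innermost factor, replacing each $P^{\mathrm{SPDE}}_{\Delta_i}$ by $T^{\cE}_{\Delta_i}$. This replacement is legitimate because at each stage the functions involved are bounded Borel and are integrated against $\kappa$-absolutely continuous laws (by invariance), so $\kappa$-a.e. equalities suffice.

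The obstacle I expect is this bookkeeping: \eqref{eq:form-spde-semigroup} is only an $L^2(\kappa)$ equality, whereas the Markov property of the SPDE uses pointwise kernels. Invariance of $\kappa$ is exactly what makes the intermediate marginals absolutely continuous, which removes the difficulty. Equal finite-dimensional distributions and continuity of paths then give equal laws on $C([0,T];\mathbb X)$. Finally, \eqref{eq:path-rev} and the two-time detailed-balance identity transfer directly; alternatively, detailed balance follows immediately from self-adjointness of $T_t^{\cE}=P_t^{\mathrm{SPDE}}$ in $L^2(\kappa)$.
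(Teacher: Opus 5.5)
Your proposal is correct and follows essentially the same route as the paper. It obtains invariance from \eqref{eq:form-spde-semigroup} and \cref{prop:form-invariance}, identifies the time-marginal density as $T_t^{\cE}h_0$ by duality and self-adjointness, and runs a backward induction on the nested finite-dimensional-distribution formula, where invariance of $\kappa$ makes $P_t^{\mathrm{SPDE}}$ respect $\kappa$-null sets, before applying \cref{thm:eq-zero}. The only cosmetic difference is that you pass from bounded to general densities by truncating $h_0\wedge n$, whereas the paper defines the $L^1(\kappa)$ preadjoint of the kernel and approximates by $L^1\cap L^2$ functions.
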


\begin{proof}
\par\noindent\textbf{Step 1. Invariance and time-marginal
densities.}\par By \cref{prop:closability-criterion,prop:closed-gradient-properties}, the Gibbs pre-form closes to a form satisfying \cref{ass:equilibrium,ass:diffusion-calculus}. Write $T_t^{\cE}$ for its semigroup and $P_t^{\rm SPDE}$ for the SPDE semigroup. From \eqref{eq:form-spde-semigroup} and \cref{prop:form-invariance}, for bounded Borel $F$,
\begin{equation*}
    \int_{\mathbb X}P_t^{\rm SPDE}F\,d\kappa
    = \int_{\mathbb X}T_t^{\cE}F\,d\kappa
    = \int_{\mathbb X}F\,d\kappa.
\end{equation*}
Thus $\kappa$ is invariant for the SPDE. In particular, the kernel $P_t^{\rm SPDE}$ respects $\kappa$-equivalence classes. Indeed, if bounded Borel functions $F$ and $G$ agree $\kappa$-a.e., then positivity and invariance give
\begin{equation*}
    \int_{\mathbb X}|P_t^{\rm SPDE}F-P_t^{\rm SPDE}G|\,d\kappa
    \le \int_{\mathbb X}P_t^{\rm SPDE}|F-G|\,d\kappa
    = \int_{\mathbb X}|F-G|\,d\kappa=0.
\end{equation*}
Let $(P_t^{\rm SPDE})^\dagger$ denote the preadjoint of the kernel on $L^1(\kappa)$. It is well defined because, for $h\ge0$ in $L^1(\kappa)$, the measure
\begin{equation*}
    A \longmapsto \int_{\mathbb X} h(x)P_t^{\rm SPDE}\mathbf1_A(x)\,d\kappa(x)
\end{equation*}
is absolutely continuous with respect to $\kappa$. For $h\in L^1(\kappa)\cap L^2(\kappa)$ and bounded Borel $F$, the semigroup identity and self-adjointness of $T_t^{\cE}$ give
\begin{equation*}
    \int_{\mathbb X}F(P_t^{\rm SPDE})^\dagger h\,d\kappa
    = \int_{\mathbb X}hP_t^{\rm SPDE}F\,d\kappa
    =\int_{\mathbb X}hT_t^{\cE}F\,d\kappa
    =\int_{\mathbb X}F T_t^{\cE}h\,d\kappa.
\end{equation*}
Both sides are continuous in $h$ for the $L^1(\kappa)$ norm. Approximation by $L^1\cap L^2$ functions therefore proves
\begin{equation*}
    (P_t^{\rm SPDE})^\dagger h=T_t^{\cE}h
    \quad\text{in }L^1(\kappa),
    \qquad h\in L^1(\kappa),
\end{equation*}
where $T_t^{\cE}$ denotes its consistent $L^1$ extension. If $h_0$ is a density with respect to $\kappa$, the preceding duality argument gives the density of the time-$t$ law as
\begin{equation*}
    h_t=(P_t^{\rm SPDE})^\dagger h_0=T_t^{\cE}h_0
    \qquad\text{in }L^1(\kappa).
\end{equation*}
The conclusions of \cref{thm:deBruijn} therefore apply to the SPDE time marginals.

\par\noindent\textbf{Step 2. Identification of the stationary
path law.}\par Let $0=t_0<t_1<\cdots<t_n\le T$ and let $F_0,\ldots,F_n$ be bounded Borel functions. Put $\Delta_i=t_i-t_{i-1}$ and define backward recursively
\begin{equation*}
    G_n^P=G_n^{\cE}=F_n,
    \qquad
    G_{i-1}^P=F_{i-1}P_{\Delta_i}^{\rm SPDE}G_i^P,
    \qquad
    G_{i-1}^{\cE}=F_{i-1}T_{\Delta_i}^{\cE}G_i^{\cE}.
\end{equation*}
If $G_i^P=G_i^{\cE}$ $\kappa$-a.e., preservation of null sets gives $P_{\Delta_i}^{\rm SPDE}G_i^P =P_{\Delta_i}^{\rm SPDE}G_i^{\cE}$ almost everywhere, and \eqref{eq:form-spde-semigroup} gives $P_{\Delta_i}^{\rm SPDE}G_i^{\cE} =T_{\Delta_i}^{\cE}G_i^{\cE}$ almost everywhere. Backward induction therefore yields $G_0^P=G_0^{\cE}$ almost everywhere. The Markov property and stationarity consequently give
\begin{align*}
    \mathbb E_\kappa^{\rm SPDE}\prod_{j=0}^nF_j(X_{t_j})
    &= \int F_0P_{t_1-t_0}^{\rm SPDE}\bigl(F_1P_{t_2-t_1}^{\rm SPDE} (\cdots P_{t_n-t_{n-1}}^{\rm SPDE}F_n)\bigr)d\kappa\\
    &= \int F_0T_{\Delta_1}^{\cE}
    \bigl(F_1T_{\Delta_2}^{\cE} (\cdots T_{\Delta_n}^{\cE}F_n) \bigr)d\kappa.
\end{align*}
The last expression is the corresponding finite-dimensional distribution of the properly associated form diffusion. Both laws are carried by $C([0,T];\mathbb X)$, whose Borel $\sigma$-algebra is generated by the coordinate maps. Hence their path laws coincide. Applying \cref{thm:eq-zero} to the form diffusion proves detailed balance and path reversal for the stationary SPDE.
\end{proof}

For a concrete equation, the correspondence in \cref{ass:form-spde-interface} is usually proved from the Fukushima decompositions of a separating coordinate family and uniqueness for the resulting martingale problem. For a coordinate $\ell$, the decomposition has the form $\ell(X_t)-\ell(X_0)=M_t^\ell+N_t^\ell$: the energy measure determines the bracket of the martingale part $M^\ell$, while the zero-energy additive functional $N^\ell$ supplies the coordinate drift. This form--equation identification and its uniqueness component are also central in singular stochastic quantisation. For $P(\Phi)_2$, the relation between the Dirichlet-form process and the shifted stochastic quantisation equation, together with uniqueness of probabilistically weak solutions and restricted Markov uniqueness, was established by R\"ockner, R.~Zhu and X.~Zhu \cite{RocknerZhuZhu2017}. The following criterion isolates the verification used here.

\begin{theorem}[Coordinate criterion for form--SPDE correspondence]
\label{prop:form-spde-identification}
Let $\mathbb X$ be a separable Banach space, let $\kappa$ be a Radon probability measure on $\mathbb X$, let $Q:\mathbb X^*\to\mathbb X$ be a symmetric non-negative covariance operator, and let $(\cE,\operatorname{Dom}(\cE))$ be a conservative quasi-regular symmetric Dirichlet form on $L^2(\mathbb X,\kappa)$ with properly associated diffusion $X$. Suppose:
\begin{enumerate}[label=(\roman*)]
\item $\mathcal A_{\rm coord}\subset\mathbb X^*$ is countable and separates points, and there exists one properly exceptional set $N_{\rm exc}\subset\mathbb X$ such that, for every $x\in\mathbb X\setminus N_{\rm exc}$ and all $\ell,r\in\mathcal A_{\rm coord}$,
\begin{equation}\label{eq:coordinate-mp}
    \ell(X_t)-\ell(X_0)-\int_0^t b_\ell(X_s)\,ds=M_t^\ell,
    \qquad \ell\in\mathcal A_{\rm coord},
\end{equation}
where, under $\Pbb_x$, the law of $X$ started from $x$, the continuous martingales satisfy
\begin{equation}\label{eq:coordinate-bracket}
    \langle M^\ell,M^r\rangle_t=2t\,r(Q\ell);
\end{equation}
\item \eqref{eq:coordinate-mp}--\eqref{eq:coordinate-bracket} characterise weak solutions of the target $\mathbb X$-valued SPDE in the probabilistic convention of \cref{sec:conventions};
\item weak solutions of that SPDE are unique in law for $\kappa$-a.e. initial point.
\end{enumerate}
Then the form and the target SPDE satisfy \cref{ass:form-spde-interface}.
\end{theorem}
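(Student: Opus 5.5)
The plan is to check the three requirements of \cref{ass:form-spde-interface} one at a time: existence of a Borel Markov family of continuous weak solutions outside a properly exceptional set, uniqueness in law, and the semigroup identity \eqref{eq:form-spde-semigroup}. The Markov family will be taken to be the properly associated diffusion $X$ itself, restricted to $\mathbb X\setminus N$.

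Here $N$ is the union of $N_{\rm exc}$ from (i) and the properly exceptional set off which \cref{prop:associated-diffusion} gives conservativity and continuous paths. A countable union of properly exceptional sets is again properly exceptional, so $N$ is properly exceptional. In particular $X$ started in $\mathbb X\setminus N$ never enters $N$.

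\textbf{Step 1: each $\Pbb_x$ is a weak solution.} For $x\in\mathbb X\setminus N$, hypothesis (i) says that under $\Pbb_x$ every coordinate process in \eqref{eq:coordinate-mp} is a continuous martingale with bracket \eqref{eq:coordinate-bracket}. Hypothesis (ii) then turns this coordinate martingale problem into a probabilistically weak solution started at $x$. This may require enlarging the stochastic basis to build $W^Q$ from the martingales $M^\ell$ by a martingale representation argument, and hypothesis (ii) is exactly what packages that step.

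The family $x\mapsto\Pbb_x$ is Borel and Markov because $\mathbb M$ is a special standard process with a Borel transition function. Its paths are continuous and it does not explode, by \cref{prop:associated-diffusion}.

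\textbf{Step 2: uniqueness and the transition semigroup.} Hypothesis (iii) gives uniqueness in law for $\kappa$-a.e. initial point. Off a $\kappa$-null set, every weak solution started at $x$ therefore has law $\Pbb_x$, so the SPDE transition semigroup is
\begin{equation*}
P_t^{\rm SPDE}F(x)=\E_x[F(X_t)]
\end{equation*}
for $\kappa$-a.e.\ $x$ and every bounded Borel $F$. Proper association says that $x\mapsto\E_x[F(X_t)]$ is an $\cE$-quasi-continuous version of $T_t^{\cE}F$. Since $\cE$-exceptional sets are $\kappa$-null, this gives $P_t^{\rm SPDE}F=T_t^{\cE}F$ in $L^2(\kappa)$, which is \eqref{eq:form-spde-semigroup}. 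At $t=0$ both sides equal $F$.

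\textbf{Main obstacle: reconciling the exceptional sets.} Uniqueness holds only off a $\kappa$-null set, while the Markov family must be defined on the complement of a properly exceptional set. I would first try to enlarge the $\kappa$-null uniqueness set inside a properly exceptional set. If that fails, I would define $P_t^{\rm SPDE}$ through the family $\Pbb_x$ everywhere on $\mathbb X\setminus N$, and note that this family solves the SPDE there and agrees with any other weak solution $\kappa$-a.e. That is all \cref{ass:form-spde-interface} requires, since the semigroup identity is asked only in $L^2(\kappa)$.
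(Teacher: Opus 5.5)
Your proposal is correct and follows the paper's proof. The properly associated diffusion serves as the Borel Markov family, and hypotheses (i)--(ii) make each $\Pbb_x$ a weak solution off one properly exceptional set. Hypothesis (iii) supplies $\kappa$-a.e.\ uniqueness in law, and proper association gives $T_t^{\cE}F=P_t^{\rm SPDE}F$ $\kappa$-a.e.\ for bounded Borel $F$. The paper also proves invariance of $\kappa$ under $P_t^{\rm SPDE}$ and extends the identity to all $F\in L^2(\kappa)$ by a contraction argument. That goes beyond what the assumption requires, since it only asks for the identity for bounded Borel $F$.
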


\begin{proof}
The properly associated diffusion in the statement gives a Borel Markov family $(\mathbb P_x)_{x\in\mathbb X\setminus N_{\rm exc}}$. For each coordinate and each pair of coordinates, the Fukushima decomposition and its covariation relation may initially hold outside a coordinate-dependent properly exceptional set. Since $\mathcal A_{\rm coord}$ is countable, replace $N_{\rm exc}$ by its union with all these sets. This is a countable union of properly exceptional sets and is again properly exceptional. On its complement, \eqref{eq:coordinate-mp} and \eqref{eq:coordinate-bracket} hold simultaneously for every $\ell,r\in\mathcal A_{\rm coord}$. Hypothesis~(ii) then shows that, under each $\mathbb P_x$, the coordinate process is a continuous probabilistically weak solution of the target SPDE. Denote the transition semigroup of this Markov family by $P_t^{\mathrm{SPDE}}$.

Let $N_{\rm uniq}$ be the $\kappa$-null set excluded in hypothesis~(iii). For $x\notin N_{\rm exc}\cup N_{\rm uniq}$, uniqueness in law implies that every weak solution starting from $x$ has law $\mathbb P_x$. Proper association gives, for bounded Borel $F$,
\begin{equation*}
    T_t^{\cE}F(x)=\mathbb E_x[F(X_t)]=P_t^{\mathrm{SPDE}}F(x)
\end{equation*}
for $\kappa$-almost every $x$. Integrating and using symmetry and conservativity of $T_t^{\cE}$ yields
\begin{equation*}
    \int_{\mathbb X}P_t^{\mathrm{SPDE}}F\,d\kappa
    = \int_{\mathbb X}T_t^{\cE}F\,d\kappa
    = \int_{\mathbb X}F\,d\kappa,
\end{equation*}
so $\kappa$ is invariant for $P_t^{\mathrm{SPDE}}$. Consequently, for bounded $F$,
\begin{align*}
    \|P_t^{\mathrm{SPDE}}F\|_{L^2(\kappa)}^2
    &\le \int_{\mathbb X}P_t^{\mathrm{SPDE}}(F^2)\,d\kappa
    = \|F\|_{L^2(\kappa)}^2,\\
    \|T_t^{\cE}F\|_{L^2(\kappa)}
    &\le \|F\|_{L^2(\kappa)}.
\end{align*}
For a general $F\in L^2(\kappa)$, choose bounded $F_n\to F$ in $L^2(\kappa)$. The two contraction estimates and the equality for $F_n$ imply
\begin{align*}
    \|P_t^{\mathrm{SPDE}}F-T_t^{\cE}F\|_2
    &\le \|P_t^{\mathrm{SPDE}}(F-F_n)\|_2 + \|T_t^{\cE}(F_n-F)\|_2\\
    &\le 2\|F-F_n\|_2\longrightarrow0.
\end{align*}
Thus \eqref{eq:form-spde-semigroup} holds on $L^2(\kappa)$, and hypotheses~(ii)--(iii) give the weak-solution and uniqueness clauses in \cref{ass:form-spde-interface}.
\end{proof}

\begin{corollary}[Weak well-posedness of the identified SPDE]
\label{cor:form-spde-wellposedness}
Under the hypotheses of \cref{prop:form-spde-identification}, the target SPDE admits an $\mathbb X$-valued continuous probabilistically weak solution for every initial point outside the properly exceptional set $N_{\rm exc}$ from \cref{prop:form-spde-identification}(i), in the sense fixed in \cref{sec:conventions}. For $\kappa$-almost every initial point this solution is unique in law, and its transition semigroup is $T_t^{\cE}$. On this full-$\kappa$ set, the associated diffusion supplies the unique solution law determined by \eqref{eq:coordinate-mp}--\eqref{eq:coordinate-bracket}; its canonical realisation is, in addition, a Markov process.
\end{corollary}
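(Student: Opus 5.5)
The corollary follows by unpacking the proof of \cref{prop:form-spde-identification} and recording what it produces. I would proceed in three steps: existence, uniqueness with identification of the semigroup, and the Markov property.

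\textbf{Existence.} Fix $x\in\mathbb X\setminus N_{\rm exc}$, with $N_{\rm exc}$ already enlarged to the countable union of coordinate-dependent exceptional sets as in that proof. Under $\Pbb_x$, the properly associated diffusion $X$ has continuous paths in $\mathbb X$ by \cref{prop:associated-diffusion}. It satisfies \eqref{eq:coordinate-mp}--\eqref{eq:coordinate-bracket} simultaneously for all $\ell,r\in\mathcal A_{\rm coord}$. Hypothesis~(ii) converts this family of coordinate martingales into a continuous probabilistically weak solution on a (possibly enlarged) stochastic basis carrying a $Q$-Wiener process. This gives existence for every initial point outside $N_{\rm exc}$.

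\textbf{Uniqueness and identification of the semigroup.} Let $N_{\rm uniq}$ be the $\kappa$-null set of hypothesis~(iii). For $x\notin N_{\rm exc}\cup N_{\rm uniq}$, every weak solution started at $x$ has law $\Pbb_x$. The solution law determined by the coordinate martingale problem is therefore the law of the associated diffusion. Proper association gives $\E_x[F(X_t)]$ as a quasi-continuous version of $T_t^{\cE}F$, so the transition semigroup of the solutions coincides with $T_t^{\cE}$ $\kappa$-a.e. The $L^2(\kappa)$ identification \eqref{eq:form-spde-semigroup} then follows verbatim from the contraction argument in the proof of \cref{prop:form-spde-identification}.

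\textbf{Markov property.} The family $(\Pbb_x)$ is the law family of a Hunt process. Its canonical realisation on $C([0,\infty);\mathbb X)$ is therefore Markov with respect to the coordinate filtration, with transition kernel $x\mapsto\Pbb_x(X_t\in\cdot)$.

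The main obstacle is the measurability and null-set bookkeeping. The Markov property is an intrinsic property of the associated process, whereas uniqueness only holds off a $\kappa$-null set that need not be properly exceptional. To handle this, I would state uniqueness and semigroup identification only on $\mathbb X\setminus(N_{\rm exc}\cup N_{\rm uniq})$. I would take the Markov statement from the associated process itself rather than from uniqueness in law, which avoids needing measurable selection or strong-Markov arguments for arbitrary weak solutions.
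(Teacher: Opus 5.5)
Your proposal is correct and follows the paper's proof essentially step for step. Existence comes from the coordinate decompositions on the common complement of $N_{\rm exc}$ via hypothesis~(ii). Uniqueness, with solution law $\Pbb_x$, holds off $N_{\rm exc}\cup N_{\rm uniq}$. The identification of $P_t^{\rm SPDE}$ with $T_t^{\cE}$ comes from proper association plus the $L^2(\kappa)$ contraction argument, and the Markov property is read off from the associated process itself rather than from uniqueness in law.

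One cosmetic point: path continuity is already part of the hypotheses of \cref{prop:form-spde-identification}, since a properly associated \emph{diffusion} is assumed there. You therefore need not invoke \cref{prop:associated-diffusion}, whose additional assumptions are not among those hypotheses.
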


\begin{proof}
\par\noindent\textbf{Step 1. Existence outside the exceptional set.}\par
Quasi-regularity supplies the properly associated diffusion $(X,\mathbb P_x)_{x\in\mathbb X\setminus N_{\rm exc}}$. For every such $x$, hypothesis~(i) of \cref{prop:form-spde-identification} gives, on one common exceptional-set complement, the coordinate decompositions \eqref{eq:coordinate-mp} and the covariations \eqref{eq:coordinate-bracket} for all $\ell,r\in\mathcal A_{\rm coord}$. Hypothesis~(ii) states precisely that these relations characterise a continuous probabilistically weak solution in the convention of \cref{sec:conventions}. Therefore the law of $X$ under $\mathbb P_x$ is the law of an $\mathbb X$-valued weak solution starting from $x$. This proves existence for every $x\notin N_{\rm exc}$; since a properly exceptional set is $\kappa$-null, it also proves existence for $\kappa$-almost every initial point.

\par\noindent\textbf{Step 2. Uniqueness in law.}\par
Let $N_{\rm uniq}$ be the $\kappa$-null set excluded by hypothesis~(iii). For
\begin{equation*}
    x\in\mathbb X\setminus(N_{\rm exc}\cup N_{\rm uniq}),
\end{equation*}
every probabilistically weak solution starting from $x$ has the same law. In particular, that law is $\mathbb P_x$, because Step~1 shows that $\mathbb P_x$ is one such solution law. Thus the associated diffusion supplies the unique solution law on this full-$\kappa$ set. Its canonical realisation is Markov and satisfies the coordinate martingales and brackets in \eqref{eq:coordinate-mp}--\eqref{eq:coordinate-bracket}.

\par\noindent\textbf{Step 3. Identification of the transition semigroup.}\par
Define the transition semigroup of the weak solution obtained in Step~1 by
\begin{equation*}
    P_t^{\rm SPDE}F(x) = \mathbb E_x[F(X_t)].
\end{equation*}
For bounded Borel $F$, proper association gives
\begin{equation*}
    P_t^{\rm SPDE}F(x)=T_t^{\cE}F(x)
    \qquad\text{for }\kappa\text{-almost every }x.
\end{equation*}
The equality for bounded functions and invariance of $\kappa$ give, by Jensen's inequality,
\begin{equation*}
    \|P_t^{\rm SPDE}F\|_{L^2(\kappa)}^2
    \le \int P_t^{\rm SPDE}(F^2)\,d\kappa
    = \|F\|_{L^2(\kappa)}^2.
\end{equation*}
The form semigroup is also an $L^2(\kappa)$ contraction. For arbitrary $F\in L^2(\kappa)$, choose bounded $F_n\to F$ in $L^2(\kappa)$. Then
\begin{align*}
    \|P_t^{\rm SPDE}F-T_t^{\cE}F\|_2
    &\le \|P_t^{\rm SPDE}(F-F_n)\|_2 + \|T_t^{\cE}(F_n-F)\|_2\\
    &\le 2\|F-F_n\|_2 \longrightarrow 0.
\end{align*}
This proves \eqref{eq:form-spde-semigroup} and identifies the SPDE transition semigroup with $T_t^{\cE}$ on $L^2(\kappa)$.
\end{proof}

For the one-dimensional $\Phi^4_1$ and Allen--Cahn-type models, $\mathbb X=C(\T)$, $Q=(-\partial_x^2+m)^{-1}$ with $m>0$, and the potentials $V_4$ and $V_{\mathrm{AC}}$ have $\lambda>0$. These are the state-space and parameter choices used in the verification below. \cref{prop:quartic-model-verification}(iii)--(v) establishes the closed-form properties, quasi-regularity, coordinate martingales and uniqueness needed for the correspondence in \cref{ass:form-spde-interface}.

\section{Systems without Detailed Balance}
\label{sec:linear}

We now retain the invariant reference law while removing detailed balance. A bounded skew-adjoint force adds an antisymmetric action on cylinder observables and a stationary Cameron--Martin current to the symmetric Gaussian dynamics. For nonzero drive the stationary path law is not invariant under time reversal; zero drive gives the equilibrium case of the formulas below. Set $V=0$ and let
\begin{equation}\label{eq:B-assumptions}
    B\in\mathcal L(\mathbb H),\qquad B^*=-B.
\end{equation}
Here $\mathcal L(\mathbb H)$ denotes the bounded linear operators on $\mathbb H$. For the external force
\begin{equation}\label{eq:linear-force}
    f(\phi)=B\phi,
\end{equation}
the SPDE~\eqref{eq:main} becomes
\begin{equation}\label{eq:linear}
    d\phi_t=-\mathsf K_B\phi_t\,dt+\sqrt{2}\,dW^Q_t,
    \qquad
    \mathsf K_{\pm B}=I\pm QB,
\end{equation}
with $Q=A^{-1}$ and $I$ the identity on $\mathbb H$. In finite dimensions the skew drift is the conservative part of the symmetric--antisymmetric decomposition of an irreversible diffusion \cite{Qian2013}; on a Hilbert space \eqref{eq:linear} is a non-symmetric Ornstein--Uhlenbeck equation \cite{Fuhrman1995,ChojnowskaMichalikGoldys1996,ChojnowskaMichalikGoldys2002}. Without assuming $BQ=QB$, we prove invariance of $\mu_Q$, identify the antisymmetric action on cylinder observables and its Cameron--Martin current, determine the reversed stationary dynamics, and compute the exact forward--reverse relative-entropy rate. These stationary arguments apply more generally to every positive, injective trace-class covariance on a real separable Hilbert space. When $BQ=QB$, the Fourier modes decouple and the stationary conclusions extend to explicit formulas for the transient covariance, relative entropy and Onsager decomposition after a mass quench.

The adjoint calculation behind this decomposition is part of the classical Gaussian-reference Kolmogorov theory \cite{BogachevRoeckner1995,BogachevDaPratoRoeckner1996,DaPratoDebusscheGoldys2002,DaPrato2004Kolmogorov}. In the $\sqrt2$-noise normalisation used here, consider the regular operator
\begin{equation*}
    KF = L_sF+\langle b,\nabla_{\mathbb H}F\rangle_{\mathbb H},
    \qquad \nu=p\mu_Q,
\end{equation*}
and suppose that $\nu$ is invariant. Gaussian integration by parts, with the stationary equation cancelling the zeroth-order term, gives the $L^2(\nu)$-adjoint
\begin{equation*}
    K_\nu^\dagger F
    = L_sF + \langle2Q\nabla_{\mathbb H}\log p-b,\nabla_{\mathbb H}F\rangle_{\mathbb H}.
\end{equation*}
Thus $b-Q\nabla_{\mathbb H}\log p$ is the antisymmetric drift component, and $K$ is symmetric precisely when it vanishes. The dynamics \eqref{eq:linear} corresponds to $p\equiv1$ and $b(\phi)=-QB\phi$, so stationary reversal changes $B$ to $-B$.

\subsection{Skew Gaussian Dynamics}
\label{sec:linear-cov}

We first prove global well-posedness of \eqref{eq:linear} and show directly that its covariance evolution preserves $\mu_Q$, without imposing $BQ=QB$. We then compute the generator on cylinder functions, separate its symmetric and antisymmetric parts, and represent the latter by a square-integrable Cameron--Martin current.

\subsubsection{Well-Posedness and the Invariant Gaussian Law}

\begin{proposition}[Global well-posedness and invariant Gaussian law]
\label{prop:skew-invariance}
For every $x\in \mathbb H$ and every prescribed $\mathbb H$-valued $Q$-Wiener process on a filtered probability space, \eqref{eq:linear} admits a unique global probabilistically strong $\mathbb H$-valued solution with continuous paths. It is pathwise unique, depends continuously on $x$, and is given by
\begin{equation*}
    \phi_t^x=e^{-t\mathsf K_B}x + \sqrt2\int_0^t e^{-(t-s)\mathsf K_B}\,dW_s^Q.
\end{equation*}
In particular, these solutions define a Borel Markov family. Moreover,
\begin{equation}\label{eq:covariance-transport}
    e^{-t\mathsf K_B}Qe^{-t\mathsf K_B^*}=e^{-2t}Q,
    \qquad
    2\int_0^t e^{-s\mathsf K_B}Qe^{-s\mathsf K_B^*}\,ds=(1-e^{-2t})Q.
\end{equation}
Consequently, $\mu_Q=\cN(0,Q)$ is invariant.
\end{proposition}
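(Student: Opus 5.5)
The plan starts from the observation that $\mathsf K_B=I+QB$ is a bounded operator on $\mathbb H$, because $Q$ is trace class and $B\in\mathcal L(\mathbb H)$. Hence $e^{-t\mathsf K_B}$ is a norm-continuous group of bounded operators, and \eqref{eq:linear} is a linear SDE in $\mathbb H$ with a bounded Lipschitz drift and additive trace-class noise. For the well-posedness part I would therefore invoke the standard theory for such equations (see \cite{DaPratoZabczyk1996}).

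\emph{Existence.} Define $\phi_t^x$ by the mild formula in the statement. The stochastic convolution is well defined and has a continuous version: its integrand $e^{-(t-s)\mathsf K_B}$ is uniformly bounded on compacts, and
\begin{equation*}
\E\Bigl\|\int_0^te^{-(t-s)\mathsf K_B}dW_s^Q\Bigr\|_{\mathbb H}^2\le Ct\,\Tr Q.
\end{equation*}
Since the drift is bounded, the mild and strong formulations coincide. This follows from Itô's formula for $e^{t\mathsf K_B}\phi_t$, or from stochastic Fubini.

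\emph{Uniqueness and dependence on $x$.} The difference of two solutions driven by the same noise satisfies a deterministic linear ODE $\dot u=-\mathsf K_Bu$ with $u_0=0$, so $u\equiv0$ by Gronwall. This gives pathwise uniqueness. Continuity in $x$ is immediate from the affine dependence, with $\|\phi_t^x-\phi_t^y\|\le e^{t\|\mathsf K_B\|}\|x-y\|$. The Borel Markov family property follows from pathwise uniqueness and the flow property of the explicit formula, in the usual way.

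\emph{First identity in \eqref{eq:covariance-transport}.} Put $R_t=e^{-t\mathsf K_B}Qe^{-t\mathsf K_B^*}$. It is differentiable in operator norm, and
\begin{equation*}
\tfrac{d}{dt}R_t=-e^{-t\mathsf K_B}\bigl(\mathsf K_BQ+Q\mathsf K_B^*\bigr)e^{-t\mathsf K_B^*}.
\end{equation*}
Using $\mathsf K_B^*=I+B^*Q=I-BQ$, we get
\begin{equation*}
\mathsf K_BQ+Q\mathsf K_B^*=Q+QBQ+Q-QBQ=2Q.
\end{equation*}
The key point is that the skew term cancels without any commutation between $B$ and $Q$. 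Hence $\tfrac{d}{dt}R_t=-2R_t$ with $R_0=Q$, and uniqueness for this linear ODE in $\mathcal L(\mathbb H)$ gives $R_t=e^{-2t}Q$.

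\emph{Second identity.} Integrating the first one gives
\begin{equation*}
2\int_0^te^{-2s}Q\,ds=(1-e^{-2t})Q.
\end{equation*}

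\emph{Invariance of $\mu_Q$.} Take $\phi_0\sim\mu_Q$ independent of $W^Q$. Then $\phi_t$ is centred Gaussian with covariance
\begin{equation*}
e^{-t\mathsf K_B}Qe^{-t\mathsf K_B^*}+2\int_0^te^{-s\mathsf K_B}Qe^{-s\mathsf K_B^*}ds=e^{-2t}Q+(1-e^{-2t})Q=Q.
\end{equation*}
Here the noise part uses the Itô isometry, after substituting $s\mapsto t-s$. Since a centred Gaussian law is determined by its covariance, $\mu_Q$ is invariant.

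\emph{Main obstacle.} The only real point is the algebraic cancellation $\mathsf K_BQ+Q\mathsf K_B^*=2Q$. One must not expand $e^{-t\mathsf K_B}$ in Fourier modes, which would implicitly require $BQ=QB$. Instead the argument has to go through the operator-valued ODE, with the derivative justified in norm because $\mathsf K_B$ is bounded.
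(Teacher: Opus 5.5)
Your proposal is correct and follows essentially the same route as the paper. Existence comes from the mild formula via variation of constants and stochastic Fubini, and pathwise uniqueness from Gronwall. The Lyapunov identity $\mathsf K_BQ+Q\mathsf K_B^*=2Q$ gives $R_t'=-2R_t$ with no commutation assumption, and invariance follows from adding the independent covariances $e^{-2t}Q+(1-e^{-2t})Q=Q$. The only cosmetic difference is that you differentiate $R_t$ in operator norm rather than trace norm, which is equally valid because $\mathsf K_B$ is bounded.
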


\begin{proof}
Since $\mathsf K_B=I+QB$ is bounded and $Q$ is trace class, the displayed stochastic convolution is $\mathbb H$-valued, adapted to the prescribed noise, and has a continuous version. The variation-of-constants formula and stochastic Fubini show that it satisfies, in $\mathbb H$,
\begin{equation*}
    \phi_t^x=x-\int_0^t\mathsf K_B\phi_s^x\,ds+\sqrt2\,W_t^Q.
\end{equation*}
Hence it is a global strong solution in the convention of \cref{sec:conventions}. If $\phi$ and $\widetilde\phi$ are two solutions with the same initial state and noise, then
\begin{equation*}
    \|\phi_t-\widetilde\phi_t\|_{\mathbb H}
    \le \|\mathsf K_B\|_{\mathcal L(\mathbb H)} \int_0^t \|\phi_s-\widetilde\phi_s\|_{\mathbb H}\,ds.
\end{equation*}
Gronwall's lemma gives indistinguishability and thus pathwise uniqueness. The identity $\phi_t^x-\phi_t^y=e^{-t\mathsf K_B}(x-y)$ gives continuous dependence on the initial state; together with independent increments of $W^Q$, it also gives the Borel Markov property.

Skew-adjointness of $B$ gives the Lyapunov identity
\begin{equation*}
    \mathsf K_BQ+Q\mathsf K_B^*
    = (I+QB)Q+Q(I+B^*Q) = 2Q.
\end{equation*}
For $\mathsf R_t^{\rm cov}=e^{-t\mathsf K_B}Qe^{-t\mathsf K_B^*}$, differentiation in trace norm therefore yields
\begin{equation*}
    (\mathsf R_t^{\rm cov})'
    = -e^{-t\mathsf K_B}(\mathsf K_BQ+Q\mathsf K_B^*)e^{-t\mathsf K_B^*}
    = -2\mathsf R_t^{\rm cov}.
 \end{equation*}
Since $\mathsf R_0^{\rm cov}=Q$, this proves the first identity in \eqref{eq:covariance-transport}; integration gives the second. If $\phi_0\sim\mu_Q$ independently of the noise, the two independent terms in the mild solution have covariances $e^{-2t}Q$ and $(1-e^{-2t})Q$, so their sum again has law $\mu_Q$.
\end{proof}

\subsubsection{Generator Decomposition and Cameron--Martin Current}

When $V=0$, the symmetric part is the Gaussian Dirichlet-form generator from \S\ref{sec:equilibrium}, while the skew drift gives the antisymmetric part. The next two propositions express this part through a Cameron--Martin current and relate its square integrability to the Hilbert--Schmidt condition used in the path-space comparison below.

On cylinder functions $F\in\cF C_b^\infty$, the formal generator of~\eqref{eq:linear} is
\begin{equation}\label{eq:L-linear}
    L_BF(\phi)
    = \Tr(Q\nabla_{\mathbb H}^2F(\phi)) -\langle(I+QB)\phi,\nabla_{\mathbb H}F(\phi)\rangle_{\mathbb H}.
\end{equation}
Then we have
\begin{proposition}[Uniform symmetric--antisymmetric decomposition on the cylinder test algebra]\label{prop:L-decomp} For every $B$ satisfying~\eqref{eq:B-assumptions}, set
\begin{equation*}
    L_sF
    = \Tr(Q\nabla_{\mathbb H}^2F)-\langle\phi,\nabla_{\mathbb H}F\rangle_{\mathbb H},
    \qquad
    L_a^BF
    = -\langle QB\phi,\nabla_{\mathbb H}F\rangle_{\mathbb H}.
 \end{equation*}
On the cylinder test algebra $\cF C_b^\infty$, $L_s$ is symmetric with $\Gamma(F)=\|Q^{1/2}\nabla_{\mathbb H}F\|_{\mathbb H}^2$, $L_a^B$ is antisymmetric, and $L_B=L_s+L_a^B$.
\end{proposition}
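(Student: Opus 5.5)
The proof splits into three checks: the algebraic identity, the symmetry of $L_s$, and the antisymmetry of $L_a^B$, all in $L^2(\mu_Q)$ with $\mu_Q$ the invariant law from \cref{prop:skew-invariance}.

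\textbf{The identity $L_B=L_s+L_a^B$.} This is immediate from \eqref{eq:L-linear}, since $(I+QB)\phi=\phi+QB\phi$.

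Next I reduce everything to finite dimensions. Fix $F,G\in\cF C_b^\infty$ depending on the coordinates $\langle\phi,e_j\rangle_{\mathbb H}$, $j\in J$ finite. Then $\nabla_{\mathbb H}F$ lies in $\operatorname{span}\{e_j:j\in J\}$, so $\langle QB\phi,\nabla_{\mathbb H}F\rangle_{\mathbb H}=-\langle\phi,BQ\nabla_{\mathbb H}F\rangle_{\mathbb H}$. This is a linear functional of $\phi$ with coefficient vector $BQ\nabla_{\mathbb H}F$, which is not of finite rank in general. It is nevertheless square integrable under $\mu_Q$, because $\phi\mapsto\langle\phi,v\rangle$ has variance $\langle Qv,v\rangle$ and $\|\nabla_{\mathbb H}F\|$ is bounded. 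Hence $L_a^BF\in L^2(\mu_Q)$.

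\textbf{Symmetry of $L_s$.} I apply \eqref{eq:G-IBP} with $V=0$ and $G=F\nabla_{\mathbb H}G$, a finite-rank cylinder field. This reproduces the computation in \S\ref{sec:setting}:
\[
-\int F L_sG\,d\mu_Q=\int\langle Q^{1/2}\nabla_{\mathbb H}F,Q^{1/2}\nabla_{\mathbb H}G\rangle_{\mathbb H}\,d\mu_Q .
\]
The right-hand side is symmetric in $F$ and $G$, so $L_s$ is symmetric on the cylinder algebra. Plugging the cylinder product rule into \eqref{eq:cdc-definition} gives $\Gamma(F)=\|Q^{1/2}\nabla_{\mathbb H}F\|^2$ pointwise, since the Hessian cross term $2\,\mathrm{Tr}(Q\,\nabla F\otimes\nabla F)$ survives and the first-order parts cancel.

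\textbf{Antisymmetry of $L_a^B$ (main obstacle).} This is the only nontrivial step, because $QB\phi$ leaves the span of the coordinates of $F$ and $BQ\ne QB$ is not assumed. The key is to integrate by parts along the vector field $\Psi(\phi)=H(\phi)\,QB\phi$ with $H=FG$, which I treat in two ways.

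\emph{Approach via Galerkin truncation.} Let $P_n$ be the Fourier projection and set $v_n=H\,P_nQBP_n\phi$, a finite-rank field. Applying \eqref{eq:G-IBP} gives
\[
\int\langle\phi,v_n\rangle\,d\mu_Q=\int\Big[\mathrm{Tr}\big(Q\,\nabla H\otimes P_nQBP_n\phi\big)+H\,\mathrm{Tr}\big(QP_nQBP_n\big)\Big]d\mu_Q .
\]
The left side vanishes, because $\langle\phi,P_nQBP_n\phi\rangle=\langle P_n\phi,QB P_n\phi\rangle$, and the integrand is antisymmetric: conjugating $QB$ by $P_n$ does not preserve skewness with respect to $\langle\cdot,\cdot\rangle$, but the Gaussian moment of a quadratic form is $\mathrm{Tr}(Q\cdot)$. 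The trace term also vanishes: $P_n$ commutes with $Q$ (the basis diagonalises $Q$), so $\mathrm{Tr}(QP_nQBP_n)=\mathrm{Tr}(P_nQ\,P_nQ\,B)$, which is the trace of a symmetric operator times a skew operator and hence zero. What remains is $\int\langle\nabla H,QP_nQBP_n\phi\rangle\,d\mu_Q=0$.

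\emph{Cleaner alternative.} It is simpler to bypass the truncation in the trace term. Write $\langle QB\phi,\nabla H\rangle=-\langle\phi,BQ\nabla H\rangle$ and apply \eqref{eq:G-IBP} directly to the field $\phi\mapsto -BQ\nabla H(\phi)$, which depends on finitely many coordinates but takes values in the infinite-dimensional range of $BQ$. Then
\[
\int L_a^BH\,d\mu_Q=-\int\mathrm{Tr}\big(Q\,\mathrm D(BQ\nabla H)\big)\,d\mu_Q=-\int\mathrm{Tr}\big(QBQ\,\nabla^2_{\mathbb H}H\big)\,d\mu_Q .
\]
Here $\nabla^2_{\mathbb H}H$ is symmetric of finite rank and $QBQ$ is skew-adjoint, so the trace vanishes. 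To justify \eqref{eq:G-IBP} for this field, I truncate it by $P_n$ (the identity holds for finite-rank cylinder fields) and pass to the limit using the Gaussian second moments noted above together with dominated convergence.

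Finally, $L_a^B$ is a first-order operator, hence a derivation: $L_a^B(FG)=F\,L_a^BG+G\,L_a^BF$. Integrating this and using $\int L_a^B(FG)\,d\mu_Q=0$ gives $\int F\,L_a^BG\,d\mu_Q=-\int G\,L_a^BF\,d\mu_Q$, which is the antisymmetry.
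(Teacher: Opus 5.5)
Your proof is correct if you rely on the ``cleaner alternative'' together with the derivation property. The first Galerkin sub-argument, however, must be deleted, because its conclusion is false.

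With $v_n=H\,P_nQBP_n\phi$, the left-hand side $\int H\langle\phi,P_nQBP_n\phi\rangle_{\mathbb H}\,d\mu_Q$ does not vanish when $BQ\ne QB$. The operator $P_nQBP_n$ has the nonzero symmetric part $\tfrac12P_n(QB-BQ)P_n$. The fact that a Gaussian mean of a quadratic form is a trace only covers $H\equiv1$, not the weight $H=FG$.

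Consequently the identity you reach, $\int\langle\nabla_{\mathbb H}H,QP_nQBP_n\phi\rangle_{\mathbb H}\,d\mu_Q=0$, fails in general. Take the noncommuting rotor of \S\ref{sec:examples} and $H=\tanh(x_j)\tanh(x_k)$ with $x_i=\langle\phi,e_i\rangle_{\mathbb H}$. Once $n$ covers both modes, the integral equals $c\,q_jq_k(q_k-q_j)\,\E_{\mu_Q}[\tanh'(x_j)]\,\E_{\mu_Q}[\tanh'(x_k)]\ne0$. It also carries an extra factor $Q$ relative to $L_a^BH$.

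The correct Galerkin field is $H\,P_nBP_n\phi$. Then $\langle\phi,P_nBP_n\phi\rangle_{\mathbb H}\equiv0$ and $\Tr(QP_nBP_n)=\Tr(P_nQP_nB)=0$. The remaining identity $\int\langle\nabla_{\mathbb H}H,QP_nBP_n\phi\rangle_{\mathbb H}\,d\mu_Q=0$ converges to $-\int L_a^BH\,d\mu_Q$.

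Two minor slips in the alternative: the sign of your display for $\int L_a^BH\,d\mu_Q$ is flipped, which is harmless since the trace vanishes. Also, $BQ\nabla_{\mathbb H}H$ takes values in the finite-dimensional space $B\mathbb H_N$, just not in a Fourier subspace.

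Apart from this, your route differs from the paper's only in the antisymmetric part; the identity $L_B=L_s+L_a^B$ and the symmetric part are treated the same way.

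The paper argues globally. With $\mathsf U_t=e^{-tQB}$ it shows $\frac{d}{dt}(\mathsf U_tQ\mathsf U_t^*)=-\mathsf U_t(QBQ+Q(QB)^*)\mathsf U_t^*=0$, so the linear flow of the drift $-QB\phi$ preserves $\mu_Q$. It then differentiates the constant map $t\mapsto\int F(\mathsf U_t\phi)G(\mathsf U_t\phi)\,d\mu_Q$ at $t=0$.

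You argue infinitesimally. \eqref{eq:G-IBP} reduces $\int L_a^BH\,d\mu_Q$ to $\int\Tr(QBQ\,\nabla_{\mathbb H}^2H)\,d\mu_Q=0$, and the derivation property converts this invariance into antisymmetry. This mirrors the formal treatment of $\cL_f$ in \S\ref{sec:setting}.

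Both use the same algebraic input, since $QBQ+Q(QB)^*=QBQ+(QBQ)^*$ vanishes exactly because $QBQ$ is skew-adjoint. The paper's version needs no traces or truncation of vector fields, and gives the stronger fact that the whole group $e^{-tQB}$ preserves $\mu_Q$. Yours stays within cylinder integration by parts and isolates skewness of $QBQ$ as the only relevant condition. The cost is the $P_n$-truncation needed to apply \eqref{eq:G-IBP} to a field not valued in a Fourier subspace.
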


\begin{proof}
Gaussian integration by parts gives the symmetry of $L_s$ and the stated carr\'e du champ. For the antisymmetric part set $\mathsf U_t=e^{-tQB}$. Although $QB$ need not be skew-adjoint on $\mathbb H$, \eqref{eq:B-assumptions} gives
 \begin{equation*}
 \frac d{dt}(\mathsf U_tQ\mathsf U_t^*)
 =-\mathsf U_t\bigl(QBQ+Q(QB)^*\bigr)\mathsf U_t^*=0.
 \end{equation*}
Thus $\mathsf U_tQ\mathsf U_t^*=Q$, so $\mathsf U_t$ preserves $\mu_Q$. Differentiating $\int_{\mathbb H}F(\mathsf U_t\phi)G(\mathsf U_t\phi)\,d\mu_Q(\phi)$ at $t=0$ gives
 \begin{equation*}
 \int_{\mathbb H}(L_a^BF)G\,d\mu_Q=-\int_{\mathbb H}F(L_a^BG)\,d\mu_Q.
 \end{equation*}
The decomposition follows directly from~\eqref{eq:L-linear}.
\end{proof}

\begin{proposition}[Cameron--Martin representation and housekeeping trace]
 \label{prop:La-representation}
For $F\in\cF C_b^\infty$,
\begin{equation}\label{eq:La-CM}
    L_a^BF(\phi)=\langle J_B^{\mathbb K}(\phi),\nabla_{\mathbb K}F(\phi)\rangle_{\mathbb K},
    \qquad J_B^{\mathbb K}(\phi)=-QB\phi,
\end{equation}
where $\nabla_{\mathbb K}=Q\nabla_{\mathbb H}$ on cylinder functions and $\mathbb K=Q^{1/2}\mathbb H$. We also write $J_B(\phi):=Q^{-1/2}J_B^{\mathbb K}(\phi)=-Q^{1/2}B\phi$ for the $\mathbb H$-coordinate of this current. Moreover, $J_B^{\mathbb K}(\phi)\in \mathbb K$ for every $\phi\in\mathbb H$, and
\begin{equation}\label{eq:J-norm}
    \E_{\mu_Q}\|Q^{-1/2}J_B^{\mathbb K}\|_{\mathbb H}^2
    = \Tr(B^*QBQ)
    = \|Q^{1/2}BQ^{1/2}\|_{\mathrm{HS}}^2.
\end{equation}
This value vanishes if and only if $B=0$.
\end{proposition}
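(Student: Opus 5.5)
The plan has three parts: verify the pointwise representation \eqref{eq:La-CM}, check membership of the current in $\mathbb K$, and compute its mean-square norm by a Gaussian second-moment calculation.

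\textbf{Pointwise representation.} For $F\in\cF C_b^\infty$, the $\mathbb H$-gradient $\nabla_{\mathbb H}F(\phi)$ lies in a finite span of Fourier vectors, so $\nabla_{\mathbb K}F=Q\nabla_{\mathbb H}F\in\mathbb K$. For $k\in\mathbb K$ and $g\in\mathbb H$ we have
\begin{equation*}
    \langle k,Qg\rangle_{\mathbb K}=\langle Q^{-1/2}k,Q^{1/2}g\rangle_{\mathbb H}=\langle k,g\rangle_{\mathbb H}.
\end{equation*}
Take $k=J_B^{\mathbb K}(\phi)=-QB\phi$ and $g=\nabla_{\mathbb H}F(\phi)$. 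This gives $\langle J_B^{\mathbb K},\nabla_{\mathbb K}F\rangle_{\mathbb K}=-\langle QB\phi,\nabla_{\mathbb H}F\rangle_{\mathbb H}=L_a^BF$, which is the definition in \cref{prop:L-decomp}.

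\textbf{Membership in $\mathbb K$.} Write $QB\phi=Q^{1/2}(Q^{1/2}B\phi)$. Since $Q^{1/2}B\phi\in\mathbb H$, this element lies in $Q^{1/2}\mathbb H=\mathbb K$. Its $\mathbb H$-coordinate is $Q^{-1/2}J_B^{\mathbb K}(\phi)=-Q^{1/2}B\phi$, which justifies the notation $J_B$.

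\textbf{Second moment.} I would use $\E_{\mu_Q}\langle T\phi,T\phi\rangle_{\mathbb H}=\Tr(TQT^*)$ for bounded $T$. This follows by expanding in the eigenbasis $(e_j)$ of $Q$, noting that $\phi=\sum_j\langle\phi,e_j\rangle e_j$ with independent coordinates of variance $q_j$, and using monotone convergence. Here $TQT^*$ is trace class because $Q$ is.

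Take $T=Q^{1/2}B$. Then
\begin{equation*}
    \E_{\mu_Q}\|Q^{1/2}B\phi\|_{\mathbb H}^2=\Tr(Q^{1/2}BQB^*Q^{1/2})=\|Q^{1/2}BQ^{1/2}\|_{\mathrm{HS}}^2,
\end{equation*}
since $Q^{1/2}BQB^*Q^{1/2}=SS^*$ with $S=Q^{1/2}BQ^{1/2}$. To obtain the form $\Tr(B^*QBQ)$, use $\|S\|_{\mathrm{HS}}^2=\Tr(S^*S)=\Tr(Q^{1/2}B^*QBQ^{1/2})$. Then apply cyclicity $\Tr(XY)=\Tr(YX)$ with $X=Q^{1/2}$ bounded and $Y=B^*QBQ^{1/2}$ trace class, because $B^*QB$ is trace class. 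This gives $\Tr(B^*QBQ)$.

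\textbf{Vanishing criterion.} Note that $\|S\|_{\mathrm{HS}}=0$ forces $S=0$. Then $B$ vanishes on the range of $Q^{1/2}$, because $Q^{1/2}$ is injective on the left. That range is dense since $Q$ is injective, so boundedness of $B$ gives $B=0$. The converse is trivial.

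\textbf{Expected obstacle.} The only delicate points are the trace-class and cyclicity manipulations with the unbounded $Q^{-1/2}$. I avoid these by never applying $Q^{-1/2}$ except to elements already written as $Q^{1/2}(\cdot)$. All traces are taken of products that contain a trace-class factor $Q$ or $Q^{1/2}(\cdot)Q^{1/2}$.
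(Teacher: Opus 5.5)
Your proposal is correct and follows essentially the same route as the paper's proof. Membership in $\mathbb K$ comes from the factorisation $QB\phi=Q^{1/2}(Q^{1/2}B\phi)$, the pairing computation yields $L_a^BF$, the Gaussian second moment gives the trace, and injectivity with dense range of $Q^{1/2}$ gives the vanishing criterion; your derivation of $\E_{\mu_Q}\|T\phi\|_{\mathbb H}^2=\Tr(TQT^*)$ and the cyclicity step only supply detail that the paper states without proof.
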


\begin{proof}
Since $QB\phi=Q^{1/2}(Q^{1/2}B\phi)$, the vector $J_B^{\mathbb K}(\phi)$ belongs to $\mathbb K$ for every $\phi\in\mathbb H$, with $\|J_B^{\mathbb K}(\phi)\|_{\mathbb K}=\|Q^{1/2}B\phi\|_{\mathbb H}\le\|Q^{1/2}B\|\,\|\phi\|_{\mathbb H}$. Also,
\begin{align*}
    \langle J_B^{\mathbb K},\nabla_{\mathbb K}F\rangle_{\mathbb K}
    &= \langle-Q^{1/2}B\phi,Q^{1/2}\nabla_{\mathbb H}F\rangle_{\mathbb H}
    = -\langle QB\phi,\nabla_{\mathbb H}F\rangle_{\mathbb H}=L_a^BF,\\
    \E_{\mu_Q}\|Q^{-1/2}J_B^{\mathbb K}\|_{\mathbb H}^2
    &= \E_{\mu_Q}\|Q^{1/2}B\phi\|_{\mathbb H}^2
    = \Tr(B^*QBQ)
    = \|Q^{1/2}BQ^{1/2}\|_{\mathrm{HS}}^2.
\end{align*}
The trace is finite because $Q^{1/2}$ is Hilbert--Schmidt and $B$ is bounded. If it is zero, then $Q^{1/2}BQ^{1/2}=0$. Injectivity of $Q^{1/2}$ gives $BQ^{1/2}=0$. Since the range of $Q^{1/2}$ is dense and $B$ is bounded, it follows that $B=0$.
\end{proof}

\subsection{Path-Space Irreversibility}

We now start the process from $\mu_Q$, still without imposing any commutation between $B$ and $Q$. The skew drift $\phi\mapsto-QB\phi$ has linear growth, while its Cameron--Martin coordinate $-Q^{1/2}B\phi$ is square integrable under $\mu_Q$. Time reversal changes $B$ to $-B$; this square integrability allows a Gaussian Girsanov comparison of the forward and reversed laws on $C([0,T];\mathbb H)$, yielding their equivalence and an exact relative-entropy trace formula. This is the stationary infinite-dimensional analogue of time-reversal comparison for diffusion laws \cite{FollmerWakolbinger1986}.

\begin{theorem}[Forward--reverse equivalence and exact KL rate]
 \label{thm:Galerkin-path-KL}
Let $\Pbb_{B,T}$ denote the law on $C([0,T];\mathbb H)$ of~\eqref{eq:linear} started from $\mu_Q$, and let $r_T:C([0,T];\mathbb H)\to C([0,T];\mathbb H)$ be the time-reversal map defined in \cref{sec:detailed-balance-path-reversal}. Then, for every $T>0$,
\begin{equation}\label{eq:path-reversal-B}
    \Pbb_{B,T}\circ r_T^{-1}=\Pbb_{-B,T},
    \qquad \Pbb_{B,T}\sim\Pbb_{-B,T}.
\end{equation}
Under $\Pbb_{-B,T}$, write the driving noise as $dW_t^Q=Q^{1/2}dW_t^{-B}$, where $W^{-B}$ is cylindrical Wiener noise. Then
\begin{equation}\label{eq:path-RN-B}
    \frac{d\Pbb_{B,T}}{d\Pbb_{-B,T}}
    = \exp\!\left\{ -\sqrt{2}\int_0^T\!\langle Q^{1/2}B\phi_t,dW_t^{-B}\rangle_{\mathbb H}
    -\int_0^T\!\|Q^{1/2}B\phi_t\|_{\mathbb H}^2\,dt \right\}.
\end{equation}
Consequently,
\begin{equation}\label{eq:galerkin-KL}
    \frac1T\RelEnt{\Pbb_{B,T}}{\Pbb_{B,T}\circ r_T^{-1}}
    = \Tr(B^*QBQ)
    = \|Q^{1/2}BQ^{1/2}\|_{\mathrm{HS}}^2
    = \E_{\mu_Q}\|Q^{-1/2}J_B^{\mathbb K}\|_{\mathbb H}^2.
\end{equation}
\end{theorem}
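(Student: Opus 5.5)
The proof splits into three parts: identify the reversed law, compare the two Gaussian laws by Girsanov, and evaluate the relative entropy.

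\emph{Reversed law.} By \cref{prop:skew-invariance}, $\Pbb_{B,T}$ is a stationary centred Gaussian measure on $C([0,T];\mathbb H)$. It is therefore determined by its covariance function $C_B(s,t)=\E[\phi_t\otimes\phi_s]$. For $t\ge s$ the mild solution gives
\begin{equation*}
    C_B(s,t)=e^{-(t-s)\mathsf K_B}Q .
\end{equation*}
The reversed process has covariance $C_B(T-t,T-s)$. For $t\ge s$ this equals $\bigl(e^{-(t-s)\mathsf K_B}Q\bigr)^*=Qe^{-(t-s)\mathsf K_B^*}$. Next, $\mathsf K_B^*=I+B^*Q=I-BQ$ and $Q\mathsf K_B^*=(I-QB)Q=\mathsf K_{-B}Q$. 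It follows that $Qe^{-u\mathsf K_B^*}=e^{-u\mathsf K_{-B}}Q$, which is $C_{-B}$. Since both laws are Gaussian, this proves $\Pbb_{B,T}\circ r_T^{-1}=\Pbb_{-B,T}$. The inclusion $\mathbb H\supset\mathbb X$ and the continuity of paths are not needed here, because the whole argument takes place in $\mathbb H$.

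\emph{Girsanov comparison.} Under $\Pbb_{-B,T}$ the canonical process solves
\begin{equation*}
    d\phi=-(I-QB)\phi\,dt+\sqrt2\,Q^{1/2}dW^{-B}.
\end{equation*}
The two drifts differ by $-2QB\phi=\sqrt2\,Q^{1/2}u_t$ with $u_t=-\sqrt2\,Q^{1/2}B\phi_t$. The candidate density is
\begin{equation*}
    \mathcal Z_T=\exp\Bigl\{\int_0^T\langle u_t,dW^{-B}_t\rangle_{\mathbb H}-\tfrac12\int_0^T\|u_t\|^2_{\mathbb H}\,dt\Bigr\},
\end{equation*}
which is exactly \eqref{eq:path-RN-B}. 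The main obstacle is showing that $\mathcal Z$ is a true martingale, since the drift grows linearly and Novikov's criterion on $[0,T]$ is not directly available. I would use the localized Novikov criterion on short intervals $[t_k,t_{k+1}]$. The Gaussian fact $\E\exp(a\|\phi\|_{\mathbb H}^2)<\infty$ from \cref{lem:gaussian-measure-facts}, together with stationarity and the bound $\|Q^{1/2}B\phi\|\le\|Q^{1/2}B\|\,\|\phi\|$, gives
\begin{equation*}
    \E\exp\Bigl(\delta\int_{t_k}^{t_{k+1}}\|\phi_t\|^2\,dt\Bigr)\le\sup_t\E e^{\delta(t_{k+1}-t_k)\|\phi_t\|^2}<\infty
\end{equation*}
by Jensen's inequality, as long as the mesh is small. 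The standard Liptser--Shiryaev/Beneš argument then yields $\E\mathcal Z_T=1$. As an alternative, I could pass through Galerkin truncations $P_N$, where finite-dimensional Beneš conditions apply, and take limits. Once $\E\mathcal Z_T=1$, the Girsanov theorem shows that under $\mathcal Z_T\,d\Pbb_{-B,T}$ the process solves the $B$-equation from $\mu_Q$. Uniqueness in law, which follows from the pathwise uniqueness in \cref{prop:skew-invariance}, identifies this measure with $\Pbb_{B,T}$. Strict positivity of $\mathcal Z_T$ gives the equivalence $\Pbb_{B,T}\sim\Pbb_{-B,T}$.

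\emph{Relative entropy.} The relative entropy is $\E_{\Pbb_{B,T}}\log\mathcal Z_T$. Under $\Pbb_{B,T}$ we have $dW^{-B}=d\widetilde W+\sqrt2\,(-Q^{1/2}B\phi)\,dt$, where $\widetilde W$ is the $B$-noise. The stochastic integral contributes a zero-mean martingale term, because its integrand is square integrable. The drift correction contributes $+2\int\|Q^{1/2}B\phi\|^2\,dt$, and subtracting the compensator gives a net $\int_0^T\|Q^{1/2}B\phi_t\|^2\,dt$. Stationarity under $\mu_Q$ then yields $T\,\E_{\mu_Q}\|Q^{1/2}B\phi\|^2=T\Tr(B^*QBQ)$. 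Combining this with \cref{prop:La-representation} gives \eqref{eq:galerkin-KL}.
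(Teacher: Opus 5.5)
Your proposal is correct and follows essentially the same route as the paper. The intertwining $Q\mathsf K_B^*=\mathsf K_{-B}Q$ identifies the reversed stationary law, and short-interval Novikov bounds from Fernique's estimate and stationarity make the exponential a true martingale; uniqueness in law then identifies the tilted measure with $\Pbb_{B,T}$, and the entropy is computed under $\Pbb_{B,T}$ after shifting the noise.

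Your small departures are all valid. You match the full Gaussian covariance function instead of passing through $(P_t^B)^*=P_t^{-B}$ and the finite-dimensional-distribution iteration. You apply the piecewise Novikov criterion directly under $\Pbb_{-B,T}$ instead of the paper's inductive construction of intermediate measures. You get $\Pbb_{-B,T}\ll\Pbb_{B,T}$ from strict positivity of $\mathcal Z_T$ instead of swapping $B$ and $-B$. One index slip: with your convention the reversed covariance is $C_B(T-s,T-t)$, not $C_B(T-t,T-s)$, though the value $Qe^{-(t-s)\mathsf K_B^*}$ you then use is correct.
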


\begin{proof}
The stationary mild solution has continuous $\mathbb H$-valued paths because $Q$ is trace class and $e^{-t\mathsf K_B}$ is strongly continuous. Directly from \eqref{eq:B-assumptions},
\begin{equation*}
    \mathsf K_{-B}Q=Q\mathsf K_B^*,\qquad
    \mathsf K_BQ=Q\mathsf K_{-B}^*.
\end{equation*}
Hence
\begin{equation*}
    e^{-t\mathsf K_{-B}}Q = Qe^{-t\mathsf K_B^*},\qquad
    e^{-t\mathsf K_B}Q = Qe^{-t\mathsf K_{-B}^*}.
\end{equation*}
For the linear equation \eqref{eq:linear}, the mild solution shows that the conditional law of $\phi_t$ given $\phi_0=x$ is Gaussian with mean $e^{-t\mathsf K_B}x$ and covariance $Q_t^B$. Its transition semigroup is therefore given by the Mehler formula
\begin{equation*}
    P_t^BF(x)
    = \int_{\mathbb H}F(e^{-t\mathsf K_B}x+y)\,\mathcal N(0,Q_t^B)(dy),
    \qquad
    Q_t^B
    = 2\int_0^t e^{-s\mathsf K_B}Qe^{-s\mathsf K_B^*}\,ds.
\end{equation*}
By \eqref{eq:covariance-transport}, $Q_t^B=(1-e^{-2t})Q$, independently of $B$. Under the stationary law, the cross-covariances are
\begin{align*}
    \mathbb E_B[\phi_t\otimes\phi_0]
    &= e^{-t\mathsf K_B}Q,\\
    \mathbb E_B[\phi_0\otimes\phi_t]
    &= Qe^{-t\mathsf K_B^*}
    = e^{-t\mathsf K_{-B}}Q.
\end{align*}
Thus $(\phi_t,\phi_0)$ under the stationary $B$ dynamics has the same centred Gaussian law as $(\phi_0,\phi_t)$ under the stationary $(-B)$ dynamics. Hence, for bounded cylinder functions $F,G$,
\begin{equation*}
    \int_{\mathbb H}F P_t^BG\,d\mu_Q = \int_{\mathbb H}G P_t^{-B}F\,d\mu_Q.
\end{equation*}
The Markov semigroups are contractions on $L^2(\mu_Q)$, so density extends this identity to all $F,G\in L^2(\mu_Q)$ and gives $(P_t^B)^*=P_t^{-B}$. Applying the adjoint relation successively to the stationary finite-dimensional distributions, exactly as in \eqref{eq:forward-fdd}--\eqref{eq:reversed-fdd}, and then using continuity of the paths proves the first identity in \eqref{eq:path-reversal-B}.

Under \eqref{eq:B-assumptions}, $Q$ is positive, injective and trace class, while $B$ is bounded and skew-adjoint; moreover, \cref{prop:skew-invariance} gives the common stationary marginal $\mu_Q$. Thus all hypotheses of \cref{lem:stationary-gaussian-girsanov} are met. By \cref{lem:stationary-gaussian-girsanov}, the two path laws are equivalent and their Radon--Nikodym derivative is exactly~\eqref{eq:path-RN-B}.

Under $\Pbb_{B,T}$, the cylindrical process
$W^B=W^{-B}-\int_0^\cdot u(\phi_s)\,ds$ is Wiener. Rewriting the logarithm in \eqref{eq:path-RN-B} under this measure gives
\begin{equation*}
    \log\frac{d\Pbb_{B,T}}{d\Pbb_{-B,T}}
    = \int_0^T\langle u(\phi_t),dW_t^B\rangle_{\mathbb H} + \frac12\int_0^T\|u(\phi_t)\|_{\mathbb H}^2\,dt.
\end{equation*}
By stationarity and \eqref{eq:Girsanov-energy},
$\E_B\int_0^T\|u(\phi_t)\|_{\mathbb H}^2dt = 2T\Tr(B^*QBQ)<\infty$. Thus the stochastic integral is square-integrable and has mean zero. Taking expectations yields
\begin{align*}
    \RelEnt{\Pbb_{B,T}}{\Pbb_{-B,T}}
    &= \frac12\E_B\int_0^T\|u(\phi_t)\|_{\mathbb H}^2dt\\
    &= \int_0^T\E_B\|Q^{1/2}B\phi_t\|_{\mathbb H}^2dt = T\Tr(B^*QBQ),
 \end{align*}
where the last equality uses stationarity and~\eqref{eq:Girsanov-energy}. Combining this with \eqref{eq:path-reversal-B} and~\eqref{eq:J-norm} proves \eqref{eq:galerkin-KL}.

\end{proof}

\begin{corollary}[Finite-dimensional approximation of the stationary path-space rate]
 \label{cor:galerkin-path-rate}
Let
\begin{equation*}
    Q_N=P_NQP_N,\qquad B_N=P_NBP_N,
\end{equation*}
and let $\Pbb_{B_N,N,T}$ be the stationary path law on $C([0,T];\mathbb H_N)$ of
\begin{equation*}
    d\phi_t^N=-(I_N+Q_NB_N)\phi_t^N\,dt + \sqrt2\,dW_t^{Q_N}.
\end{equation*}
Then
\begin{equation}\label{eq:galerkin-limit}
    \frac1T\RelEnt{\Pbb_{B_N,N,T}}{\Pbb_{B_N,N,T}\circ r_T^{-1}}
    = \|P_NQ^{1/2}BQ^{1/2}P_N\|_{\mathrm{HS}}^2 \uparrow \Tr(B^*QBQ).
\end{equation}
\end{corollary}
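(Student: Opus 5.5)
The plan is to apply \cref{thm:Galerkin-path-KL} on the finite-dimensional space $\mathbb H_N$ with $Q$ replaced by $Q_N$ and $B$ by $B_N$, and then pass to the limit through the Fourier eigenbasis.

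\textbf{Step 1: the truncated data satisfy the theorem's hypotheses.} Since $P_N$ is an orthogonal projection commuting with $Q$ (the basis \eqref{eq:basis} diagonalises $Q$), the operator $Q_N=P_NQP_N$ restricted to $\mathbb H_N$ is positive, injective and trace class, being diagonal with entries $q_j>0$. Also $B_N^*=P_NB^*P_N=-B_N$, so $B_N$ is bounded and skew-adjoint on $\mathbb H_N$. As noted before \cref{prop:skew-invariance}, the stationary arguments hold for every positive injective trace-class covariance on a real separable Hilbert space, so they apply on $\mathbb H_N$. \cref{thm:Galerkin-path-KL} then gives
\begin{equation*}
\frac1T\RelEnt{\Pbb_{B_N,N,T}}{\Pbb_{B_N,N,T}\circ r_T^{-1}}=\|Q_N^{1/2}B_NQ_N^{1/2}\|_{\mathrm{HS}(\mathbb H_N)}^2 .
\end{equation*}
Because $Q$ commutes with $P_N$, we have $Q_N^{1/2}=P_NQ^{1/2}P_N$. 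Hence $Q_N^{1/2}B_NQ_N^{1/2}=P_NQ^{1/2}P_NBP_NQ^{1/2}P_N=P_NQ^{1/2}BQ^{1/2}P_N$, which is the middle expression in \eqref{eq:galerkin-limit}. Its Hilbert--Schmidt norm on $\mathbb H_N$ equals its norm as an operator on $\mathbb H$, since it vanishes on $\mathbb H_N^\perp$.

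\textbf{Step 2: monotone convergence.} Put $M=Q^{1/2}BQ^{1/2}$, which is Hilbert--Schmidt by \cref{prop:La-representation}. In the basis $(e_j)$,
\begin{equation*}
\|P_NMP_N\|_{\mathrm{HS}}^2=\sum_{i,j\in I_N}|\langle Me_j,e_i\rangle_{\mathbb H}|^2 ,
\end{equation*}
where $I_N$ indexes the modes of frequency at most $N$. The sets $I_N$ increase to the full index set, so the partial sums of the nonnegative double series increase. By monotone convergence they tend to $\|M\|_{\mathrm{HS}}^2=\Tr(B^*QBQ)$, using \eqref{eq:J-norm}. This gives the arrow $\uparrow$ in \eqref{eq:galerkin-limit}.

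\textbf{Main obstacle.} The only real point is that $Q$ commutes with $P_N$, which is what identifies $Q_N^{1/2}$ with $P_NQ^{1/2}P_N$. This holds here because $P_N$ is a spectral projection of $Q$. For a general covariance one would instead choose $P_N$ as spectral projections of $Q$, and the same argument goes through. No commutation between $B$ and $Q$ is needed.
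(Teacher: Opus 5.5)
Your proposal is correct and takes essentially the same route as the paper: apply \cref{thm:Galerkin-path-KL} on $\mathbb H_N$ with data $(Q_N,B_N)$, use the fact that $P_N$ is a spectral projection of $Q$ to rewrite $Q_N^{1/2}B_NQ_N^{1/2}=P_NQ^{1/2}BQ^{1/2}P_N$, and obtain the monotone limit $\Tr(B^*QBQ)$ from the increasing index sets in the Hilbert--Schmidt double sum. Your explicit check that the truncated data satisfy the theorem's hypotheses on $\mathbb H_N$ (positivity, injectivity and trace class of $Q_N$, skew-adjointness of $B_N$) is a small but welcome addition to the paper's terser argument.
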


\begin{proof}
Since $P_N$ is a spectral projection of $Q$,
\begin{align*}
    Q_N^{1/2}B_NQ_N^{1/2}
    &= P_NQ^{1/2}P_NBP_NQ^{1/2}P_N\\
    &= P_NQ^{1/2}BQ^{1/2}P_N.
\end{align*}
Moreover, $B_N^*=-B_N$. The finite-dimensional instance of \cref{thm:Galerkin-path-KL} therefore gives
\begin{equation*}
    \frac1T\RelEnt{\Pbb_{B_N,N,T}}{\Pbb_{B_N,N,T}\circ r_T^{-1}} = \|P_NQ^{1/2}BQ^{1/2}P_N\|_{\mathrm{HS}}^2.
\end{equation*}
Put $\mathsf A_B=Q^{1/2}BQ^{1/2}$ and let $\mathcal I_N=\{0,(k,c),(k,s):1\le k\le N\}$ index the Fourier basis of $\mathbb H_N$. Then
\begin{align*}
    \|P_N\mathsf A_BP_N\|_{\mathrm{HS}}^2
    &= \sum_{\alpha,\beta\in\mathcal I_N} |\langle e_\alpha,\mathsf A_Be_\beta\rangle_{\mathbb H}|^2\\
    &\uparrow \sum_{\alpha,\beta} |\langle e_\alpha,\mathsf A_Be_\beta\rangle_{\mathbb H}|^2
    = \|\mathsf A_B\|_{\mathrm{HS}}^2
    = \Tr(B^*QBQ)\qquad\text{as }N\to\infty.
\end{align*}
The monotonicity follows because the index sets $\mathcal I_N\times\mathcal I_N$ increase. The approximating law is that of the Galerkin-truncated equation; when $BQ\ne QB$, it need not equal the law of the projected process $P_N\phi_t$.
\end{proof}

Duan, Zhang and Zimmer carry the Gaussian-reference current from the stationary adjoint calculation to a full path-space entropy-production formula and establish its equivalence with zero current, generator symmetry, detailed balance and stationary path reversal \cite{DuanZhangZimmer2026}. For the linearly growing skew Gaussian dynamics considered here, \cref{thm:Galerkin-path-KL} gives the complete finite-horizon path comparison, while \eqref{eq:galerkin-KL} and \eqref{eq:galerkin-limit} identify its rate with the explicit Hilbert--Schmidt trace and its monotone Galerkin recovery.

\begin{remark}[Time-reversal convention]
 \label{rmk:time-reversal}
For the coordinate-wise reversal used in \cref{thm:Galerkin-path-KL}, field values are left unchanged, corresponding to $\Theta=I$. The theorem identifies the stationary path-space relative-entropy rate with the resulting forward--backward asymmetry. Hence $B\ne0$ breaks this time-reversal symmetry even though the invariant one-time law remains $\mu_Q$.

If an orthogonal involution $\Theta$ satisfies $\Theta Q=Q\Theta$ and $\Theta B\Theta=-B$, then the model instead obeys generalised detailed balance relative to $\Theta$: its stationary path law is invariant under the transformed reversal $\omega(t)\mapsto\Theta\omega(T-t)$. The single-mode rotor defined by \eqref{eq:single-mode-force} gives a concrete example: spatial reflection $(\Theta\phi)(x)=\phi(-x)$ has precisely this property. This does not contradict~\eqref{eq:galerkin-KL}, which uses the coordinate-wise reversal $\Theta=I$.
\end{remark}

\subsection{Transient Relaxation}

Once $BQ=QB$, the operator $B$ preserves the Fourier eigenspaces of $Q$. The dynamics can then be computed mode by mode, strengthening the stationary results to explicit formulas for the transient covariance, relative entropy and Onsager decomposition after a mass quench.

\subsubsection{The Commuting Case}

\Cref{eq:linear} defines a linear Borel Markov family and may therefore be started from any initial distribution. We impose the commutation relation

\begin{equation}\label{eq:B-commuting}
    BQ=QB.
\end{equation}
Under this additional assumption, each non-zero eigenspace $\mathbb V_k=\operatorname{span}\{e_k^c,e_k^s\}$ is invariant and, with $\mathbb V_0=\operatorname{span}\{e_0\}$,
\begin{equation}\label{eq:B-blocks}
    Be_0=0,\qquad B|_{\mathbb V_k}=b_kR,\qquad
    R=\begin{pmatrix}0&1\\-1&0\end{pmatrix},\qquad
    \sup_{k\ge1}|b_k|\le\|B\|.
\end{equation}
The numbers $b_k$ are the bare coefficients of $B$. The coefficient of $R$ in the actual drift $-QB$ on $\mathbb V_k$ is $-b_k/(k^2+m)$; relative to the usual counterclockwise orientation generated by $-R$, the angular velocity is $b_k/(k^2+m)$.

We now take the mass-quenched initial law $\mu_0=\cN(0,A_0^{-1})$, where $A_0=-\partial_x^2+m_0$ and $m_0>0$. Since $A_0^{-1}$ is a bounded Borel function of $Q$, it commutes with $B$. Here ``mass quench'' means that the initial covariance uses $m_0$, while the subsequent dynamics and target covariance use $m$.

\begin{lemma}[Mass-quench covariance]\label{lem:cov}
The law of~\eqref{eq:linear} is $\mu_t=\cN(0,C_t)$ with
\begin{equation}\label{eq:Ct-B}
    C_t=e^{-2t}A_0^{-1}+(1-e^{-2t})Q,
\end{equation}
independently of the coefficients $(b_k)$. Its unique invariant probability measure is $\mu_Q$.
\end{lemma}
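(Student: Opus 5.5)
The plan is to compute the law directly from the mild-solution formula of \cref{prop:skew-invariance}. Let $\phi_0\sim\mu_0=\cN(0,A_0^{-1})$ be independent of the noise. Then $\phi_t=e^{-t\mathsf K_B}\phi_0+\sqrt2\int_0^te^{-(t-s)\mathsf K_B}dW_s^Q$ is a sum of two independent centred Gaussian $\mathbb H$-valued variables. Hence $\mu_t$ is centred Gaussian with covariance
\[
C_t=e^{-t\mathsf K_B}A_0^{-1}e^{-t\mathsf K_B^*}+Q_t^B .
\]
By \eqref{eq:covariance-transport}, $Q_t^B=(1-e^{-2t})Q$, and this needs no commutation. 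So the only remaining task is the first term.

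\textbf{First term.} Here we use \eqref{eq:B-commuting}. Since $A_0^{-1}$ is a bounded Borel function of $Q$, it commutes with $B$, and hence with $QB$ and with $e^{-t\mathsf K_B}=e^{-t}e^{-tQB}$. Then
\[
e^{-t\mathsf K_B}A_0^{-1}e^{-t\mathsf K_B^*}=e^{-2t}\,e^{-tQB}A_0^{-1}e^{-tB^*Q}=e^{-2t}A_0^{-1}\,e^{-tQB}e^{tQB}.
\]
For the last equality we use $(QB)^*=B^*Q=-BQ=-QB$, so $e^{-tB^*Q}=e^{tQB}$, together with commutation of $A_0^{-1}$ with $e^{-tQB}$.

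The product $e^{-tQB}e^{tQB}$ equals $I$, which gives \eqref{eq:Ct-B}. Equivalently, one can check this mode by mode with \eqref{eq:B-blocks}: on $\mathbb V_k$ the flow $e^{-tQB}$ is a rotation, which commutes with the scalar covariance $(k^2+m_0)^{-1}I$ of that block. The coefficients $b_k$ therefore drop out.

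\textbf{Invariant measure.} Invariance of $\mu_Q$ is already in \cref{prop:skew-invariance}. For uniqueness, take any invariant probability $\nu$ and let $\phi_0\sim\nu$. From the mild formula, $\|e^{-t\mathsf K_B}\|\le e^{-t}$, because $e^{-tQB}$ is unitary when $QB$ is skew-adjoint, which holds under commutation. Hence $e^{-t\mathsf K_B}\phi_0\to0$ almost surely. The stochastic convolution has law $\cN(0,(1-e^{-2t})Q)$, which converges weakly to $\mu_Q$. Therefore $\nu=\mathrm{law}(\phi_t)\to\mu_Q$ weakly, and so $\nu=\mu_Q$. One could alternatively use characteristic functionals: $\hat\nu(h)=\E_\nu\exp(i\langle e^{-t\mathsf K_B}\phi_0,h\rangle)\,e^{-(1-e^{-2t})\langle Qh,h\rangle/2}$, and letting $t\to\infty$ gives $\hat\nu=\hat\mu_Q$.

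\textbf{Expected obstacle.} The only delicate point is justifying the operator identities: skew-adjointness of $QB$ and the commutation with $A_0^{-1}$. If the functional-calculus argument is in doubt, the block decomposition \eqref{eq:B-blocks} reduces everything to $2\times2$ rotations. Nothing here is a serious obstacle; the lemma is essentially a direct corollary of \eqref{eq:covariance-transport} plus commutation.
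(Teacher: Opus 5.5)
Your proposal is correct and follows the paper's proof. It computes the covariance from the mild solution using $e^{-t\mathsf K_B}=e^{-t}e^{-tQB}$, with $e^{-tQB}$ unitary and commuting with $A_0^{-1}$. It gets uniqueness from $\|e^{-t\mathsf K_B}\|\le e^{-t}$ together with weak convergence of the stochastic convolution to $\mu_Q$. The only cosmetic difference is that you take the stochastic-convolution covariance directly from \eqref{eq:covariance-transport}, which holds without commutation. The paper instead recomputes it as $2\int_0^te^{-2s}Q\,ds$ using $BQ=QB$.
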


\begin{proof}
Put $\mathsf S_B=QB$. By \cref{eq:B-assumptions,eq:B-commuting}, $\mathsf S_B^*=-\mathsf S_B$ and $\mathsf S_B$ commutes with $Q$ and $A_0^{-1}$. Hence $e^{-t\mathsf K_B}=e^{-t}e^{-t\mathsf S_B}$, where $e^{-t\mathsf S_B}$ is unitary and commutes with both covariance operators. The covariance of the mild solution is
\begin{equation*}
    e^{-2t}A_0^{-1}+2\int_0^t e^{-2s}Q\,ds,
\end{equation*}
which proves \eqref{eq:Ct-B}. Invariance also follows from \cref{prop:skew-invariance}. Finally, $\|e^{-t\mathsf K_B}\|=e^{-t}$, so the initial contribution $e^{-t\mathsf K_B}\phi_0$ converges to zero, while the stochastic convolution is centred Gaussian with covariance $(1-e^{-2t})Q$. Hence the law converges weakly to $\mu_Q$ from every initial distribution on $\mathbb H$, and an invariant law must equal $\mu_Q$.
\end{proof}

\subsubsection{Transient Entropy and Onsager Decomposition}

Define the shorthand $\lambda_k^0 = k^2 + m_0$ and
\begin{equation}\label{eq:Tk}
    T_k(t) = e^{-2t}\Bigl(\frac{\lambda_k}{\lambda_k^0} - 1\Bigr).
\end{equation}

For the finite-dimensional expressions below, set $\mu_t^N=(P_N)_\#\mu_t=\cN(0,P_NC_tP_N)$, where $(P_N)_\#$ denotes pushforward by $P_N$, $\mu_Q^N=\cN(0,Q_N)$, $h_t^N=d\mu_t^N/d\mu_Q^N$, and $v_{\mathrm{ss}}^N(\phi_N)=-Q_NB_N\phi_N$ on $\mathbb H_N$.

The quantities $\sigma_{\rm na}$, $\sigma_{\rm hk}$ and $\sigma_{\rm tot}$ introduced below are Onsager current quadratic forms, defined by squared $L^2(\mu_t;\mathbb H)$ norms of the corresponding Cameron--Martin fields. For each $k\ge0$, $P_{\mathbb V_k}$ denotes the $\mathbb H$-orthogonal projection onto the Fourier eigenspace $\mathbb V_k$ defined in \eqref{eq:B-blocks}.

\begin{theorem}[Gaussian entropy dissipation and infinite-dimensional Onsager decomposition]\label{thm:lin-hk} For every $t\ge0$, $\mu_t\sim\mu_Q$, and
\begin{equation}\label{eq:rel-entropy}
    \RelEnt{\mu_t}{\mu_Q}
    = \frac12\sum_{k\ge0}d_k\bigl[T_k-\log(1+T_k)\bigr]<\infty.
\end{equation}
Define
\begin{equation}\label{eq:sigma-na-hk}
    \sigma_{\mathrm{na}}(t)
    = \sum_{k\ge0}d_k\frac{T_k^2}{1+T_k},\qquad
    \sigma_{\mathrm{hk}}(t)
    = \Tr(B^*QB C_t)
    = 2\sum_{k\ge1}b_k^2\frac{1+T_k}{\lambda_k^2}.
\end{equation}
Then
\begin{equation}\label{eq:entropy-decay}
    \frac{d}{dt}\RelEnt{\mu_t}{\mu_Q}=-\sigma_{\mathrm{na}}(t).
\end{equation}
The series
\begin{equation}\label{eq:Onsager-fields}
    G_t(\phi)
    :=\sum_{k\ge0}\frac{\sqrt{\lambda_k}T_k}{1+T_k}P_{\mathbb V_k}\phi,
    \qquad
    J_B(\phi):=-Q^{1/2}B\phi
\end{equation}
converge in $L^2(\mu_t;\mathbb H)$. With these limits, define the infinite-dimensional Onsager quadratic form
\begin{equation}\label{eq:sigma-tot-limit}
    \sigma_{\mathrm{tot}}(t)
    := \E_{\mu_t}\|J_B-G_t\|_{\mathbb H}^2
    = \lim_{N\to\infty} \int_{\mathbb H_N}\!\bigl\|Q_N^{-1/2} (v_{\mathrm{ss}}^N-Q_N\nabla_{\mathbb H}\log h_t^N)\bigr\|_{\mathbb H_N}^2\,d\mu_t^N
\end{equation}
satisfies
\begin{equation}\label{eq:onsager-decomp}
    \sigma_{\mathrm{tot}}(t) = \sigma_{\mathrm{na}}(t)+\sigma_{\mathrm{hk}}(t).
\end{equation}
At steady state,
\begin{equation}\label{eq:hk-ss}
    \sigma_{\mathrm{hk}}^{\rm ss}
    = \Tr(B^*QBQ)
    = \|Q^{1/2}BQ^{1/2}\|_{\mathrm{HS}}^2
    = 2\sum_{k\ge1}\frac{b_k^2}{(k^2+m)^2}.
\end{equation}
This quantity is positive exactly when $B\ne0$; for uniform block coefficients $b_k=c$ it reduces to $2c^2\sum_{k\ge1}(k^2+m)^{-2}$.
\end{theorem}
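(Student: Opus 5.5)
The plan is to reduce everything to the Fourier blocks $\mathbb V_k$, where both $\mu_t=\cN(0,C_t)$ (by \cref{lem:cov}) and $\mu_Q$ are product measures over the modes. On $\mathbb V_k$, $C_t$ acts as $e^{-2t}/\lambda_k^0+(1-e^{-2t})/\lambda_k=(1+T_k)/\lambda_k$. The relative covariance $Q^{-1/2}C_tQ^{-1/2}$ is therefore $I+\mathcal T_t$, with $\mathcal T_t$ equal to $T_k$ on $\mathbb V_k$. Since $\lambda_k/\lambda_k^0-1=(m-m_0)/(k^2+m_0)$, we get $|T_k|\le Ce^{-2t}k^{-2}$, so $\sum_kd_kT_k^2<\infty$. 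Moreover $1+T_k\ge\min(1,\lambda_k/\lambda_k^0)\cdot$const$>0$ uniformly in $k$. The Feldman--H\'ajek theorem (Hilbert--Schmidt perturbation of the identity) then gives $\mu_t\sim\mu_Q$. The entropy formula \eqref{eq:rel-entropy} follows from the one-dimensional Gaussian entropy $\frac12[T-\log(1+T)]$ per mode. I would justify it by computing it for the projections $\mu_t^N,\mu_Q^N$ and passing to $N\to\infty$ via monotone convergence of relative entropy under the increasing $\sigma$-algebras generated by $P_N$. Convergence of the series is immediate because $T-\log(1+T)=O(T^2)$.

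For \eqref{eq:entropy-decay}, I differentiate termwise. We have $\dot T_k=-2T_k$, so
\[
\frac{d}{dt}\tfrac12[T_k-\log(1+T_k)]=\tfrac12\dot T_k\,\frac{T_k}{1+T_k}=-\frac{T_k^2}{1+T_k}.
\]
The termwise derivatives are dominated uniformly on compact time intervals by $Ck^{-4}$, which justifies the interchange. For $\sigma_{\mathrm{hk}}$ the computation is direct: $\E_{\mu_t}\|Q^{1/2}B\phi\|^2=\Tr(B^*QBC_t)$. On $\mathbb V_k$ we have $B^*QB=b_k^2\lambda_k^{-1}I$ (because $R^*R=I$) and $C_t=(1+T_k)\lambda_k^{-1}I$, which gives $2b_k^2(1+T_k)/\lambda_k^2$. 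Setting $T_k=0$ gives \eqref{eq:hk-ss}, and its positivity exactly when $B\neq0$ is the last assertion of \cref{prop:La-representation}.

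For the Onsager part, I would first check $L^2(\mu_t)$ convergence of the two series. For $G_t$, the $k$-th term contributes
\[
\E_{\mu_t}\|\cdot\|^2=d_k\lambda_k\frac{T_k^2}{(1+T_k)^2}\cdot\frac{1+T_k}{\lambda_k}=d_k\frac{T_k^2}{1+T_k},
\]
and these are summable; this is exactly $\E_{\mu_t}\|G_t\|^2=\sigma_{\mathrm{na}}$. The cross term $\E_{\mu_t}\langle J_B,G_t\rangle$ vanishes mode by mode: on $\mathbb V_k$ it equals a constant times $\E\langle R\phi_k,\phi_k\rangle$, and $\langle R\phi_k,\phi_k\rangle=0$ pointwise by skew-symmetry. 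Expanding the square then gives \eqref{eq:onsager-decomp}.

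It remains to identify the Galerkin limit in \eqref{eq:sigma-tot-limit}. In finite dimensions $h_t^N$ is explicit: $\nabla\log h_t^N=(Q_N^{-1}-(P_NC_tP_N)^{-1})\phi$, which on $\mathbb V_k$ equals $\lambda_k\frac{T_k}{1+T_k}\phi$. Hence $Q_N^{-1/2}Q_N\nabla\log h_t^N=P_N G_t$ and $Q_N^{-1/2}v_{\rm ss}^N=-Q_N^{1/2}B_N\phi=P_NJ_B$; here commutation is used, since $B$ preserves $\mathbb H_N$. The integrand is therefore $\|P_N(J_B-G_t)\|^2$ evaluated under the pushforward, and monotone convergence in $N$ gives the limit. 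I expect the main obstacle to be the uniform justification steps: the equivalence and entropy limit, and the termwise differentiation. Both are handled by the uniform positive lower bound on $1+T_k$ and the $k^{-2}$ decay of $T_k$.
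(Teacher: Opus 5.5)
Your proposal is correct and follows essentially the paper's own route: blockwise diagonalisation with $1+T_k$ bounded away from zero and $T_k=O(k^{-2})$, Feldman--H\'ajek plus \cref{lem:entropy-projections} for the entropy series, termwise differentiation using $T_k'=-2T_k$, the explicit Galerkin identities $Q_N^{1/2}\nabla_{\mathbb H}\log h_t^N=P_NG_t$ and $Q_N^{-1/2}v_{\mathrm{ss}}^N=P_NJ_B$, and pointwise orthogonality of $J_B$ and $G_t$ on each $\mathbb V_k$. The differences are cosmetic: you pass to the Galerkin limit by monotone convergence rather than $L^2$ convergence of $P_N(J_B-G_t)$, and the paper additionally records, via \cref{lem:gaussian-density-energy}, that $\sigma_{\mathrm{na}}(t)=4\cE_Q(\sqrt{h_t},\sqrt{h_t})$, which the statement does not require.
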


\begin{proof}
 \par\noindent\textbf{Step 1. Equivalence and relative entropy.}\par
The eigenvalues of $C_t$ are $v_k=(1+T_k)/\lambda_k$. The operator $Q^{-1/2}C_tQ^{-1/2}-I$ is diagonal with eigenvalues $T_k=O(k^{-2})$, hence is Hilbert--Schmidt, while $1+T_k$ is uniformly bounded above and away from zero. Let $\mathsf T_t$ be the diagonal operator satisfying $\mathsf T_t|_{\mathbb V_k}=T_k(t)I_{\mathbb V_k}$. Then $C_t=Q^{1/2}(I+\mathsf T_t)Q^{1/2}$, and the bounded invertibility of $(I+\mathsf T_t)^{1/2}$ also gives $\operatorname{Ran}C_t^{1/2} = \operatorname{Ran}Q^{1/2}$. Equality of these Cameron--Martin ranges and the Hilbert--Schmidt relative covariance perturbation are the two conditions in the centred Feldman--H\'ajek equivalence theorem \cite{Bogachev1998}; hence $\mu_t\sim\mu_Q$. The Fourier coordinates generate the Borel sigma-field, and $\mu_t\ll\mu_Q$, so \cref{lem:entropy-projections} identifies the full entropy with the limit of the finite-mode entropies. The one-mode Gaussian formula and $T_k-\log(1+T_k) = O(T_k^2)$ then give~\eqref{eq:rel-entropy}.

 \par\noindent\textbf{Step 2. Nonadiabatic dissipation.}\par
Since $T_k'=-2T_k$, termwise differentiation yields
\begin{equation*}
    -\frac{d}{dt}\frac12[T_k-\log(1+T_k)] = \frac{T_k^2}{1+T_k}.
\end{equation*}
It is justified locally uniformly in $t\ge0$ by $\sum_k d_kT_k^2<\infty$ and the uniform lower bound on $1+T_k$, proving~\eqref{eq:entropy-decay}. Moreover, $\mu_t\sim\mu_Q$ and
$\sum_kd_kT_k^2/(1+T_k)<\infty$ verify the hypotheses of \cref{lem:gaussian-density-energy}. For the Gaussian gradient form $\cE_Q$, that lemma gives
\begin{equation*}
    \sqrt{h_t}\in\operatorname{Dom}(\cE_Q),
    \qquad
    4\cE_Q(\sqrt{h_t},\sqrt{h_t}) = \sigma_{\rm na}(t).
\end{equation*}
Thus the differentiated series is also the closed-form entropy-dissipation equality.

 \par\noindent\textbf{Step 3. The two infinite-dimensional fields.}\par
For the $N$-mode Gaussian density, direct differentiation gives
\begin{align*}
    \log h_t^N(\phi_N)
    &= c_{N,t}-\frac12\left\langle\phi_N, \bigl[ (C_t^N)^{-1}-Q_N^{-1} \bigr]\phi_N \right\rangle_{\mathbb H_N},\\
    Q_N^{1/2}\nabla_{\mathbb H}\log h_t^N(\phi_N)
    &= \sum_{k=0}^N\frac{\sqrt{\lambda_k}T_k}{1+T_k} P_{\mathbb V_k}\phi_N=P_NG_t(\phi_N).
\end{align*}
Thus $G_t$ is the $L^2(\mu_t;\mathbb H)$ limit of the finite-dimensional Cameron--Martin logarithmic gradients. Likewise,
\begin{equation*}
    Q_N^{-1/2}v_{\rm ss}^N(\phi_N) = -Q_N^{1/2}B_N\phi_N = P_NJ_B(\phi_N).
\end{equation*}
By \eqref{eq:Ct-B},
\begin{align*}
    \E_{\mu_t}\|G_t\|_{\mathbb H}^2
    &= \sum_{k\ge0}d_k\frac{T_k^2}{1+T_k}
    = \sigma_{\mathrm{na}}(t),\\
    \E_{\mu_t}\|J_B\|_{\mathbb H}^2
    &= \E_{\mu_t}\|Q^{1/2}B\phi\|_{\mathbb H}^2
    = \Tr(B^*QB C_t)
    = 2\sum_{k\ge1}b_k^2\frac{1+T_k}{\lambda_k^2}.
 \end{align*}
Both series converge: the first because $T_k=O(k^{-2})$, and the second because $(b_k)$ is bounded and $\sum_k\lambda_k^{-2}<\infty$. Hence the partial sums in \eqref{eq:Onsager-fields} are Cauchy in $L^2(\mu_t;\mathbb H)$ and define the asserted limits.

 \par\noindent\textbf{Step 4. Onsager orthogonality.}\par
On $\mathbb V_k$, $G_t(\phi)$ is parallel to $P_{\mathbb V_k}\phi$, whereas
 \begin{equation*}
 J_B(\phi)=-b_k\lambda_k^{-1/2}RP_{\mathbb V_k}\phi
 \end{equation*}
is orthogonal to it. Consequently $\langle J_B,G_t\rangle_{\mathbb H}=0$ in $L^1(\mu_t)$, and expansion of~\eqref{eq:sigma-tot-limit} gives~\eqref{eq:onsager-decomp}. The identities in Step~3 give, for every $N$,
\begin{align*}
 &\int_{\mathbb H_N}\bigl\|Q_N^{-1/2}
 (v_{\rm ss}^N-Q_N\nabla_{\mathbb H}\log h_t^N)\bigr\|_{\mathbb H_N}^2
 \,d\mu_t^N\\
 &\qquad=\mathbb E_{\mu_t}\|P_N(J_B-G_t)\|_{\mathbb H}^2.
\end{align*}
Since $P_N(J_B-G_t)\to J_B-G_t$ in $L^2(\mu_t;\mathbb H)$, the right-hand side converges to $\mathbb E_{\mu_t}\|J_B-G_t\|_{\mathbb H}^2$. This proves the limit in \eqref{eq:sigma-tot-limit}.

 \par\noindent\textbf{Step 5. Steady state.}\par
Since $C_t\to Q$ and $T_k\to0$, the nonadiabatic term vanishes and
 \begin{equation*}
 \sigma_{\mathrm{hk}}^{\rm ss}
 =\Tr(B^*QBQ)
 =\Tr\bigl((Q^{1/2}BQ^{1/2})^*(Q^{1/2}BQ^{1/2})\bigr).
 \end{equation*}
The trace is finite because $B$ is bounded and $Q^{1/2}$ is Hilbert--Schmidt, and~\eqref{eq:B-blocks} gives the final series.
\end{proof}

\begin{corollary}[Commuting spectral rate]\label{cor:commuting-Galerkin-rate}
Under \eqref{eq:B-commuting}, the Galerkin rate in \eqref{eq:galerkin-limit} is
 \begin{equation*}
 \frac1T\RelEnt{\Pbb_{B_N,N,T}}
 {\Pbb_{B_N,N,T}\circ r_T^{-1}}
 =2\sum_{k=1}^N\frac{b_k^2}{(k^2+m)^2}.
 \end{equation*}
Consequently,
 \begin{equation*}
 0\le \Tr(B^*QBQ)
 -\frac1T\RelEnt{\Pbb_{B_N,N,T}}
 {\Pbb_{B_N,N,T}\circ r_T^{-1}}
 \le2\|B\|^2\sum_{k>N}(k^2+m)^{-2}=O(N^{-3}).
 \end{equation*}
\end{corollary}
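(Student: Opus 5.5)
The plan is to reduce the statement to the Galerkin identity already proved in \cref{cor:galerkin-path-rate} and then evaluate the Hilbert--Schmidt norm block by block.

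First I would note that each $\mathbb H_N$ is a sum of Fourier eigenspaces $\mathbb V_0\oplus\cdots\oplus\mathbb V_N$. Under \eqref{eq:B-commuting} the operator $B$ preserves these spaces, so $P_NB=BP_N$ and $B_N=B|_{\mathbb H_N}$. By \cref{cor:galerkin-path-rate} the Galerkin rate equals $\|P_NQ^{1/2}BQ^{1/2}P_N\|_{\mathrm{HS}}^2$. Commutation also gives
\begin{equation*}
    Q^{1/2}BQ^{1/2}=QB .
\end{equation*}
By \eqref{eq:B-blocks} this operator vanishes on $\mathbb V_0$, and on $\mathbb V_k$ it equals $(k^2+m)^{-1}b_kR$. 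The $2\times2$ block $R$ has $\|R\|_{\mathrm{HS}}^2=2$. Summing the orthogonal block contributions for $1\le k\le N$ therefore gives $2\sum_{k=1}^N b_k^2(k^2+m)^{-2}$, which is the first display. The same computation with $N=\infty$ reproduces \eqref{eq:hk-ss}, so $\Tr(B^*QBQ)=2\sum_{k\ge1}b_k^2(k^2+m)^{-2}$.

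The difference between the full trace and the Galerkin rate is then the tail $2\sum_{k>N}b_k^2(k^2+m)^{-2}$. This tail is nonnegative, and the bound $|b_k|\le\|B\|$ from \eqref{eq:B-blocks} shows it is at most $2\|B\|^2\sum_{k>N}(k^2+m)^{-2}$. The last step is the order estimate: I would use $(k^2+m)^{-2}\le k^{-4}$ and compare with an integral, giving $\sum_{k>N}k^{-4}\le\int_N^\infty x^{-4}\,dx=N^{-3}/3$, which is $O(N^{-3})$.

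The only point needing care is the claim $P_NBP_N=B|_{\mathbb H_N}$, together with the identification of the HS norm through orthogonal blocks. Both follow directly from the invariance of each $\mathbb V_k$ under $B$, so I expect no real obstacle; the argument is bookkeeping on top of \cref{cor:galerkin-path-rate} and \cref{thm:lin-hk}.
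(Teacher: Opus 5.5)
Your proposal is correct and takes essentially the same route the paper intends; the corollary is stated without a separate proof. \cref{cor:galerkin-path-rate} reduces the Galerkin rate to $\|P_NQ^{1/2}BQ^{1/2}P_N\|_{\mathrm{HS}}^2$, the block form \eqref{eq:B-blocks} under \eqref{eq:B-commuting} evaluates this as $2\sum_{k=1}^N b_k^2(k^2+m)^{-2}$, and subtracting it from \eqref{eq:hk-ss}, with $|b_k|\le\|B\|$ and the integral comparison $\sum_{k>N}k^{-4}\le N^{-3}/3$, gives the stated bounds and fills in the details the paper leaves implicit.
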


\begin{remark}[Relation to path-space entropy production]
The quantities in \cref{thm:lin-hk} are instantaneous Onsager current quadratic forms. At stationarity the housekeeping term equals the total current energy and agrees with the forward--reverse path-space relative-entropy rate in the finite-dimensional diffusion benchmarks. For jump diffusions, the path-space rate also includes a jump contribution \cite{ZhangLu2025}. In the present field model, \cref{thm:Galerkin-path-KL} identifies the stationary housekeeping term with the relative-entropy rate on the complete SPDE path space. Away from stationarity, an instantaneous Onsager quadratic form and a path-reversal entropy-production rate need not agree. For an overdamped model satisfying local detailed balance at temperature $T_{\mathrm{phys}}$, the stationary rate also represents housekeeping heat divided by $k_BT_{\mathrm{phys}}$ per unit time. The spectral projections of the two $L^2(\mu_t;\mathbb H)$ fields in \cref{thm:lin-hk} recover the corresponding Galerkin currents.
\end{remark}

\subsubsection{Large-Time Relaxation}

\begin{corollary}[Large-time asymptotics]\label{cor:asymptotic}
Under the hypotheses of \cref{thm:lin-hk}, set
\begin{equation*}
 S = \sum_{k\ge0}d_k(\lambda_k/\lambda_k^0-1)^2<\infty.
\end{equation*}
Then
 \begin{equation*}
 e^{4t}\,\sigma_{\mathrm{na}}(t) \to S,\qquad
 e^{4t}\,\RelEnt{\mu_t}{\mu_Q} \to \frac{S}{4},
 \end{equation*}
while $\sigma_{\mathrm{hk}}(t)\to\sigma_{\mathrm{hk}}^{\rm ss}$ and $\sigma_{\mathrm{tot}}(t)\to\sigma_{\mathrm{hk}}^{\rm ss}$. The direction of monotonicity of $\sigma_{\mathrm{hk}}$ depends on the initial mass:
 \begin{equation*}
 \sigma_{\mathrm{hk}}(t)-\sigma_{\mathrm{hk}}^{\rm ss}
 =2e^{-2t}(m-m_0)\sum_{k\ge1}
 \frac{b_k^2}{(k^2+m)^2(k^2+m_0)}.
 \end{equation*}
It decreases when $m>m_0$, increases when $m<m_0$, and is constant when $m=m_0$. All covariance deviations $T_k$ relax at the uniform rate $e^{-2t}$ (field amplitudes relax at rate $e^{-t}$), and the relative entropy decays as $e^{-4t}$.
\end{corollary}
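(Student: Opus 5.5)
The proof works directly from the explicit formulas of \cref{thm:lin-hk}, using $T_k(t)=e^{-2t}\tau_k$ with $\tau_k:=\lambda_k/\lambda_k^0-1=(m-m_0)/(k^2+m_0)$. First, $S=\sum_k d_k\tau_k^2<\infty$ because $\tau_k=O(k^{-2})$. Also $1+\tau_k=\lambda_k/\lambda_k^0$ is bounded above and below uniformly in $k$: it lies between $\min(1,m/m_0)$ and $\max(1,m/m_0)$. Hence $1+T_k(t)=1+e^{-2t}\tau_k$ lies between $1$ and $1+\tau_k$ for every $t\ge0$, and so is uniformly bounded away from zero. This uniform bound drives all the limit interchanges.

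For the nonadiabatic rate, write
\[
e^{4t}\sigma_{\rm na}(t)=\sum_k d_k\,\frac{\tau_k^2}{1+e^{-2t}\tau_k}.
\]
Each term converges to $d_k\tau_k^2$ and is dominated by $C d_k\tau_k^2$, so dominated convergence for series gives the limit $S$. For the entropy, set $\psi(x)=x-\log(1+x)$. On the compact range of $T_k$, bounded away from $-1$, we have $\psi(x)=\tfrac12x^2+O(|x|^3)$, with $\psi(x)/x^2$ continuous and bounded. Then
\[
e^{4t}\,\tfrac12\,d_k\,\psi(e^{-2t}\tau_k)\longrightarrow \tfrac14\,d_k\tau_k^2 ,
\]
and each term is again dominated by $Cd_k\tau_k^2$. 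Summing gives $S/4$. Alternatively, one can integrate \eqref{eq:entropy-decay} from $t$ to $\infty$ and use the $\sigma_{\rm na}$ asymptotics; the relative entropy tends to $0$ by the same domination.

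For the housekeeping term, \eqref{eq:sigma-na-hk} gives
\[
\sigma_{\rm hk}(t)-\sigma_{\rm hk}^{\rm ss}=2\sum_{k\ge1}\frac{b_k^2T_k}{\lambda_k^2}
=2e^{-2t}\sum_{k\ge1}\frac{b_k^2\tau_k}{\lambda_k^2}.
\]
Substituting $\tau_k=(m-m_0)/(k^2+m_0)$ gives the displayed formula. The series converges absolutely since $|b_k|\le\|B\|$. The sign of $m-m_0$ fixes the direction of monotonicity, and the whole expression tends to $0$. Then \eqref{eq:onsager-decomp} gives $\sigma_{\rm tot}=\sigma_{\rm na}+\sigma_{\rm hk}\to0+\sigma^{\rm ss}_{\rm hk}$.

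The closing sentences follow directly. Each $T_k$ carries the factor $e^{-2t}$. The mean $e^{-t\mathsf K_B}x$ has norm $e^{-t}\|x\|$ by the unitary factorisation in \cref{lem:cov}. The entropy decays as $e^{-4t}$ because $S/4>0$ whenever $m\neq m_0$; when $m=m_0$ everything vanishes identically. The only point needing care is the uniform lower bound on $1+T_k$ used for domination, which the estimate above settles.
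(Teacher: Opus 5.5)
Your proposal is correct and follows essentially the paper's route: dominated convergence on $e^{4t}\sigma_{\mathrm{na}}(t)=\sum_k d_k\tau_k^2/(1+e^{-2t}\tau_k)$ using the uniform positivity of $1+T_k$, a second-order expansion of $x-\log(1+x)$ for the relative entropy, and direct substitution of $T_k=e^{-2t}(m-m_0)/(k^2+m_0)$ into $\sigma_{\mathrm{hk}}$. The only difference is cosmetic: you handle the entropy by dominated convergence with the bounded ratio $\psi(x)/x^2$, whereas the paper bounds the cubic remainder $e^{4t}\sum_k O(T_k^3)=O(e^{-2t})$, which also gives the rate at which the correction vanishes.
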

\begin{proof}
With $a_k:=\lambda_k/\lambda_k^0-1$ and $T_k=e^{-2t}a_k$, $e^{4t}\sigma_{\mathrm{na}} = \sum d_k a_k^2/(1 + e^{-2t}a_k)$. Since $|e^{-2t}a_k| \le |a_k|$ is uniformly bounded and $1+e^{-2t}a_k$ is uniformly positive, dominated convergence gives the limit $S$. The expansion $T_k - \log(1+T_k) = T_k^2/2 + O(T_k^3)$ and the estimate $e^{4t}\sum O(T_k^3) = e^{-2t}\sum O(a_k^3) \to 0$ yield the relative entropy asymptotics. The monotonicity formula follows from $\sigma_{\mathrm{hk}}(t)=2\sum_{k\ge1}b_k^2(1+T_k)/\lambda_k^2$ and $T_k=e^{-2t}(m-m_0)/(k^2+m_0)$.
\end{proof}

\begin{remark}[Gaussian LSI and exact entropy decay]
 \label{rmk:LSI-transfer}
In the linear case, set
 \begin{equation*}
 U_N^{\rm lin}(x)=\frac12\langle x,A_Nx\rangle_{\mathbb H_N},
 \qquad \mu_Q^N=\cN(0,Q_N),
 \end{equation*}
and let $\cE_N$ be the $Q_N$-gradient form of $\mu_Q^N$. Its preconditioned Hessian satisfies
 \begin{equation*}
 A_N^{-1/2}\nabla^2U_N^{\rm lin}A_N^{-1/2}=I_N,
 \end{equation*}
and therefore has curvature lower bound $\rho_N=1$. Hence $\Ent_{\mu_Q^N}(g^2)\le2\cE_N(g,g)$. The corresponding infinite-dimensional Gaussian LSI gives the entropy upper bound $e^{-2t}$. For a non-trivial mass quench $m_0\ne m$, \cref{cor:asymptotic} instead gives the exact asymptotic $\RelEnt{\mu_t}{\mu_Q}\sim \frac{S}{4}e^{-4t}$. The covariance deviations $T_k$ first decay as $e^{-2t}$, whereas Gaussian relative entropy is quadratic in these deviations near equilibrium and therefore decays as $e^{-4t}$.
\end{remark}

The stationary path-space relative-entropy rate is therefore available for every bounded skew-adjoint $B$. When $BQ=QB$, the modewise formulas additionally describe the transient covariance, entropy dissipation and Onsager decomposition.

\section{Examples}
\label{sec:examples}

We now verify the preceding results in concrete models. For SSEP and WASEP we compute the generator, invariant law and time-reversal properties directly. The Ornstein--Uhlenbeck and rotor examples apply the Gaussian results of \S\ref{sec:linear}, while the interacting Gibbs fields require verification of integration by parts, closability and the form--SPDE correspondence. Standard model background is taken from \cite{KipnisLandim1999,GlimmJaffe1987,HairerStuartVoss2007}. The examples thus progress from discrete equilibrium and driven currents to Gaussian field dynamics and, finally, to interacting Gibbs fields in which the invariant law and the stochastic equation must be identified on the same infinite-dimensional state space.

\subsection{Models with an Equilibrium Steady State}

The models in this group possess an equilibrium invariant law and their stationary dynamics is reversible with respect to that law. Their initial laws need not be stationary: entropy dissipation describes the approach to equilibrium, whereas detailed balance and path reversal refer to the stationary Markov process. For SSEP these statements follow from a direct finite-state calculation. The choice $B=0$ gives the symmetric Gaussian field, while the quartic fields require Gibbs logarithmic derivatives, closability, form--SPDE identification and quantitative functional inequalities.

\subsubsection{Symmetric Exclusion}
Let $\mathbb T_N=\mathbb Z/N\mathbb Z$, $N\ge3$, fix a particle number $M\in\{1,\ldots,N-1\}$, and set
\begin{equation*}
 \Omega_{N,M}
 =\left\{\eta\in\{0,1\}^{\mathbb T_N}:
 \sum_{x\in\mathbb T_N}\eta_x=M\right\},
 \qquad
 \pi_{N,M}(\eta)=\binom NM^{-1}.
\end{equation*}
Writing $\eta^{x,x+1}$ for the configuration obtained by exchanging the occupations at $x$ and $x+1$, the symmetric simple exclusion process with time scale $a_N>0$ has generator and Dirichlet form \cite{KipnisLandim1999}:
\begin{align}
 L_N^{\rm SSEP}F(\eta)
 &=\frac{a_N}{2}\sum_{x\in\mathbb T_N}
 \mathbf1_{\{\eta_x\ne\eta_{x+1}\}}
 [F(\eta^{x,x+1})-F(\eta)],\label{eq:SSEP-generator}\\
 \cE_{N,M}^{\rm SSEP}(F,F)
 &=\frac{a_N}{4}\sum_{\eta\in\Omega_{N,M}}\pi_{N,M}(\eta)
 \sum_{x\in\mathbb T_N}\mathbf1_{\{\eta_x\ne\eta_{x+1}\}}
 [F(\eta^{x,x+1})-F(\eta)]^2.\label{eq:SSEP-form}
\end{align}

\begin{proposition}[SSEP entropy relaxation]\label{prop:SSEP-relaxation}
The law $\pi_{N,M}$ is invariant, and the dynamics is reversible with respect to $\pi_{N,M}$. If $\mu_t$ is the time-$t$ law and $h_t=d\mu_t/d\pi_{N,M}$, then, for $t>0$,
 \begin{equation}\label{eq:SSEP-entropy}
 \frac d{dt}\Ent_{\pi_{N,M}}(h_t)
 =-\cE_{N,M}^{\rm SSEP}(h_t,\log h_t)
 \le-4\cE_{N,M}^{\rm SSEP}(\sqrt{h_t},\sqrt{h_t}).
 \end{equation}
If $\rho_{N,M}>0$ satisfies
 \begin{equation}\label{eq:SSEP-LSI}
 \Ent_{\pi_{N,M}}(G^2)
 \le\frac{2}{\rho_{N,M}}\cE_{N,M}^{\rm SSEP}(G,G),
 \qquad
 \sum_{\eta\in\Omega_{N,M}}\pi_{N,M}(\eta)G(\eta)^2=1,
 \end{equation}
then
 \begin{equation}\label{eq:SSEP-decay}
 \Ent_{\pi_{N,M}}(h_t)
 \le e^{-2\rho_{N,M}t}\Ent_{\pi_{N,M}}(h_0).
 \end{equation}
The optimal constant in \eqref{eq:SSEP-LSI}, denoted by $\rho_{N,M}^{\rm opt}$, satisfies $\rho_{N,M}^{\rm opt}\asymp a_NN^{-2}$ uniformly for $1\le M\le N-1$ \cite{Yau1997}.
\end{proposition}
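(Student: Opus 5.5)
The plan has four steps: establish symmetry of the generator with respect to $\pi_{N,M}$, derive the entropy identity by differentiating the finite master equation, get the inequality from the pointwise logarithmic bound on each edge, and conclude decay from the LSI by Gr\"onwall.

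\textbf{Symmetry and invariance.} The generator \eqref{eq:SSEP-generator} is a finite rate matrix on $\Omega_{N,M}$, so every step is finite-dimensional linear algebra. The exchange map $\eta\mapsto\eta^{x,x+1}$ is an involution of $\Omega_{N,M}$. The indicator $\mathbf1_{\{\eta_x\ne\eta_{x+1}\}}$ is invariant under it, and $\pi_{N,M}$ is uniform. Hence the rates satisfy $c(\eta,\eta^{x,x+1})=c(\eta^{x,x+1},\eta)$, and together with uniformity this gives detailed balance $\pi(\eta)c(\eta,\zeta)=\pi(\zeta)c(\zeta,\eta)$. Summing over $\eta$ shows that $\pi_{N,M}$ is invariant, and detailed balance is exactly self-adjointness of $L_N^{\rm SSEP}$ in $L^2(\pi_{N,M})$. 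Next I would compute $-\int F L G\,d\pi$. Substituting $\eta\to\eta^{x,x+1}$ in half of the sum and averaging the two halves gives
\[
-\int F\,LG\,d\pi=\frac{a_N}{4}\sum_\eta\pi(\eta)\sum_x\mathbf1_{\{\eta_x\ne\eta_{x+1}\}}[F(\eta^{x,x+1})-F(\eta)][G(\eta^{x,x+1})-G(\eta)].
\]
This is the bilinear form whose diagonal is \eqref{eq:SSEP-form}.

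\textbf{Entropy identity and inequality.} By reversibility, the density $h_t=d\mu_t/d\pi$ solves $\partial_th_t=Lh_t$, because the $L^2(\pi)$-adjoint of $L$ is $L$ itself. For $t>0$ every entry of $h_t=e^{tL}h_0$ is strictly positive on each irreducible class. The graph of adjacent swaps on the ring connects all configurations with $M$ particles, so $e^{tL}$ has strictly positive entries and $\log h_t$ is finite; this is where $t>0$ is needed. Differentiating $\sum\pi h_t\log h_t$ and using $\sum\pi Lh_t=0$ gives $\frac{d}{dt}\Ent=\int\log h_t\,Lh_t\,d\pi=-\cE(h_t,\log h_t)$. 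The inequality then follows edge by edge from $(a-b)(\log a-\log b)\ge4(\sqrt a-\sqrt b)^2$, the pointwise inequality recorded in \S\ref{sec:setting}, applied inside the symmetric sum.

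\textbf{Decay.} I would apply \eqref{eq:SSEP-LSI} with $G=\sqrt{h_t}$, which is normalised because $\int h_t\,d\pi=1$. This gives $\Ent(h_t)\le\frac{2}{\rho}\cE(\sqrt{h_t},\sqrt{h_t})$. Combined with the previous step,
\[
\frac{d}{dt}\Ent(h_t)\le-4\cE(\sqrt{h_t},\sqrt{h_t})\le-2\rho\,\Ent(h_t).
\]
Gr\"onwall's inequality on $(0,t]$ yields \eqref{eq:SSEP-decay}. The passage down to $t=0$ uses continuity of $t\mapsto\Ent(h_t)$ on the finite space, which holds even when $h_0$ vanishes somewhere, because $r\log r$ is continuous at $0$.

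\textbf{Sharp constant and main obstacle.} The order $\rho^{\rm opt}_{N,M}\asymp a_NN^{-2}$ I would not prove; I would quote Yau's log-Sobolev estimate \cite{Yau1997} and check that it matches our normalisation of $\cE$, namely the factor $a_N/4$ and the entropy convention \eqref{eq:homogeneous-entropy}. The main care point in the remaining argument is the positivity of $h_t$ for $t>0$, which is what makes the derivative formula legitimate.
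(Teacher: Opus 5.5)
Your treatment of invariance, reversibility, the identity $\frac{d}{dt}\Ent_{\pi_{N,M}}(h_t)=-\cE_{N,M}^{\rm SSEP}(h_t,\log h_t)$, the edgewise inequality and the Gr\"onwall step is the same argument the paper gives. Your remark that irreducibility makes $h_t>0$ for $t>0$ is a point the paper leaves implicit.

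The gap is in the last assertion, $\rho_{N,M}^{\rm opt}\asymp a_NN^{-2}$ uniformly in $1\le M\le N-1$, which you propose to get by quoting \cite{Yau1997} and checking normalisations. That citation supplies only one direction, and in a different geometry. Yau's estimate gives a log-Sobolev constant of order $N^2$ for exclusion on an interval with free boundary, uniformly in the particle number. To obtain $\rho_{N,M}^{\rm opt}\ge c\,a_NN^{-2}$ on the ring, you still need a comparison step. The ring form is the interval form plus the nonnegative contribution of the bond $\{N-1,0\}$, taken against the same uniform canonical measure, so the interval LSI persists on the ring. Multiplying all rates by $a_N/2$ then rescales the optimal constant by the same factor.

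The matching bound $\rho_{N,M}^{\rm opt}\le C\,a_NN^{-2}$ is not a log-Sobolev statement at all, so quoting a log-Sobolev theorem cannot give it. It requires exhibiting a slow mode. The paper takes $F(\eta)=\sum_{x}\cos(2\pi x/N)\eta_x$. For a linear observable the exclusion indicator can be dropped, and one gets $-L_N^{\rm SSEP}F=a_N[1-\cos(2\pi/N)]F$. This $F$ has mean zero and is nonconstant on $\Omega_{N,M}$ for $1\le M\le N-1$. Apply \eqref{eq:SSEP-LSI} to $1+\varepsilon F$; both sides are homogeneous of degree two, so the normalisation is harmless. Expanding to order $\varepsilon^2$ linearises the inequality into the Poincar\'e inequality $\rho_{N,M}\operatorname{Var}_{\pi_{N,M}}(F)\le\cE_{N,M}^{\rm SSEP}(F,F)=a_N[1-\cos(2\pi/N)]\operatorname{Var}_{\pi_{N,M}}(F)$. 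Hence $\rho_{N,M}^{\rm opt}\le a_N[1-\cos(2\pi/N)]\le C\,a_NN^{-2}$. Without these two additions your plan proves the relaxation statements but not the two-sided order of the optimal constant.
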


\begin{proof}
The map $\eta\mapsto\eta^{x,x+1}$ is an involution of $\Omega_{N,M}$ and preserves its uniform law. Changing variables by this involution in the $L^2(\pi_{N,M})$ pairing shows that $L_N^{\rm SSEP}$ is self-adjoint and gives \eqref{eq:SSEP-form}. Since $\partial_t h_t=L_N^{\rm SSEP}h_t$ and the generator annihilates constants,
 \begin{equation*}
 \frac d{dt}\Ent_{\pi_{N,M}}(h_t)
 =\langle L_N^{\rm SSEP}h_t,\log h_t\rangle_{\pi_{N,M}}
 =-\cE_{N,M}^{\rm SSEP}(h_t,\log h_t).
 \end{equation*}
The scalar inequality $ (a-b)(\log a-\log b)\ge4(\sqrt a-\sqrt b)^2 $ on every exchanging pair gives the inequality in \eqref{eq:SSEP-entropy}. Applying \eqref{eq:SSEP-LSI} to $G=\sqrt{h_t}$ yields
 \begin{equation*}
 \frac d{dt}\Ent_{\pi_{N,M}}(h_t)
 \le-2\rho_{N,M}\Ent_{\pi_{N,M}}(h_t).
 \end{equation*}
Gronwall's inequality proves \eqref{eq:SSEP-decay}. For the constant, the exclusion LSI of \cite{Yau1997} on the interval has a bound uniform in the particle number. Adding the periodic bond increases the form, and multiplying all rates by $a_N/2$ multiplies its optimal LSI constant by the same factor. Hence $\rho_{N,M}^{\rm opt}\ge c a_NN^{-2}$. For the reverse bound, let
\begin{equation*}
 F(\eta)=\sum_{x\in\mathbb T_N}\cos(2\pi x/N)\eta_x.
\end{equation*}
For $1\le M\le N-1$, this is nonconstant, has mean zero, and direct application of \eqref{eq:SSEP-generator} gives
\begin{equation*}
 -L_N^{\rm SSEP}F=a_N[1-\cos(2\pi/N)]F.
\end{equation*}
Applying the homogeneous LSI to $1+\varepsilon F$ and expanding at $\varepsilon=0$ gives
$\rho_{N,M}^{\rm opt}\operatorname{Var}_{\pi_{N,M}}(F)
\le\cE_{N,M}^{\rm SSEP}(F,F)$.
Thus $\rho_{N,M}^{\rm opt}\le a_N[1-\cos(2\pi/N)]\le C a_NN^{-2}$, as asserted.
\end{proof}

\par\medskip
\subsubsection{Equilibrium Ornstein--Uhlenbeck Field}
Setting $B=0$ in \eqref{eq:linear} gives the reversible Ornstein--Uhlenbeck field
\begin{equation*}
 d\phi_t=-\phi_t\,dt+\sqrt2\,dW_t^Q.
\end{equation*}
The Gaussian covariance evolution, reversibility criterion and finite-mode entropy-production calculation used below are standard \cite{Bogachev1998,ChojnowskaMichalikGoldys2002,LandiTomeOliveira2013}. Writing $\mathrm{GFF}(a)=\mathcal N(0,(-\partial_x^2+a)^{-1})$, start the process from $\mu_0=\mathrm{GFF}(m_0)$ and put $\mu_Q=\mathrm{GFF}(m)$. For every $t\ge0$, the relative covariance eigenvalues are $1+T_k(t)$, which are uniformly positive and bounded, while $T_k(t)=O(k^{-2})$. Thus the Cameron--Martin ranges agree and the relative covariance perturbation is Hilbert--Schmidt. The centred Feldman--H\'ajek theorem gives $\mu_t\sim\mu_Q$. The Fourier projections generate the Borel sigma-field, so \cref{lem:entropy-projections} justifies passing from the finite-dimensional Gaussian entropy formula to the full law. Direct calculation then gives
\begin{align*}
 C_t&=e^{-2t}(-\partial_x^2+m_0)^{-1}
 +(1-e^{-2t})(-\partial_x^2+m)^{-1},\\
 T_k(t)&=e^{-2t}\left(\frac{k^2+m}{k^2+m_0}-1\right)
 =e^{-2t}\frac{m-m_0}{k^2+m_0},\\
 \RelEnt{\mu_t}{\mu_Q}
 &=\frac12\sum_{k\ge0}d_k
 \bigl[T_k(t)-\log(1+T_k(t))\bigr],\\
 \sigma_{\rm na}(t)
 &=\sum_{k\ge0}d_k\frac{T_k(t)^2}{1+T_k(t)},
 \qquad
 \sigma_{\rm hk}(t)=0,
 \qquad
 \sigma_{\rm tot}(t)=\sigma_{\rm na}(t).
\end{align*}
In particular,
\begin{equation*}
 \frac d{dt}\RelEnt{\mu_t}{\mu_Q}=-\sigma_{\rm na}(t).
\end{equation*}
The semigroup is self-adjoint on $L^2(\mu_Q)$ and its stationary path law is reversible by \cref{thm:eq-zero}. Moreover,
\begin{equation*}
 S=(m-m_0)^2\sum_{k\ge0}\frac{d_k}{(k^2+m_0)^2}<\infty,
\end{equation*}
so \cref{cor:asymptotic} gives
\begin{equation*}
 \sigma_{\rm na}(t)=Se^{-4t}+o(e^{-4t}),
 \qquad
 \RelEnt{\mu_t}{\mu_Q}=\frac S4e^{-4t}+o(e^{-4t}).
\end{equation*}
Gaussian preservation keeps $h_t$ as an explicit Gaussian density ratio; therefore the de~Bruijn equality can also be verified term by term from the two displayed series.

\subsubsection{The $\Phi^4_1$ Field}

The one-dimensional $\Phi^4$ theory is the Euclidean scalar field with quartic self-interaction:
\begin{equation*}
 V(\phi) = V_4(\phi):=\frac{\lambda}{4}\int_{\T}\phi^4\,dx,
 \qquad \lambda>0,\quad f=0.
\end{equation*}
The Gaussian reference $\mu_Q$, with $Q=(-\partial_x^2+m)^{-1}$, is the free massive Euclidean field; $m$ is the quadratic mass parameter and fixes a free correlation length of order $m^{-1/2}$. The Gibbs weight $e^{-V_4}$ introduces the quartic self-interaction. Here the unsubscripted $\lambda$ is its coupling constant and is unrelated to the spectral eigenvalues $\lambda_k=k^2+m$ fixed after \eqref{eq:basis}. Since $m,\lambda>0$ and $V=V_4$, \cref{prop:quartic-model-verification}(i) gives $Z_4=\int_{\mathbb X}e^{-V_4}\,d\mu_Q\in(0,\infty)$, and we set $\kappa_4=Z_4^{-1}e^{-V_4}\mu_Q$. This is the standard one-dimensional quartic Euclidean Gibbs measure of constructive quantum field theory \cite{GlimmJaffe1987}. In stochastic quantisation \cite{ParisiWu1981,DamgaardHuffel1987}, the Langevin time is an auxiliary sampling time, and the entropy estimate below quantifies convergence of that sampling dynamics to the interacting field law. The remaining analytic parts of the construction---the logarithmic derivative, form closure and form--SPDE identification---are verified together with this relaxation statement.

\begin{theorem}[The one-dimensional $\Phi^4_1$ Langevin dynamics]
 \label{thm:Phi4-model}
Let
\begin{equation*}
 \mathbb X=C(\T),\qquad
 A=-\partial_x^2+m,\qquad Q=A^{-1},\qquad
 \mu_Q=\cN(0,Q),
\end{equation*}
and let
\begin{equation*}
 V_4(\phi)=\frac\lambda4\int_\T\phi^4\,dx,
 \qquad
 \kappa_4=Z_4^{-1}e^{-V_4}\mu_Q,
 \qquad m,\lambda>0.
\end{equation*}
Let $(\cE,\operatorname{Dom}(\cE))$ be the closure of \eqref{eq:pre-form} on Fourier cylinders. Then the following statements hold.
\begin{enumerate}[label=(\roman*)]
\item For every $x\in\mathbb X$, the equation
\begin{equation}\label{eq:Phi4-spde}
 d\phi_t=-[\phi_t+\lambda Q(\phi_t^3)]\,dt+\sqrt2\,dW_t^Q,
 \qquad \phi_0=x,
\end{equation}
has a unique global probabilistically strong $\mathbb X$-valued solution with continuous paths, satisfying the stochastic integral equation in $\mathbb X$. The form is quasi-regular, and its $L^2(\kappa_4)$ semigroup is the transition semigroup of \eqref{eq:Phi4-spde}.
\item The measure $\kappa_4$ is invariant, the transition semigroup is reversible with respect to $\kappa_4$, and the stationary process is invariant under path reversal. If $h_0$ is a probability density, $h_t=P_th_0$ and $\Ent_{\kappa_4}(h_0)<\infty$, then
\begin{equation*}
 \Ent_{\kappa_4}(h_t)
 +4\int_0^t\cE(\sqrt{h_s},\sqrt{h_s})\,ds
 \le \Ent_{\kappa_4}(h_0).
\end{equation*}
If in addition $h_0\in\operatorname{Dom}(L)$ and $0<c_-\le h_0\le c_+<\infty$, the inequality is an equality.
\item For every $g\in\operatorname{Dom}(\cE)$ with $g\ge0$ and $\int g^2\,d\kappa_4=1$,
\begin{equation}\label{eq:LSI-Phi4}
 \Ent_{\kappa_4}(g^2)\le2\cE(g,g).
\end{equation}
Consequently,
\begin{equation*}
 \Ent_{\kappa_4}(P_th)
 \le e^{-2t}\Ent_{\kappa_4}(h)
\end{equation*}
for every probability density $h$ with finite entropy.
\end{enumerate}
\end{theorem}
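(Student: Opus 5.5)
The plan is to reduce everything to the abstract results of \S\ref{sec:equilibrium} by verifying their hypotheses for $V_4$, and then to add a curvature argument for the log-Sobolev inequality. Part (i) combines cited well-posedness with the form--SPDE criterion. Part (ii) follows from \cref{cor:equilibrium-spde-transfer} and \cref{thm:deBruijn}. Part (iii) is a Bakry--\'Emery bound obtained by Galerkin approximation.

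\textbf{Part (i).} We take strong global well-posedness of \eqref{eq:Phi4-spde} in $\mathbb X=C(\T)$ from \cite[Theorems~2.10 and~3.6]{HairerStuartVoss2007}. We then check \cref{ass:gibbs-ibp}.
\begin{enumerate}[label=(\alph*)]
\item $0<Z_4<\infty$, since $e^{-V_4}\le1$ and $\mu_Q(\mathbb X)=1$.
\item $\partial_{h_j}V_4(\phi)=\lambda\sqrt{q_j}\int_\T\phi^3e_j\,dx$. This is bounded by $C\|\phi\|_\infty^3$, so it lies in $L^2(\kappa_4)$ by the Fernique bound of \cref{lem:gaussian-measure-facts}.
\item $e^{-V_4}\in W^{1,1}_{h_j}(\mu_Q)$. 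We approximate by $e^{-V_4(P_N\phi)}$ composed with a smooth cutoff, and use dominated convergence with polynomial moments.
\end{enumerate}
With these, \cref{prop:closability-criterion,prop:closed-gradient-properties} give a conservative, strongly local Dirichlet form with the diffusion calculus.

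Quasi-regularity comes from two facts. The coordinates $\ell$ lie in $\operatorname{Dom}(\cE)$ with bounded $\Gamma$. The function $\phi\mapsto\|\phi\|_{\mathbb X}$, obtained as a countable supremum of $\ell$'s, has $\Gamma\le C$ by \cref{lem:cameron-martin-embedding}. Tightness then follows by the standard \cite{MaRoeckner1992} criterion: on compact sets of H\"older functions, use a smoothed H\"older norm with bounded energy.

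Next we apply \cref{prop:form-spde-identification} with Fourier coordinates $\ell_j=\langle\cdot,e_j\rangle$. The Fukushima drift is $-\sqrt{q_j}\beta_j$ in direction $h_j$, which gives $b_{\ell_j}(\phi)=-\langle\phi+\lambda Q\phi^3,e_j\rangle$. The bracket is $2tq_j\delta_{ij}$. The coordinate martingale problem then yields weak solutions through a martingale representation theorem. Pathwise uniqueness together with Yamada--Watanabe gives uniqueness in law for every initial point.

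\textbf{Part (ii).} This is \cref{cor:equilibrium-spde-transfer} applied to the identification from part (i).

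\textbf{Part (iii).} We work with the Galerkin measures $\kappa_N\propto e^{-V_4(P_N\cdot)}\mu_Q^N$ on $\mathbb H_N$. The preconditioned Hessian is
\[
A_N^{-1/2}\nabla^2 U_NA_N^{-1/2}=I_N+3\lambda A_N^{-1/2}P_N(\phi^2\cdot)A_N^{-1/2}\ge I_N .
\]
The Bakry--\'Emery criterion, as in \cref{rmk:LSI-transfer}, then gives $\Ent_{\kappa_N}(g^2)\le2\cE_N(g,g)$ for cylinder $g$, uniformly in $N$. For a fixed cylinder $g$ depending on finitely many modes, both sides converge as $N\to\infty$. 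This holds because $V_4(P_N\phi)\to V_4(\phi)$ in $L^1$ along a Fej\'er-type approximation, with uniform domination $e^{-V}\le1$ and moments. We extend to $\operatorname{Dom}(\cE)$ by density of the core together with lower semicontinuity of entropy under $L^2$ convergence. The decay $e^{-2t}$ then follows from part (ii) and Gronwall for regular $h_0$, and from approximation plus lower semicontinuity for general finite-entropy $h_0$.

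\textbf{Main obstacle.} I expect the hardest step to be the Fukushima/drift identification and quasi-regularity. In particular, one must show that the zero-energy part equals $\int b_\ell\,ds$, which needs $b_\ell\in L^2(\kappa_4)$ and integration by parts against \eqref{eq:log-derivative}, and one must find a nest of compacts in $C(\T)$. A secondary concern is that the $L^1$ convergence $V_4(P_N\cdot)\to V_4$ requires Fej\'er rather than Dirichlet projections. If so, I would use Fej\'er means in the Galerkin convexity argument; convexity of $V_4\circ F_N$ survives, since the map is linear.
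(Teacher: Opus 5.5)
Your proposal is correct and follows the paper's architecture: Galerkin verification of the Gibbs integration by parts, closure and quasi-regularity of the form, identification of the associated diffusion through Fourier-coordinate martingales and uniqueness in law, transfer via \cref{cor:equilibrium-spde-transfer}, and the log-Sobolev inequality from the uniform preconditioned Bakry--\'Emery bound on Galerkin marginals, followed by a limit, density and Gronwall. The differences are minor: you prove quasi-regularity (a $\Gamma$-bounded H\"older norm with compact level sets) and the drift identification ($\ell_j\in\operatorname{Dom}(L)$ with $L\ell_j=-\sqrt{q_j}\,\beta_j$, where L\'evy's characterisation suffices and no martingale representation is needed) by hand, whereas the paper cites \cite[Theorems~3.1 and~3.3]{AlbeverioMaRockner2015}; and your Fej\'er concern is unfounded, because $P_N\phi\to\phi$ in $H^s(\T)\hookrightarrow L^4(\T)$ for $s\in(1/4,1/2)$ and $\mu_Q$-a.e.\ $\phi$, which is exactly what the paper uses.
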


\begin{proof}
\par\noindent\textbf{Step 1. Construction and identification.}\par
Here $\mathbb X=C(\T)$, $Q=(-\partial_x^2+m)^{-1}$, $m,\lambda>0$ and $V=V_4$, so \cref{prop:quartic-model-verification} applies. Its part~(i) constructs the full-support Gibbs law $\kappa_4$, and part~(ii) gives the global strong solution stated in (i). Parts~(iii)--(iv) verify \cref{ass:gibbs-ibp,ass:equilibrium,ass:diffusion-calculus} and the required quasi-regularity. Finally, part~(v) verifies \cref{ass:form-spde-interface} by identifying the coordinate martingales and using uniqueness in law, so the closed-form semigroup is the transition semigroup of \eqref{eq:Phi4-spde}. All hypotheses of \cref{cor:equilibrium-spde-transfer} are therefore satisfied, and that corollary gives invariance, reversibility, entropy dissipation and stationary path reversal.

\par\noindent\textbf{Step 2. LSI for $\kappa_{4,N}$.}\par
Define the Galerkin-approximated measure
 \begin{equation*}
 \kappa_{4,N}
 = Z_{4,N}^{-1}\,e^{-V_4(P_N\phi)}\,\mu_Q,
 \end{equation*}
where
 \begin{equation*}
 Z_{4,N}=\int_{\mathbb X}e^{-V_4(P_N\phi)}\,d\mu_Q.
 \end{equation*}
Denote its closed $Q$-gradient form by
 \begin{equation*}
 \cE_{4,N}(g,g)=\int\|Q^{1/2}\nabla_{\mathbb H}g\|_{\mathbb H}^2\,d\kappa_{4,N}.
 \end{equation*}
Thus $\kappa_{4,N}$ is the product of a low-mode $\Phi^4_1$ interaction with a free Gaussian on the high modes.

Fix $N$, and let $g$ be a smooth cylinder depending on the Fourier coordinates in $\mathbb H_M$ for some $M\ge N$. The $\mathbb H_M$ marginal of $\kappa_{4,N}$ has the interacting potential on the first $N$ modes and the free Gaussian potential on the modes $N+1,\ldots,M$. On the interacting block, set
\begin{equation*}
 U_{4,N}(\phi)
 =\frac12\langle\phi,A_N\phi\rangle_{\mathbb H_N}+V_4(P_N\phi).
\end{equation*}
Its Hessian is
 \begin{equation*}
 \nabla^2 U_{4,N}=A_N
 +3\lambda P_N\operatorname{Mult}_{(P_N\phi)^2}P_N.
 \end{equation*}
Here $\operatorname{Mult}_a u=au$. For every $u\in\mathbb H_N$,
\begin{equation*}
 \left\langle u,
 P_N\operatorname{Mult}_{(P_N\phi)^2}P_Nu
 \right\rangle_{\mathbb H_N}
 =\int_{\T}(P_N\phi)^2u^2\,dx\ge0.
\end{equation*}
The projections are needed because multiplication by $(P_N\phi)^2$ need not preserve $\mathbb H_N$. The preconditioned Hessian satisfies
 \begin{equation*}
 Q_N^{1/2}\nabla^2 U_{4,N}\,Q_N^{1/2}
 =Q_N^{1/2}\bigl(A_N
 +3\lambda P_N\operatorname{Mult}_{(P_N\phi)^2}P_N\bigr)Q_N^{1/2}
 \ge I_N,
 \end{equation*}
Indeed, the quadratic form of the additional term is
\begin{equation*}
 3\lambda\int_{\T}(P_N\phi)^2
 |Q_N^{1/2}u|^2\,dx\ge0.
\end{equation*}
The modes from $N+1$ to $M$ are free Gaussian and have preconditioned Hessian equal to the identity. Hence the full $M$-dimensional marginal has a smooth, strictly positive density and its preconditioned Hessian is bounded below by $I_M$. The finite-dimensional Bakry--\'Emery criterion \cite{BakryGentilLedoux2014} states that a lower bound $\rho I_M$ on this Hessian yields an LSI with constant $2/\rho$. All its hypotheses are therefore satisfied here with $\rho=1$, and it gives
\begin{equation*}
 \Ent_{\kappa_{4,N}}(g^2)
 \le2\cE_{4,N}(g,g).
\end{equation*}
The constant is independent of both $M$ and $N$. Since every smooth cylinder belongs to some $\mathbb H_M$, this proves the displayed inequality for all $g\in\cF C_b^\infty$; no infinite-dimensional limit in $M$ is required.

 \par\noindent\textbf{Step 3. The limit $N\to\infty$.}\par
Fix $s\in(1/4,1/2)$. Since $m>0$, \cref{lem:Gaussian-regularity} gives $\phi\in H^s(\T)$ for $\mu_Q$-almost every $\phi$; hence the Fourier projections satisfy $P_N\phi\to\phi$ in $H^s(\T)$ and, by $H^s(\T)\hookrightarrow L^4(\T)$, also in $L^4(\T)$. Hence $V_4(P_N\phi) \to V_4(\phi)$ $\mu_Q$-almost surely. Since $0 \le e^{-V_4(P_N\phi)} \le 1$, dominated convergence gives
\begin{equation*}
 Z_{4,N}\longrightarrow Z_4,
 \qquad
 \|\kappa_{4,N}-\kappa_4\|_{\rm TV}\longrightarrow0.
\end{equation*}
Fix $g\in\cF C_b^\infty$. Both $g$ and $\|Q^{1/2}\nabla_{\mathbb H}g\|_{\mathbb H}$ are bounded, and the continuous function $r\mapsto r^2\log r^2$, with value $0$ at $r=0$, is bounded on the range of $g$. Total-variation convergence therefore gives
\begin{align*}
 \int g^2\,d\kappa_{4,N}&\longrightarrow\int g^2\,d\kappa_4,\\
 \int g^2\log g^2\,d\kappa_{4,N}
 &\longrightarrow\int g^2\log g^2\,d\kappa_4,\\
 \cE_{4,N}(g,g)&\longrightarrow\cE(g,g).
\end{align*}
Using the definition \eqref{eq:homogeneous-entropy}, the first two limits imply
\begin{equation*}
 \Ent_{\kappa_{4,N}}(g^2)
 \longrightarrow\Ent_{\kappa_4}(g^2).
\end{equation*}
Passing to the limit in the cylinder inequality from Step~2 proves the LSI for every smooth cylinder $g$.

 \par\noindent\textbf{Step 4. Extension and entropy decay.}\par
For any $u \in \operatorname{Dom}(\cE)$, choose $u_n\in\cF C_b^\infty$ such that
\begin{equation*}
 \|u_n-u\|_{L^2(\kappa_4)}^2
 +\cE(u_n-u,u_n-u)\longrightarrow0.
\end{equation*}
Then $u_n^2\to u^2$ in $L^1(\kappa_4)$, because
\begin{equation*}
 \|u_n^2-u^2\|_{L^1}
 \le\|u_n-u\|_{L^2}\bigl(\|u_n\|_{L^2}+\|u\|_{L^2}\bigr)
 \longrightarrow0.
\end{equation*}
The variational representation of homogeneous entropy makes $u\mapsto\Ent_{\kappa_4}(u^2)$ lower semicontinuous under this convergence. The form Cauchy--Schwarz inequality gives
\begin{equation*}
    |\cE(u_n,u_n)-\cE(u,u)| \le \cE(u_n-u,u_n-u) + 2\cE(u,u)^{1/2}\cE(u_n-u,u_n-u)^{1/2} \longrightarrow 0.
\end{equation*}
Taking the lower limit in the cylinder LSI proves \eqref{eq:LSI-Phi4} on the whole form domain. For a probability density $h_t = P_t h_0$ with finite initial entropy, fix $s\ge0$, restart the Markov semigroup from $h_s=P_sh_0$, and apply \cref{thm:deBruijn}(ii) on $[s,t]$. Applying the LSI to $\sqrt{h_r}$ for almost every $r$ gives
\begin{equation*}
 \Ent_{\kappa_4}(h_t)+2\int_s^t\Ent_{\kappa_4}(h_r)\,dr
 \le\Ent_{\kappa_4}(h_s),\qquad0\le s\le t.
 \end{equation*}
The integral form of Gronwall's lemma yields $\Ent_{\kappa_4}(h_t)\le e^{-2t}\Ent_{\kappa_4}(h_0)$.
\end{proof}

\par\medskip
\noindent\textbf{Consequences for the dynamics.}\par
We consider a free Gaussian initial law $\mu_0=\cN(0,(-\partial_x^2+m_0)^{-1})$ with $m_0>0$, or an initial quartic Gibbs law with coupling $\lambda_{\mathrm{init}}\ge0$. These are exactly the two classes covered by \cref{prop:quartic-model-verification}(vi), so their initial laws have finite relative entropy with respect to $\kappa_4$ and \cref{thm:Phi4-model}(ii)--(iii) applies. The first preparation starts from a free field with mass $m_0$ and switches on the target interaction, with a simultaneous mass change when $m_0\ne m$; the second changes the quartic coupling at fixed mass. Both relax to $\kappa_4$ at an exponential relative-entropy rate of at least $2$. In the stochastic-quantisation interpretation this is a sampling-time rate. For an overdamped realisation satisfying local detailed balance, the same entropy decay is the decay of the corresponding nonequilibrium free-energy excess. The rate is uniform in $\lambda$ because the $Q$-preconditioning normalises the free quadratic curvature and $\nabla_{\mathbb H}^2V_4\succeq0$, so the quartic interaction does not lower the Bakry--\'Emery bound.

For the $f=0$ Gibbs dynamics, the stationary process is reversible and its steady-state entropy production is zero. In the driven Gaussian model of \cref{thm:lin-hk}, an invariant-law-preserving non-gradient drift instead produces a nonequilibrium steady state precisely when its stationary circulation is nonzero; then $\sigma_{\mathrm{tot}}^{\rm ss}=\sigma_{\mathrm{hk}}^{\rm ss}>0$.

\subsubsection{The Allen--Cahn Field}

The Allen--Cahn example retains the quartic confinement but adds a negative quadratic contribution, so that the interaction competes with the Gaussian quadratic energy rather than merely reinforcing it. We take
\begin{equation*}
 V_{\mathrm{AC}}(\phi) = \lambda\int_{\T}\bigl(\phi^4/4 - \phi^2/2\bigr)\,dx,
 \qquad \lambda>0,\quad f=0.
\end{equation*}
The terminology comes from the double-well phase-field energy of Allen and Cahn \cite{AllenCahn1979}. In the classification of dynamic critical phenomena, relaxation of a nonconserved scalar order parameter is the Model~A mechanism \cite{HohenbergHalperin1977}; the equation below is the $Q$-preconditioned Langevin realisation of the same energy landscape. The nonlinear stochastic analytic setting is related to \cite{HairerStuartVoss2007}. Since $m,\lambda>0$ and $V=V_{\mathrm{AC}}$, \cref{prop:quartic-model-verification}(i) gives $Z_{\mathrm{AC}}=\int_{\mathbb X}e^{-V_{\mathrm{AC}}}\,d\mu_Q\in(0,\infty)$, and we set $\kappa_{\mathrm{AC}}=Z_{\mathrm{AC}}^{-1}e^{-V_{\mathrm{AC}}}\mu_Q$. The pointwise interaction $\Psi_{\rm AC}(r)=r^4/4-r^2/2$ has minima at $r=\pm1$, but the Gibbs measure is defined relative to $\mu_Q$. Its full formal energy is therefore
\begin{equation*}
 U_{\mathrm{AC}}(\phi)
 = \frac12\langle\phi,A\phi\rangle_{\mathbb H}+V_{\mathrm{AC}}(\phi)
 = \int_{\T}\left[\frac12|\partial_x\phi|^2
 +\frac{m-\lambda}{2}\phi^2+\frac{\lambda}{4}\phi^4\right]dx.
\end{equation*}
Consequently, when $m>\lambda$ the full energy is strictly convex and its unique homogeneous minimiser is $\phi=0$. The log-Sobolev result proved below thus concerns the convex regime of this Allen--Cahn-type model, not a phase-separating double-well regime. At $m=\lambda$ uniform quadratic curvature is lost, while for $m<\lambda$ the full local energy becomes a genuine double well with minima $\phi=\pm\sqrt{(\lambda-m)/\lambda}$.

\begin{theorem}[The one-dimensional Allen--Cahn-type Langevin dynamics]
 \label{thm:AC-model}
Let $\mathbb X=C(\T)$, $A=-\partial_x^2+m$, $Q=A^{-1}$ and $\mu_Q=\cN(0,Q)$, and let
\begin{equation*}
 V_{\mathrm{AC}}(\phi)
 =\lambda\int_\T\left(\frac{\phi^4}{4}-\frac{\phi^2}{2}\right)dx,
 \qquad
 \kappa_{\mathrm{AC}}
 =Z_{\mathrm{AC}}^{-1}e^{-V_{\mathrm{AC}}}\mu_Q,
 \qquad m,\lambda>0.
\end{equation*}
Let $(\cE,\operatorname{Dom}(\cE))$ be the closure of \eqref{eq:pre-form} on Fourier cylinders. Then the following statements hold.
\begin{enumerate}[label=(\roman*)]
\item For every $x\in\mathbb X$, the equation
\begin{equation}\label{eq:AC-spde}
 d\phi_t
 =-[\phi_t+\lambda Q(\phi_t^3-\phi_t)]\,dt+\sqrt2\,dW_t^Q,
 \qquad \phi_0=x,
\end{equation}
has a unique global probabilistically strong $\mathbb X$-valued solution with continuous paths, satisfying the stochastic integral equation in $\mathbb X$. The form is quasi-regular, and its $L^2(\kappa_{\mathrm{AC}})$ semigroup is the transition semigroup of \eqref{eq:AC-spde}.
\item The measure $\kappa_{\mathrm{AC}}$ is invariant, the transition semigroup is reversible with respect to $\kappa_{\mathrm{AC}}$, and the stationary process is invariant under path reversal. If $h_0$ is a probability density, $h_t=P_th_0$ and $\Ent_{\kappa_{\mathrm{AC}}}(h_0)<\infty$, then
\begin{equation*}
 \Ent_{\kappa_{\mathrm{AC}}}(h_t)
 +4\int_0^t\cE(\sqrt{h_s},\sqrt{h_s})\,ds
 \le \Ent_{\kappa_{\mathrm{AC}}}(h_0).
\end{equation*}
If in addition $h_0\in\operatorname{Dom}(L)$ and $0<c_-\le h_0\le c_+<\infty$, the inequality is an equality.
\item If $m>\lambda$, then for every $g\in\operatorname{Dom}(\cE)$ with $g\ge0$ and $\int g^2\,d\kappa_{\mathrm{AC}}=1$,
\begin{equation}\label{eq:LSI-AC-full}
 \Ent_{\kappa_{\mathrm{AC}}}(g^2)
 \le\frac{2}{1-\lambda/m}\cE(g,g).
\end{equation}
Consequently,
\begin{equation}\label{eq:LSI-AC-decay}
 \Ent_{\kappa_{\mathrm{AC}}}(P_th)
 \le e^{-2(1-\lambda/m)t}\Ent_{\kappa_{\mathrm{AC}}}(h)
\end{equation}
for every probability density $h$ with finite entropy.
\end{enumerate}
\end{theorem}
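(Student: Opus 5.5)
The plan mirrors the proof of \cref{thm:Phi4-model}, with the quartic potential replaced by $V_{\mathrm{AC}}$ and the curvature bound adjusted for the negative quadratic term. For parts~(i) and~(ii), we invoke \cref{prop:quartic-model-verification} with $V=V_{\mathrm{AC}}$, which the paper states covers both quartic examples with $m,\lambda>0$. Part~(i) of that proposition gives $Z_{\mathrm{AC}}\in(0,\infty)$, since $\Psi_{\rm AC}(r)\ge-\tfrac14$ makes $e^{-V_{\mathrm{AC}}}\le e^{2\pi\lambda/4}$. Part~(ii) gives the global strong solution of \eqref{eq:AC-spde}. Parts~(iii)--(iv) give \cref{ass:gibbs-ibp}, and hence, via \cref{prop:closability-criterion,prop:closed-gradient-properties}, \cref{ass:equilibrium,ass:diffusion-calculus} together with quasi-regularity. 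Part~(v) gives \cref{ass:form-spde-interface}. \Cref{cor:equilibrium-spde-transfer} then yields invariance, reversibility, path reversal and the dissipation inequality or equality by \cref{thm:deBruijn}(i)--(ii).

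For the LSI in part~(iii), we again use the Galerkin measures $\kappa_{\mathrm{AC},N}=Z_{\mathrm{AC},N}^{-1}e^{-V_{\mathrm{AC}}(P_N\phi)}\mu_Q$. Fix a cylinder $g$ depending on the modes in $\mathbb H_M$ with $M\ge N$. The $\mathbb H_M$-marginal has potential $U_{\mathrm{AC},N}(\phi)=\tfrac12\langle\phi,A_N\phi\rangle+V_{\mathrm{AC}}(P_N\phi)$ on the first $N$ modes and a free Gaussian part on the remaining modes. Its Hessian on the interacting block is
\begin{equation*}
A_N+3\lambda P_N\operatorname{Mult}_{(P_N\phi)^2}P_N-\lambda I_N .
\end{equation*}
The middle term is nonnegative. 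Preconditioning gives
\begin{equation*}
Q_N^{1/2}\nabla^2U_{\mathrm{AC},N}Q_N^{1/2}\ge I_N-\lambda Q_N .
\end{equation*}
Since $\|Q_N\|=\lambda_0^{-1}=1/m$, the lowest eigenvalue coming from the zero mode, the right-hand side is at least $(1-\lambda/m)I_N$. The high modes contribute the identity, which is larger. The finite-dimensional Bakry--\'Emery criterion with $\rho=1-\lambda/m>0$ then gives $\Ent_{\kappa_{\mathrm{AC},N}}(g^2)\le\frac{2}{1-\lambda/m}\cE_{\mathrm{AC},N}(g,g)$, uniformly in $M$ and $N$.

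The one genuinely new point is the passage $N\to\infty$. In Step~3 of the $\Phi^4_1$ proof, domination came from $0\le e^{-V_4}\le1$. Here we would instead use the uniform bound $e^{-V_{\mathrm{AC}}(P_N\phi)}\le e^{\pi\lambda/2}$. We also need $V_{\mathrm{AC}}(P_N\phi)\to V_{\mathrm{AC}}(\phi)$ $\mu_Q$-a.s. This follows from $P_N\phi\to\phi$ in $L^4(\T)$, hence in $L^2(\T)$, via \cref{lem:Gaussian-regularity}. Dominated convergence then gives $Z_{\mathrm{AC},N}\to Z_{\mathrm{AC}}$ and total-variation convergence $\kappa_{\mathrm{AC},N}\to\kappa_{\mathrm{AC}}$. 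The bounded-integrand argument for the entropy and the energy then passes the LSI to all cylinders, exactly as before.

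The extension to all of $\operatorname{Dom}(\cE)$ again uses form-norm approximation together with lower semicontinuity of the entropy. For the decay, combine \cref{thm:deBruijn}(ii), restarted at each time $s$, with the LSI applied to $\sqrt{h_r}$. This gives
\begin{equation*}
\Ent(h_t)+2(1-\lambda/m)\int_s^t\Ent(h_r)\,dr\le\Ent(h_s),
\end{equation*}
and the integral Gronwall lemma yields \eqref{eq:LSI-AC-decay}. I expect the main point to check to be the curvature bound: the estimate $\lambda Q_N\le(\lambda/m)I_N$ must be applied after preconditioning, and the sign of the $-\lambda$ term needs care. Beyond that, the argument rests on the proposition covering $V_{\mathrm{AC}}$ for all $\lambda>0$, which the paper asserts.
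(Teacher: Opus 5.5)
Your proposal is correct and follows essentially the same route as the paper's proof. Parts (i)--(ii) come from identification via \cref{prop:quartic-model-verification} and \cref{cor:equilibrium-spde-transfer}. For (iii) you use the same Galerkin measures $\kappa_{\mathrm{AC},N}$ with preconditioned Hessian bounded below by $(1-\lambda/m)I$; your $I_N-\lambda Q_N\ge(1-\lambda/m)I_N$ is the same estimate as the paper's use of $3(P_N\phi)^2-1\ge-1$ and $\|Q_N\|\le m^{-1}$. The remaining steps match as well: a total-variation limit $N\to\infty$ dominated by $e^{\lambda\pi/2}$, lower-semicontinuous extension to $\operatorname{Dom}(\cE)$, and the restarted de~Bruijn inequality combined with integral Gr\"onwall.
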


\begin{proof}
\par\noindent\textbf{Step 1. Construction and identification.}\par
Here $\mathbb X=C(\T)$, $Q=(-\partial_x^2+m)^{-1}$, $m,\lambda>0$ and $V=V_{\mathrm{AC}}$, so \cref{prop:quartic-model-verification} applies. Part~(i) constructs the full-support Gibbs law $\kappa_{\mathrm{AC}}$, and part~(ii) gives the global strong solution stated in (i). Parts~(iii)--(iv) verify \cref{ass:gibbs-ibp,ass:equilibrium,ass:diffusion-calculus} and quasi-regularity. Part~(v) then identifies the coordinate martingales and uses uniqueness in law to obtain \cref{ass:form-spde-interface}; hence the closed-form semigroup is the transition semigroup of \eqref{eq:AC-spde}. Applying \cref{cor:equilibrium-spde-transfer} gives invariance, reversibility, entropy dissipation and stationary path reversal.

\par\noindent\textbf{Step 2. Uniform Galerkin curvature.}\par
Set $\rho=1-\lambda/m>0$ and define the Galerkin measures
 \begin{equation}\label{eq:AC-kappaN}
 \kappa_{\mathrm{AC},N}
 = Z_{\mathrm{AC},N}^{-1}\,
 e^{-V_{\mathrm{AC}}(P_N\phi)}\,\mu_Q,
 \end{equation}
where $Z_{\mathrm{AC},N} =\E_{\mu_Q}[e^{-V_{\mathrm{AC}}(P_N\phi)}]$, and write
 \begin{equation*}
 \cE_{\mathrm{AC},N}(g,g)
 =\int\|Q^{1/2}\nabla_{\mathbb H}g\|_{\mathbb H}^2\,d\kappa_{\mathrm{AC},N}.
 \end{equation*}

The total potential on the low $N$ modes is $U_{\mathrm{AC},N}(\phi) = \frac12\langle\phi, A_N\phi\rangle_{\mathbb H_N} + V_{\mathrm{AC}}(P_N\phi)$. Its Hessian is
 \begin{equation*}
 \nabla^2 U_{\mathrm{AC},N}
 =A_N+\lambda P_N\operatorname{Mult}_{3(P_N\phi)^2-1}P_N,
 \end{equation*}
where $\operatorname{Mult}_{3\phi^2-1}$ is the multiplication operator $u\mapsto(3\phi^2-1)u$. The projections $P_N$ on both sides are essential because multiplication can generate frequencies outside $\mathbb H_N$. For every $u\in\mathbb H_N$,
\begin{align*}
    \left\langle u,Q_N^{1/2}\nabla^2 U_{\mathrm{AC},N}Q_N^{1/2}u \right\rangle_{\mathbb H_N}
    &= \|u\|_{\mathbb H_N}^2 + \lambda\int_{\T}\bigl(3(P_N\phi)^2-1\bigr) |Q_N^{1/2}u|^2\,dx\\
    &\ge \|u\|_{\mathbb H_N}^2 - \lambda\|Q_N^{1/2}u\|_{\mathbb H_N}^2\\
    &\ge \left(1-\frac{\lambda}{m}\right)\|u\|_{\mathbb H_N}^2,
\end{align*}
since $3(P_N\phi)^2-1\ge-1$ pointwise, $Q_N^{1/2}A_NQ_N^{1/2}=I_N$, and $\|Q_N\|\le m^{-1}$. Now fix a smooth cylinder $g$ and choose $M\ge N$ so that $g$ depends only on $\mathbb H_M$. On the modes $N+1,\ldots,M$, the potential is Gaussian and its preconditioned Hessian is the identity. Thus the full $M$-dimensional marginal has a smooth, strictly positive density, and its preconditioned Hessian is bounded below by $\rho I_M$. Since $m>\lambda$ gives $\rho=1-\lambda/m>0$, the finite-dimensional Bakry--\'Emery criterion \cite{BakryGentilLedoux2014} applies to this marginal with that value of $\rho$ and gives
 \begin{equation*}
 \Ent_{\kappa_{\mathrm{AC},N}}(g^2)
 \le \frac{2}{\rho}\; \cE_{\mathrm{AC},N}(g,g),
 \qquad g\in\cF C_b^\infty.
 \end{equation*}

 \par\noindent\textbf{Step 3. The limit $N\to\infty$.}\par
Fix $s\in(1/4,1/2)$. Because $m>0$, \cref{lem:Gaussian-regularity} gives $\phi\in H^s(\T)$ for $\mu_Q$-almost every $\phi$. The Fourier projections therefore converge in $H^s(\T)$ and, by $H^s(\T)\hookrightarrow L^4(\T)$, satisfy $P_N\phi\to\phi$ in $L^4(\T)$. Hence $V_{\mathrm{AC}}(P_N\phi)\to V_{\mathrm{AC}}(\phi)$ $\mu_Q$-almost surely. The lower bound $V_{\mathrm{AC}} \ge -\lambda\pi/2$ gives $0\le e^{-V_{\mathrm{AC}}(P_N\phi)}\le e^{\lambda\pi/2}$. Dominated convergence therefore yields $Z_{\mathrm{AC},N}\to Z_{\mathrm{AC}}$ and $\kappa_{\mathrm{AC},N} \to \kappa_{\mathrm{AC}}$ in total variation.

Fix a nonzero $g\in\cF C_b^\infty$. Then $\int g^2\,d\kappa_{\mathrm{AC},N}\to \int g^2\,d\kappa_{\mathrm{AC}}>0$, while $g$ and $\|Q^{1/2}\nabla_{\mathbb H}g\|_{\mathbb H}$ are bounded. Hence, after the convention $0\log0=0$, all integrands defining the energy and homogeneous entropy are uniformly bounded for sufficiently large $N$. Consequently, $\cE_{\mathrm{AC},N}(g,g) \to \cE(g,g)$. The convergence of $\int g^2\,d\kappa_{\mathrm{AC},N}$ and of $\int g^2\log g^2\,d\kappa_{\mathrm{AC},N}$ likewise gives convergence of the homogeneous entropies. Passing to the limit proves
\begin{equation}\label{eq:AC-LSI-cylinder}
 \Ent_{\kappa_{\mathrm{AC}}}(g^2)
 \le \frac{2}{\rho}\; \cE(g,g),
 \qquad \forall g \in \cF C_b^\infty.
 \end{equation}

 \par\noindent\textbf{Step 4. Extension to the form domain and entropy decay.}\par
For $u\in\operatorname{Dom}(\cE)$, choose cylinder functions $u_n$ such that
\begin{equation*}
 \|u_n-u\|_{L^2(\kappa_{\mathrm{AC}})}^2
 +\cE(u_n-u,u_n-u)\longrightarrow0.
\end{equation*}
\begin{equation*}
    \|u_n^2-u^2\|_{L^1}
    \le \|u_n-u\|_{L^2}(\|u_n\|_{L^2}+\|u\|_{L^2})
    \longrightarrow 0.
\end{equation*}
The variational representation of homogeneous entropy gives
\begin{equation*}
    \Ent_{\kappa_{\mathrm{AC}}}(u^2)
    \le\liminf_{n\to\infty}
    \Ent_{\kappa_{\mathrm{AC}}}(u_n^2),
\end{equation*}
whereas the form Cauchy--Schwarz inequality gives
\begin{equation*}
    |\cE(u_n,u_n)-\cE(u,u)|
    \le \cE(u_n-u,u_n-u) + 2\cE(u,u)^{1/2}\cE(u_n-u,u_n-u)^{1/2}
    \longrightarrow 0
\end{equation*}
Taking the lower limit in \eqref{eq:AC-LSI-cylinder} therefore yields
\begin{equation*}
    \Ent_{\kappa_{\mathrm{AC}}}(u^2)
    \le \frac{2}{\rho}\; \cE(u,u),
    \qquad \forall u \in \operatorname{Dom}(\cE).
\end{equation*}

For a probability density $h_t=P_th_0$ with finite initial entropy, apply \cref{thm:deBruijn}(ii) after restarting the semigroup at time $s$. The LSI with $u=\sqrt{h_r}$ then gives, for $0\le s\le t$,
\begin{equation*}
    \Ent_{\kappa_{\mathrm{AC}}}(h_t)
    +2\rho\int_s^t\Ent_{\kappa_{\mathrm{AC}}}(h_r)\,dr
    \le \Ent_{\kappa_{\mathrm{AC}}}(h_s).
\end{equation*}
The integral form of Gr\"onwall's inequality proves \eqref{eq:LSI-AC-decay}.
\end{proof}

\par\medskip
\noindent\textbf{Consequences in the convex regime.}\par
For the free Gaussian initial law $\mu_0=\cN(0,(-\partial_x^2+m_0)^{-1})$ with $m_0>0$, \cref{prop:quartic-model-verification}(vi) gives finite relative entropy with respect to $\kappa_{\mathrm{AC}}$, so \cref{thm:AC-model}(ii) gives entropy dissipation. This preparation switches on $V_{\mathrm{AC}}$ from a free field with mass $m_0$. If also $m>\lambda$, the full energy $U_{\mathrm{AC}}$ is uniformly convex, and \cref{thm:AC-model}(iii) gives exponential relative-entropy decay to $\kappa_{\mathrm{AC}}$ with exponent $2(1-\lambda/m)$. Here $m$ sets the Gaussian quadratic scale, whereas $\lambda$ controls both the destabilising quadratic term and the stabilising quartic confinement. The factor $1-\lambda/m$ is precisely the uniform curvature retained after this competition and tends to zero as $\lambda\uparrow m$. At $m=\lambda$ the full energy remains convex but loses uniform quadratic curvature, while $m<\lambda$ enters the double-well regime. The de~Bruijn identity (or inequality for general finite-entropy data) and semigroup detailed balance retain their stated measure-theoretic meaning; under the quasi-regularity hypotheses, \cref{thm:eq-zero}(ii) also gives stationary path reversal.

\subsection{Models without Detailed Balance}

For nonzero drive, the examples retain their invariant one-time laws but fail detailed balance. For WASEP this is seen from the oriented microscopic particle current and the forward--reverse jump rates. For the Gaussian rotors it is described by the Cameron--Martin current, both when $B$ commutes with $Q$ and in a finite-rank noncommuting example. The zero-drive cases remain the detailed-balance boundary of the same formulas.

\subsubsection{Weakly Asymmetric Exclusion}
On the state space $\Omega_{N,M}$ introduced above, let $p,q>0$, $p+q=1$, and define
\begin{equation}\label{eq:WASEP-generator}
    L_{p,q}F(\eta)=a_N\sum_{x\in\mathbb T_N}
    [p\eta_x(1-\eta_{x+1})+q\eta_{x+1}(1-\eta_x)]
    [F(\eta^{x,x+1})-F(\eta)].
\end{equation}
The choice
\begin{equation*}
    p=\frac{1+\varepsilon_N}{2},
    \qquad
    q=\frac{1-\varepsilon_N}{2},
    \qquad
    \varepsilon_N\in(-1,1),
    \qquad
    \varepsilon_N\longrightarrow0,
\end{equation*}
is the weakly asymmetric regime. The formulas below are exact at finite volume and display the weak-drive thermodynamic structure before any scaling limit.

The invariant law and current are classical for periodic exclusion, while the entropy-production and path-action formulas below are the corresponding specialisations of the general Markov-jump relations \cite{KipnisLandim1999,LebowitzSpohn1999,EspositoVdB2010}.

\begin{proposition}[Driven exclusion and path reversal]
 \label{prop:WASEP-thermodynamics}
The canonical law $\pi_{N,M}$ is invariant and $L_{p,q}^{\dagger}=L_{q,p}$ in $L^2(\pi_{N,M})$. Consequently, $L_s=L_N^{\rm SSEP}$, $L_a=(L_{p,q}-L_{q,p})/2$, and the dynamics is reversible exactly when $p=q$. If $\mu_t$ is its time-$t$ law and
\begin{equation*}
    n_{10}(\eta)=\sum_{x\in\mathbb T_N}\eta_x(1-\eta_{x+1}),
\end{equation*}
then, for $t>0$,
\begin{align}
    \sigma_{\rm na}(t)
    &= -\frac d{dt}\RelEnt{\mu_t}{\pi_{N,M}},
    \label{eq:WASEP-na}\\
    \sigma_{\rm hk}(t)
    &= a_N(p-q)\log(p/q)\,
    \mathbb E_{\mu_t}n_{10}\ge0.
    \label{eq:WASEP-hk}
\end{align}
The same identities hold at $t=0$ if $d\mu_0/d\pi_{N,M}$ is strictly positive. At stationarity,
\begin{equation}\label{eq:WASEP-stationary}
    \sigma_{\rm hk}^{\rm ss}
    = a_N(p-q)\log(p/q)\frac{M(N-M)}{N-1}
    = J_{\rm ss}\mathcal A_{\rm aff},
\end{equation}
where
\begin{equation*}
    J_{\rm ss}=a_N(p-q)\frac{M(N-M)}{N-1},
    \qquad
    \mathcal A_{\rm aff}=\log(p/q),
\end{equation*}
are the total stationary signed current and the edge affinity, respectively. If $N_+(T)$ and $N_-(T)$ count right and left jumps, and $\mathbb P_{p,q,T}$ is the stationary path law on $D([0,T];\Omega_{N,M})$, then
 \begin{equation}\label{eq:WASEP-action}
    \Sigma_T
    := \log\frac{d\mathbb P_{p,q,T}}{d\mathbb P_{q,p,T}}
    = [N_+(T)-N_-(T)]\log(p/q),
 \end{equation}
and
\begin{equation}\label{eq:WASEP-path-KL}
    \RelEnt{\mathbb P_{p,q,T}}{\mathbb P_{q,p,T}} = T\sigma_{\rm hk}^{\rm ss},
    \qquad
    \mathbb E_{p,q}e^{-\Sigma_T}=1.
\end{equation}
\end{proposition}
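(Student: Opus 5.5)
The plan is to reduce every claim to explicit bond-by-bond bookkeeping on the finite state space $\Omega_{N,M}$, using two elementary facts about the ring. First, for every $\eta$, the number $n_{10}(\eta)$ of $10$-bonds equals the number $n_{01}(\eta)=\sum_x\eta_{x+1}(1-\eta_x)$ of $01$-bonds, because runs of particles and holes alternate around $\mathbb T_N$. Second, $\eta\mapsto\eta^{x,x+1}$ is an involution preserving the uniform law $\pi_{N,M}$. The transition rates of \eqref{eq:WASEP-generator} are $w(\eta,\eta^{x,x+1})=a_Np$ on a $10$-bond at $x$ and $a_Nq$ on a $01$-bond, so the total exit rate is $a_N(pn_{10}+qn_{01})=a_Nn_{10}$. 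Reversing the transition swaps $p$ and $q$, so the total entry rate into $\eta$ is $a_N(qn_{10}+pn_{01})=a_Nn_{10}$ as well. Hence $\pi_{N,M}$ is invariant.

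\textbf{Adjoint and decomposition.} For the uniform law, the $L^2(\pi_{N,M})$-adjoint has rates $w^\dagger(\eta,\zeta)=w(\zeta,\eta)$. The diagonal part is unchanged because entry and exit rates agree, so exchanging $p$ and $q$ gives $L_{p,q}^\dagger=L_{q,p}$. Averaging yields $L_s=\tfrac12(L_{p,q}+L_{q,p})$. Since $p+q=1$, its rate on each bond is $\tfrac{a_N}{2}\mathbf1_{\{\eta_x\ne\eta_{x+1}\}}$, which is $L_N^{\rm SSEP}$. For reversibility, if $p\ne q$ then $L_a\ne0$. To see this, evaluate $L_a$ on the indicator of a configuration $\zeta$ at a configuration $\eta=\zeta^{x,x+1}$ for which $\eta$ has a $10$-bond at $x$; the result is $\tfrac{a_N}{2}(p-q)\ne0$. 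Conversely $p=q$ gives $L_{p,q}=L_{q,p}$.

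\textbf{Entropy balance.} I adopt the standard Markov-jump definitions \cite{EspositoVdB2010}. The nonadiabatic rate is
\[
\sigma_{\rm na}=\tfrac12\sum_{\eta,\zeta}\bigl(\mu(\eta)w(\eta,\zeta)-\mu(\zeta)w(\zeta,\eta)\bigr)\log\frac{h(\eta)}{h(\zeta)} .
\]
The housekeeping rate is
\[
\sigma_{\rm hk}=\sum_{\eta,\zeta}\mu(\eta)w(\eta,\zeta)\log\frac{w(\eta,\zeta)}{w^\dagger(\eta,\zeta)} .
\]
For $t>0$, irreducibility of the chain on $\Omega_{N,M}$ gives $h_t>0$, so the master equation can be differentiated as in the proof of \cref{prop:SSEP-relaxation}. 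Invariance of $\pi$ then yields \eqref{eq:WASEP-na}. For the housekeeping rate, the logarithmic ratio equals $\log(p/q)$ for right jumps and $\log(q/p)$ for left jumps. Therefore
\[
\sigma_{\rm hk}=a_N\log(p/q)\,\mathbb E_{\mu_t}[pn_{10}-qn_{01}]=a_N(p-q)\log(p/q)\,\mathbb E_{\mu_t}n_{10},
\]
using $n_{01}=n_{10}$. This quantity is nonnegative because $(p-q)\log(p/q)\ge0$. At stationarity, exchangeability gives $\pi(\eta_x=1,\eta_{x+1}=0)=M(N-M)/(N(N-1))$, so $\mathbb E_\pi n_{10}=M(N-M)/(N-1)$. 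This gives \eqref{eq:WASEP-stationary} and identifies $J_{\rm ss}=a_N\mathbb E_\pi[pn_{10}-qn_{01}]$.

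\textbf{Path space.} I will use the Girsanov formula for pure-jump chains. Both dynamics have the same stationary initial law $\pi_{N,M}$ and the same exit rate $a_Nn_{10}(\eta)$, so the compensator terms cancel. Each right jump contributes the factor $p/q$ and each left jump the factor $q/p$, which gives \eqref{eq:WASEP-action}. Taking expectations under $\mathbb P_{p,q,T}$, the counting processes $N_\pm$ have compensators $\int_0^T a_Np\,n_{10}\,ds$ and $\int_0^T a_Nq\,n_{01}\,ds$. By stationarity, $\mathbb E[N_+-N_-]=Ta_N(p-q)\mathbb E_\pi n_{10}$, which yields the first identity in \eqref{eq:WASEP-path-KL}. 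The second identity is the normalization $\mathbb E_{p,q}[d\mathbb P_{q,p,T}/d\mathbb P_{p,q,T}]=1$, valid because the two laws are equivalent on this finite state space.

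The main obstacle is conceptual rather than computational: fixing the precise definitions of $\sigma_{\rm na}$ and $\sigma_{\rm hk}$ for jump chains so that they agree with the paper's usage. The whole argument also hinges on the ring identity $n_{10}=n_{01}$. This identity is what makes the exit rates coincide, and without it the Radon--Nikodym derivative would carry an extra compensator term.
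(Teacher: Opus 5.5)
Your proposal is correct and follows the paper's proof essentially step for step. The shared steps are: the adjoint and invariance via the exchange involution together with $n_{10}=n_{01}$; the master-equation computation for $\sigma_{\rm na}$; current times affinity for $\sigma_{\rm hk}$; the exchangeable two-site marginal for the stationary current; and the jump-rate ratio with cancelling holding-time factors for $\Sigma_T$. Your explicit check that the diagonal (exit-rate) terms of the adjoint match only because $n_{10}=n_{01}$ is a point the paper's change-of-variables sentence leaves implicit. The paper additionally notes that $L_{p,q}^\dagger=L_{q,p}$ makes $\mathbb P_{q,p,T}$ the law of the c\`adl\`ag time-reversed stationary path, which justifies reading $\Sigma_T$ as a forward--reverse action, but this is not needed for the displayed identities.
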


\begin{proof}
\par\noindent\textbf{Step 1. Adjoint and invariant law.}\par Changing variables by $\eta\mapsto\eta^{x,x+1}$ in the $L^2(\pi_{N,M})$ pairing exchanges the right and left rates. Hence $L_{p,q}^{\dagger}=L_{q,p}$. Since both operators annihilate constants,
\begin{equation*}
    \langle 1,L_{p,q}F\rangle_{\pi_{N,M}} = \langle L_{q,p}1,F\rangle_{\pi_{N,M}}=0,
\end{equation*}
which proves invariance. Averaging and subtracting the two adjoint generators gives the stated symmetric--antisymmetric decomposition.

 \par\noindent\textbf{Step 2. Nonadiabatic entropy production.}\par
Let $h_t=d\mu_t/d\pi_{N,M}$. The forward equation and invariance give
\begin{equation*}
    \frac d{dt}\RelEnt{\mu_t}{\pi_{N,M}}
    = \langle L_{p,q}^{\dagger}h_t,\log h_t\rangle_{\pi_{N,M}}
    = \langle h_t,L_{p,q}\log h_t\rangle_{\pi_{N,M}}.
\end{equation*}
The negative of the last expression is the nonadiabatic entropy-production rate, which proves \eqref{eq:WASEP-na}; its nonnegativity is the entropy contraction of a Markov semigroup relative to an invariant law. Put $n_{01}(\eta)=\sum_x(1-\eta_x)\eta_{x+1}$. Periodicity gives
\begin{equation*}
    n_{10}(\eta)-n_{01}(\eta) = \sum_{x\in\mathbb T_N}(\eta_x-\eta_{x+1})=0.
\end{equation*}
Because $\pi_{N,M}$ is uniform, the stationary affinities of right and left jumps are respectively $\log(p/q)$ and $-\log(p/q)$. Summing the instantaneous current times its affinity therefore gives
\begin{equation*}
    \sigma_{\rm hk}(t)=a_N\log(p/q) [p\mathbb E_{\mu_t}n_{10} - q\mathbb E_{\mu_t}n_{01}],
\end{equation*}
and $n_{10}=n_{01}$ gives \eqref{eq:WASEP-hk}.

 \par\noindent\textbf{Step 3. Stationary current.}\par
Under the uniform canonical law,
\begin{equation*}
    \pi_{N,M}(\eta_x=1,\eta_{x+1}=0) = \frac{M(N-M)}{N(N-1)}.
\end{equation*}
Summation over the $N$ bonds proves \eqref{eq:WASEP-stationary}. Notice that $J_{\rm ss}$ is signed and summed over all bonds; the current per bond is $J_{\rm ss}/N$.

 \par\noindent\textbf{Step 4. Path reversal.}\par
There is almost surely no jump at the deterministic time $T$. The c\`adl\`ag reversal is therefore given almost surely by $(r_T^{\rm jump}\omega)(t)=\omega((T-t)-)$ for $0<t<T$, with $(r_T^{\rm jump}\omega)(0)=\omega(T)$ and $(r_T^{\rm jump}\omega)(T)=\omega(0)$. Since $L_{p,q}^{\dagger}=L_{q,p}$, its law under $\mathbb P_{p,q,T}$ is $\mathbb P_{q,p,T}$. Since $n_{10}=n_{01}$, the forward and reversed escape rates agree at every configuration, so the holding-time factors cancel in the path-density ratio. Multiplying the jump-rate ratios gives \eqref{eq:WASEP-action}. Its stationary expectation gives the first identity in \eqref{eq:WASEP-path-KL}; integrating the reciprocal Radon--Nikodym derivative gives the second.
\end{proof}

In the weakly asymmetric parametrisation,
\begin{equation*}
    (p-q)\log(p/q)
    = \varepsilon_N\log\frac{1+\varepsilon_N}{1-\varepsilon_N}
    = 2\varepsilon_N^2+O(\varepsilon_N^4).
\end{equation*}
Thus the stationary entropy production is quadratic to leading order in the drive. The master-equation proof, rather than a diffusion chain rule, is essential for this jump process \cite{KipnisLandim1999,LebowitzSpohn1999}.

\subsubsection{Noncommuting Rotor}

To obtain a noncommuting example, couple two modes with distinct equilibrium variances. Choose two normalised eigenvectors $e_j,e_k$ of $Q$ with
\begin{equation*}
    Qe_j = q_je_j,\qquad Qe_k = q_ke_k,\qquad q_j\ne q_k,
\end{equation*}
and define the finite-rank operator
\begin{equation*}
    B\phi=c\bigl(\langle\phi,e_j\rangle_{\mathbb H}e_k - \langle\phi,e_k\rangle_{\mathbb H}e_j\bigr),
    \qquad c\ne0.
\end{equation*}
Then $B^*=-B$, but
\begin{equation*}
    BQe_j=cq_je_k,\qquad QBe_j=cq_ke_k,
\end{equation*}
so $BQ\ne QB$. Nevertheless \cref{prop:skew-invariance} shows that $\mu_Q$ is invariant, while \cref{thm:Galerkin-path-KL} gives the exact full-path identity
\begin{equation*}
    \frac1T\RelEnt{\Pbb_{B,T}}{\Pbb_{B,T}\circ r_T^{-1}}
    = \|Q^{1/2}BQ^{1/2}\|_{\mathrm{HS}}^2
    = 2c^2q_jq_k>0.
\end{equation*}
For this two-mode subsystem, the trace is the standard stationary entropy-production quadratic form for a nonsymmetric Ornstein--Uhlenbeck process \cite{GodrecheLuck2019,DaCostaPavliotis2023}. Thus two modes with different equilibrium variances can carry a stationary irreversible circulation even though the one-time Gaussian law is unchanged. Unlike the commuting example below, this coupling cannot be reduced to independent rotations inside equal-eigenvalue Fourier planes.

\subsubsection{Single-Mode Rotor}
Let $\mathcal H$ denote the periodic Hilbert transform defined by $\mathcal He_0=0$, $\mathcal He_k^c=-e_k^s$ and $\mathcal He_k^s=e_k^c$. Thus $\mathcal H^*=-\mathcal H$, $\mathcal H^2=-(I-P_0)$, where $P_0$ projects onto the constants, and $[A,\mathcal H]=0$.

We next restrict the skew forcing to the $k=1$ Fourier block, where it acts as a $90^\circ$ rotation:
\begin{equation}\label{eq:single-mode-force}
    f(\phi) = c\bigl(\phi_1^s\,e_1^c - \phi_1^c\,e_1^s\bigr),\qquad
    c\in\R\setminus\{0\},
\end{equation}
with $\phi_1^c = \langle\phi, e_1^c\rangle_{\mathbb H}$, $\phi_1^s = \langle\phi, e_1^s\rangle_{\mathbb H}$. The zero mode and all $k\ge 2$ modes evolve as independent free OU processes. On the $k=1$ block, the dynamics is a two-dimensional Ornstein--Uhlenbeck process with rotational drift \cite{LandiTomeOliveira2013,GodrecheLuck2019}. With the convention $R=\left(\begin{smallmatrix}0&1\\-1&0\end{smallmatrix}\right)$, this force is $B=cR=c\mathcal H$ on $\mathbb V_1$, so its sign agrees with the drift matrix below. The coefficient of $R$ in the drift is $-c/(1+m)$; equivalently, relative to the counterclockwise orientation generated by $-R$, the angular velocity is $+c/(1+m)$. Its unsigned angular speed is therefore $|c|/(1+m)$, not $|c|$.

\par\medskip
\noindent\textbf{Invariant measure unchanged.}\par
The drift matrix $\mathsf K_1=I_2+c\lambda_1^{-1}R$, where $I_2$ is the identity on $\mathbb V_1$, satisfies $\mathsf K_1+\mathsf K_1^T=2I_2$ ($R$ is antisymmetric). For the mass-quench initial law used here, the covariance on $\mathbb V_1$ is scalar. Writing $v_1(t)$ for either coordinate variance, the Lyapunov equation $\dot v_1=-2v_1+2\lambda_1^{-1}$ is therefore independent of $c$ (cf.\ the proof of~\cref{lem:cov}). The steady state remains $\mu_{\mathrm{ss}} = \mu_Q$---the skew forcing does \emph{not} alter the invariant measure.

\par\medskip
\noindent\textbf{Entropy production decomposition.}\par
Applying the Fourier mode calculation of~\cref{thm:lin-hk}, the $k=1$ block ($d_1=2$) contributes
\begin{equation*}
    \sigma_{\mathrm{na},1}(t) = 2\,\frac{T_1^2}{1+T_1},\qquad
    \sigma_{\mathrm{hk},1}(t) = 2c^2\,\frac{1+T_1}{\lambda_1^2},
\end{equation*}
with $T_1(t) = e^{-2t}(\lambda_1/\lambda_1^0 - 1)$. The remaining modes give $\sigma_{\mathrm{na},k} = d_k T_k^2/(1+T_k)$ and $\sigma_{\mathrm{hk},k}=0$. Summing,
\begin{equation*}
    \sigma_{\mathrm{tot}}
    = \sum_{k\ge 0}\sigma_{\mathrm{na},k} + 2c^2\frac{1+T_1}{\lambda_1^2}
    = \sigma_{\mathrm{na}} + \sigma_{\mathrm{hk}}.
\end{equation*}
At steady state ($t\to\infty$), $T_k\to 0$, $\sigma_{\mathrm{na}}^{\rm ss}=0$, and
\begin{equation*}
    \sigma_{\mathrm{hk}}^{\rm ss}
    = 2c^2\,\lambda_1^{-2}
    = \frac{2c^2}{(1+m)^2} > 0.
\end{equation*}
This example also displays the reversible/irreversible distinction described by \cref{thm:eq-zero}: $c=0$ (gradient structure) $\Rightarrow \sigma_{\mathrm{tot}}^{\rm ss}=0$, while $c\neq 0$ (non-gradient driving) $\Rightarrow \sigma_{\mathrm{hk}}^{\rm ss}>0$, with the invariant measure always being $\mu_Q$.

\par\medskip
\noindent\textbf{Stationary consequence.}\par
The rotational driving produces a nonzero stationary probability current in the $(e_1^c,e_1^s)$ coefficient plane while preserving the Gaussian invariant law. At stationarity, $\sigma_{\mathrm{na}}^{\rm ss}=0$ while $\sigma_{\mathrm{hk}}^{\rm ss}=2c^2(1+m)^{-2}>0$; the latter is the stationary path-space relative-entropy rate. Thus the driven process has the same invariant one-time law as the reversible Gaussian dynamics but is not reversible.

\section{Discussion and Outlook}
\label{sec:outlook}

The finite-system distinction between relaxation and stationary circulation remains valid for the field dynamics studied here once the classical generator calculation is replaced by a closed Dirichlet form, its associated process and a comparison of path laws. The main correspondences are collected below.

\begin{center}
\small
\renewcommand{\arraystretch}{1.15}
\begin{tabular}{@{}L{0.28\textwidth}L{0.33\textwidth}L{0.31\textwidth}@{}}
\hline
\textbf{Question} & \textbf{Answer in this paper} &
 \textbf{Mathematical formulation}\\
\hline
How is free-energy relaxation formulated? &
 By an exact regular-density formula and a finite-entropy inequality. &
 Symmetric Dirichlet form and carr\'e du champ.\\
How can invariant dynamics remain irreversible? &
 Invariance persists under skew forcing while reversibility may fail. &
 Antisymmetric action on cylinder observables and Cameron--Martin current.\\
How is time-reversal breaking quantified? &
 By comparing the stationary forward and reversed path laws. &
 Forward--reverse relative entropy.\\
When do the form results apply to the target SPDE? &
 After the associated form process has been identified with that equation. &
 Coordinate martingales, quadratic variation and weak uniqueness.\\
\hline
\end{tabular}
\end{center}

For the symmetric dynamics, this gives the entropy-dissipation formula for regular densities, its integrated extension to finite-entropy initial laws, and stationary path reversal. For the driven Gaussian dynamics, the Cameron--Martin current has squared energy
\begin{equation*}
    \E_{\mu_Q}\|Q^{-1/2}J_B^{\mathbb K}\|_{\mathbb H}^2 = \|Q^{1/2}BQ^{1/2}\|_{\mathrm{HS}}^2,
\end{equation*}
which is also the steady-state entropy-production rate defined by forward--reverse path-space relative entropy per unit time, and the monotone limit of the finite-mode rates. The one-dimensional $\Phi^4_1$ and Allen--Cahn-type examples show how the symmetric conclusions reach interacting Gibbs dynamics: the fields are continuous, the nonlinearities are defined pointwise, and the logarithmic derivatives can be obtained from an ordinary Gibbs density relative to a Gaussian reference law.

In two spatial dimensions, stochastic quantisation introduces the first additional difficulty. Symmetric Dirichlet-form constructions for the $\Phi^4_2$ measure and its associated distribution-valued process, and more generally for the family of two-dimensional renormalised polynomial Gibbs measures conventionally denoted by $P(\Phi)_2$, are classical \cite{BorkarChariMitter1988,AlbeverioRockner1991,AlbeverioMaRockner2015}. In contrast to the one-dimensional examples, the field is distribution-valued and the nonlinear drift is defined through Wick renormalisation. Transferring the present entropy calculation therefore requires the renormalised integration-by-parts formula and the diffusion calculus to be realised on the same closed form. The associated process must then be identified with the renormalised SPDE through its coordinate martingales, and the resulting martingale problem must satisfy the appropriate uniqueness property. The relation between the Dirichlet-form construction, the shifted stochastic quantisation equation and restricted Markov uniqueness has been established for $P(\Phi)_2$ in \cite{DaPratoDebussche2003,RocknerZhuZhu2017}. Model-specific quantitative relaxation is a separate functional-inequality problem; a spectral gap for the two-dimensional stochastic quantisation equation is proved in \cite{TsatsoulisWeber2016}.

For the $\Phi^4_3$ model these issues occur at a more singular level. Both the Euclidean field measure and the stochastic quantisation flow are obtained through renormalised approximations, and the cubic drift is itself a distribution on the field space \cite{Hairer2014,GubinelliImkellerPerkowski2015,AlbeverioKusuoka2020,GubinelliHofmanova2021}. Extending the symmetric argument requires this renormalised construction to be connected to a closed diffusion form with the entropy calculus used here. An irreversible extension additionally requires an invariant-law-preserving perturbation whose current has finite Cameron--Martin energy, together with an absolute-continuity comparison of the forward and reversed path laws. Thus $\Phi^4_2$ and $\Phi^4_3$ provide successive tests of the same form-based method within stochastic quantisation: renormalised measures and drifts are already present in two dimensions, while their joint approximation becomes substantially more singular in three.

This analytic progression also clarifies the physical interpretation. In the Parisi--Wu formulation, stochastic quantisation introduces an auxiliary time in order to sample a Euclidean quantum-field measure \cite{ParisiWu1981,DamgaardHuffel1987,GlimmJaffe1987}. Entropy dissipation along that time therefore describes convergence of the sampling dynamics to the Euclidean field law. A mesoscopic thermodynamic interpretation can instead be sought when the same field equation arises as the scaling limit of a microscopic stochastic dynamics with physical time and inherits its thermodynamic calibration from that model. A precise example of the dynamical passage is the near-critical two-dimensional Ising--Kac model: the rescaled magnetisation field under Glauber dynamics converges to the dynamic $\Phi^4_2$ equation, and the accompanying renormalisation is reflected in the shift of the microscopic critical temperature \cite{MourratWeber2017}; the corresponding equilibrium measures converge to the $\Phi^4_2$ measure \cite{HairerIberti2018}.

Convergence of the processes and invariant measures does not by itself determine the fate of their thermodynamic quantities. At each fixed coarse-graining map, the data-processing inequality can reduce forward--reverse relative entropy by removing path information, and the missing contribution can be interpreted in stochastic thermodynamics as entropy production carried by hidden degrees of freedom \cite{Esposito2012,KawaguchiNakayama2013}. A scale-by-scale result would therefore have to compare not only the laws of the microscopic dynamics and the limiting SPDE, but also their symmetric forms, entropy dissipation, stationary currents and reversed path measures:
\begin{equation*}
    \text{microscopic Markov dynamics}
    \longrightarrow \text{coarse-grained field}
    \longrightarrow \text{Langevin SPDE}.
\end{equation*}
This would connect hydrodynamic and fluctuation limits with field-level stochastic thermodynamics and macroscopic fluctuation theory \cite{KipnisLandim1999,BertiniEtAl2015}. It would also distinguish quantities that are preserved by the limit from inequalities caused by unresolved microscopic circulation.

The initial preparation and the boundary mechanism form part of the same question. The examples above use mass quenches in the Gaussian models, free Gaussian preparations or interaction quenches in the nonlinear models, and periodic boundary conditions; finite relative entropy of the initial laws can be checked directly. Reservoirs or more general boundary conditions can add entropy fluxes, while a microscopic derivation fixes the temperature, mobility and local-detailed-balance calibration needed to interpret the information-theoretic rates as physical heat and free-energy rates. The form and path-space quantities developed here specify the objects that such a passage from microscopic dynamics to field thermodynamics would need to control.

\section*{Acknowledgements}

The authors thank Q.~Zhang for discussions on the finite-dimensional entropy-production decomposition, and A.~Zhang and J.~Zimmer for helpful discussions on stationary currents, reversibility and path-space entropy production for stochastic evolution equations. The work of Shuyuan Fan, Yuanke Chen and Jinqiao Duan was partly supported by the Guangdong Provincial Key Laboratory of Mathematical and Neural Dynamical Systems (No.~2024B1212010004), the Dongguan Key Laboratory for Data Science and Intelligent Medicine, the Innovation Team Project for Higher Education Institutions in Guangdong Province, ``Cross-disciplinary Research Team on Data Science and Intelligent Medicine'' (No.~2023KCXTD054), and the NSFC International Collaboration Fund for Creative Research Teams (Grant No.~W2541005). Shuyuan Fan was additionally supported by the Natural Science Foundation of Guangdong Province (No.~2025A1515011188). Lifei Wang acknowledges financial support from the Hebei International Joint Research Center for Mathematics and Interdisciplinary Science and the Central Guidance on Local Science and Technology Development Fund of Hebei Province (No.~264Z0106G). DeepSeek and GLM were used in preparing the initial draft. OpenAI Codex assisted with the subsequent review and revision of the manuscript, including checks of mathematical arguments and references, as well as calculations and proofs for selected examples. All AI-assisted material was reviewed and verified by the authors, who take full responsibility for the content.

\appendix
\section{Properties of Symmetric Closed Dirichlet Forms}
\label{app:dirichlet-details}

This appendix proves the finite-entropy extension, the contraction properties of closed gradients and the projection limit for relative entropy used above.

We next extend the entropy calculation from regular densities to arbitrary probability densities with finite relative entropy.

\begin{lemma}[Approximation from regular densities to finite-entropy data]
 \label{lem:finite-entropy-extension}
Assume \cref{ass:equilibrium}. Suppose that every probability density $g\in\operatorname{Dom}(L)$ satisfying $0<c_-\le g\le c_+<\infty$ obeys
\begin{equation}\label{eq:regular-dissipation-hypothesis}
    \Ent_\kappa(P_tg) + 4\int_0^t\cE(\sqrt{P_sg},\sqrt{P_sg})\,ds = \Ent_\kappa(g),
    \qquad t\ge0.
\end{equation}
Then every probability density $h_0$ with $\Ent_\kappa(h_0)<\infty$ satisfies, with $h_s=P_sh_0$,
\begin{equation*}
    \Ent_\kappa(h_t) + 4\int_0^t\cE(\sqrt{h_s},\sqrt{h_s})\,ds \le \Ent_\kappa(h_0),
    \qquad t\ge0.
\end{equation*}
\end{lemma}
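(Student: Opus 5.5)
We approximate $h_0$ by regular densities, apply the hypothesis \eqref{eq:regular-dissipation-hypothesis} to each approximant, and pass to the limit. The entropy terms are handled by convexity or continuity, and the energy term by lower semicontinuity of the closed form.

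\textbf{Construction of the approximants.} First truncate from above and below: for $n\ge2$ set $\tilde h^{(n)}=(h_0\wedge n)\vee n^{-1}$ and normalise, $g_n^{(0)}=\tilde h^{(n)}/\int\tilde h^{(n)}d\kappa$. Then $g_n^{(0)}$ is a probability density bounded above and below. Since $r\log r$ is bounded below and $h_0\log h_0\in L^1(\kappa)$, dominated convergence gives $g_n^{(0)}\to h_0$ in $L^1(\kappa)$ and $\Ent_\kappa(g_n^{(0)})\to\Ent_\kappa(h_0)$.

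Next regularise into $\operatorname{Dom}(L)$ by the semigroup itself. Put $g_{n,\varepsilon}=\varepsilon^{-1}\int_0^\varepsilon P_rg_n^{(0)}\,dr$, which lies in $\operatorname{Dom}(L)$. By the Markov property and conservativity it stays within the bounds $[(\int\tilde h^{(n)})^{-1}n^{-1},\,(\int\tilde h^{(n)})^{-1}n]$, and it is still a probability density by \cref{prop:form-invariance}. Jensen's inequality, applied to the convex functional $\Ent_\kappa$, together with entropy contraction $\Ent_\kappa(P_rg)\le\Ent_\kappa(g)$ (again Jensen with the invariant Markov kernel), gives $\Ent_\kappa(g_{n,\varepsilon})\le\Ent_\kappa(g_n^{(0)})$. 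Also $g_{n,\varepsilon}\to g_n^{(0)}$ in $L^2$ as $\varepsilon\downarrow0$. Choosing $\varepsilon_n\downarrow0$ suitably, we obtain regular densities $g_n\to h_0$ in $L^1(\kappa)$ with $\limsup_n\Ent_\kappa(g_n)\le\Ent_\kappa(h_0)$.

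\textbf{Passage to the limit.} Apply \eqref{eq:regular-dissipation-hypothesis} to each $g_n$. By $L^1$-contractivity of the semigroup, $P_sg_n\to h_s$ in $L^1(\kappa)$ for every $s$. It follows that $\sqrt{P_sg_n}\to\sqrt{h_s}$ in $L^2(\kappa)$, using $|\sqrt a-\sqrt b|^2\le|a-b|$. The relative entropy is lower semicontinuous under $L^1$ convergence of densities (for instance via its variational representation), so $\Ent_\kappa(h_t)\le\liminf_n\Ent_\kappa(P_tg_n)$.

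For the energy term, a closed nonnegative form is lower semicontinuous on $L^2(\kappa)$ when extended by $+\infty$ outside $\operatorname{Dom}(\cE)$. Hence $\cE(\sqrt{h_s},\sqrt{h_s})\le\liminf_n\cE(\sqrt{P_sg_n},\sqrt{P_sg_n})$ for each $s$, and Fatou's lemma in $s$ gives $\int_0^t\cE(\sqrt{h_s},\sqrt{h_s})\,ds\le\liminf_n\int_0^t\cE(\sqrt{P_sg_n},\sqrt{P_sg_n})\,ds$. This also shows that the time integrand is measurable: it is a liminf of measurable functions, since $s\mapsto\sqrt{P_sg_n}$ is $L^2$-continuous.

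Combining $\liminf$ of the left-hand side with $\limsup_n\Ent_\kappa(g_n)\le\Ent_\kappa(h_0)$ yields the claimed inequality.

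\textbf{Main obstacle.} The delicate step is the approximating sequence. It must simultaneously stay in $\operatorname{Dom}(L)$, keep two-sided bounds, and remain a probability density, while its entropy does not exceed that of $h_0$ in the limit. Convexity of entropy along the Markov averaging is what makes the upper bound work without any continuity of entropy under $L^2$ convergence. The lower semicontinuity of the closed form is the reason the conclusion is only an inequality.
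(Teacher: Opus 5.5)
Your argument is correct and is essentially the paper's proof. Both truncate $h_0$, then smooth into $\operatorname{Dom}(L)$ with a positive unital average of the semigroup. You use $\varepsilon^{-1}\int_0^\varepsilon P_r\,dr$ with a diagonal choice of $\varepsilon_n$, while the paper uses the resolvent $n(n-L)^{-1}$. Both then bound the entropy by Jensen and pass to the limit using lower semicontinuity of entropy and of the closed form together with Fatou.

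One small correction: the measurability of $s\mapsto\cE(\sqrt{h_s},\sqrt{h_s})$ does not follow from a liminf, since you only have an inequality there. It does hold, however. The map $s\mapsto\sqrt{h_s}$ is continuous into $L^2(\kappa)$, being the uniform limit of the continuous maps $s\mapsto\sqrt{P_sg_n}$. Composing with the lower semicontinuous $\cE$ gives a lower semicontinuous, hence Borel, integrand.
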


\begin{proof}
\par\noindent\textbf{Step 1. Truncation and entropy convergence.}\par
We use a truncation followed by resolvent regularisation. Set
\begin{equation*}
    \tilde h_0^{(n)} = \min(h_0, n) + n^{-1},\qquad
    Z_n = \int\tilde h_0^{(n)}\,d\kappa\;(\ge n^{-1}),\qquad
    \bar h_0^{(n)} = \tilde h_0^{(n)}/Z_n.
\end{equation*}
Then $(2n)^{-1} \le \bar h_0^{(n)} \le 3n$ for sufficiently large $n$, and $\bar h_0^{(n)} \to h_0$ in $L^1(\kappa)$. The truncations also satisfy
\begin{equation}\label{eq:entropy-truncation-convergence}
    \Ent_\kappa(\bar h_0^{(n)})\longrightarrow\Ent_\kappa(h_0).
\end{equation}
We give the truncation calculation explicitly. Write $\Phi(r)=r\log r$ with $\Phi(0)=0$. Since
\begin{equation*}
    0\le h_0 - \min(h_0,n) = (h_0-n)\mathbf1_{\{h_0>n\}}\le h_0,
\end{equation*}
dominated convergence gives
\begin{equation*}
    \|\tilde h_0^{(n)}-h_0\|_{L^1(\kappa)}\longrightarrow0,
    \qquad Z_n\longrightarrow1,
    \qquad \|\bar h_0^{(n)}-h_0\|_{L^1(\kappa)}\longrightarrow0.
\end{equation*}
The negative part $\Phi^-$ is bounded by $e^{-1}$. On $\{h_0\le n\}$, $\tilde h_0^{(n)}=h_0+n^{-1}$, and
\begin{equation*}
    \Phi^+(h_0+n^{-1}) \le C\bigl(1+h_0\log^+h_0\bigr),
\end{equation*}
with a constant independent of $n$. Hence dominated convergence applies there. On the complementary set, for all sufficiently large $n$,
\begin{align*}
    \int_{\{h_0>n\}}\Phi^+(n+n^{-1})\,d\kappa
    &\le 2n\log(2n)\,\kappa(h_0>n)\\
    &\le 4\int_{\{h_0>n\}}h_0\log h_0\,d\kappa
    \longrightarrow 0.
\end{align*}
It follows that
\begin{equation*}
    \int\tilde h_0^{(n)}\log\tilde h_0^{(n)}\,d\kappa
    \longrightarrow
    \int h_0\log h_0\,d\kappa.
\end{equation*}
Finally, the homogeneous entropy of an integrable $h\ge0$ with mass $Z$ satisfies
\begin{align*}
    \Ent_\kappa(h/Z)
    &= \int\frac hZ\log\!\left(\frac hZ\right)d\kappa\\
    &= \frac1Z\int h\log h\,d\kappa-\log Z\\
    &= \frac1Z\left(\int h\log h\,d\kappa-Z\log Z\right)
    = \frac1Z\Ent_\kappa(h).
\end{align*}
Combining this formula with $Z_n\to1$ and the preceding convergence proves \eqref{eq:entropy-truncation-convergence}.

\par\noindent\textbf{Step 2. Resolvent regularisation.}\par
Let $h_0^{(n)}$ be the resolvent regularisation of the bounded density $\bar h_0^{(n)}$, defined by
\begin{equation}\label{eq:resolvent-regularisation}
    \mathsf R_n^{\rm res}=n(n-L)^{-1} = \int_0^\infty ne^{-nt}P_t\,dt,
    \qquad
    h_0^{(n)} = \mathsf R_n^{\rm res}\bar h_0^{(n)}.
\end{equation}
Since $P_t$ is positivity preserving and $P_t1=1$, the integral representation gives
\begin{equation*}
    \mathsf R_n^{\rm res}1=1,
    \qquad
    f\ge0\ \Longrightarrow\ \mathsf R_n^{\rm res}f\ge0.
\end{equation*}
Consequently, if $a\le f\le b$, then
\begin{equation*}
    a = \mathsf R_n^{\rm res}a
    \le \mathsf R_n^{\rm res}f
    \le \mathsf R_n^{\rm res}b=b.
\end{equation*}
Invariance of $\kappa$ under $P_t$ and Fubini's theorem also give
\begin{equation*}
    \int_{\mathbb X}\mathsf R_n^{\rm res}f\,d\kappa
    = \int_0^\infty ne^{-nt} \int_{\mathbb X}P_tf\,d\kappa\,dt
    = \int_{\mathbb X}f\,d\kappa.
\end{equation*}
Thus $h_0^{(n)}$ is a probability density and retains the positive upper and lower bounds of $\bar h_0^{(n)}$. Moreover,
\begin{equation*}
    (n-L)\mathsf R_n^{\rm res}f=nf, \qquad f\in L^2(\kappa),
\end{equation*}
so $\mathsf R_n^{\rm res}f\in\operatorname{Dom}(L)$.

The Markov semigroup is contractive on $L^1(\kappa)$. If $f\in L^1(\kappa)\cap L^2(\kappa)$, then, since $\kappa$ is a probability measure,
\begin{equation*}
    \|P_tf-f\|_{L^1(\kappa)}
    \le \|P_tf-f\|_{L^2(\kappa)}
    \longrightarrow 0
    \qquad\text{as }t \downarrow 0.
\end{equation*}
For general $f\in L^1(\kappa)$, choose $f_m\in L^1(\kappa)\cap L^2(\kappa)$ with $f_m\to f$ in $L^1(\kappa)$. Contractivity gives
\begin{equation*}
    \|P_tf-f\|_1 \le 2\|f-f_m\|_1 + \|P_tf_m-f_m\|_1.
\end{equation*}
First letting $t\downarrow0$ and then $m\to\infty$ proves strong continuity on $L^1(\kappa)$. Hence, for $f\in L^1(\kappa)$,
\begin{equation*}
    \|\mathsf R_n^{\rm res}f-f\|_1
    \le \int_0^\infty ne^{-nt}\|P_tf-f\|_1\,dt
    \longrightarrow 0.
\end{equation*}
Indeed, after splitting the integral at $\delta>0$, the first part is bounded by $\sup_{0\le t\le\delta}\|P_tf-f\|_1$ and the second by $2e^{-n\delta}\|f\|_1$. It follows that
\begin{align*}
    \|h_0^{(n)}-h_0\|_1
    &\le \|\mathsf R_n^{\rm res}(\bar h_0^{(n)}-h_0)\|_1 + \|\mathsf R_n^{\rm res}h_0-h_0\|_1\\
    &\le \|\bar h_0^{(n)}-h_0\|_1 + \|\mathsf R_n^{\rm res}h_0-h_0\|_1
    \longrightarrow 0.
\end{align*}

\par\noindent\textbf{Step 3. Convergence of the regularised entropies.}\par
Let $\Phi(r)=r\log r$. Jensen's inequality for the positive unital operator $\mathsf R_n^{\rm res}$ gives
\begin{equation*}
    \Phi(\mathsf R_n^{\rm res}\bar h_0^{(n)})
    \le\mathsf R_n^{\rm res}\Phi(\bar h_0^{(n)}).
\end{equation*}
Integrating this inequality and using $\int\mathsf R_n^{\rm res}f\,d\kappa=\int f\,d\kappa$, which follows from the invariance of $\kappa$ under $P_t$, yields
\begin{align*}
    \Ent_\kappa(h_0^{(n)})
    &= \int_{\mathbb X} \Phi(\mathsf R_n^{\rm res}\bar h_0^{(n)})\,d\kappa\\
    &\le \int_{\mathbb X} \mathsf R_n^{\rm res}\Phi(\bar h_0^{(n)})\,d\kappa\\
    &= \int_{\mathbb X}\Phi(\bar h_0^{(n)})\,d\kappa
    = \Ent_\kappa(\bar h_0^{(n)}).
 \end{align*}
For probability densities, relative entropy has the variational representation
\begin{equation*}
    \Ent_\kappa(h)
    = \sup_{g\in L^\infty(\kappa)} \left\{\int_{\mathbb X}hg\,d\kappa - \log\int_{\mathbb X}e^g\,d\kappa\right\}.
\end{equation*}
It is therefore lower semicontinuous for $L^1(\kappa)$ convergence. Since $h_0^{(n)}\to h_0$ in $L^1(\kappa)$,
\begin{equation*}
    \Ent_\kappa(h_0) \le \liminf_{n\to\infty}\Ent_\kappa(h_0^{(n)}).
\end{equation*}
Together with \eqref{eq:entropy-truncation-convergence}, these bounds prove
\begin{equation}\label{eq:regularised-entropy-convergence}
    \Ent_\kappa(h_0^{(n)})\longrightarrow\Ent_\kappa(h_0).
\end{equation}
\par\noindent\textbf{Step 4. Passage to the dissipation inequality.}\par
Set $h_s^{(n)}=P_sh_0^{(n)}$ and $h_s=P_sh_0$. The $L^1$-contractivity of the semigroup gives
\begin{equation*}
    \|h_s^{(n)}-h_s\|_{L^1(\kappa)}
    \le\|h_0^{(n)}-h_0\|_{L^1(\kappa)}\longrightarrow0.
\end{equation*}
For every $g\in L^\infty(\kappa)$, this convergence gives
\begin{align*}
    \int_{\mathbb X}h_tg\,d\kappa - \log\int_{\mathbb X}e^g\,d\kappa
    &=\lim_{n\to\infty}\left( \int_{\mathbb X}h_t^{(n)}g\,d\kappa - \log\int_{\mathbb X}e^g\,d\kappa \right)\\
    &\le\liminf_{n\to\infty}\Ent_\kappa(h_t^{(n)}).
\end{align*}
Taking the supremum over $g\in L^\infty(\kappa)$ in the variational formula for relative entropy yields $\Ent_\kappa(h_t)\le\liminf_{n\to\infty}\Ent_\kappa(h_t^{(n)})$. Closedness in \cref{ass:equilibrium}(i) makes $\cE$ lower semicontinuous on $L^2(\kappa)$. Moreover, $\|\sqrt{h_s^{(n)}}-\sqrt{h_s}\|_{L^2}^2 \le\|h_s^{(n)}-h_s\|_{L^1}\to0$ for each $s\in[0,t]$. Fatou's lemma therefore gives
\begin{equation*}
    \int_0^t \cE(\sqrt{h_s},\sqrt{h_s})\,ds
    \le \liminf_{n\to\infty}
    \int_0^t \cE(\sqrt{h_s^{(n)}},\sqrt{h_s^{(n)}})\,ds.
\end{equation*}
Each $h_0^{(n)}$ belongs to $\operatorname{Dom}(L)$ and is bounded above and away from zero. Hence~\eqref{eq:regular-dissipation-hypothesis} gives
\begin{equation}\label{eq:ii-trunc}
    \Ent_\kappa(h_t^{(n)}) + 4\int_0^t\cE(\sqrt{h_s^{(n)}},\sqrt{h_s^{(n)}})\,ds = \Ent_\kappa(h_0^{(n)}).
\end{equation}
Using~\eqref{eq:regularised-entropy-convergence} and the two lower semicontinuity bounds above, we obtain
\begin{align*}
    \Ent_\kappa(h_0)
    &= \lim_{n\to\infty}\left[ \Ent_\kappa(h_t^{(n)}) + 4\int_0^t\cE(\sqrt{h_s^{(n)}},\sqrt{h_s^{(n)}})\,ds \right]\\
    &\ge\liminf_{n\to\infty}\Ent_\kappa(h_t^{(n)}) + 4\liminf_{n\to\infty} \int_0^t\cE(\sqrt{h_s^{(n)}},\sqrt{h_s^{(n)}})\,ds\\
    &\ge\Ent_\kappa(h_t) + 4\int_0^t\cE(\sqrt{h_s},\sqrt{h_s})\,ds.
\end{align*}
\end{proof}

For the closed Gibbs gradient form, it remains to verify the normal-contraction property and conservativity.

\begin{lemma}[Normal contractions for a closed gradient form]
 \label{lem:closed-gradient-markov}
Let $D_{\mathbb K}$ be the closure in $L^2(\kappa)\to L^2(\kappa;\mathbb K)$ of the cylinder gradient $\nabla_{\mathbb K}$, and define
\begin{equation*}
    \cE(u,v)=\int_{\mathbb X} \langle D_{\mathbb K}u,D_{\mathbb K}v\rangle_{\mathbb K}\,d\kappa.
\end{equation*}
For every normal contraction $\theta:\mathbb R\to\mathbb R$ and every $u\in\operatorname{Dom}(D_{\mathbb K})$,
\begin{equation*}
    \theta\circ u\in\operatorname{Dom}(D_{\mathbb K}),
    \qquad
    \|D_{\mathbb K}(\theta\circ u)\|_{\mathbb K}
    \le\|D_{\mathbb K}u\|_{\mathbb K}
    \quad\kappa\text{-a.e.}
\end{equation*}
Consequently, $\cE$ has the Markov property. If the cylinder core contains $1$, then $\cE$ is conservative.
\end{lemma}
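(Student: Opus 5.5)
The plan is to prove the statement first for smooth normal contractions with bounded derivative, and then reach an arbitrary normal contraction by uniform approximation and closedness of $D_{\mathbb K}$. A normal contraction means $\theta(0)=0$ and $|\theta(r)-\theta(s)|\le|r-s|$.

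\textbf{Step 1: smooth contractions on cylinders.} Let $\theta\in C^\infty(\mathbb R)$ with $\theta(0)=0$ and $|\theta'|\le1$. If $F\in\cF C_b^\infty$, then $\theta\circ F$ is again a smooth bounded cylinder function. The ordinary chain rule gives $\nabla_{\mathbb K}(\theta\circ F)=\theta'(F)\nabla_{\mathbb K}F$.

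\textbf{Step 2: smooth contractions on the domain.} Take $u\in\operatorname{Dom}(D_{\mathbb K})$ and cylinders $F_j$ with $F_j\to u$ in $L^2(\kappa)$ and $\nabla_{\mathbb K}F_j\to D_{\mathbb K}u$ in $L^2(\kappa;\mathbb K)$. Passing to a subsequence, we may assume $F_j\to u$ $\kappa$-a.e.
\begin{itemize}
\item The Lipschitz bound gives $\theta(F_j)\to\theta(u)$ in $L^2(\kappa)$.
\item The gradients converge by the same splitting used in Step~2 of the proof of \cref{prop:closed-gradient-properties}:
\begin{equation*}
\theta'(F_j)\nabla_{\mathbb K}F_j-\theta'(u)D_{\mathbb K}u
=\theta'(F_j)\bigl(\nabla_{\mathbb K}F_j-D_{\mathbb K}u\bigr)+\bigl(\theta'(F_j)-\theta'(u)\bigr)D_{\mathbb K}u .
\end{equation*}
The first term tends to zero since $|\theta'|\le1$. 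The second tends to zero by dominated convergence, using a.e.\ convergence and the bound $|\theta'(F_j)-\theta'(u)|\le2$.
\end{itemize}
Closedness then gives $\theta(u)\in\operatorname{Dom}(D_{\mathbb K})$, $D_{\mathbb K}\theta(u)=\theta'(u)D_{\mathbb K}u$, and hence $\|D_{\mathbb K}\theta(u)\|_{\mathbb K}\le\|D_{\mathbb K}u\|_{\mathbb K}$ pointwise a.e. This is also the smooth chain rule cited in the proof of \cref{prop:closed-gradient-properties}.

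\textbf{Step 3: general normal contractions (main obstacle).} For a general normal contraction $\theta$, set $\theta_n=\rho_{1/n}*\theta-(\rho_{1/n}*\theta)(0)$. These are smooth normal contractions with $\theta_n\to\theta$ locally uniformly and $|\theta_n(r)|\le|r|$. Hence $\theta_n(u)\to\theta(u)$ in $L^2(\kappa)$ by dominated convergence. By Step~2, the gradients $D_{\mathbb K}\theta_n(u)=\theta_n'(u)D_{\mathbb K}u$ are bounded in $L^2(\kappa;\mathbb K)$ by $\|D_{\mathbb K}u\|$. The difficulty is that these gradients need not converge strongly, since $\theta'$ may be discontinuous. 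I would handle this with the standard weak-closedness argument:
\begin{itemize}
\item Extract a subsequence with $D_{\mathbb K}\theta_n(u)\rightharpoonup G$ weakly in $L^2(\kappa;\mathbb K)$.
\item The graph of the closed operator $D_{\mathbb K}$ is a closed subspace, hence weakly closed. Therefore $\theta(u)\in\operatorname{Dom}(D_{\mathbb K})$ with $D_{\mathbb K}\theta(u)=G$.
\end{itemize}

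The pointwise bound needs one more argument, because weak limits only preserve norm inequalities after integration. For each set $E$ and each $h\in\mathbb K$, weak convergence tested against $\mathbf 1_E h$ gives
\begin{equation*}
\Bigl|\int_E\langle G,h\rangle_{\mathbb K}\,d\kappa\Bigr|\le\int_E\|D_{\mathbb K}u\|_{\mathbb K}\|h\|_{\mathbb K}\,d\kappa ,
\end{equation*}
since $\|\theta_n'(u)D_{\mathbb K}u\|\le\|D_{\mathbb K}u\|$ pointwise. Varying $E$ gives $|\langle G,h\rangle_{\mathbb K}|\le\|D_{\mathbb K}u\|_{\mathbb K}\|h\|_{\mathbb K}$ a.e. Taking $h$ in a countable dense subset of the unit ball of $\mathbb K$ then gives $\|G\|_{\mathbb K}\le\|D_{\mathbb K}u\|_{\mathbb K}$ a.e.

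An alternative route is to show that the convex hull of $\{\theta_n'(u)D_{\mathbb K}u\}$ contains strongly convergent sequences, by Mazur's lemma. Each element of that hull has norm at most $\|D_{\mathbb K}u\|_{\mathbb K}$ pointwise, so a further a.e.\ subsequence of the strong limit gives the pointwise bound.

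\textbf{Step 4: consequences.}
\begin{itemize}
\item \emph{Markov property.} Apply Step~3 to the normal contraction $\theta(r)=(0\vee r)\wedge1$ and integrate the pointwise bound.
\item \emph{Conservativity.} If $1$ lies in the cylinder core, then $\nabla_{\mathbb K}1=0$, so $1\in\operatorname{Dom}(\cE)$ and $\cE(1,1)=0$.
\end{itemize}
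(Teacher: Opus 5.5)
Your proof is correct and follows the paper's argument essentially step for step: the smooth chain rule on $\operatorname{Dom}(D_{\mathbb K})$ via closedness, mollification of $\theta$, and weak compactness combined with weak closedness of the graph of $D_{\mathbb K}$. The differences are minor. You get the pointwise bound on the weak limit by testing against $\mathbf 1_E h$ and using separability of $\mathbb K$, whereas the paper uses that the convex, norm-closed set $\{H:\|H\|_{\mathbb K}\le\|D_{\mathbb K}u\|_{\mathbb K}\ \kappa\text{-a.e.}\}$ is weakly closed (your Mazur alternative). You also read conservativity directly off the definition ($1\in\operatorname{Dom}(\cE)$, $\cE(1,1)=0$), where the paper additionally passes through $L1=0$ and $P_t1=1$.
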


\begin{proof}
\par\noindent\textbf{Step 1. Smooth contractions on the closed domain.}\par
Let $\eta\in C^\infty(\mathbb R)$ be a normal contraction, so that $\eta(0)=0$ and $|\eta(r)-\eta(s)|\le|r-s|$. For a cylinder function $F$, the ordinary chain rule gives
\begin{equation*}
    \nabla_{\mathbb K}(\eta\circ F) = \eta'(F)\nabla_{\mathbb K}F,
\end{equation*}
and hence
\begin{align*}
    \cE_0(\eta\circ F,\eta\circ F)
    &=\int_{\mathbb X}|\eta'(F)|^2\|\nabla_{\mathbb K}F\|_{\mathbb K}^2\,d\kappa\\
    &\le\cE_0(F,F).
\end{align*}
We next pass this estimate to the closed domain. Let $u\in\operatorname{Dom}(D_{\mathbb K})$ and choose cylinder functions $F_j$ such that
\begin{equation*}
    F_j\longrightarrow u\quad\text{in }L^2(\kappa),
    \qquad
    \nabla_{\mathbb K}F_j\longrightarrow D_{\mathbb K}u
    \quad\text{in }L^2(\kappa;\mathbb K).
\end{equation*}
For the smooth normal contraction $\eta$ fixed above, its Lipschitz property gives $\eta(F_j)\to\eta(u)$ in $L^2(\kappa)$. After passing to a subsequence, both $F_j\to u$ and $\nabla_{\mathbb K}F_j\to D_{\mathbb K}u$ hold $\kappa$-a.e. Since $|\eta'|\le1$ and $\eta'$ is continuous,
\begin{equation*}
    \bigl\|\eta'(F_j)\nabla_{\mathbb K}F_j - \eta'(u)D_{\mathbb K}u\bigr\|_{L^2(\kappa;\mathbb K)}
    \le \bigl\|\nabla_{\mathbb K}F_j-D_{\mathbb K}u \bigr\|_{L^2(\kappa;\mathbb K)} + \bigl\|[\eta'(F_j)-\eta'(u)]D_{\mathbb K}u \bigr\|_{L^2(\kappa;\mathbb K)} \longrightarrow 0.
\end{equation*}
For the second term, pointwise convergence follows from continuity of $\eta'$, while its squared integrand is bounded by $4\|D_{\mathbb K}u\|_{\mathbb K}^2\in L^1(\kappa)$. Dominated convergence therefore makes this term tend to zero. Closedness of $D_{\mathbb K}$ therefore gives
\begin{equation*}
    \eta\circ u\in\operatorname{Dom}(D_{\mathbb K}),
    \qquad
    D_{\mathbb K}(\eta\circ u) = \eta'(u)D_{\mathbb K}u.
\end{equation*}

\par\noindent\textbf{Step 2. Arbitrary normal contractions.}\par
Now let $\theta$ be an arbitrary Lipschitz normal contraction. If $\rho_m$ is a standard mollifier, set
\begin{equation*}
    \theta_m(r)=(\rho_m*\theta)(r)-(\rho_m*\theta)(0).
\end{equation*}
Then $\theta_m\in C^\infty(\mathbb R)$, $\theta_m(0)=0$, $|\theta_m'|\le1$, and $\theta_m\to\theta$ uniformly on $\mathbb R$. The smooth chain rule just proved gives
\begin{equation*}
    \|D_{\mathbb K}(\theta_m\circ u)(x)\|_{\mathbb K}
    \le \|D_{\mathbb K}u(x)\|_{\mathbb K}
    \qquad\text{for }\kappa\text{-a.e. }x.
\end{equation*}
Hence $(D_{\mathbb K}(\theta_m\circ u))_m$ is bounded in $L^2(\kappa;\mathbb K)$. Since this is a Hilbert space, there are a subsequence, not relabelled, and $G\in L^2(\kappa;\mathbb K)$ such that
\begin{equation*}
    D_{\mathbb K}(\theta_m\circ u)\rightharpoonup G
    \quad\text{in }L^2(\kappa;\mathbb K).
\end{equation*}
The uniform convergence $\theta_m\to\theta$ and the fact that $\kappa$ is a probability measure give
\begin{equation*}
    \theta_m\circ u\longrightarrow\theta\circ u
    \quad\text{strongly, hence weakly, in }L^2(\kappa).
\end{equation*}
Therefore the pairs
\begin{equation*}
    \bigl(\theta_m\circ u,D_{\mathbb K}(\theta_m\circ u)\bigr)
    \rightharpoonup(\theta\circ u,G)
\end{equation*}
in the product Hilbert space $L^2(\kappa)\times L^2(\kappa;\mathbb K)$. The graph of the closed linear operator $D_{\mathbb K}$ is a norm-closed linear subspace of this product space. Every norm-closed linear subspace of a Hilbert space is weakly closed, so the limiting pair also belongs to the graph. Consequently,
\begin{equation*}
    \theta\circ u\in\operatorname{Dom}(D_{\mathbb K}),
    \qquad
    D_{\mathbb K}(\theta\circ u)=G.
\end{equation*}
Finally, the set
\begin{equation*}
    \left\{H\in L^2(\kappa;\mathbb K):
    \|H(x)\|_{\mathbb K}\le\|D_{\mathbb K}u(x)\|_{\mathbb K}
    \ \kappa\text{-a.e.}\right\}
\end{equation*}
is convex by the triangle inequality. It is also norm closed: if $H_n\to H$ in $L^2(\kappa;\mathbb K)$, a subsequence converges $\kappa$-a.e., and the pointwise norm bound passes to its limit. Every norm-closed convex subset of a Hilbert space is weakly closed. It contains every $D_{\mathbb K}(\theta_m\circ u)$ and therefore contains $G$. We have proved
\begin{equation*}
 \|D_{\mathbb K}(\theta\circ u)\|_{\mathbb K}
 \le\|D_{\mathbb K}u\|_{\mathbb K}
 \quad\kappa\text{-a.e.}
\end{equation*}
for every Lipschitz normal contraction $\theta$; compare \cite{AlbeverioMaRockner2015}. Taking $\theta(r)=(0\vee r)\wedge1$ proves the Markov property.

\par\noindent\textbf{Step 3. Conservativity.}\par
The constant function $1$ belongs to the cylinder core and $D_{\mathbb K}1=0$. Therefore
\begin{equation*}
    \cE(1,v)=\int_{\mathbb X}
    \langle D_{\mathbb K}1,D_{\mathbb K}v\rangle_{\mathbb K}\,d\kappa=0,
    \qquad v\in\operatorname{Dom}(\cE).
\end{equation*}
It follows from the generator--form correspondence that $1\in\operatorname{Dom}(L)$ and $L1=0$. Hence $P_t1=1$, and the closed form is conservative.

\end{proof}

The next elementary measure-theoretic fact identifies the full relative entropy from an increasing family of observations.

\begin{lemma}[Relative entropy under increasing projections]\label{lem:entropy-projections}
Let $\nu=h\mu$ be probability measures, and let $(\mathcal F_N)$ be increasing sub-sigma-fields generating the ambient sigma-field modulo $\mu$-null sets. Put $h_N=\mathbb E_\mu[h\mid\mathcal F_N]$. Then
\begin{equation*}
    \Ent_\mu(h_N)\uparrow\Ent_\mu(h),
\end{equation*}
with the value $+\infty$ allowed. In particular, if $\mathcal F_N=\sigma(P_N)$, then

\begin{equation*}
    \RelEnt{\nu\circ P_N^{-1}}{\mu\circ P_N^{-1}}
    \uparrow\RelEnt{\nu}{\mu}.
\end{equation*}
\end{lemma}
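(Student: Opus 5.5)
The plan is to prove monotonicity and the limit separately, both through the variational (Donsker--Varadhan) representation of relative entropy already used in \cref{lem:finite-entropy-extension}. Write $\nu_N$ and $\mu_N$ for the restrictions of $\nu$ and $\mu$ to $\mathcal F_N$. Then $h_N=d\nu_N/d\mu_N$, so $\Ent_\mu(h_N)=\RelEnt{\nu_N}{\mu_N}$. Moreover,
\begin{equation*}
    \RelEnt{\nu_N}{\mu_N}=\sup_{g\in L^\infty(\mathcal F_N)}\Bigl\{\int g\,d\nu-\log\int e^g\,d\mu\Bigr\}.
\end{equation*}
Since the classes $L^\infty(\mathcal F_N)$ increase with $N$, the sequence $\Ent_\mu(h_N)$ is nondecreasing. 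It is also bounded by $\Ent_\mu(h)$, because the supremum defining the latter runs over all bounded $g$. An alternative route to the same two facts is conditional Jensen for the convex function $r\log r$ applied to the martingale $(h_N)$.

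For the reverse inequality $\Ent_\mu(h)\le\lim_N\Ent_\mu(h_N)$, there are two routes.

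The first is martingale convergence. Since $h\in L^1(\mu)$ and $\bigvee_N\mathcal F_N$ is the full $\sigma$-field modulo null sets, Lévy's upward theorem gives $h_N\to h$ in $L^1(\mu)$. Lower semicontinuity of entropy under $L^1$ convergence, the same variational argument as in Step~4 of \cref{lem:finite-entropy-extension}, then yields $\Ent_\mu(h)\le\liminf_N\Ent_\mu(h_N)$. Combined with monotonicity, this gives $\Ent_\mu(h_N)\uparrow\Ent_\mu(h)$. The argument covers the value $+\infty$: if $\Ent_\mu(h)=\infty$, lower semicontinuity forces $\Ent_\mu(h_N)\to\infty$.

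The second route avoids martingales. Given bounded $g$ with $|g|\le C$, approximate it in $L^1(\mu+\nu)$ by bounded $\bigcup_N\mathcal F_N$-measurable functions $g_k$, also with $|g_k|\le C$. This is possible by a monotone-class argument, since $\bigcup_N\mathcal F_N$ is an algebra generating the $\sigma$-field. Dominated convergence then passes both $\int g_k\,d\nu$ and $\log\int e^{g_k}d\mu$ to the limit. I will use the martingale route as the main argument, since it is shorter.

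The main step to handle carefully is the meaning of ``generating modulo $\mu$-null sets'' in the martingale step. Lévy's theorem gives $h_N\to\mathbb E_\mu[h\mid\mathcal F_\infty]$ with $\mathcal F_\infty=\bigvee_N\mathcal F_N$. This limit equals $h$ $\mu$-a.e. because $h$ agrees $\mu$-a.e. with an $\mathcal F_\infty$-measurable function. One also needs $\nu\ll\mu$, which is given.

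For the projection corollary, take $\mathcal F_N=\sigma(P_N)$. These $\sigma$-fields increase because $P_N=P_NP_{N+1}$ is a measurable function of $P_{N+1}$. They generate the Borel $\sigma$-field because the Fourier coordinates do. By the change-of-variables formula for image measures, $\RelEnt{\nu\circ P_N^{-1}}{\mu\circ P_N^{-1}}=\RelEnt{\nu|_{\mathcal F_N}}{\mu|_{\mathcal F_N}}=\Ent_\mu(h_N)$. The relevant density is $d(\nu\circ P_N^{-1})/d(\mu\circ P_N^{-1})$ evaluated at $P_N$, which equals $h_N$.
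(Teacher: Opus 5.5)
Your proof is correct and follows the same skeleton as the paper's. Both arguments first show monotonicity with the upper bound $\Ent_\mu(h)$, then combine martingale convergence $h_N\to h$ with lower semicontinuity, and both use the same pullback identification of $h_N$ for $\mathcal F_N=\sigma(P_N)$. The difference is in the tools: the paper uses conditional Jensen for $\Phi(r)=r\log r-r+1$ and then Fatou's lemma on $\Phi(h_N)\ge0$ along the almost-sure limit, whereas you use the Donsker--Varadhan formula for both steps. Your martingale-free second route, which approximates bounded test functions by $\bigcup_N\mathcal F_N$-measurable ones, is also valid.
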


\begin{proof}
The conditional-expectation martingale converges to $h$ almost surely and in $L^1(\mu)$. Let $\Phi(r)=r\log r-r+1$, with $\Phi(0)=1$. Since all the densities have mass one, $\Ent_\mu(h_N)=\int\Phi(h_N)\,d\mu$. Conditional Jensen gives
\begin{equation*}
    \int\Phi(h_N)\,d\mu
    \le \int\Phi(h_{N+1})\,d\mu
    \le \int\Phi(h)\,d\mu.
\end{equation*}
Since $\Phi\ge0$, Fatou's lemma gives the reverse limiting inequality:
\begin{equation*}
    \int\Phi(h)\,d\mu \le \liminf_{N\to\infty}\int\Phi(h_N)\,d\mu.
\end{equation*}
For $\mathcal F_N=\sigma(P_N)$, the pullback of the projected Radon--Nikodym density is $h_N$, by the defining conditional-expectation identity. This proves the last assertion.
\end{proof}

\section{SPDE Realisation and Nonlinear Gibbs Models}
\label{app:spde-details}

The results collected here connect the associated form process with the target stochastic equation and verify the analytic inputs for the nonlinear Gibbs examples.

\subsection{From the Associated Process to the SPDE}
\label{app:spde-realisation}

The following comparison identifies the classical Gibbs gradient form on $\mathbb X^*$-cylinders with the Fourier cylinder form used in the examples.

\begin{lemma}[Fourier cylinders form the same gradient-form core]
 \label{lem:fourier-core}
Let $\mathbb X=C(\T)$, let $\kappa\sim\mu_Q$, and assume $\int_{\mathbb X}\|\phi\|_\infty^2\,d\kappa<\infty$. Consider the classical gradient pre-form on
\begin{equation*}
    \mathcal C(\mathbb X^*)=\{g(\ell_1,\ldots,\ell_r):r<\infty, g\in C_b^\infty(\mathbb R^r),\ \ell_i\in \mathbb X^*\}.
\end{equation*}
This pre-form is closable if and only if its restriction to the Fourier cylinder algebra $\cF C_b^\infty$ is closable. When they are closable, their closures coincide.
\end{lemma}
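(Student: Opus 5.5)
The plan is to show that each algebra lies in the graph-norm closure of the other. Write $\cE_0^{\mathcal C}$ for the pre-form on $\mathcal C(\mathbb X^*)$, with gradient $\nabla_{\mathbb K}F=\sum_i\partial_ig(\ell_1,\ldots,\ell_r)\,i^*\ell_i$ (by \eqref{eq:banach-covariance-map}, $i^*\ell=Q\ell\in\mathbb K$). Write $\cE_0^{\cF}$ for its restriction to $\cF C_b^\infty$. Since each Fourier coordinate $\phi\mapsto\langle\phi,e_j\rangle_{\mathbb H}$ is integration against a continuous function, it lies in $\mathbb X^*$. Hence $\cF C_b^\infty\subset\mathcal C(\mathbb X^*)$, the gradients agree there, and closability of the larger pre-form implies closability of the restriction. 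For the converse, and for equality of the closures, it suffices to prove the following: every $F=g(\ell_1,\ldots,\ell_r)\in\mathcal C(\mathbb X^*)$ is a $\|\cdot\|_{\cE_{0,1}}$-limit of Fourier cylinders $F_N$, in the sense that
\begin{equation*}
F_N\to F \text{ in } L^2(\kappa),\qquad \nabla_{\mathbb K}F_N\to\nabla_{\mathbb K}F \text{ in } L^2(\kappa;\mathbb K).
\end{equation*}
Then the graph of the Fourier gradient is dense in the graph of the $\mathcal C(\mathbb X^*)$ gradient. Consequently the closure of the first contains the second graph, a null sequence for one pre-form is approximated by a null sequence for the other, and the two closures coincide.

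For the approximation I would take $\ell_i^N:=\ell_i\circ S_N$, where $S_N$ is the Fejér mean of order $N$, a convolution with a positive kernel of mass one. Then $S_N\phi\in\mathbb H_N$, $\|S_N\phi\|_\infty\le\|\phi\|_\infty$, and $S_N\phi\to\phi$ uniformly for each $\phi\in C(\T)$. Because $S_N\phi$ is a finite combination of $e_0,e^c_k,e^s_k$ with coefficients $\langle\phi,e_j\rangle_{\mathbb H}$ times Fejér weights, $\ell_i^N$ is a linear combination of Fourier coordinates. Therefore $F_N:=g(\ell_1^N,\ldots,\ell_r^N)\in\cF C_b^\infty$. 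Pointwise $\ell_i^N(\phi)\to\ell_i(\phi)$, and $g$ is bounded, so dominated convergence gives $F_N\to F$ in $L^2(\kappa)$.

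The gradient is the main step. We have
\begin{equation*}
\nabla_{\mathbb K}F_N=\sum_i\partial_ig(\ell^N(\phi))\,i^*\ell_i^N,
\end{equation*}
and I need $i^*(\ell_i\circ S_N)\to i^*\ell_i$ in $\mathbb K$. By \eqref{eq:banach-covariance-map}, $\|i^*\ell\|_{\mathbb K}^2=\ell(Q\ell)$. Since $S_N$ is self-adjoint on $\mathbb H$ and commutes with $Q$, we get $i^*(\ell\circ S_N)=S_N(Q\ell)$. Here $Q\ell\in\mathbb K=H^1(\T)$, and Fejér means converge in $H^1$ because the Fourier multipliers $(1-|k|/(N+1))_+$ are bounded by one and tend to one. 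So $S_NQ\ell\to Q\ell$ in $\mathbb K$, uniformly bounded. Combining this with the pointwise convergence and boundedness of $\partial_ig(\ell^N(\phi))$, dominated convergence yields convergence in $L^2(\kappa;\mathbb K)$. I expect the identity $i^*(\ell\circ S_N)=S_NQ\ell$ to be the one point needing care. I would verify it by pairing with $h\in\mathbb K$: $\langle i^*(\ell\circ S_N),h\rangle_{\mathbb K}=\ell(S_Nh)$, and $S_N$ is symmetric with respect to $\langle\cdot,\cdot\rangle_{\mathbb K}$ since it is a Fourier multiplier, so this equals $\langle S_N i^*\ell,h\rangle_{\mathbb K}$.

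The moment assumption $\int\|\phi\|_\infty^2\,d\kappa<\infty$ is not needed with bounded $g$. It would enter only if linear functionals themselves were included in the cylinder class. I would mention this and also use it to note that $\ell\in L^2(\kappa)$ for the identification of the closures.
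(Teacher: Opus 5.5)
Your proposal is correct and follows essentially the same route as the paper: you approximate each $\ell_i$ by its Fej\'er-mean composition $\ell_i\circ S_N$, show that the resulting Fourier cylinders converge to $F$ in the graph norm (with $i^*(\ell\circ S_N)\to i^*\ell$ in $\mathbb K$ as the key step), and transfer closability and the closure through this graph density. The differences are cosmetic. You get $F_N\to F$ from boundedness of $g$ rather than from the Lipschitz bound together with $\ell_N\to\ell$ in $L^2(\kappa)$, which is why the moment hypothesis is indeed superfluous; and you get the $\mathbb K$-convergence from $S_N$ being a $\mathbb K$-symmetric contraction multiplier, whereas the paper evaluates the explicit sum $\sum_k q_k|\widehat{\mathsf F}_N(k)-1|^2|\widehat\ell(k)|^2$ using $|\widehat\ell(k)|\le\|\ell\|_{\mathrm{TV}}$ and $\Tr Q<\infty$.
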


\begin{proof}
\par\noindent\textbf{Step 1. Approximation of linear functionals.}\par
By the Riesz representation theorem, $\ell\in \mathbb X^*$ is a finite signed measure on $\T$. Let $\mathsf F_N$ be the Fej\'er kernel and put $\ell_N(\phi)=\ell(\mathsf F_N*\phi)$. Then $\ell_N$ is a finite Fourier functional, $\mathsf F_N*\phi\to\phi$ uniformly for every $\phi\in \mathbb X$, and $|\ell_N(\phi)|\le\|\ell\|_{\mathrm{TV}}\|\phi\|_\infty$, where $\|\ell\|_{\mathrm{TV}}$ is the total-variation norm of the signed measure $\ell$. Thus $\ell_N\to\ell$ in $L^2(\kappa)$.

Let $i:\mathbb K\hookrightarrow \mathbb X$ and $i^*:\mathbb X^*\to \mathbb K$ be the adjoint embedding. If $\widehat\ell(k)$ are the Fourier coefficients of $\ell$, then
\begin{equation*}
    \|i^*(\ell_N-\ell)\|_{\mathbb K}^2
    = \sum_{k\in\mathbb Z}q_k |\widehat{\mathsf F}_N(k)-1|^2|\widehat\ell(k)|^2
    \longrightarrow 0,
\end{equation*}
because $|\widehat\ell(k)|\le\|\ell\|_{\mathrm{TV}}$ and $\sum_kq_k=\Tr Q<\infty$.

\par\noindent\textbf{Step 2. Passage to cylinder functions.}\par
 For $F=g(\ell_1,\ldots,\ell_r)$, set
\begin{equation*}
    \boldsymbol\ell = (\ell_1,\ldots,\ell_r),\qquad
    \boldsymbol\ell_N = (\ell_{1,N},\ldots,\ell_{r,N}),\qquad
    F_N = g(\boldsymbol\ell_N).
\end{equation*}
Since $g$ has bounded first derivatives,
\begin{align*}
    |F_N-F|
    &\le \|\nabla g\|_\infty |\boldsymbol\ell_N-\boldsymbol\ell|_{\mathbb R^r},\\
    \|F_N-F\|_{L^2(\kappa)}
    &\le \|\nabla g\|_\infty
    \left( \sum_{i=1}^r \|\ell_{i,N}-\ell_i\|_{L^2(\kappa)}^2 \right)^{1/2}
    \longrightarrow 0.
\end{align*}
The gradient difference is
\begin{equation*}
    \nabla_{\mathbb K}F_N-\nabla_{\mathbb K}F
    = \sum_{i=1}^r\partial_ig(\boldsymbol\ell_N)\, i^*(\ell_{i,N}-\ell_i)
    + \sum_{i=1}^r [\partial_ig(\boldsymbol\ell_N)-\partial_ig(\boldsymbol\ell)]\,i^*\ell_i
    =: A_N+B_N.
\end{equation*}
For the first term,
\begin{equation*}
    \|A_N\|_{L^2(\kappa;\mathbb K)}
    \le \sum_{i=1}^r\|\partial_i g\|_\infty \|i^*(\ell_{i,N}-\ell_i)\|_{\mathbb K}
    \longrightarrow0.
\end{equation*}
For every $\phi\in\mathbb X$, uniform convergence of the Fej\'er means gives $\boldsymbol\ell_N(\phi)\to\boldsymbol\ell(\phi)$, so $B_N(\phi)\to0$ in $\mathbb K$. Moreover,
\begin{equation*}
    \|B_N(\phi)\|_{\mathbb K}
    \le 2\sum_{i=1}^r\|\partial_i g\|_\infty
    \|i^*\ell_i\|_{\mathbb K},
\end{equation*}
and the right-hand side is a finite constant. Since $\kappa$ is a probability measure, dominated convergence gives $B_N\to0$ in $L^2(\kappa;\mathbb K)$. Thus every cylinder in $\mathcal C(\mathbb X^*)$ is approximated by Fourier cylinders in the gradient graph norm. Restricting a closable gradient preserves closability. Conversely, if the Fourier gradient is closable and a sequence of $\mathbb X^*$-cylinders converges to zero with convergent gradients, choose for its $n$th member a Fourier approximation with both graph errors below $1/n$. Closability of the Fourier gradient forces the limiting gradient to vanish. The same approximation proves equality of the closures.
\end{proof}

\subsection{Verification for the Nonlinear Gibbs Dynamics}
\label{app:nonlinear-gibbs-verification}

The two one-dimensional nonlinear models use the same Gaussian sample regularity and moment bounds.

\begin{lemma}[Gaussian regularity]\label{lem:Gaussian-regularity}
Let $\mu_Q = \cN(0,Q)$ with $Q = A^{-1}$, $A = -\partial_x^2 + m$, $m>0$. For every $s < 1/2$, $\mu_Q(H^s(\T)) = 1$, and $\|\phi\|_{H^s}$ possesses finite moments of all orders under $\mu_Q$. Choosing $s \in (1/3, 1/2)$, the one-dimensional Sobolev embedding $H^s(\T) \hookrightarrow L^6(\T)$ yields $\E_{\mu_Q}\|\phi\|_{L^6}^r < \infty$ for all $r<\infty$. The same conclusion holds for any mass-quenched initial law $\mu_0 = \cN(0, A_0^{-1})$ with $A_0 = -\partial_x^2 + m_0$, $m_0>0$.
\end{lemma}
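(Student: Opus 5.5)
The plan is to use the Fourier representation of $\mu_Q$. Under $\mu_Q$ the coordinates $\xi_j=\langle\phi,e_j\rangle_{\mathbb H}$ are independent centred Gaussians with variances $q_j=\lambda_j^{-1}$. Take the periodic Sobolev norm $\|\phi\|_{H^s}^2=\sum_j(1+k_j^2)^s|\xi_j|^2$, where $k_j$ is the frequency of $e_j$. Then
\begin{equation*}
    \E_{\mu_Q}\|\phi\|_{H^s}^2=\sum_j\frac{(1+k_j^2)^s}{k_j^2+m}
    \le C_{s,m}\Bigl(1+2\sum_{k\ge1}k^{2s-2}\Bigr)<\infty
\end{equation*}
for every $s<1/2$, because $2-2s>1$. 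This gives $\mu_Q(H^s(\T))=1$.

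To obtain moments of all orders, I would view $\phi$ as a centred Gaussian random element of the separable Hilbert space $H^s(\T)$. Its covariance operator is diagonal with trace equal to the sum above, hence trace class, so $\mu_Q$ restricts to a centred Gaussian Radon measure on $H^s(\T)$. \Cref{lem:gaussian-measure-facts} (Fernique) then gives $\int\exp(a\|\phi\|_{H^s}^2)\,d\mu_Q<\infty$ for some $a>0$, which bounds every polynomial moment. Alternatively, I can avoid Fernique. The scalar identity $\E|Z|^p=c_p(\E|Z|^2)^{p/2}$ from \cref{lem:gaussian-measure-facts}, combined with Minkowski's inequality in $L^{p/2}$ applied to $\sum_j w_j|\xi_j|^2$, gives $\E\|\phi\|_{H^s}^p\le c_p(\E\|\phi\|_{H^s}^2)^{p/2}$ for $p\ge2$. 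Lower moments follow by H\"older's inequality.

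For the $L^6$ statement, fix $s\in(1/3,1/2)$. The one-dimensional fractional Sobolev embedding $H^s(\T)\hookrightarrow L^p(\T)$ holds for $1/p\ge 1/2-s$, and $p=6$ requires exactly $s\ge1/3$. Hence $\|\phi\|_{L^6}\le C_s\|\phi\|_{H^s}$, and the $H^s$ moments transfer directly. The same embedding with $s>1/4$ gives $L^4$, which is the version used in the $\Phi^4_1$ and Allen--Cahn proofs.

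For the mass-quenched law $\mu_0=\cN(0,A_0^{-1})$, the identical computation applies with $\lambda_k^0=k^2+m_0$ replacing $\lambda_k$. Only $m_0>0$ is needed for the $k=0$ term, and the tail behaves like $k^{2s-2}$ as before. No step is a genuine obstacle. The only point needing care is justifying that the $H^s$ norm is a measurable function of the Fourier coordinates and that Fernique applies on $H^s$ rather than on $\mathbb X$. I would handle this by identifying $H^s$ through its coordinate series, so that measurability is automatic and the diagonal trace-class covariance makes $\mu_Q$ a Gaussian measure on $H^s$. The elementary Minkowski route sidesteps this point entirely, so I would present that route.
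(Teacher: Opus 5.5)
Your proof is correct. The Fourier variance computation, the embedding $H^s(\T)\hookrightarrow L^6(\T)$ for $s\ge 1/3$, and the substitution $k^2+m\to k^2+m_0$ for the quenched law are exactly the paper's argument, and so is your first option for the moments: view the series as a centred Gaussian Radon element of $H^s(\T)$ and apply the Fernique bound in \cref{lem:gaussian-measure-facts}.

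The only real difference is the moment step you choose to present. Your Minkowski route uses
\begin{equation*}
\bigl\|\,\|\phi\|_{H^s}^2\bigr\|_{L^{p/2}}\le\sum_j w_j\,\|\xi_j^2\|_{L^{p/2}}=c_p^{2/p}\,\E\|\phi\|_{H^s}^2,\qquad p\ge2.
\end{equation*}
It applies the triangle inequality to finite partial sums and then monotone convergence. This route is self-contained: it needs only the scalar moment formula, gives explicit constants, and never has to check that the law is a Radon Gaussian measure on $H^s$. Fernique yields more, namely $\E\exp(a_s\|\phi\|_{H^s}^2)<\infty$. The paper never uses that extra strength. Every later appeal to this lemma needs only almost-sure membership in $H^s$ or polynomial moments: the Galerkin dominated-convergence bounds with $\|\phi\|_{H^s}^3$, the $L^6$ moments in the logarithmic-derivative estimates, and the $L^4$ and $L^2$ moments in the quench entropy. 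The exponential bound actually used elsewhere is the one on $C(\T)$, in \cref{lem:quartic-analytic-inputs}. So your elementary version is sufficient for the whole paper.
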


\begin{proof}
Expand $\phi$ in the Fourier basis: $\phi = \sum_j q_j^{1/2}\xi_j e_j$ with $\xi_j \sim \cN(0,1)$ i.i.d.\ and $q_j$ the corresponding eigenvalues of $Q$ fixed after \eqref{eq:basis}. For $s<1/2$,
\begin{equation*}
    \E_{\mu_Q}\|\phi\|_{H^s}^2 = \sum_{k\in\Z}\frac{(1+k^2)^s}{k^2+m}<\infty.
\end{equation*}
Thus the Fourier series defines a centred Gaussian Radon random variable in the separable Hilbert space $H^s(\T)$. Applying the Fernique estimate in \cref{lem:gaussian-measure-facts} with $\mathbb E=H^s(\T)$ gives an $a_s>0$ such that
\begin{equation*}
    \E_{\mu_Q}\exp(a_s\|\phi\|_{H^s}^2)<\infty,
\end{equation*}
and hence all polynomial moments are finite. The embedding $H^s(\T)\subset L^6(\T)$ for $s>1/3$ completes the proof. For $\mu_0=\mathcal N(0,A_0^{-1})$, the same calculation replaces the denominator $k^2+m$ by $k^2+m_0$; the summability threshold and the application of \cref{lem:gaussian-measure-facts} are unchanged.
\end{proof}

We now complete the well-posedness and form-theoretic parts of the quartic identification.

\begin{lemma}[Analytic inputs for the quartic dynamics]
 \label{lem:quartic-analytic-inputs}
Let $V$ be either $V_4$ or $V_{\rm AC}$ in \cref{thm:Phi4-model,thm:AC-model}, let $\kappa=Z^{-1}e^{-V}\mu_Q$ on $\mathbb X=C(\T)$. Then:
\begin{enumerate}[label=(\roman*)]
\item The constant $Z$ belongs to $(0,\infty)$. The Gaussian measure $\mu_Q$ is a Radon measure on $\mathbb X$ with full support. The Gibbs measure $\kappa$ is equivalent to $\mu_Q$, has full support, and satisfies
\begin{equation*}
    \int_{\mathbb X}\|\phi\|_\infty^r\,d\kappa(\phi)<\infty,
    \qquad r<\infty.
\end{equation*}
\item For every $x\in\mathbb X$, the corresponding equation \eqref{eq:Phi4-spde} or \eqref{eq:AC-spde} has a unique global probabilistically strong $\mathbb X$-valued solution with continuous paths. There exists $N>0$ and, for every $p>0$, there are $C_p,K_p,\gamma_p>0$ such that
\begin{equation*}
    \mathbb E\|\phi_t^x\|_\infty^p
    \le C_p(1+\|x\|_\infty)^{Np}e^{-\gamma_pt}+K_p,
    \qquad t\ge0.
\end{equation*}
\item The drift $b(\phi)=-\phi-Q\nabla_{\mathbb H}V(\phi)$ is a Borel map from $\mathbb X$ to $\mathbb X$, belongs to $L^2(\kappa;\mathbb X)$, and satisfies
\begin{equation*}
    \beta_{i^*\ell}^{\rm AMR}(\phi)=\ell(b(\phi)),
    \qquad \ell\in\mathbb X^*.
\end{equation*}
\item If the pre-form \eqref{eq:pre-form} is closable and $\cE$ denotes its closure, then $\cE$ is quasi-regular. The closures obtained from Fourier cylinders and from cylinders generated by $\mathbb X^*$ coincide.
\end{enumerate}
\end{lemma}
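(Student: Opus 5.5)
I will treat the four parts in order, using the pointwise structure of the potentials on $\mathbb X=C(\T)$. Write $\Psi(r)=\frac\lambda4r^4$ or $\Psi(r)=\lambda(r^4/4-r^2/2)$, so that $V(\phi)=\int_\T\Psi(\phi)\,dx$.

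\textbf{Part (i).} Since $\Psi\ge-\lambda/4$ pointwise, $V\ge-\lambda\pi/2$, and hence $e^{-V}\le e^{\lambda\pi/2}$. Moreover $V<\infty$ on $\mathbb X$, so $e^{-V}>0$ everywhere. These two facts give $Z\in(0,\infty)$ and $\kappa\sim\mu_Q$. The remaining claims follow from \cref{lem:gaussian-measure-facts}:
\begin{itemize}
\item $\mu_Q$ is Radon on $\mathbb X$ by the $C^\gamma$ version noted after \cref{lem:cameron-martin-embedding};
\item its support is $\overline{\mathbb K}^{\mathbb X}=\mathbb X$ by \cref{lem:cameron-martin-embedding};
\item $\kappa$ inherits full support from equivalence;
\item the moments $\int\|\phi\|_\infty^r\,d\kappa\le Z^{-1}e^{\lambda\pi/2}\int\|\phi\|_\infty^r\,d\mu_Q$ are finite by Fernique.
\end{itemize}

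\textbf{Part (ii).} I split $\phi_t=Y_t+\psi_t$, where $Y$ is the Ornstein--Uhlenbeck process $dY=-Y\,dt+\sqrt2\,dW^Q$ with $Y_0=0$. This process has continuous $\mathbb X$-valued paths and Gaussian moments uniform in $t$. The remainder solves the random ODE $\dot\psi=-\psi-\lambda Q\bigl((\psi+Y)^3-c(\psi+Y)\bigr)$ in $\mathbb X$, where $c=0$ or $1$.

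Because $Q$ maps $L^1$ boundedly into $C(\T)$, the vector field is locally Lipschitz on $\mathbb X$. This gives local existence, uniqueness and adaptedness, and pathwise uniqueness follows by Gronwall on the stopped difference. For non-explosion and the moment bound, I test the ODE against $A\psi$ in $\mathbb H$, i.e.\ I use the energy $\frac12\|\psi\|_{\mathbb K}^2$. The cubic term then contributes $\lambda\int(\psi+Y)^3\psi$, which is bounded below by $\frac\lambda2\int\psi^4-C\int Y^4$. This yields a dissipative inequality
\begin{equation*}
    \frac{d}{dt}\|\psi\|_{\mathbb K}^2\le-2\|\psi\|_{\mathbb K}^2+C(1+\|Y\|_{L^4}^{4}),
\end{equation*}
and $\|\cdot\|_\infty\lesssim\|\cdot\|_{\mathbb K}$ then controls the sup norm for $t>0$. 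This step needs care because $x\in C(\T)$ is not in $H^1$. I handle it by first putting $e^{-t\mathsf K}x$ into $Y$ for a short initial layer, or equivalently by a comparison/maximum-principle bound in $L^\infty$ for short times. The result is polynomial dependence on $\|x\|_\infty$ with decay $e^{-\gamma t}$. This initial-layer argument is the main obstacle. Alternatively I can cite \cite[Theorems~2.10 and 3.6]{HairerStuartVoss2007}, which give exactly this global well-posedness, and prove only the moment bound by the energy estimate above.

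\textbf{Part (iii).} Since $\phi^3\in C(\T)\subset\mathbb H$ and $Q\colon\mathbb H\to H^2\subset\mathbb X$ is continuous, the drift $b$ is continuous, hence Borel. Its norm satisfies $\|b(\phi)\|_\infty\le C(\|\phi\|_\infty+\|\phi\|_\infty^3)$, so $b\in L^2(\kappa;\mathbb X)$ by part (i). For the logarithmic derivative, fix $\ell\in\mathbb X^*$ and $h=i^*\ell=Q\ell$, using \eqref{eq:banach-covariance-map}. Gaussian integration by parts along $h$ gives the term $\ell(\phi)$, and the Gibbs factor contributes $\partial_hV(\phi)=\langle\nabla_{\mathbb H}V(\phi),Q\ell\rangle=\ell(Q\nabla_{\mathbb H}V(\phi))$. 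The justification is the same cutoff argument as in \cref{prop:closability-criterion}, with integrability supplied by part (i). Hence $\beta_h=\ell(\phi)+\ell(Q\nabla_{\mathbb H}V)=-\ell(b(\phi))$, which is $\beta^{\rm AMR}_{i^*\ell}=\ell(b)$ under the sign convention of \cref{def:logarithmic-derivative}.

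\textbf{Part (iv).} By part (i), $\int\|\phi\|_\infty^2\,d\kappa<\infty$ and $\kappa\sim\mu_Q$, so \cref{lem:fourier-core} shows that the Fourier and $\mathbb X^*$-cylinder closures coincide. Quasi-regularity of the $\mathbb X^*$-cylinder gradient form on the separable Banach space $\mathbb X$ is then the general theorem \cite[Chapter~IV, Section~4]{MaRoeckner1992} for closable classical gradient forms with $\mathbb K$ densely and continuously embedded in $\mathbb X$, which holds by \cref{lem:cameron-martin-embedding}.
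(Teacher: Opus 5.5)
Your parts (i) and (iii) follow the paper's proof. In (i), bounded strictly positive Gibbs densities give $Z\in(0,\infty)$ and $\kappa\sim\mu_Q$, the Cameron--Martin support formula gives full support, and Fernique gives the moments. In (iii), $b$ is continuous, and $\beta^{\rm AMR}_{i^*\ell}=\ell(b)$ follows from Gaussian integration by parts along $i^*\ell$, whose associated linear functional is $\ell$ itself. One caveat for (i): the $C^\gamma$ remark you cite for the Radon property is stated without proof in the main text. The paper proves it at this point from $\E_{\mu_Q}|\phi(x)-\phi(y)|^2\le C|x-y|$ and Kolmogorov's theorem, so you should include that step.

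In (iv) you take a different but valid route. You use Ma--R\"ockner's result that the closure of a closable classical gradient form is quasi-regular, on a separable Banach space with full-support reference measure and densely, continuously embedded tangent space. Once \cref{lem:fourier-core} has moved you to $\mathbb X^*$-cylinders, this turns exactly the closability hypothesis of (iv) into quasi-regularity. The paper instead checks well-admissibility of the coordinate directions and applies Theorem~3.1 of Albeverio--Ma--R\"ockner. That route re-derives closability, also gives locality, and places the form in the framework of their Theorem~3.3, which the paper later uses to identify the associated process with the SPDE.

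In (ii) the paper runs no energy estimate. It verifies hypotheses A1--A3 and A5--A7 of Hairer--Stuart--Voss and takes both well-posedness and the moment bound from their Theorem~2.10. The dissipativity bound it checks, $\langle u,F(u+y)\rangle_{\mathbb H}\le-\frac\lambda4\|u+y\|_{L^4}^4+C(1+\|y\|_\infty^4)$ with $F=-\nabla_{\mathbb H}V$, is the same quantity your test against $A\psi$ produces.

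Your self-contained route works, but the device you propose for rough data fails as stated. The preconditioned linear part $-\phi$ generates $e^{-t}I$, which does not smooth. So $e^{-t}x$ stays exactly as rough as $x$ for all times, and after any finite initial layer $\psi$ again starts outside $\mathbb K$. A short-time $L^\infty$ bound leaves this roughness in place. No global maximum principle is available either, because the drift is nonlocal: with $M=\|\phi_t\|_\infty$, at a maximum you only get $\dot\phi\le-M+\lambda M^3/m$.

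The repair is to keep the datum in the reference process for all time: set $Y_t=e^{-t}x+O_t$ with $O_t=\sqrt2\int_0^te^{-(t-s)}\,dW_s^Q$, and $\psi_0=0$. Then $\psi_t=-\lambda\int_0^te^{-(t-s)}Q(\phi_s^3-c\phi_s)\,ds\in H^2$, so your $\mathbb K$-energy inequality is rigorous. Combining it with $\|Y_t\|_{L^4}^4\lesssim e^{-4t}\|x\|_\infty^4+\|O_t\|_\infty^4$, Gronwall and Jensen give non-explosion and the stated moment bound with $N=2$. If you cite Hairer--Stuart--Voss instead, cite Theorem~2.10 for the moment bound as well and verify its hypotheses, as the paper does.
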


\begin{proof}
 \par\noindent\textbf{Step 1. State space and Gibbs measure.}\par
For $x,y\in\T$, Fourier summation gives the increment estimate
\begin{equation*}
    \E_{\mu_Q}|\phi(x)-\phi(y)|^2 = \sum_jq_j|e_j(x)-e_j(y)|^2 \le C|x-y|.
\end{equation*}
Since each increment is a centred real Gaussian variable, the scalar moment formula in \cref{lem:gaussian-measure-facts} gives, for every $p\ge2$,
\begin{equation*}
    \E_{\mu_Q}|\phi(x)-\phi(y)|^p \le C_p|x-y|^{p/2}.
\end{equation*}
Given $\gamma<\frac12$, choose $p$ with $p(\frac12-\gamma)>1$. Kolmogorov's continuity theorem then gives a version in $C^\gamma(\T)$, and hence $\mu_Q$ is a Gaussian Radon measure on $\mathbb X=C(\T)$. Its Cameron--Martin space is continuously and densely embedded in $\mathbb X$ by \cref{lem:cameron-martin-embedding}; the support formula in \cref{lem:gaussian-measure-facts} therefore gives $\operatorname{supp}\mu_Q=\mathbb X$. Both Gibbs densities are strictly positive. Also $e^{-V_4}\le1$, while
\begin{equation*}
    \frac{r^4}{4} - \frac{r^2}{2} = \frac{(r^2-1)^2}{4} - \frac14\ge - \frac14
\end{equation*}
gives $e^{-V_{\rm AC}}\le e^{\lambda\pi/2}$. Thus $\kappa$ is Radon, equivalent to $\mu_Q$, and has full support. The two Gibbs densities are bounded above by constants. The Fernique estimate in \cref{lem:gaussian-measure-facts}, applied on $C(\T)$, gives exponential integrability of a positive multiple of $\|\phi\|_\infty^2$; consequently,
\begin{equation}\label{eq:gibbs-sup-moments}
    \int_{\mathbb X}\|\phi\|_\infty^r\,d\kappa(\phi)<\infty,
    \qquad r<\infty.
\end{equation}

\pagebreak[3]
 \par\noindent\textbf{Step 2. Well-posedness and moment bounds.}\par
Hairer, Stuart and Voss established global strong well-posedness and moment bounds for nonlinear preconditioned SPDEs in \cite[Theorem~2.10]{HairerStuartVoss2007}; under the corresponding gradient hypotheses, \cite[Theorem~3.6]{HairerStuartVoss2007} also proves symmetry of the Gibbs semigroup and invariance of the Gibbs measure. For the two equations in \cref{thm:Phi4-model,thm:AC-model}, the operators in their notation are
\begin{equation*}
    L_{\rm HSV}=-A,\qquad
    G_{\rm HSV}=Q=A^{-1},\qquad
    F_{\rm HSV}=-\nabla_{\mathbb H}V.
\end{equation*}
We verify the hypotheses for both potentials; the calculation also records the required Gaussian regularity and the dissipativity estimates for the two polynomial drifts. The operator $L_{\rm HSV}$ is self-adjoint and strictly dissipative on $\mathbb H=L^2(\T)$, the Fourier basis in \eqref{eq:basis} is a complete eigenbasis, and its analytic semigroup leaves $\mathbb X=C(\T)$ invariant with
\begin{equation*}
    \|e^{tL_{\rm HSV}}x\|_\infty\le e^{-mt}\|x\|_\infty.
\end{equation*}
Choose $\alpha\in(\frac14,\frac12)$. Then $\operatorname{Dom}(A^\alpha)=H^{2\alpha}(\T)$ embeds continuously and densely into $C(\T)$, while
\begin{equation*}
 \Tr(A^{-2\alpha}) = \sum_{k\in\mathbb Z}(k^2+m)^{-2\alpha} < \infty.
\end{equation*}
If $Y\sim\mathcal N(0,A^{-2\alpha})$, Fourier summation gives
\begin{equation}\label{eq:Aalpha-increments}
    \mathbb E|Y(x)-Y(y)|^2\le C_\alpha|x-y|^{4\alpha-1}.
\end{equation}
Together with the scalar Gaussian moment formula in \cref{lem:gaussian-measure-facts}, Kolmogorov's continuity theorem shows that $Y$ has a $C(\T)$-valued version. These facts verify Assumptions A1--A2.

The maps
\begin{equation*}
    F_4(z)=-\lambda z^3,
    \qquad F_{\rm AC}(z)=-\lambda(z^3-z)
\end{equation*}
belong to $C^1(\mathbb X;\mathbb X)$ and are locally Lipschitz. Their derivatives are the multiplication operators
\begin{equation*}
    \mathrm DF_4(z)[h]=-3\lambda z^2h,\qquad
    \mathrm DF_{\mathrm{AC}}(z)[h]=\lambda(1-3z^2)h,
\end{equation*}
so $\|\mathrm DF(z)\|_{\mathcal L(\mathbb X)}\le C(1+\|z\|_\infty^2)$ and $\|F(z)\|_\infty\le C(1+\|z\|_\infty^3)$ in both cases. These are the polynomial bounds required in A3. For the dissipativity condition A5, write $z=x+y$. Young's inequality gives
\begin{align*}
    \langle x,F_4(x+y)\rangle_{\mathbb H}
    &= -\lambda\int_\T z^4\,dx+\lambda\int_\T yz^3\,dx\\
    &\le -\frac\lambda2\|z\|_{L^4}^4 + C_\lambda(1+\|y\|_\infty^4),\\
    \langle x,F_{\rm AC}(x+y)\rangle_{\mathbb H}
    &= -\lambda\int_\T z^4\,dx+\lambda\int_\T z^2\,dx + \lambda\int_\T yz^3\,dx-\lambda\int_\T yz\,dx\\
    &\le -\frac\lambda4\|z\|_{L^4}^4 + C_\lambda(1+\|y\|_\infty^4).
\end{align*}
Finally, $G_{\rm HSV}=Q$ is positive, self-adjoint and trace class with dense range, while Step~1 gives $\mathcal N(0,Q)(\mathbb X)=1$. Moreover,
\begin{equation*}
    G_{\rm HSV}L_{\rm HSV}=-I,
\end{equation*}
which verifies A6--A7. Thus Assumptions A1--A3 and A5--A7 all hold. Theorem~2.10 cited above then gives a unique global $\mathbb X$-valued strong solution for every initial point $x\in\mathbb X$, together with constants $N>0$ and, for each $p>0$, constants $C_p,K_p,\gamma_p>0$ such that
\begin{equation*}
    \mathbb E\|\phi_t^x\|_\infty^p
    \le C_p(1+\|x\|_\infty)^{Np}e^{-\gamma_pt}+K_p,
    \qquad t\ge0.
\end{equation*}
Since $U=-V$ is bounded above in both cases and $F_{\rm HSV}=U'$, Theorem~3.6 of the same reference also applies. Thus the concrete SPDE semigroup is symmetric in $L^2(\kappa)$ and preserves $\kappa$; the remaining steps identify this semigroup with the closed Fourier-gradient form used in the abstract results.

 \par\noindent\textbf{Step 3. Logarithmic drift.}\par
Let $i:\mathbb K\hookrightarrow\mathbb X$ be the Cameron--Martin embedding and $i^*:\mathbb X^*\to\mathbb K$ its adjoint. Thus $Q=ii^*$ as a covariance map from $\mathbb X^*$ to $\mathbb X$. On cylinders generated by $\mathbb X^*$, consider the classical $\mathbb K$-gradient form
\begin{equation*}
    \cE^{\rm AMR}(F,G)
    =\frac12\int_{\mathbb X}
    \langle\nabla_{\mathbb K}F,\nabla_{\mathbb K}G\rangle_{\mathbb K}
    \,d\kappa.
\end{equation*}
For a Fourier cylinder, $\nabla_{\mathbb K}F=Q\nabla_{\mathbb H}F$ and therefore
\begin{equation*}
    \|\nabla_{\mathbb K}F\|_{\mathbb K}^2 = \langle Q\nabla_{\mathbb H}F, \nabla_{\mathbb H}F\rangle_{\mathbb H}.
\end{equation*}
Let $\ell\in\mathbb X^*$ and $h=i^*\ell$. Both potentials are continuously Fr\'echet differentiable on $C(\T)$, and
\begin{equation*}
    |\partial_hV_4(\phi)|
    \le C_h\|\phi\|_\infty^3,
    \qquad
    |\partial_hV_{\mathrm{AC}}(\phi)|
    \le C_h(\|\phi\|_\infty^3+\|\phi\|_\infty).
\end{equation*}
The Gibbs weights are bounded, and \eqref{eq:gibbs-sup-moments} makes these derivatives square-integrable. The Gaussian Sobolev integration-by-parts theorem \cite{Bogachev1998}, applied to $Fe^{-V}$ in the Cameron--Martin direction $h=i^*\ell$, therefore gives the negative logarithmic derivative used in \cite{AlbeverioMaRockner2015}:
\begin{equation*}
 \beta_h^{\rm AMR}(\phi) = -\ell(\phi)-\partial_hV(\phi) = \ell[-\phi-Q\nabla_{\mathbb H}V(\phi)]=\ell[b(\phi)].
\end{equation*}
Here $b(\phi)=-\phi-Q\nabla_{\mathbb H}V(\phi)$. The polynomial Nemytskii map $\nabla_{\mathbb H}V:C(\T)\to C(\T)$ is continuous, and $Q=A^{-1}$ acts continuously on $C(\T)$. Hence $b:\mathbb X\to\mathbb X$ is continuous and therefore Borel. Since
\begin{equation*}
    \|b(\phi)\|_\infty\le C(1+\|\phi\|_\infty^3),
\end{equation*}
\eqref{eq:gibbs-sup-moments} gives $b\in L^2(\kappa;\mathbb X)$. To make the tangent-space normalisation explicit, define the continuous linear functionals
\begin{equation*}
    \ell_j(\phi)=q_j^{-1/2} \langle \phi,e_j \rangle_{\mathbb H}.
\end{equation*}
Their Cameron--Martin representatives are
\begin{equation*}
    i^*\ell_j=Q(q_j^{-1/2}e_j)=\sqrt{q_j}e_j=h_j.
\end{equation*}
Thus $(i^*\ell_j)_j$ is the orthonormal basis $(h_j)_j$ of $\mathbb K$, and the family $(\ell_j)_j$ separates points of $C(\T)$. Moreover,
\begin{equation*}
    \beta_{h_j}^{\rm AMR}(\phi)
    = \ell_j(b(\phi))
    = -\frac{\langle\phi,e_j\rangle_{\mathbb H}}{\sqrt{q_j}}
    - \sqrt{q_j}\langle\nabla_{\mathbb H}V(\phi),e_j\rangle_{\mathbb H},
\end{equation*}
which is the negative of the score $\beta_j$ in \eqref{eq:beta-form}.

\par\noindent\textbf{Step 4. Quasi-regularity and form cores.}\par
We now check the hypotheses used in \cite[Theorem~3.1]{AlbeverioMaRockner2015}. The space $\mathbb X=C(\T)$ is a separable Banach space, $\kappa$ is a finite Radon measure with full support, and the tangent Hilbert space $\mathbb K$ is continuously and densely embedded in $\mathbb X$ by \cref{lem:cameron-martin-embedding}. The linear span of $(\ell_j)_j$ is a point-separating subspace of $\mathbb X^*$, and the preceding integration-by-parts formula, together with $\beta_{h_j}^{\rm AMR}\in L^2(\kappa)$, shows that every basis direction is well-admissible in the terminology of that theorem. Hence the theorem applies, and the closure of $\cE^{\rm AMR}$ is local and quasi-regular. Since $\cE^{\rm AMR}=\frac12\cE$ on Fourier cylinders and \cref{lem:fourier-core} identifies the $\mathbb X^*$-cylinder and Fourier cylinder closures, the closed form in the lemma is exactly twice this classical form. Multiplication by $2$ does not change the form domain, the form norm topology or the exceptional sets, and hence preserves quasi-regularity. Conservativity was proved in \cref{prop:closed-gradient-properties}.

\end{proof}

For the $\Phi^4_1$ Gibbs measure, the coordinate logarithmic derivatives are obtained by a Galerkin-limit integration-by-parts argument.

\begin{lemma}[Logarithmic derivatives of the $\Phi^4_1$ Gibbs measure]
 \label{lem:Phi4-log-derivatives}
For every Fourier basis direction $h_j=\sqrt{q_j}e_j$, the function
\begin{equation*}
    \beta_j(\phi)
    = \frac{\langle\phi,e_j\rangle_{\mathbb H}}{\sqrt{q_j}} + \lambda\sqrt{q_j}\int_\T\phi^3e_j\,dx
\end{equation*}
belongs to $L^2(\kappa_4)$. Moreover, for every $F\in\cF C_b^\infty$,
\begin{equation}\label{eq:Phi4-IBP}
    \int_{\mathbb X}\partial_{h_j}F\,d\kappa_4 = \int_{\mathbb X}F\beta_j\,d\kappa_4.
\end{equation}
\end{lemma}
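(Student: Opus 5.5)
The plan is to verify the finite-dimensional integration by parts for the Galerkin-truncated Gibbs measures $\kappa_{4,N}=Z_{4,N}^{-1}e^{-V_4(P_N\phi)}\mu_Q$, and then let $N\to\infty$.

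\textbf{Step 1: square integrability.} Since $|\langle\phi,e_j\rangle_{\mathbb H}|\le C\|\phi\|_\infty$ and $|\int_\T\phi^3e_j\,dx|\le C\|\phi\|_\infty^3$, we get $|\beta_j(\phi)|\le C_j(\|\phi\|_\infty+\|\phi\|_\infty^3)$. The moment bound \eqref{eq:gibbs-sup-moments} of \cref{lem:quartic-analytic-inputs}(i) then gives $\beta_j\in L^2(\kappa_4)$.

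\textbf{Step 2: the Galerkin identity.} Fix $F\in\cF C_b^\infty$ and choose $M\ge\max(N,j)$ so that $F$ depends only on the coordinates in $\mathbb H_M$. The $\mathbb H_M$ marginal of $\kappa_{4,N}$ has an explicit smooth positive Lebesgue density on $\mathbb R^{\dim\mathbb H_M}$:
\begin{equation*}
\exp\Bigl(-\tfrac12\langle x,A_Mx\rangle-V_4(P_Nx)\Bigr).
\end{equation*}
The directional derivative along $h_j$ of $V_4(P_N\cdot)$ is $\lambda\sqrt{q_j}\int_\T(P_N\phi)^3P_Ne_j\,dx$. The factor $P_Ne_j$ equals $e_j$ once $N$ is large enough. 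Ordinary integration by parts in finite dimensions is justified because $F$ is bounded with bounded derivatives and the density decays like a Gaussian. It gives
\begin{equation*}
\int\partial_{h_j}F\,d\kappa_{4,N}=\int F\,\beta_j^N\,d\kappa_{4,N},\qquad
\beta_j^N(\phi)=\frac{\langle\phi,e_j\rangle_{\mathbb H}}{\sqrt{q_j}}+\lambda\sqrt{q_j}\int_\T(P_N\phi)^3e_j\,dx.
\end{equation*}
Equivalently, one could write this through the Gaussian integration-by-parts step in the proof of \cref{prop:closability-criterion}, since $e^{-V_4(P_N\phi)}$ is a smooth cylinder weight.

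\textbf{Step 3: passage to the limit.} I would rewrite both sides as $\mu_Q$-integrals, with weights $Z_{4,N}^{-1}e^{-V_4(P_N\phi)}$. As in Step 3 of the proof of \cref{thm:Phi4-model}, \cref{lem:Gaussian-regularity} gives $P_N\phi\to\phi$ in $L^4(\T)$, and in fact in $L^6(\T)$ if we choose $s\in(1/3,1/2)$, for $\mu_Q$-almost every $\phi$. Hence $V_4(P_N\phi)\to V_4(\phi)$ and $Z_{4,N}\to Z_4>0$. The weights are bounded by $Z_{4,N}^{-1}\le C$ uniformly in $N$.

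The left-hand side then converges by dominated convergence. On the right-hand side, $\int(P_N\phi)^3e_j\,dx\to\int\phi^3e_j\,dx$ almost surely, because cubes converge in $L^{4/3}$ once $P_N\phi\to\phi$ in $L^4$. The right-hand integrand is dominated by
\begin{equation*}
C\|F\|_\infty\bigl(|\langle\phi,e_j\rangle|+\|P_N\phi\|_{L^3}^3\bigr).
\end{equation*}

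\textbf{Main obstacle.} The delicate point is uniform integrability of $\|P_N\phi\|_{L^3}^3$ under $\mu_Q$. I would obtain it from $\|P_N\phi\|_{L^3}\le C\|P_N\phi\|_{H^s}\le C\|\phi\|_{H^s}$ for some $s\in(1/6,1/2)$, using that the Fourier projection is a contraction in $H^s$. This gives the $N$-independent dominating function $C(1+\|\phi\|_{H^s}^3)$, which is $\mu_Q$-integrable by \cref{lem:Gaussian-regularity}. Vitali's theorem, or directly dominated convergence, then yields \eqref{eq:Phi4-IBP}.
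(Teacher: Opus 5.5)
Your proposal is correct and follows essentially the same route as the paper: Galerkin-truncated weights $e^{-V_4(P_N\phi)}$, finite-dimensional Gaussian integration by parts on $\mathbb H_N$, and dominated convergence using the $N$-independent bound $\|P_N\phi\|_{H^s}\le\|\phi\|_{H^s}$ together with \cref{lem:Gaussian-regularity}. The differences are cosmetic. You bound $\beta_j$ through the sup-norm moments \eqref{eq:gibbs-sup-moments}, use $L^3$ with $s>1/6$ instead of $L^6$ with $s>1/3$, and normalise by $Z_{4,N}$ rather than dividing by $Z_4$ at the end; the paper also reads off $e^{-V_4}\in W^{1,1}_{h_j}(\mu_Q)$ from the same limit, which is what it later uses for \cref{ass:gibbs-ibp}(i).
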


\begin{proof}
\par\noindent\textbf{Step 1. Log-derivative estimate.}\par
Write
\begin{equation*}
    \beta_j
    = \gamma_j+\partial_{h_j}V_4,
    \qquad
    \gamma_j(\phi)
    = \frac{\langle\phi,e_j\rangle_{\mathbb H}}{\sqrt{q_j}},
    \qquad
    \partial_{h_j}V_4
    = \lambda\sqrt{q_j}\int_\T\phi^3e_j\,dx.
\end{equation*}
Since $e^{-V_4}\le1$ and $\E_{\mu_Q}|\langle\phi,e_j\rangle_{\mathbb H}|^2=q_j$,
\begin{equation*}
 \int_{\mathbb X}|\gamma_j|^2\,d\kappa_4\le Z_4^{-1}<\infty.
\end{equation*}
H\"older's inequality and $\|e_j\|_{L^2}=1$ give
\begin{equation*}
    |\partial_{h_j}V_4| \le \lambda\sqrt{q_j}\|\phi\|_{L^6}^3.
\end{equation*}
Consequently,
\begin{equation*}
    \int_{\mathbb X}|\partial_{h_j}V_4|^2\,d\kappa_4
    \le Z_4^{-1}\lambda^2q_j
    \E_{\mu_Q}\|\phi\|_{L^6}^6<\infty
\end{equation*}
by \cref{lem:Gaussian-regularity}. Hence $\beta_j\in L^2(\kappa_4)$.

\par\noindent\textbf{Step 2. Coordinate integration by parts.}\par
Fix $e_j$ and $F\in\cF C_b^\infty$, and take $N$ large enough that $P_Ne_j=e_j$ and that $\mathbb H_N$ contains every Fourier coordinate on which $F$ depends. On $\mathbb H_N=P_N\mathbb H$, finite-dimensional Gaussian integration by parts gives
\begin{equation}\label{eq:Phi4-Galerkin-IBP}
    \int_{\mathbb H_N}\partial_{h_j}F\,e^{-V_4(P_N\phi)}\,d\mu_{Q_N}
    = \int_{\mathbb H_N}F\gamma_j(\phi) e^{-V_4(P_N\phi)}\,d\mu_{Q_N}
    + \int_{\mathbb H_N}F\partial_{h_j}V_4(P_N\phi) e^{-V_4(P_N\phi)}\,d\mu_{Q_N}
\end{equation}
Because every integrand in \eqref{eq:Phi4-Galerkin-IBP} is measurable with respect to $P_N$, the same expressions are obtained by integrating over $(\mathbb X,\mu_Q)$. Fix $s\in(1/3,1/2)$. For $\mu_Q$-almost every $\phi$, $P_N\phi\to\phi$ in $H^s(\T)$, hence in $L^6(\T)$ and $L^4(\T)$. Thus
\begin{equation*}
    V_4(P_N\phi)\longrightarrow V_4(\phi),
    \qquad
    \partial_{h_j}V_4(P_N\phi)
    \longrightarrow \partial_{h_j}V_4(\phi).
\end{equation*}
Since $P_N$ is a contraction on $H^s(\T)$ and $H^s(\T)\hookrightarrow L^6(\T)$,
\begin{equation*}
    |\partial_{h_j}V_4(P_N\phi)| \le C_s\lambda\sqrt{q_j}\|\phi\|_{H^s}^3.
\end{equation*}
The right-hand side belongs to $L^2(\mu_Q)$ by \cref{lem:Gaussian-regularity}. The three integrands in \eqref{eq:Phi4-Galerkin-IBP} satisfy
\begin{align*}
    |\partial_{h_j}F|e^{-V_4(P_N\phi)}
    &\le\|\partial_{h_j}F\|_\infty,\\
    |F\gamma_j|e^{-V_4(P_N\phi)}
    &\le\|F\|_\infty|\gamma_j(\phi)|,\\
    |F\partial_{h_j}V_4(P_N\phi)|e^{-V_4(P_N\phi)}
    &\le C_s\lambda\sqrt{q_j}\|F\|_\infty
    \|\phi\|_{H^s}^3.
\end{align*}
These bounds are integrable. Dominated convergence therefore yields
\begin{align*}
    e^{-V_4(P_N\cdot)}&\longrightarrow e^{-V_4}
    &&\text{in }L^1(\mu_Q),\\
    -\partial_{h_j}V_4(P_N\cdot)e^{-V_4(P_N\cdot)}
    &\longrightarrow-\partial_{h_j}V_4e^{-V_4}
    &&\text{in }L^1(\mu_Q).
\end{align*}
In particular, $e^{-V_4}\in W^{1,1}_{h_j}(\mu_Q)$ with weak derivative $-\partial_{h_j}V_4e^{-V_4}$. Passing to the limit in \eqref{eq:Phi4-Galerkin-IBP} and dividing by $Z_4$ proves \eqref{eq:Phi4-IBP}.
\end{proof}

For the Allen--Cahn Gibbs measure, closability follows from the logarithmic derivatives and a Galerkin-limit integration-by-parts formula.

\begin{lemma}[Logarithmic derivatives of the Allen--Cahn Gibbs measure]
 \label{lem:AC-log-derivatives}
For every Fourier basis direction $h_j=\sqrt{q_j}e_j$, the function
\begin{equation*}
    \beta_j(\phi)
    =\frac{\langle\phi,e_j\rangle_{\mathbb H}}{\sqrt{q_j}}
    + \lambda\sqrt{q_j}\int_{\T}(\phi^3-\phi)e_j\,dx
\end{equation*}
belongs to $L^2(\kappa_{\mathrm{AC}})$. Moreover, for every $F\in\cF C_b^\infty$,
\begin{equation*}
    \int_{\mathbb X}\partial_{h_j}F\,d\kappa_{\mathrm{AC}} = \int_{\mathbb X}F\beta_j\,d\kappa_{\mathrm{AC}}.
\end{equation*}
\end{lemma}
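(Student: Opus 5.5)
The plan is to mirror the proof of \cref{lem:Phi4-log-derivatives}, with the only new feature being the lower-order term $-\lambda\phi^2/2$ in $V_{\mathrm{AC}}$. First I will write
\begin{equation*}
    \beta_j=\gamma_j+\partial_{h_j}V_{\mathrm{AC}},\qquad
    \gamma_j(\phi)=\frac{\langle\phi,e_j\rangle_{\mathbb H}}{\sqrt{q_j}},\qquad
    \partial_{h_j}V_{\mathrm{AC}}(\phi)=\lambda\sqrt{q_j}\int_\T(\phi^3-\phi)e_j\,dx.
\end{equation*}
To get square integrability, I will use the bound $e^{-V_{\mathrm{AC}}}\le e^{\lambda\pi/2}$ from the proof of \cref{lem:quartic-analytic-inputs}(i) in place of $e^{-V_4}\le1$. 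With that bound, $\int|\gamma_j|^2\,d\kappa_{\mathrm{AC}}\le Z_{\mathrm{AC}}^{-1}e^{\lambda\pi/2}$. H\"older's inequality gives $|\partial_{h_j}V_{\mathrm{AC}}|\le\lambda\sqrt{q_j}(\|\phi\|_{L^6}^3+\|\phi\|_{L^2})$. \cref{lem:Gaussian-regularity} then gives finite $\mu_Q$-moments of the right-hand side, and hence $\beta_j\in L^2(\kappa_{\mathrm{AC}})$.

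Next, for the integration by parts, I fix $F\in\cF C_b^\infty$ and take $N$ so large that $P_Ne_j=e_j$ and $\mathbb H_N$ contains all coordinates of $F$. Finite-dimensional Gaussian integration by parts on $\mathbb H_N$, applied to $F e^{-V_{\mathrm{AC}}(P_N\phi)}$, gives the analogue of \eqref{eq:Phi4-Galerkin-IBP} with $V_4$ replaced by $V_{\mathrm{AC}}$. This step is legitimate because $V_{\mathrm{AC}}\circ P_N$ is a smooth polynomial on $\mathbb H_N$ bounded below, so all boundary terms vanish against the Gaussian weight. Since all integrands are $P_N$-measurable, I then rewrite every integral over $(\mathbb X,\mu_Q)$.

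To pass to the limit, I fix $s\in(1/3,1/2)$. Then $P_N\phi\to\phi$ in $H^s$, hence in $L^6$, $L^4$ and $L^2$, for $\mu_Q$-a.e.\ $\phi$. This gives the pointwise convergences $V_{\mathrm{AC}}(P_N\phi)\to V_{\mathrm{AC}}(\phi)$ and $\partial_{h_j}V_{\mathrm{AC}}(P_N\phi)\to\partial_{h_j}V_{\mathrm{AC}}(\phi)$. For dominated convergence I use three domination facts:
\begin{itemize}
    \item the uniform bound $e^{-V_{\mathrm{AC}}(P_N\phi)}\le e^{\lambda\pi/2}$;
    \item the contraction of $P_N$ on $H^s$;
    \item the embedding $H^s\hookrightarrow L^6$, which gives $|\partial_{h_j}V_{\mathrm{AC}}(P_N\phi)|\le C_s\lambda\sqrt{q_j}(\|\phi\|_{H^s}^3+\|\phi\|_{H^s})$.
\end{itemize}
By \cref{lem:Gaussian-regularity} this dominating function is $\mu_Q$-integrable. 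Dominated convergence therefore passes to the limit in each of the three terms. Dividing by $Z_{\mathrm{AC}}$ yields the identity. As a by-product, $e^{-V_{\mathrm{AC}}}\in W^{1,1}_{h_j}(\mu_Q)$ with the stated weak derivative, which is \cref{ass:gibbs-ibp}(i).

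The main obstacle is only bookkeeping: replacing the bound $e^{-V}\le1$ by the constant $e^{\lambda\pi/2}$, which needs the pointwise lower bound $r^4/4-r^2/2\ge-1/4$, and keeping track of the additional linear term. Both are handled by the domination bounds in the previous paragraph, so no new idea beyond \cref{lem:Phi4-log-derivatives} is required.
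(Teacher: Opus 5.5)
Your proposal is correct and follows the paper's proof essentially step for step. You use the same splitting $\beta_j=\gamma_j+\partial_{h_j}V_{\mathrm{AC}}$ with the bound $e^{-V_{\mathrm{AC}}}\le e^{\lambda\pi/2}$ and H\"older/\cref{lem:Gaussian-regularity} for square integrability, then the Galerkin Gaussian integration by parts on $\mathbb H_N$ followed by dominated convergence via $P_N\phi\to\phi$ in $H^s$, $s\in(1/3,1/2)$, and $H^s\hookrightarrow L^6$.
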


\begin{proof}
\par\noindent\textbf{Step 1. Log-derivative estimate.}\par
From \eqref{eq:beta-form}, the coordinate log-derivative is
\begin{equation}\label{eq:AC-beta}
    \beta_j(\phi) 
    = \frac{\langle\phi, e_j\rangle_{\mathbb H}}{\sqrt{q_j}}
    + \lambda\sqrt{q_j}\int_{\T}\bigl(\phi^3(x) - \phi(x)\bigr)\,e_j(x)\,dx
    =: \gamma_j(\phi) + \partial_{h_j}V_{\mathrm{AC}}(\phi).
\end{equation}
For the Gaussian part:
\begin{equation}\label{eq:AC-beta-gauss}
    \int|\gamma_j|^2\,d\kappa_{\mathrm{AC}}
    \le Z_{\mathrm{AC}}^{-1} e^{\lambda\pi/2}
    \int|\gamma_j|^2\,d\mu_Q
    = Z_{\mathrm{AC}}^{-1} e^{\lambda\pi/2}
    \frac{1}{q_j}\,\E_{\mu_Q}[|\langle\phi,e_j\rangle_{\mathbb H}|^2]
    = Z_{\mathrm{AC}}^{-1} e^{\lambda\pi/2} < \infty,
\end{equation}
since $\E_{\mu_Q}[|\langle\phi,e_j\rangle_{\mathbb H}|^2] = q_j$. For the potential derivative part, we estimate
\begin{align*}
    |\partial_{h_j}V_{\mathrm{AC}}|
    &\le \lambda\sqrt{q_j}\int_{\T}|\phi^3(x) - \phi(x)|\,|e_j(x)|\,dx \\
    &\le \lambda\sqrt{q_j}\,
    \bigl(\|\phi\|_{L^6}^3\,\|e_j\|_{L^2}
    + \|\phi\|_{L^2}\,\|e_j\|_{L^2}\bigr) \\
    &= \lambda\sqrt{q_j}\,
    \bigl(\|\phi\|_{L^6}^3 + \|\phi\|_{L^2}\bigr),
\end{align*}
using H\"older's inequality and $\|e_j\|_{L^2}=1$. Squaring and integrating,
\begin{align}\label{eq:AC-V-L2}
    \int|\partial_{h_j}V_{\mathrm{AC}}|^2\,d\kappa_{\mathrm{AC}}
    &\le Z_{\mathrm{AC}}^{-1} e^{\lambda\pi/2}\,\lambda^2 q_j\,
    \E_{\mu_Q}\bigl[(\|\phi\|_{L^6}^3 + \|\phi\|_{L^2})^2\bigr] \nonumber\\
    &\le C\,\lambda^2 q_j\,Z_{\mathrm{AC}}^{-1} e^{\lambda\pi/2}\, \E_{\mu_Q}\bigl(\|\phi\|_{L^6}^6 + \|\phi\|_{L^2}^2\bigr)
    < \infty,
\end{align}
where the finiteness of the expectation follows from \cref{lem:Gaussian-regularity}. Since every Fourier covariance eigenvalue satisfies $q_j\le m^{-1}$, the constants are uniform in $j$. Combining~\eqref{eq:AC-beta-gauss} and~\eqref{eq:AC-V-L2}, $\beta_j \in L^2(\kappa_{\mathrm{AC}})$ for every basis index $j$.

\smallskip
\par\noindent\textbf{Step 2. Coordinate integration by parts via the Galerkin limit.}\par
Fix a basis vector $e_j$ and a cylinder function $F$, and take $N$ large enough that $P_Ne_j=e_j$ and $\mathbb H_N$ contains every Fourier coordinate on which $F$ depends. On $\mathbb H_N=P_N\mathbb H$, let $\mu_{Q_N}=\mathcal N(0,Q_N)$ be the Gaussian marginal of $\mu_Q$. The truncated potential $V^{(N)}_{\mathrm{AC}}(\phi):=V_{\mathrm{AC}}(P_N\phi)$ is smooth. Applying finite-dimensional Gaussian integration by parts on $\mathbb H_N$ gives
\begin{equation}\label{eq:AC-Galerkin-IBP}
    \begin{aligned}
        \int_{\mathbb H_N} \partial_{h_j}F \; e^{-V^{(N)}_{\mathrm{AC}}(\phi)}\,d\mu_{Q_N}
        &= \int_{\mathbb H_N} F\,
        \frac{\langle\phi, e_j\rangle_{\mathbb H}}{\sqrt{q_j}}\,
        e^{-V^{(N)}_{\mathrm{AC}}(\phi)}\,d\mu_{Q_N} \\
        &\quad + \int_{\mathbb H_N} F\,
        \partial_{h_j}V^{(N)}_{\mathrm{AC}}(\phi)\,
        e^{-V^{(N)}_{\mathrm{AC}}(\phi)}\,d\mu_{Q_N}.
    \end{aligned}
\end{equation}
Because every integrand in \eqref{eq:AC-Galerkin-IBP} is measurable with respect to $P_N$, these finite-dimensional integrals equal the corresponding integrals over $(\mathbb X,\mu_Q)$.

Now take the limit $N\to\infty$. Fix $s\in(1/3,1/2)$. For $\mu_Q$-almost every $\phi$, $P_N\phi\to\phi$ in $H^s(\T)$ and therefore in both $L^6(\T)$ and $L^4(\T)$. Consequently $V^{(N)}_{\mathrm{AC}}(\phi)\to V_{\mathrm{AC}}(\phi)$ and $\partial_{h_j}V^{(N)}_{\mathrm{AC}}(\phi) \to\partial_{h_j}V_{\mathrm{AC}}(\phi)$. The Fourier projections are contractions on $H^s(\T)$, and $H^s(\T)\hookrightarrow L^6(\T)$; hence the uniform bound
\begin{equation*}
    |\partial_{h_j}V^{(N)}_{\mathrm{AC}}(\phi)|
    \le \lambda\sqrt{q_j}\,
    \bigl( \|P_N\phi\|_{L^6}^3 + \|P_N\phi\|_{L^2} \bigr)
    \le C_s\lambda\sqrt{q_j}\,
    \bigl( \|\phi\|_{H^s}^3 + \|\phi\|_{H^s} \bigr),
\end{equation*}
is $\mu_Q$-integrable by \cref{lem:Gaussian-regularity}. With $\gamma_j(\phi)=\langle\phi,e_j\rangle_{\mathbb H}/\sqrt{q_j}$ and the uniform bound $e^{-V^{(N)}_{\mathrm{AC}}}\le e^{\lambda\pi/2}$, the three integrands in \eqref{eq:AC-Galerkin-IBP} satisfy
\begin{align*}
    |\partial_{h_j}F|e^{-V^{(N)}_{\mathrm{AC}}}
    &\le e^{\lambda\pi/2}\|\partial_{h_j}F\|_\infty,\\
    |F\gamma_j|e^{-V^{(N)}_{\mathrm{AC}}}
    &\le e^{\lambda\pi/2}\|F\|_\infty|\gamma_j(\phi)|,\\
    |F\partial_{h_j}V^{(N)}_{\mathrm{AC}}| e^{-V^{(N)}_{\mathrm{AC}}}
    &\le C_s e^{\lambda\pi/2}\lambda\sqrt{q_j}\|F\|_\infty \bigl( \|\phi\|_{H^s}^3+\|\phi\|_{H^s} \bigr).
\end{align*}
The first bound is constant, the second is integrable because $\gamma_j\in L^2(\mu_Q)$, and the third is integrable by \cref{lem:Gaussian-regularity}. In particular,
\begin{align*}
    e^{-V^{(N)}_{\mathrm{AC}}}&\longrightarrow e^{-V_{\mathrm{AC}}}
    &&\text{in }L^1(\mu_Q),\\
    - \partial_{h_j}V^{(N)}_{\mathrm{AC}} e^{-V^{(N)}_{\mathrm{AC}}}
    &\longrightarrow-\partial_{h_j}V_{\mathrm{AC}}e^{-V_{\mathrm{AC}}}
    &&\text{in }L^1(\mu_Q).
\end{align*}
The approximating weights are smooth bounded cylinder functions, so this is the required convergence in $W^{1,1}_{h_j}(\mu_Q)$. Dominated convergence also permits passage to the limit in \eqref{eq:AC-Galerkin-IBP}, yielding
\begin{equation}\label{eq:AC-IBP}
    \int_{\mathbb X} \partial_{h_j}F \; d\kappa_{\mathrm{AC}}
    = \int_{\mathbb X} F\,\beta_j \; d\kappa_{\mathrm{AC}},
    \qquad \forall F \in \cF C_b^\infty.
\end{equation}

\smallskip
\end{proof}

The preceding lemmas supply the Gibbs measure, its closed form and a weak solution represented by the associated process. Uniqueness in law identifies this process with the prescribed SPDE. The following result collects these conclusions in the order used in the model theorems.

\begin{proposition}[Verification for the quartic Gibbs dynamics]
 \label{prop:quartic-model-verification}
Let $V$ be either $V_4$ or $V_{\mathrm{AC}}$, let $\kappa=Z^{-1}e^{-V}\mu_Q$ on $\mathbb X=C(\T)$, and consider the pre-form \eqref{eq:pre-form} on Fourier cylinders. Then the following statements hold.
\begin{enumerate}[label=(\roman*)]
\item The constant $Z$ belongs to $(0,\infty)$, and $\kappa$ is a full-support Radon probability measure equivalent to $\mu_Q$. It has finite moments of every order in $\|\cdot\|_\infty$.
\item For every $x\in\mathbb X$, the corresponding equation \eqref{eq:Phi4-spde} or \eqref{eq:AC-spde} has a unique global probabilistically strong $\mathbb X$-valued solution with continuous paths. The moment bound in \cref{lem:quartic-analytic-inputs}(ii) holds.
\item \Cref{ass:gibbs-ibp} holds. Consequently, the pre-form is closable; its closure $(\cE,\operatorname{Dom}(\cE))$ satisfies the integration-by-parts formula \eqref{eq:log-derivative} with $\beta_j\in L^2(\kappa)$ and \cref{ass:equilibrium,ass:diffusion-calculus}.
\item The closed form is quasi-regular, and the closures obtained from Fourier cylinders and from cylinders generated by $\mathbb X^*$ coincide.
\item The closed-form semigroup is the transition semigroup of the corresponding SPDE on $L^2(\kappa)$. In particular, \cref{ass:form-spde-interface} holds.
\item If $A_0=-\partial_x^2+m_0$ with $m_0>0$, then $\mu_0=\cN(0,A_0^{-1})$ has finite relative entropy with respect to both $\kappa_4$ and $\kappa_{\mathrm{AC}}$. Moreover, for $\lambda_{\mathrm{init}}\ge0$, the quartic Gibbs law with coupling $\lambda_{\mathrm{init}}$ has finite relative entropy with respect to $\kappa_4$.
\end{enumerate}
\end{proposition}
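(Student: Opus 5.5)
Parts (i), (ii) and (iv) are already contained in \cref{lem:quartic-analytic-inputs}: part (i) of that lemma gives $Z\in(0,\infty)$, full support, equivalence with $\mu_Q$ and all $\|\cdot\|_\infty$-moments; part (ii) gives strong well-posedness and the moment bound via \cite[Theorem~2.10]{HairerStuartVoss2007}; and part (iv) gives quasi-regularity and coincidence of the two closures, once closability is known. For (iii) I will combine the coordinate integration-by-parts lemmas, \cref{lem:Phi4-log-derivatives} for $V_4$ and \cref{lem:AC-log-derivatives} for $V_{\rm AC}$. Their Step~2 shows that $e^{-V}$ is the $W^{1,1}_{h_j}(\mu_Q)$-limit of the smooth cylinder weights $e^{-V(P_N\cdot)}$, with weak derivative $-\partial_{h_j}Ve^{-V}$. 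This is \cref{ass:gibbs-ibp}(i). Their Step~1 gives $\gamma_j,\partial_{h_j}V\in L^2(\kappa)$, which is \cref{ass:gibbs-ibp}(ii). Then \cref{prop:closability-criterion,prop:closed-gradient-properties} yield closability, \eqref{eq:log-derivative}, and \cref{ass:equilibrium,ass:diffusion-calculus}. Since the Fourier cylinder weights $e^{-V(P_N\cdot)}$ are smooth but not bounded with bounded derivatives, I would truncate with a $C_b^\infty$ cutoff in finitely many coordinates before passing to the limit. This is a routine remark.

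For (v) I will apply \cref{prop:form-spde-identification} with $\mathcal A_{\rm coord}=\{\ell_j\}$, where $\ell_j(\phi)=q_j^{-1/2}\langle\phi,e_j\rangle_{\mathbb H}$; this family is countable and separates points. \textbf{Hypothesis (i).} Since $\ell_j\in\operatorname{Dom}(\cE)$ (it is linear with $L^2(\kappa)$ moments, and we approximate by cutoffs), the Fukushima decomposition for the quasi-regular local form gives $\ell_j(X_t)-\ell_j(X_0)=M^{\ell_j}_t+N^{\ell_j}_t$. The energy measure gives $\langle M^{\ell_j},M^{\ell_k}\rangle_t=2t\langle i^*\ell_j,i^*\ell_k\rangle_{\mathbb K}=2t\,\ell_k(Q\ell_j)$, where the factor $2$ comes from our normalisation $\cE=2\cE^{\rm AMR}$. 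By \cref{lem:quartic-analytic-inputs}(iii) we have $\beta^{\rm AMR}_{i^*\ell}=\ell\circ b\in L^2(\kappa)$, so $N^{\ell_j}_t=\int_0^t\ell_j(b(X_s))\,ds$. This is the standard identification of the zero-energy part from the $L^2$ logarithmic derivative, as in \cite{AlbeverioMaRockner2015}; one checks that $\cE(\ell_j,v)=-\int \ell_j(b)v\,d\kappa$. Taking a common properly exceptional set over the countable family gives \eqref{eq:coordinate-mp}--\eqref{eq:coordinate-bracket}.

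\textbf{Hypothesis (ii).} The coordinate martingale problem must produce a weak solution in $\mathbb X$. The drift is continuous $\mathbb X\to\mathbb X$, and $\int_0^t\|b(X_s)\|_\infty ds<\infty$ a.s. by stationarity and \cref{lem:quartic-analytic-inputs}(iii). So $M_t:=X_t-X_0-\int_0^tb(X_s)ds$ is a continuous $\mathbb X$-valued process whose $\ell_j$-coordinates are martingales with brackets $2t\,\ell_k(Q\ell_j)$. L\'evy's characterisation, applied coordinatewise in the basis $(h_j)$, shows that $M=\sqrt2W^Q$ for a $Q$-Wiener process, possibly after enlarging the probability space. \textbf{Hypothesis (iii).} Uniqueness in law follows from pathwise uniqueness in (ii) together with Yamada--Watanabe.

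For (vi), the Feldman--H\'ajek argument from \cref{thm:lin-hk} gives $\mu_0\sim\mu_Q$ with finite relative entropy. Then
\begin{equation*}
\RelEnt{\mu_0}{\kappa}=\RelEnt{\mu_0}{\mu_Q}+\E_{\mu_0}V+\log Z,
\end{equation*}
and $\E_{\mu_0}V<\infty$ by \cref{lem:Gaussian-regularity}. For the initial Gibbs law $\kappa_{\lambda_{\rm init}}$, the density ratio is $\propto e^{(\lambda-\lambda_{\rm init})\int\phi^4/4}$, and its logarithm is integrable under $\kappa_{\lambda_{\rm init}}$ by the $L^4$-moments. The main obstacle is hypothesis (i) of (v): identifying the zero-energy part $N^{\ell_j}$ pathwise as $\int_0^t\ell_j(b(X_s))\,ds$ for every starting point outside one properly exceptional set, rather than merely under $\Pbb_\kappa$. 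Here I would invoke the Albeverio--Ma--R\"ockner result for well-admissible directions directly, rather than reprove it.
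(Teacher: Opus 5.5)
Your treatment of (i)--(iv) and (vi) matches the paper's. Parts (i), (ii) and (iv) come from \cref{lem:quartic-analytic-inputs}. Part (iii) is \cref{lem:Phi4-log-derivatives,lem:AC-log-derivatives} fed into \cref{prop:closability-criterion,prop:closed-gradient-properties}. Part (vi) is Feldman--H\'ajek plus $\RelEnt{\mu_0}{\kappa}=\RelEnt{\mu_0}{\mu_Q}+\E_{\mu_0}V+\log Z$ and the explicit interaction-quench formula.

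Part (v) is where you take a genuinely different route. The paper does not go through \cref{prop:form-spde-identification}. It applies \cite[Theorem~3.3]{AlbeverioMaRockner2015} to $\cE^{\rm AMR}=\frac12\cE$, which directly gives, for every $z$ outside a properly exceptional set, the $\mathbb X$-valued equation $X_t=z+\frac12\int_0^tb(X_s)\,ds+W^{\rm AMR}_t$ with an $\mathbb X$-valued $Q$-Wiener process. It fixes the normalisation by the time change $Y_t=X_{2t}$, $W^Q_t=2^{-1/2}W^{\rm AMR}_{2t}$. It proves uniqueness in law by hand: cut-off drifts $b_R$, Picard iteration, and a Borel solution map with $X=\Psi_T(z,W^Q)$. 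Only then does it identify the semigroups by proper association.

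You instead verify the three hypotheses of \cref{prop:form-spde-identification} for $\cE$ itself. You use scalar Fukushima decompositions of the $\ell_j$, so the factor $2$ enters through the energy measure rather than a time change. You then reassemble the noise by L\'evy's characterisation and obtain uniqueness from Yamada--Watanabe. This route exercises the paper's own criterion. For the zero-energy part it needs only general Fukushima theory: $\ell_j\in\operatorname{Dom}(L)$ with $L\ell_j=\ell_j\circ b$ follows from \eqref{eq:log-derivative} and $\ell_j\circ b\in L^2(\kappa)$ by $\cE_1$-density, and then $N^{[\ell_j]}_t=\int_0^tL\ell_j(X_s)\,ds$ holds for quasi-every start. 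If you cite AMR Theorem~3.3 anyway, as you propose, it already delivers the $\mathbb X$-valued equation and noise, and your Fukushima and L\'evy steps become redundant. The paper's route buys existence in one citation and a self-contained uniqueness argument.

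A few small corrections:
\begin{itemize}
\item The weights $e^{-V(P_N\cdot)}$ already lie in $\cF C_b^\infty$. On the finite-dimensional $\mathbb H_N$ the quartic decay dominates every polynomial produced by differentiation, so no cutoff is needed.
\item For every admissible start, finiteness of $\int_0^t\|b(X_s)\|_\infty\,ds$ follows from continuity of $b:\mathbb X\to\mathbb X$ and of the paths. Stationarity would only give it for $\kappa$-a.e.\ start.
\item No enlargement of the probability space is needed in the L\'evy step, since $Q$ is injective and $M_t=\sum_j\sqrt{q_j}\,\ell_j(M_t)e_j$.
\item Yamada--Watanabe requires pathwise uniqueness among all weak solutions on a common basis, not just uniqueness of noise-adapted strong solutions as stated in \cref{lem:quartic-analytic-inputs}(ii). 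For additive noise and a locally Lipschitz drift this is a localised Gronwall argument, essentially the paper's Step~3.
\end{itemize}
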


\begin{proof}
\par\noindent\textbf{Step 1. Measure, well-posedness and closed-form
properties.}\par Parts~(i)--(ii) are \cref{lem:quartic-analytic-inputs}(i)--(ii). For $V=V_4$, \cref{lem:Phi4-log-derivatives} gives the $L^2(\kappa_4)$ logarithmic derivatives and \eqref{eq:Phi4-IBP}; for $V=V_{\mathrm{AC}}$, the corresponding conclusions are \cref{lem:AC-log-derivatives} and \eqref{eq:AC-IBP}. Thus \cref{ass:gibbs-ibp} holds in both cases. Applying \cref{prop:closability-criterion,prop:closed-gradient-properties} proves part~(iii). Part~(iv) is \cref{lem:quartic-analytic-inputs}(iv).

\par\noindent\textbf{Step 2. Associated process and coordinate equation.}\par
By \cref{lem:quartic-analytic-inputs}(iii), the map
\begin{equation*}
    b(\phi) = -\phi-Q\nabla_{\mathbb H}V(\phi)
\end{equation*}
is Borel, belongs to $L^2(\kappa;\mathbb X)$, and satisfies
\begin{equation*}
    \ell(b)=\beta_{i^*\ell}^{\rm AMR}
    \qquad\text{for every }\ell\in\mathbb X^*.
\end{equation*}
The two additional hypotheses of \cite[Theorem~3.3]{AlbeverioMaRockner2015} therefore hold for the form
\begin{equation*}
    \cE^{\mathrm{AMR}}(F,G)
    = \frac12\int_{\mathbb X}
    \langle\nabla_{\mathbb K}F,\nabla_{\mathbb K}G\rangle_{\mathbb K}\,d\kappa.
\end{equation*}
The theorem gives a Borel set $\mathcal S\subset\mathbb X$ whose complement is properly exceptional and, for every $z\in\mathcal S$, an $\mathbb X$-valued Wiener process $W^{\mathrm{AMR}}$ such that
\begin{equation*}
    X_t = z + \frac12\int_0^tb(X_s)\,ds+W_t^{\mathrm{AMR}}.
\end{equation*}
For the functionals
\begin{equation*}
    \ell_j(\phi) = q_j^{-1/2}\langle\phi,e_j\rangle_{\mathbb H},
\end{equation*}
the processes $\ell_j(W_t^{\mathrm{AMR}})$ are independent standard Brownian motions. Since
\begin{equation*}
    \langle W_t^{\mathrm{AMR}},e_j\rangle_{\mathbb H} = \sqrt{q_j}\,\ell_j(W_t^{\mathrm{AMR}}),
\end{equation*}
$W^{\mathrm{AMR}}$ has covariance $Q$ and is an $\mathbb X$-valued $Q$-Wiener process.

On Fourier cylinders, $\cE=2\cE^{\mathrm{AMR}}$. Hence the process associated with $\cE$ is $Y_t=X_{2t}$. Set
\begin{equation*}
    W_t^Q=2^{-1/2}W_{2t}^{\mathrm{AMR}};
\end{equation*}
then $W^Q$ is again a $Q$-Wiener process. Replacing $t$ by $2t$ in the preceding equation gives
\begin{equation}\label{eq:X-valued-quartic-SDE}
    Y_t = z + \int_0^t[-Y_s-Q\nabla_{\mathbb H}V(Y_s)]\,ds + \sqrt2\,W_t^Q
\end{equation}
as an identity in $\mathbb X$. Thus it is the $\mathbb X$-valued probabilistically weak formulation fixed in \cref{sec:conventions}. For the point-separating coordinates $r_j(\phi)=\langle\phi,e_j\rangle_{\mathbb H}$,
\begin{equation*}
\begin{aligned}
    M_t^j
    &= \langle Y_t-Y_0,e_j\rangle_{\mathbb H} + \int_0^t
    \langle Y_s+Q\nabla_{\mathbb H}V(Y_s),e_j\rangle_{\mathbb H}\,ds,\\
    \langle M^i,M^j\rangle_t
    &= 2tq_i\mathbf1_{\{i=j\}}.
\end{aligned}
\end{equation*}
These coordinate martingales record the drift and noise covariance of the weak solution \eqref{eq:X-valued-quartic-SDE}. We identify its law directly using uniqueness.

\par\noindent\textbf{Step 3. Uniqueness and semigroup identification.}\par
The global strong well-posedness in \cref{lem:quartic-analytic-inputs}(ii), together with the locally Lipschitz drift, gives uniqueness in law. Indeed, the drift $b:C(\T)\to C(\T)$ is locally Lipschitz. Multiplying it by a Lipschitz cutoff on the ball of radius $R$ gives a globally Lipschitz drift $b_R$. For every $T>0$, Picard iteration therefore gives a Borel solution map $\Psi_{R,T}(z,w)$ for the truncated integral equation. These maps agree up to their exit times and determine a Borel maximal solution map $\Psi_T$. Global existence shows that this maximal solution does not explode under the $Q$-Wiener law. If $(X,W^Q)$ is any global probabilistically weak solution starting from $z$, uniqueness for the truncated equations and passage to the increasing exit times give
\begin{equation*}
    X=\Psi_T(z,W^Q)
    \qquad\text{almost surely on }[0,T].
\end{equation*}
Since the law of $W^Q$ is fixed, the law of $X$ is unique.

For every $z\in\mathcal S$, the associated process in \eqref{eq:X-valued-quartic-SDE} is a global weak solution. By the preceding uniqueness argument, its law agrees with that of the SPDE solution starting at $z$. Proper association therefore gives, for bounded Borel $F$ and $t\ge0$,
\begin{equation*}
 T_t^{\cE}F(z)=\mathbb E_z^{\rm form}[F(Y_t)]
 =P_t^{\rm SPDE}F(z)
 \qquad\kappa\text{-a.e.}
\end{equation*}
Both semigroups are contractions on $L^2(\kappa)$, so bounded-function approximation gives equality on $L^2(\kappa)$. This proves part~(v).

\par\noindent\textbf{Step 4. Finite-entropy quenches.}\par
Let $Q_0=A_0^{-1}$, $A_0=-\partial_x^2+m_0$ and $\lambda_k^0=k^2+m_0$. Since $m,m_0>0$,
\begin{equation*}
    \operatorname{Ran}Q_0^{1/2} = H^1(\T) = \operatorname{Ran}Q^{1/2}.
\end{equation*}
Moreover, $Q^{-1/2}Q_0Q^{-1/2}-I$ is diagonal in the Fourier basis, with eigenvalues
\begin{equation*}
    r_k-1
    =\frac{\lambda_k}{\lambda_k^0}-1
    =\frac{m-m_0}{k^2+m_0}=O(k^{-2}).
\end{equation*}
The relative covariance perturbation is therefore Hilbert--Schmidt. The centred Feldman--H\'ajek theorem gives $\mu_0\sim\mu_Q$. Since Fourier coordinates generate the Borel sigma-field, \cref{lem:entropy-projections} applies; the finite-dimensional Gaussian relative entropy formula then gives
\begin{equation}\label{eq:quenched-Gaussian-entropy}
    \RelEnt{\mu_0}{\mu_Q} = \frac12\sum_{k\ge0}d_k \bigl[ r_k-1 - \log r_k \bigr] < \infty,
\end{equation}
because the summand is $O((r_k-1)^2)=O(k^{-4})$. For either quartic potential,
\begin{equation}\label{eq:quenched-Gibbs-entropy}
    \RelEnt{\mu_0}{\kappa} = \RelEnt{\mu_0}{\mu_Q} + \E_{\mu_0}[V]+\log Z.
\end{equation}
By \cref{lem:Gaussian-regularity}, both $\E_{\mu_0}\|\phi\|_{L^4}^4$ and $\E_{\mu_0}\|\phi\|_{L^2}^2$ are finite. Thus the middle term in \eqref{eq:quenched-Gibbs-entropy} is finite for $V_4$ and for $V_{\mathrm{AC}}$, while part~(i) gives $|\log Z|<\infty$. This proves the finite-entropy assertion for the free Gaussian initial laws.

For the interaction quench, define, for $\alpha\ge0$,
\begin{equation*}
    Z_\alpha
    = \int_{\mathbb X}\exp\!\left( -\frac\alpha4\int_\T\phi^4\,dx \right) d\mu_Q,
    \qquad
    \kappa_\alpha
    = Z_\alpha^{-1}\exp\!\left( -\frac\alpha4\int_\T\phi^4\,dx \right) \mu_Q.
\end{equation*}
If the initial law is $\kappa_{\lambda_{\mathrm{init}}}$ with $\lambda_{\mathrm{init}}\ge0$, then
\begin{equation*}
    \RelEnt{\kappa_{\lambda_{\mathrm{init}}}}{\kappa_\lambda}
    = \frac{\lambda-\lambda_{\mathrm{init}}}{4} \E_{\kappa_{\lambda_{\mathrm{init}}}} \int_\T\phi^4\,dx
    + \log\frac{Z_\lambda}{Z_{\lambda_{\mathrm{init}}}}.
\end{equation*}
The expectation is finite because the density of $\kappa_{\lambda_{\mathrm{init}}}$ with respect to $\mu_Q$ is bounded and \cref{lem:Gaussian-regularity} supplies the fourth moment. Both normalising constants lie in $(0,\infty)$, so the displayed relative entropy is finite. This completes part~(vi).
\end{proof}

\begin{remark}[Role of the coordinate logarithmic-derivative route]
 \label{rmk:AC-localisation}
The coordinate logarithmic-derivative criterion works directly in $L^2(\kappa_{\mathrm{AC}})$. By contrast, truncating the potential as $V\wedge n$ produces the pre-forms $\cE_0^{(n)}(F,F) =\int\|Q^{1/2}\nabla_{\mathbb H}F\|_{\mathbb H}^2\,e^{-(V\wedge n)}d\mu_Q$, which are defined with respect to different reference measures $\nu^{(n)}(d\phi)=e^{-(V(\phi)\wedge n)}\mu_Q(d\phi)$, so their $L^2$ spaces differ and formal monotonicity of the weights does not by itself imply closability of the limiting form. The direct route through \cref{prop:closability-criterion} instead requires the $L^2(\kappa_{\mathrm{AC}})$ integrability of each $\beta_j$ and the Galerkin-limit integration-by-parts identity proved above.
\end{remark}

\section{Gaussian Path-Space Comparison}
\label{app:path-space-details}

This appendix first identifies the closed-gradient energy of Gaussian density ratios and then establishes the path-space change of measure used in the stationary forward--reverse comparison.

\begin{lemma}[Closed-gradient energy of a Gaussian density]\label{lem:gaussian-density-energy}
Let $Q$ be positive, injective and trace class on $\mathbb H$, with an orthonormal eigenbasis $Qe_j=q_je_j$, and let $\mu_Q=\cN(0,Q)$. Suppose that $\nu\sim\mu_Q$ is centred Gaussian with independent coordinates of variances $r_jq_j$, where $r_j>0$, and set $h=d\nu/d\mu_Q$. If
\begin{equation*}
    \mathcal I :=\sum_{j\ge0}\frac{(r_j-1)^2}{r_j} < \infty,
\end{equation*}
then the closed Gaussian gradient form
$\cE_Q(u,u)=\int\|D_{\mathbb K}u\|_{\mathbb K}^2\,d\mu_Q$
satisfies
\begin{equation*}
    \sqrt h\in\operatorname{Dom}(\cE_Q),
    \qquad 4\cE_Q(\sqrt h,\sqrt h)=\mathcal I.
\end{equation*}
Here $D_{\mathbb K}$ is the closure of the cylinder Cameron--Martin gradient and $\mathbb K=Q^{1/2}\mathbb H$.
\end{lemma}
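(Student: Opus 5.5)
The plan is to approximate $\sqrt h$ by its finite-mode conditional expectations, identify their closed gradients explicitly, and pass to the limit using closedness of $D_{\mathbb K}$. Write $\xi_j=\langle\phi,e_j\rangle_{\mathbb H}/\sqrt{q_j}$, which are i.i.d. standard normal under $\mu_Q$. Let $\mathcal F_N=\sigma(\xi_0,\dots,\xi_N)$ and $h_N=\E_{\mu_Q}[h\mid\mathcal F_N]$. Since $\nu$ has independent coordinates, $h_N$ is the product of the one-dimensional density ratios:
\begin{equation*}
    h_N=\prod_{j\le N}r_j^{-1/2}\exp\!\Bigl(\tfrac12\bigl(1-r_j^{-1}\bigr)\xi_j^2\Bigr),
    \qquad
    \sqrt{h_N}=\prod_{j\le N}g_j(\xi_j).
\end{equation*}
Each factor $g_j(x)=r_j^{-1/4}\exp(\tfrac14(1-r_j^{-1})x^2)$ satisfies $\int g_j^2\,d\gamma_1=1$.

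First I will show that $\sqrt{h_N}\in\operatorname{Dom}(\cE_Q)$, with
\begin{equation*}
    D_{\mathbb K}\sqrt{h_N}=\sum_{j\le N}\tfrac12\bigl(1-r_j^{-1}\bigr)\xi_j\sqrt{h_N}\,h_j,
    \qquad h_j=\sqrt{q_j}e_j.
\end{equation*}
The function $\sqrt{h_N}$ is smooth in finitely many coordinates but unbounded when $r_j>1$. I will approximate it by $\chi_R(\xi)\sqrt{h_N}$ with a smooth cutoff $\chi_R$, or equivalently by $\theta_R(\sqrt{h_N})$, and pass to the limit in the graph norm using dominated convergence. This requires $\xi_j^2h_N\in L^1(\mu_Q)$, which holds because $\int\xi_j^2g_j^2\,d\gamma_1=r_j<\infty$.

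Next comes the energy computation. By independence,
\begin{equation*}
    4\cE_Q(\sqrt{h_N},\sqrt{h_N})
    =\sum_{j\le N}\bigl(1-r_j^{-1}\bigr)^2\int\xi_j^2h_N\,d\mu_Q
    =\sum_{j\le N}\bigl(1-r_j^{-1}\bigr)^2r_j
    =\sum_{j\le N}\frac{(r_j-1)^2}{r_j}.
\end{equation*}
These partial sums converge to $\mathcal I$. The identity $\int\xi_j^2h_N\,d\mu_Q=r_j$ holds because under $\nu$ the variable $\xi_j$ has variance $r_j$.

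For the limit, I note that $h_N\to h$ in $L^1(\mu_Q)$ by martingale convergence, since the coordinates generate the $\sigma$-field. Hence $\|\sqrt{h_N}-\sqrt h\|_{L^2}^2\le\|h_N-h\|_{L^1}\to0$. The gradients $D_{\mathbb K}\sqrt{h_N}$ are Cauchy in $L^2(\mu_Q;\mathbb K)$. For $M>N$, the components in the directions $h_j$ with $j\le N$ are $\tfrac12(1-r_j^{-1})\xi_j(\sqrt{h_M}-\sqrt{h_N})$. The tail directions contribute $\tfrac14\sum_{N<j\le M}(r_j-1)^2/r_j$, which tends to $0$.

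The main obstacle is controlling the first group of components. For this I will use the product structure: $\sqrt{h_M}=\sqrt{h_N}\,\Pi_{N,M}$, where $\Pi_{N,M}$ is independent of $\mathcal F_N$ and satisfies $\E\,\Pi_{N,M}^2=1$. This gives
\begin{equation*}
    \E\bigl[\xi_j^2h_N(\Pi_{N,M}-1)^2\bigr]=r_j\,\E(\Pi_{N,M}-1)^2=2r_j\bigl(1-\E\,\Pi_{N,M}\bigr).
\end{equation*}
Here $\E\,\Pi_{N,M}=\prod_{N<j\le M}\int g_j\,d\gamma_1$, which tends to $1$ because $1-\int g_j\,d\gamma_1=O((r_j-1)^2)$ and these terms are summable. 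Summing over $j\le N$ with weights $(1-r_j^{-1})^2$ requires uniformity in $N$. I expect to get it because the total is bounded by $\mathcal I\cdot 2\bigl(1-\prod_{j>N}\int g_j\,d\gamma_1\bigr)\to0$.

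Closedness of $D_{\mathbb K}$ then gives $\sqrt h\in\operatorname{Dom}(\cE_Q)$ and $4\cE_Q(\sqrt h,\sqrt h)=\lim_N\sum_{j\le N}(r_j-1)^2/r_j=\mathcal I$.
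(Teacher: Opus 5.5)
Your proposal is correct and follows the paper's proof essentially step for step. Both use the finite-mode conditional expectations $h_N$, the coordinate cutoff $\chi_R(\xi)\sqrt{h_N}$ to enter the closed domain, and independence to compute $4\cE_Q(\sqrt{h_N},\sqrt{h_N})=\sum_{j\le N}(r_j-1)^2/r_j$. Both then get a graph-norm Cauchy sequence from the factorisation $\sqrt{h_M}=\sqrt{h_N}\,\Pi_{N,M}$ and an orthogonal split of the gradient difference.

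The only difference is how you control $\E(\Pi_{N,M}-1)^2$. You bound it through the explicit affinities $\int g_j\,d\gamma_1$. The paper instead notes that independence and $\int h_N\,d\mu_Q=1$ give $\E(\Pi_{N,M}-1)^2=\|\sqrt{h_M}-\sqrt{h_N}\|_{L^2(\mu_Q)}^2$, which already tends to zero by the $L^2$ convergence you established.
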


\begin{proof}
\par\noindent\textbf{Step 1. Finite-coordinate densities and cutoffs.}\par
Write $x_j=\langle\phi,e_j\rangle_{\mathbb H}$, $f_j=\sqrt{q_j}e_j$ and
$\mathcal F_N=\sigma(x_0,\ldots,x_N)$. The $f_j$ form an orthonormal basis of $\mathbb K$. By the product Gaussian formula,
\begin{equation*}
    h_N
    := \mathbb E_{\mu_Q}[h\mid\mathcal F_N]
    = \prod_{j=0}^Nr_j^{-1/2}\exp\left( \frac{r_j-1}{2r_jq_j}x_j^2 \right).
\end{equation*}
Set $g_N=\sqrt{h_N}$ and $a_j=(r_j-1)/(\sqrt{q_j}r_j)$. Finite-dimensional differentiation gives
\begin{equation*}
    \nabla_{\mathbb K}g_N
    = \frac12g_N\sum_{j=0}^Na_jx_jf_j,\qquad
    \int\|\nabla_{\mathbb K}g_N\|_{\mathbb K}^2\,d\mu_Q
    = \frac14 \mathcal I_N,\quad
    \mathcal I_N:=\sum_{j=0}^N\frac{(r_j-1)^2}{r_j}.
\end{equation*}
To put this possibly unbounded cylinder in the closed domain, choose smooth cutoffs $\chi_R$ on $\operatorname{span}\{e_0,\ldots,e_N\}$, equal to one on the ball of radius $R$, zero outside the ball of radius $2R$, and with $\|\nabla\chi_R\|\le C/R$. Then $\chi_Rg_N$ belongs to the bounded smooth cylinder core,
\begin{align*}
    \chi_Rg_N&\longrightarrow g_N &&\text{in }L^2(\mu_Q),\\
    \nabla_{\mathbb K}(\chi_Rg_N)
    &=\chi_R\nabla_{\mathbb K}g_N+g_N\nabla_{\mathbb K}\chi_R
    \longrightarrow\nabla_{\mathbb K}g_N
    &&\text{in }L^2(\mu_Q;\mathbb K).
\end{align*}
Indeed, dominated convergence handles the first gradient term, and the second has norm at most
$C\|Q\|^{1/2}\|g_N\|_{L^2(\mu_Q)}/R=C\|Q\|^{1/2}/R$.
Closedness proves that $g_N$ has the displayed closed gradient and energy.

\par\noindent\textbf{Step 2. Convergence in the closed-gradient norm.}\par
The conditional expectations $h_N$ converge to $h$ in $L^1(\mu_Q)$, and
\begin{equation*}
    \|g_N-\sqrt h\|_{L^2(\mu_Q)}^2 \le \|h_N-h\|_{L^1(\mu_Q)}\longrightarrow0.
\end{equation*}
For $M>N$, write $g_M=g_N\rho_{N,M}$, where $\rho_{N,M}$ depends only on coordinates $N+1,\ldots,M$. Independence gives
$\int\rho_{N,M}^2\,d\mu_Q=1$ and
$4\int \| \nabla_{\mathbb K}\rho_{N,M} \|_{\mathbb K}^2\,d\mu_Q=\mathcal I_M-\mathcal I_N$.
Moreover,
\begin{equation*}
    D_{\mathbb K}g_M-D_{\mathbb K}g_N
    = (\rho_{N,M}-1)D_{\mathbb K}g_N + g_N\nabla_{\mathbb K}\rho_{N,M}.
\end{equation*}
The two summands take values in orthogonal coordinate subspaces of $\mathbb K$. Factoring their squared integrals over the independent coordinates gives
\begin{align*}
    \|D_{\mathbb K}g_M-D_{\mathbb K}g_N\|_{L^2(\mu_Q;\mathbb K)}^2
    &=\frac{\mathcal I_N}{4}\int(\rho_{N,M}-1)^2\,d\mu_Q
    +\frac{\mathcal I_M-\mathcal I_N}{4}\\
    &=\frac{\mathcal I_N}{4}\|g_M-g_N\|_{L^2(\mu_Q)}^2
    +\frac{\mathcal I_M-\mathcal I_N}{4}
    \longrightarrow 0.
\end{align*}
Thus $(g_N)$ is Cauchy in the graph norm. Closedness gives $\sqrt h\in\operatorname{Dom}(\cE_Q)$, and strong convergence of its gradients yields
$4\cE_Q(\sqrt h,\sqrt h)=\lim_N \mathcal I_N=\mathcal I$.
\end{proof}

\begin{lemma}[Stationary Gaussian path-space change of measure]
 \label{lem:stationary-gaussian-girsanov}
Assume~\eqref{eq:B-assumptions}, and let $\Pbb_{B,T}$ and $\Pbb_{-B,T}$ be the stationary path laws on $C([0,T];\mathbb H)$ of \eqref{eq:linear} with drifts $B$ and $-B$, respectively. Then, for every $T>0$,
\begin{equation*}
    \Pbb_{B,T}\sim\Pbb_{-B,T},
\end{equation*}
and, under $\Pbb_{-B,T}$,
\begin{equation*}
    \frac{d\Pbb_{B,T}}{d\Pbb_{-B,T}}
    = \exp\!\left\{
    -\sqrt{2}\int_0^T\!\langle Q^{1/2}B\phi_t,dW_t^{-B}\rangle_{\mathbb H}
    -\int_0^T\!\|Q^{1/2}B\phi_t\|_{\mathbb H}^2\,dt
    \right\}.
\end{equation*}
The stochastic integral has a Borel version on the canonical path space.
\end{lemma}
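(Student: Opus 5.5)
The plan is a Girsanov change of drift on a common probability space, carried out in the Cameron--Martin coordinates. Under $\Pbb_{-B,T}$ the canonical process satisfies
\[
d\phi_t=-(I-QB)\phi_t\,dt+\sqrt2\,Q^{1/2}dW_t^{-B},
\]
while under $\Pbb_{B,T}$ the drift is $-(I+QB)\phi_t$. The drift difference is therefore $-2QB\phi_t=\sqrt2\,Q^{1/2}u(\phi_t)$ with $u(\phi)=-\sqrt2\,Q^{1/2}B\phi$, and $u$ takes values in $\mathbb H$, so the drift difference lies in the range of the noise for every $\phi$. Both laws have the common initial marginal $\mu_Q$ by \cref{prop:skew-invariance}, so no change of initial law is needed. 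The candidate density is then
\[
\exp\Bigl\{\int_0^T\langle u(\phi_t),dW_t^{-B}\rangle_{\mathbb H}-\tfrac12\int_0^T\|u(\phi_t)\|_{\mathbb H}^2dt\Bigr\},
\]
which matches the displayed formula. Under stationarity one has $\E\|u(\phi_t)\|^2=2\Tr(B^*QBQ)$; this is the energy identity \eqref{eq:Girsanov-energy} quoted in the proof of \cref{thm:Galerkin-path-KL}, and it follows from \cref{prop:La-representation}.

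The main obstacle is showing that the exponential local martingale $\mathcal Z_t$ is a true martingale, since $u$ grows linearly and Novikov does not apply on $[0,T]$ directly. I would try two routes. The first uses Novikov on short intervals: because $\mu_Q$-type Gaussian laws have Fernique integrability (\cref{lem:gaussian-measure-facts}), and $\sup_{t\le T}\|\phi_t\|$ under the linear OU dynamics also has a Gaussian tail, one gets $\E\exp(\epsilon\int_s^{s+\delta}\|u\|^2)<\infty$ for small $\delta$. Chaining over a partition of $[0,T]$ via the Markov property (Beneš/Liptser--Shiryaev style) then gives $\E\mathcal Z_T=1$. The second route is a localisation argument: stop at $\tau_R=\inf\{t:\|\phi_t\|\ge R\}$, apply Girsanov with a bounded drift on $[0,\tau_R]$, and use non-explosion of the $B$-equation (global well-posedness from \cref{prop:skew-invariance}) to show $\E\mathcal Z_{T\wedge\tau_R}\mathbf1_{\{\tau_R> T\}}\to1$. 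I would present the localisation route, since it needs only that both equations have unique global solutions.

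Once $\E\mathcal Z_T=1$, Girsanov shows that under $\mathcal Z_T\cdot\Pbb_{-B,T}$ the process $W^{-B}-\int u\,ds$ is a cylindrical Wiener process. The canonical process therefore solves the $B$-equation, and pathwise uniqueness from \cref{prop:skew-invariance} gives uniqueness in law, which identifies the law as $\Pbb_{B,T}$. This yields $\Pbb_{B,T}\ll\Pbb_{-B,T}$ with the stated density. Because $\mathcal Z_T>0$ almost surely, the reverse absolute continuity follows, and hence $\Pbb_{B,T}\sim\Pbb_{-B,T}$.

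For the Borel version of the stochastic integral, I would write $\int_0^T\langle Q^{1/2}B\phi_t,dW^{-B}_t\rangle$ as an $L^2$ limit of Riemann sums in finitely many Fourier modes. On the canonical space, $Q^{1/2}dW^{-B}_t=(d\phi_t+(I-QB)\phi_tdt)/\sqrt2$ is expressed through the path itself, so each approximant is a Borel functional of the path. An almost-sure subsequential limit along a fixed sequence of such approximants then gives a Borel version.
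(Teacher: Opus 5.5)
Your proposal is correct. Your first route, short-time Novikov chained through the Markov property, is essentially the paper's, but the localisation route you choose to present proves the true-martingale property differently.

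\textbf{The paper's route.} Under $\mu_Q$, $Q^{1/2}B\phi$ is a centred Gaussian vector with trace-class covariance $Q^{1/2}BQB^*Q^{1/2}$. Fernique therefore gives $\E_{\mu_Q}\exp(\varepsilon\|Q^{1/2}B\phi\|_{\mathbb H}^2)<\infty$. Jensen in time and stationarity then give Novikov's condition on every interval of length $\delta<\varepsilon$. The change of measure is built inductively over a partition, using at each stage that both the $B$ and $-B$ dynamics preserve $\mu_Q$, so each new interval starts again from the stationary marginal. Mutual absolute continuity comes from rerunning the argument with $B$ and $-B$ exchanged.

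\textbf{Your route.} You replace all of this with three facts: Novikov for the bounded integrand $u(\phi_t)\mathbf 1_{\{t\le\tau_R\}}$, the supermartingale bound $\E_{-B}\mathcal Z_T\le1$, and
$\E_{-B}[\mathcal Z_T\mathbf 1_{\{\tau_R>T\}}]=\widetilde\Pbb_R(\tau_R>T)=\Pbb_{B,T}(\tau_R>T)\to1$, where $\widetilde\Pbb_R=\mathcal Z_{T\wedge\tau_R}\cdot\Pbb_{-B,T}$.

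The middle equality is the one step you should write out. Under $\widetilde\Pbb_R$, the process $\widetilde W=W^{-B}-\int_0^{\cdot\wedge\tau_R}u(\phi_s)\,ds$ is a cylindrical Wiener process, independent of $\phi_0\sim\mu_Q$. So the strong solution of the $B$-equation driven by $\phi_0$ and $Q^{1/2}\widetilde W$, given by \cref{prop:skew-invariance}, has law $\Pbb_{B,T}$. By Gronwall it coincides with $\phi$ on $[0,\tau_R]$, so the two exit events agree.

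\textbf{What each approach buys.} Your argument needs neither Fernique, nor stationarity, nor the Hilbert--Schmidt condition for the martingale property. It uses only that $u$ is a bounded linear map and that the $B$-equation is pathwise unique and non-explosive. It therefore gives $\Pbb_B\sim\Pbb_{-B}$ on path space from any common initial law, and the trace condition enters only in the relative-entropy computation. Your derivation of $\Pbb_{-B,T}\ll\Pbb_{B,T}$ from $\mathcal Z_T>0$ is also shorter than the paper's symmetric rerun. In exchange, the paper's route gives explicit exponential moments and uniform integrability on each short interval, using the stationary structure that the theorem works with anyway.

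Your Borel-version construction matches the paper's.
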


\begin{proof}
\par\noindent\textbf{Step 1. Cameron--Martin drift and energy.}\par
The drift difference is
\begin{equation*}
    b_B(\phi)-b_{-B}(\phi)=-2QB\phi = \sqrt{2}Q^{1/2}u(\phi),\qquad
    u(\phi)=-\sqrt{2}Q^{1/2}B\phi.
\end{equation*}
Thus it lies in the noise Cameron--Martin directions. Furthermore,
\begin{equation}\label{eq:Girsanov-energy}
    \int_{\mathbb H}\|Q^{1/2}B\phi\|_{\mathbb H}^2\,d\mu_Q = \Tr(B^*QBQ) < \infty.
\end{equation}
\par\noindent\textbf{Step 2. Canonical stochastic integral.}\par
We first construct the stochastic integral as a functional of the canonical trajectory. Enumerate an orthonormal eigenbasis of $Q$ as $(e_j)_{j\ge0}$, with $Qe_j=q_je_j$. Under the $(-B)$ law define, for $\omega\in C([0,T];\mathbb H)$,
\begin{equation}\label{eq:canonical-brownian-coordinate}
    \mathcal W_j^\omega(t)=\frac1{\sqrt{2q_j}}
    \left[
    \langle\omega_t-\omega_0,e_j\rangle_{\mathbb H} + \int_0^t\langle \mathsf K_{-B}\omega_s,e_j\rangle_{\mathbb H}\,ds
    \right].
\end{equation}
Taking the $e_j$-coordinate of the integral equation under $\Pbb_{-B,T}$ gives
\begin{equation*}
    \langle\phi_t-\phi_0,e_j\rangle_{\mathbb H} + \int_0^t\langle\mathsf K_{-B}\phi_s,e_j\rangle_{\mathbb H}\,ds = \sqrt{2q_j}\,\mathcal W_j(t).
\end{equation*}
Thus each $\mathcal W_j$ is a continuous local martingale starting from zero. The coordinate martingale brackets of the $Q$-Wiener noise give
\begin{equation*}
    \langle\mathcal W_i,\mathcal W_j\rangle_t = \delta_{ij}t.
\end{equation*}
For every finite set of indices, the multidimensional L\'evy characterisation \cite{DaPratoZabczyk2014} therefore shows that the corresponding $\mathcal W_j$ are independent standard Brownian motions in the completed canonical filtration. Let $\Pi_M$ be the orthogonal projection onto $\operatorname{span}\{e_0,\ldots,e_M\}$ and set
\begin{equation*}
    I_M=\sum_{j=0}^M\int_0^T \langle Q^{1/2}B\phi_t,e_j\rangle_{\mathbb H}\,d\mathcal W_j(t).
\end{equation*}
To see explicitly that $I_M$ is a canonical-path functional, let $t_k^n=kT2^{-n}$ and define the adapted dyadic sums
\begin{equation*}
    S_{M,n}(\omega) = \sum_{j=0}^M\sum_{k=0}^{2^n-1}\langle Q^{1/2}B\omega_{t_k^n},e_j\rangle_{\mathbb H}
    \times\bigl[\mathcal W_j^\omega(t_{k+1}^n) - \mathcal W_j^\omega(t_k^n)\bigr].
\end{equation*}
The integrand is adapted and square-integrable by \eqref{eq:Girsanov-energy}. For $0\le s\le t\le T$, the integral equation and stationarity give
\begin{align*}
    \E_{-B}\|\phi_t-\phi_s\|_{\mathbb H}^2
    &\le2(t-s)\int_s^t\E_{-B}\|\mathsf K_{-B}\phi_r\|_{\mathbb H}^2\,dr + 4(t-s)\Tr Q\\
    &\le2\|\mathsf K_{-B}\|^2\Tr Q\,(t-s)^2+4\Tr Q\,(t-s).
\end{align*}
Applying the bounded operator $Q^{1/2}B$ and It\^o's isometry shows that the mean-square error of the left-endpoint sums tends to zero with the mesh. Hence $S_{M,n}\to I_M$ in $L^2(\Pbb_{-B,T})$. Each $S_{M,n}$ is Borel measurable on $C([0,T];\mathbb H)$ by \eqref{eq:canonical-brownian-coordinate}; after taking an almost surely convergent subsequence, $I_M$ therefore has a Borel representative. For $M_2>M_1$, It\^o's isometry and stationarity give
\begin{align*}
    \E_{-B}|I_{M_2}-I_{M_1}|^2
    &=\E_{-B}\int_0^T \|(\Pi_{M_2}-\Pi_{M_1})Q^{1/2}B\phi_t\|_{\mathbb H}^2\,dt\\
    &=T\int_{\mathbb H} \|(\Pi_{M_2}-\Pi_{M_1})Q^{1/2}B\phi\|_{\mathbb H}^2\,d\mu_Q(\phi)
    \longrightarrow 0.
\end{align*}
by \eqref{eq:Girsanov-energy} and monotone convergence in the orthonormal basis. Thus $(I_M)$ is Cauchy in $L^2(\Pbb_{-B,T})$. Denote its limit by
\begin{equation*}
    I = \int_0^T\langle Q^{1/2}B\phi_t,dW_t^{-B}\rangle_{\mathbb H}.
\end{equation*}
An $L^2$ limit of Borel random variables on the standard Borel space $C([0,T];\mathbb H)$ has a Borel representative, which we fix for $I$. It\^o's isometry identifies this limit with the Hilbert-space It\^o integral of the predictable integrand $Q^{1/2}B\phi$ in $L^2(dt\otimes\Pbb_{-B,T};\mathbb H)$. Its $L^2$ uniqueness also shows that another eigenbasis or another increasing family of finite-rank projections produces the same random variable $\Pbb_{-B,T}$-almost surely. Consequently the exponential in \eqref{eq:path-RN-B} is a well-defined Borel functional of the canonical path.

\par\noindent\textbf{Step 3. Short-time change of measure.}\par
We next prove that the stochastic exponential is a true martingale on an arbitrary finite interval. Under $\mu_Q$, the random variable $Q^{1/2}B\phi$ is a centred Gaussian variable in $\mathbb H$ with covariance operator
\begin{equation*}
    Q^{1/2}BQB^*Q^{1/2}.
\end{equation*}
Its trace equals $\|Q^{1/2}BQ^{1/2}\|_{\mathrm{HS}}^2<\infty$, so this is a Gaussian Radon random variable on the separable Hilbert space $\mathbb H$. Applying the Fernique estimate in \cref{lem:gaussian-measure-facts} with $\mathbb E=\mathbb H$ gives an $\varepsilon>0$ such that
\begin{equation*}
    \int_{\mathbb H}\exp\!\bigl(\varepsilon
    \|Q^{1/2}B\phi\|_{\mathbb H}^2\bigr)\,d\mu_Q(\phi)<\infty.
\end{equation*}
On an interval of length $\delta<\varepsilon$, Jensen's inequality and stationarity imply
\begin{align*}
    \E_{-B}\exp\!\left(\frac12\int_0^\delta\|u(\phi_t)\|_{\mathbb H}^2dt\right)
    &=\E_{-B}\exp\!\left(\int_0^\delta \|Q^{1/2}B\phi_t\|_{\mathbb H}^2dt\right)\\
    &\le\frac1\delta\int_0^\delta \E_{-B}\exp\!\left( \delta\|Q^{1/2}B\phi_t\|_{\mathbb H}^2 \right)dt<\infty.
\end{align*}
The process $u(\phi_t)$ is predictable, and the displayed estimate is exactly Novikov's condition on every stationary interval of length at most $\delta$. The corresponding stochastic exponential is therefore a uniformly integrable martingale on each such interval, by Novikov's criterion; we use the cylindrical Girsanov theorem in \cite[Chapter~10]{DaPratoZabczyk2014}.

\par\noindent\textbf{Step 4. Extension to an arbitrary time interval.}\par
Choose a partition $0=t_0<t_1<\cdots<t_n=T$ with $t_i-t_{i-1}\le\delta$. We construct the change of measure inductively. Set $\Pbb^{(0)}=\Pbb_{-B,T}$. Suppose that $\Pbb^{(i-1)}$ has already been defined so that the drift is $B$ on $[0,t_{i-1}]$, the drift is $-B$ after $t_{i-1}$, and the law of $\phi_{t_{i-1}}$ is $\mu_Q$. Since the density used up to this stage is $\mathcal F_{t_{i-1}}$-measurable, the conditional law of the future given $\mathcal F_{t_{i-1}}$ is still the $(-B)$ dynamics started from $\phi_{t_{i-1}}$. Consequently,
\begin{equation*}
    \mathbb E_{\Pbb^{(i-1)}} \exp\!\left(\frac12\int_{t_{i-1}}^{t_i} \|u(\phi_s)\|_{\mathbb H}^2\,ds\right)
    = \int_{\mathbb H}\mu_Q(dx)\,\mathbb E_{-B,x}\exp\!\left(\frac12\int_0^{t_i-t_{i-1}}\|u(\phi_s)\|_{\mathbb H}^2\,ds\right)
    < \infty.
\end{equation*}
The equality uses the $\mu_Q$ marginal at $t_{i-1}$; its right-hand side is the stationary expectation estimated above.

Let $W^{(i-1)}$ denote the cylindrical Brownian motion under $\Pbb^{(i-1)}$ on the interval $[t_{i-1},t_i]$. Since all preceding density increments are $\mathcal F_{t_{i-1}}$-measurable, its differentials on this interval agree with those of $W^{-B}$. Define, for $t\in[t_{i-1},t_i]$,
\begin{equation*}
    Z_i(t)=\exp\!\bigg\{\int_{t_{i-1}}^t \langle u(\phi_s),dW_s^{(i-1)}\rangle_{\mathbb H} - \frac12\int_{t_{i-1}}^t \|u(\phi_s)\|_{\mathbb H}^2\,ds\bigg\}.
\end{equation*}
The preceding estimate and Novikov's criterion show that $(Z_i(t))$ is a uniformly integrable martingale. In particular,
\begin{equation*}
    \mathbb E_{\Pbb^{(i-1)}} [Z_i(t_i)\mid\mathcal F_{t_{i-1}}] = 1.
\end{equation*}
Define
\begin{equation*}
    \frac{d\Pbb^{(i)}}{d\Pbb^{(i-1)}}=Z_i(t_i).
\end{equation*}
The integrand $u(\phi_s)$ is predictable and square integrable, and $Z_i$ is a true martingale by the interval-wise Novikov estimate. Girsanov's theorem therefore changes the drift from $-B$ to $B$ on $[t_{i-1},t_i]$. The conditional expectation above shows that the law on $\mathcal F_{t_{i-1}}$ is unchanged. Since both the $B$ and the $(-B)$ dynamics preserve $\mu_Q$, the endpoint marginal under $\Pbb^{(i)}$ is again $\mu_Q$. This proves the induction hypothesis at the next step.

Writing $Z_i=Z_i(t_i)$, the tower property now gives
\begin{equation*}
    \mathbb E_{-B}[Z_1\cdots Z_n] = \mathbb E_{\Pbb^{(n-1)}}[Z_n]=1,
\end{equation*}
and the same identity at every earlier induction step shows that each $\Pbb^{(i)}$ is a probability measure. Because the intervals are disjoint, multiplication of the interval exponentials gives the single Dol\'eans exponential
\begin{equation*}
    \prod_{i=1}^nZ_i
    = \exp\!\left\{\int_0^T\langle u(\phi_t),dW_t^{-B}\rangle_{\mathbb H}
    - \frac12\int_0^T\|u(\phi_t)\|_{\mathbb H}^2\,dt\right\}.
\end{equation*}
Substituting $u=-\sqrt2Q^{1/2}B\phi$ yields exactly \eqref{eq:path-RN-B}. Under $\Pbb^{(n)}$, the canonical process starts from $\mu_Q$ and the successive Girsanov changes show that it satisfies \eqref{eq:linear} with drift $-(I+QB)\phi$ on all of $[0,T]$ and with noise coefficient $\sqrt2Q^{1/2}$. By \cref{prop:skew-invariance}, this linear equation has a global strong solution and is pathwise unique; hence it is unique in law. The transformed canonical law is therefore $\Pbb^{(n)}=\Pbb_{B,T}$. If $B$ and $-B$ are interchanged, the Cameron--Martin integrand changes sign, while its square norm, the Fernique estimate and every interval-wise Novikov bound are unchanged. The same argument therefore gives $\Pbb_{-B,T}\ll\Pbb_{B,T}$ and proves mutual absolute continuity.

\end{proof}


\end{document}